\theoremstyle{plain}
\newtheorem{thm}{Theorem}[section]
\newtheorem*{thm*}{Theorem}
\newtheorem{lem}[thm]{Lemma}
\theoremstyle{plain}
\newtheorem{prop}[thm]{Proposition}
\newtheorem{conj}[thm]{Conjecture}
\theoremstyle{plain}
\newtheorem{example}[thm]{Example}
\newtheorem{cor}[thm]{Corollary}
\theoremstyle{definition}
\newtheorem{defn}[thm]{Definition}
\theoremstyle{remark}
\newtheorem{rem}[thm]{Remark}
\DeclareMathAlphabet{\mathpzc}{OT1}{pzc}{m}{it}
\newcommand{\bem}[1]{}
\newcommand{\dirac}{\mbox{$\mathcal{D}\!\!\!\!\!\:/\!\;$}}
\newcommand{\spinor}{\mbox{$S\!\!\!\!\!\:/\;\!$}}
\newcommand{\sspinor}{\mbox{$\scriptstyle S\!\!\!\!\!\:/\;\!$}}
\newcommand{\bundle}[1]{\CMcal{#1}}
\newcommand{\Cl}{\mathit{Cl}}
\newcommand{\R}{\mathbbm{R}}
\newcommand{\C}{\mathbbm{C}}
\newcommand{\Q}{\mathbbm{Q}}
\renewcommand{\H}{\mathscr{H}}
\newcommand{\He}{\mathscr{H}^\partial }
\newcommand{\Z}{\mathbbm{Z}}
\newcommand{\N}{\mathbbm{N}}
\newcommand{\id}{\mathbbm{1}}
\newcommand{\BU}{\mathrm{B}\mathrm{U}}
\newcommand{\group}{G}
\newcommand{\met}{\mathrm{Met}}
\newcommand{\scal}{\mathpzc{sc_{min}}}
\newcommand{\D}{\mathcal{D}}
\newcommand{\curv}[1]{\CMcal{#1}}
\newcommand{\proj}[1]{\mbox{$#1\mathrm{P}$}}
\newcommand{\ke}{\mathpzc{k}}
\newcommand{\ce}{\mathpzc{c}}
\newcommand{\cgv}{\mathpzc{cgv}}
\newcommand{\cg}{\mathpzc{cg}}
\newcommand{\ccg}{\mathpzc{ccg}}
\newcommand{\yam}{\mathcal{Y}}
\begin{document}
%\title{Scalar curvature and vector bundles}
%\author{Mario Listing}
%\address{Mathematisches Institut, Albert-Ludwigs-Universit\"at Freiburg, Eckerstra\ss e 1, 79104 Freiburg, Germany}
%\email{mario.listing@math.uni-freiburg.de}
%\thanks{Supported by the German Science Foundation and in Part by SFB/TR 71}
%\begin{abstract}
%In the firs We use Gromov's K--area to define a homology theory on pairs of compact smooth manifolds. In fact, this theory collects obstructions to the enlargeability of the manifold and its nontrivial submanifolds. Moreover, using the K--area homology we can rephrase some classic results about positive scalar curvature. We also show that the functor of K--area holomogy determines the functor of singular homology on pairs of compact smooth manifolds.
%\end{abstract}
%\keywords{K--area, homology theory, scalar curvature, curvature gap}
%\subjclass[2000]{55N20 (57R20,57R19,53C21)}

%\maketitle
\begin{titlepage}
\thispagestyle{empty}
\begin{center}
\vspace*{2cm}
{\Huge \bf Scalar curvature and vector bundles }\\
\vspace{2cm}

{\Large Mario Listing}
\vspace{2cm}

{\bf Abstract}\\
\end{center}
In the first part we use Gromov's K--area to define the K--area homology which stabilizes into singular homology on the category of pairs of compact smooth manifolds. The second part treats the questions of certain curvature gaps. For instance, the $L^\infty $--curvature gap of complex vector bundles on a compact manifold is positive if and only if the K--area homology coincides with the reduced singular homology in all even degrees. In the third part we give some upper bounds of the scalar curvature on compact manifolds. In particular, we generalize results by Llarull and Goette, Semmelmann. 
\vspace{1cm}

\noindent The present work is my habilitation thesis which was accepted by the ''Fakult\"at f\"ur Mathematik und Physik der Albert--Ludwigs--Universit\"at Freiburg''. %of the faculty of mathematics and physis 
%of the paper educed singular homology coincides with the K--area we give an answer to the following question: Suppose $\bundle{E}\to M$ is a Hermitian vector compact manifold simply connected considers 
%\Large Habilitationsschrift\\
%an der Fakult\"at f\"ur Mathematik und Physik\\ 
%der Albert--Ludwigs--Universit\"at Freiburg\\
%\vspace{2cm}
%vorgelegt von\\
%Dr.~rer.~nat.~Mario Listing\\
%\vspace{2cm}
%Juni 2011
%\end{center}

% \newpage \thispagestyle{empty} $ $ \newpage \thispagestyle{empty}
\tableofcontents 
%\end{center}
%\maketitle
%\thispagestyle{empty}
%\newpage \thispagestyle{empty} $ $
\end{titlepage}
 
\chapter*{Introduction}
One of the main concerns in differential geometry is the relation of curvature and topology. Classical examples of this interaction are the Bonnet--Myers theorem or the theorem by Hadamard--Cartan. Latest research results on these subjects are the classification of manifolds with positive curvature operator \cite{BoWi} and the $1/4$--pinched differentiable sphere theorem \cite{BrSch}. However, methods of Riemannian geometry also supply important result in topology as the proof of the Poincar\'e conjecture in dimension $n=3$ shows \cite{pre_Per1,pre_Per2}. In this case the Ricci curvature flow determines an Einstein metric on a simply connected closed $3$--manifold which can only exist on $S^3$. In $4$--dimensions Yang--Mills theory and Seiberg--Witten theory are powerful tools to relate Riemannian geometry to differential topology. Last but not least the Atiyah--Singer index theorem and the Lichnerowicz formula determine a topological obstruction to the existence of positive scalar curvature on spin manifolds. 

In this work we are interested in the interaction of the curvature of vector bundles with the topology of the underlying manifold. This leads eventually to questions of positive scalar curvature which is  investigated in many papers. In the first chapter we use the curvature of complex vector bundles to define a semi homology theory which also serves as a well accessible obstruction against positive scalar curvature. Using a recent result by Hanke \cite{Hanke}, these obstructions have been known before in a different framework namely the Rosenberg index. This semi homology theory has further applications in the second chapter which is mostly concerned with the question of how small the curvature of a vector bundle can be before it has to be stably rational trivial. Remember that a flat vector bundle on a simply connected manifold is trivial, but this does not generalize to manifolds with nontrivial fundamental group. In the last chapter we show how to bound the scalar curvature from above if the manifold admits a metric of positive scalar curvature.

\subsubsection*{Chapter 1: Homology of finite K--area}
Gromov introduced in \cite{Gr01} the notion of K--area for Riemannian manifolds and proved that this area is finite for closed spin manifold of positive scalar curvature. In this result the K--area replaces the concept of enlargeability considered by Gromov and Lawson in \cite{GrLa2,GrLa3}. Apart from this interesting fact, the K--area is not  entirely bound to positive scalar curvature, because finite K--area of a compact manifold depends only on the homotopy type of the manifold whereas the existence of positive scalar curvature depends on the differentiable structure. So far the best obstruction for positive scalar curvature on spin manifolds is the vanishing of an invariant $\alpha ^\R _\mathrm{max}(M)\in \mathrm{KO}_n(C^*_{\mathrm{max},\R }\pi _1(M))$ introduced by Rosenberg in \cite{Ros1,Ros2}. In particular, $\alpha ^\R _\mathrm{max}(M)$ generalizes the Atiyah--Milnor--Singer invariant and does not vanish for enlargeable spin manifolds which was proved by Hanke and Schick in \cite{HaSch1,HaSch2}. Recently, Hanke generalized in \cite{Hanke} the concept of K--area to infinite dimensional bundles over $C^*$--algebras and proved that infinite K--area implies $\alpha ^\R _\mathrm{max}(M)\neq 0$. This justifies the observation that the Rosenberg index is the best index theoretic obstruction for positive scalar curvature.

The K--area defined in \cite{Gr01} and used below is given by $\sup \frac{1}{\| R^\bundle{E}\| }$ where the supremum is taken over a certain set of (finite dimensional) Hermitian vector bundles $\bundle{E}$ endowed with a Hermitian connection. The usage of finite dimensional bundles is the main difference to the preprint by Hanke \cite{Hanke}, but it has the advantage to get  explicit values for the K--area which becomes important in chapters 2 and 3. Below we introduce the K--area of a compact Riemannian manifold $(M,g)$ w.r.t.~a homology class $\theta \in H_{*}(M;G)$ where $H_*(M;G)$ means singular homology of $M$ with coefficients in an abelian group $G$. This leads to the definition of the \emph{homology groups with finite K--area} respectively the \emph{K--area homology}. In fact, the set of homology classes $\theta \in H_k(M;G)$ with finite K--area determine a subgroup $\H _k(M;G)\subseteq H_k(M;G)$ which is independent on the choice of the Riemannian metric on $M$. Moreover, the induced homomorphism of a continuous map $f:M\to N$ restricts to homomorphisms $f_*:\H _k(M;G)\to \H _k(N;G)$ which proves that $\H _k(M;G)$ depends only on the homotopy type of the compact manifold $M$. Below we give some examples of the K--area homology which show its nontrivial character. For instance, $\H _*(T^n)=\{ 0\} $ and $\H _*(N)=\{ 0\} $ if $N$ is a closed orientable surface with Euler characteristic $\chi (N)\leq 0$ whereas $\H _{2k}(M)=H_{2k}(M)$ for all $k>0$ if $M$ is a closed orientable manifold with finite fundamental group. Moreover, we construct manifolds with isomorphic fundamental group, isomorphic singular homology but different K--area homology and we give  examples of manifolds with isomorphic cohomology ring but different K--area homology. In fact, if $M^3$ is a hyperbolic homology sphere and $f:M\to S^3$ is a map of degree $\pm 1$, then $f$ induces isomorphisms in singular homology and cohomology but not in K--area homology since $\H _3(M)=\{ 0\} $ whereas $\H _3(S^3)=\Z $. Because the fundamental class of the sphere $S^k $ has finite K--area for all $k\geq 2$ we conclude as an application that the image of the Hurewicz homomorphism $h_k:\pi _k(M)\to H _k(M)$ is contained in $\H _k(M)$ for all $k\geq 2$. Conversely, the fundamental class of a connected compactly enlargeable manifold $M^n$ has infinite K--area which means $\H _n(M)=\{ 0\} $.  Moreover, if $M$ is connected and compactly $\widehat{A}$--enlargeable, then the Poincar\'e dual of the total $\widehat{A}$--class of $M$ has infinite K--area, i.e.~$\widehat{\mathbf{A}}(M)\cap [M]\notin \H _*(M;\Q )$. Here, compactly enlargeable respectively compactly $\widehat{A}$--enlargeable refer to finite coverings in the definition of enlargeability (cf.~\cite{LaMi}). Using the definition of infinite K--area in \cite{Hanke}, enlargeable manifolds have infinite K--area. Moreover, we show that a closed connected spin manifold $M^n$ of positive scalar curvature satisfies $\widehat{\mathbf{A}}(M)\cap [M]\in \H _*(M;\Q )$ and $\H _{n-1}(M)=H _{n-1}(M)$ which together with the above observation can be seen as a restatement of some classic results about positive scalar curvature. Note that $\H _0(M;\Q )=\H _1(M;\Q )=\{ 0\} $ for any compact manifold, i.e.~this result covers the original obstruction of the vanishing $\widehat{A}$--genus. Analogous results have been observed by Brunnbauer and Hanke in \cite{BrHa} where the small group homology $H_*^{\mathrm{sm}(P)}(M;\Q )$ has been introduced. In fact,  one can show that $\H _*(M;\Q )\subseteq H_*^{\mathrm{sm}(P)}(M;\Q )$ if $P$ denotes the largeness property ''compactly enlargeable''. Another analogy can be found in \cite{Hanke} where a subspace of the K--group is considered. In section \ref{sec15} we introduce the relative K--area of a class $\theta \in H_k(M,A)$ where $A$ is a compact submanifold of $M$. This leads to the subgroups $\H _k(M,A)\subseteq H_k(M,A)$ of homology classes with finite relative K--area which can be useful to relate $\H _*(M)$ and $\H _*(M')$ if $M'$ is obtained from $M$ by surgery. In fact, this relative version satisfies the excision property and $\H _k(M,A)$ is isomorphic to $\H _k(M/A)$ for all $k$ if $M/A$ is a smooth manifold. 
\begin{thm*}
The functor $\H $ yields a semi homology theory on pairs of compact smooth manifolds and continuous maps in the sense that $\H $  satisfies the Eilenberg--Steenrod axioms up to exact homology sequences. 

Moreover, $\H $ determines the functor of singular homology on the category pairs of compact smooth manifolds and continuous maps.
\end{thm*}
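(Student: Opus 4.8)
The strategy I would follow is to treat $\H$ throughout as the subfunctor of singular homology cut out by finiteness of the K--area, and to obtain each Eilenberg--Steenrod axiom by restricting the corresponding structure on $H_*(-;G)$ to the finite--K--area part. Functoriality of $f_*$ on the groups $\H_k(M;G)$ and $\H_k(M,A;G)$, homotopy invariance, and the excision isomorphism $\H_k(M,A)\cong\H_k(M/A)$ have already been set up, so these I would simply invoke. The additivity axiom is immediate, since a bundle on a disjoint union splits and hence $\H_k(M_1\sqcup M_2)=\H_k(M_1)\oplus\H_k(M_2)$, and the dimension axiom holds in the strong form $\H_k(\mathrm{pt};G)=0$ for all $k$, because every class on a point is detected by flat bundles and so has infinite K--area. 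What remains for the first assertion is to produce the connecting homomorphisms and to locate the failure of exactness.

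For the boundary map the plan is to take the restriction of the singular connecting homomorphism $\partial\colon H_k(M,A;G)\to H_{k-1}(A;G)$ and to check, from the definition of the relative K--area in Section~\ref{sec15}, that a class of finite relative K--area has a boundary of finite absolute K--area; naturality for maps of pairs is then inherited, since $\partial$ is the restriction of a natural transformation. With $\partial$ in hand, the families of maps $\H_k(A)\to\H_k(M)\to\H_k(M,A)\xrightarrow{\partial}\H_{k-1}(A)$ sit inside the exact singular sequence, so every composite vanishes and each sequence is a chain complex. Exactness, however, genuinely fails: a class in $\H_k(M)$ that dies in $H_k(M,A)$ comes from $H_k(A)$, but its preimage there may have infinite K--area and thus lie outside $\H_k(A)$. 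The hyperbolic homology sphere $M^3\to S^3$ discussed above already exhibits this, which is precisely why $\H$ is only a \emph{semi} homology theory.

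For the second assertion I would show that $\H$ recaptures all of singular homology after one stabilisation by the two--sphere, whose fundamental class has finite K--area. The decisive point is that crossing with $[S^2]$ lands in the finite--K--area subgroup: for every $\theta\in H_k(M;G)$ the class $\theta\times[S^2]\in H_{k+2}(M\times S^2;G)$ has finite K--area, so $-\times[S^2]$ defines a homomorphism $H_k(M;G)\to\H_{k+2}(M\times S^2;G)$. By the Künneth theorem this map is split injective, and a class $i_{0*}\alpha$ in the complementary summand has finite K--area if and only if $\alpha$ does, where $i_0\colon M\hookrightarrow M\times S^2$, $m\mapsto(m,\mathrm{pt})$; consequently $\H_{k+2}(M\times S^2;G)=i_{0*}\H_{k+2}(M;G)\oplus\big(H_k(M;G)\times[S^2]\big)$. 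This yields the natural identification $H_k(M;G)\cong\mathrm{coker}\big(i_{0*}\colon\H_{k+2}(M;G)\to\H_{k+2}(M\times S^2;G)\big)$, and the same recipe applied to $(M\times S^2,A\times S^2)$ recovers $H_*(M,A;G)$; thus $\H$ determines singular homology, and iterating the construction realises the announced stabilisation.

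The main obstacle I anticipate is the geometric heart of that last step, namely the finiteness of the K--area of $\theta\times[S^2]$ for arbitrary $\theta$. I expect to establish it by a slicing argument: any Hermitian bundle $\bundle{E}$ on $M\times S^2$ that detects $\theta\times[S^2]$ must, on a suitable slice $\{m\}\times S^2$, detect $[S^2]$, and since $[S^2]$ has finite K--area this forces a uniform lower bound $\|R^\bundle{E}\|\geq c>0$ coming from Chern--Weil theory on the two--sphere. Making the phrase ``detects on a slice'' precise and controlling it uniformly over the base, so that the estimate survives for the product class, is the delicate part; the rest reduces to bookkeeping about restricting an exact theory to a subfunctor. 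The remaining technical check, that the relative $\partial$ preserves finiteness of K--area, I would handle in the same spirit directly from the definition of the relative K--area.
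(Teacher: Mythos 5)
Your overall architecture coincides with the paper's (restrict singular homology to the finite K--area part, verify the axioms by restriction, and recover $H_*$ by stabilizing with $S^2$ via the K\"unneth formula --- your cokernel description is exactly the $\widehat{\H}^n$ construction of section \ref{versus}), but two of your steps fail as stated, and they are precisely the points where the paper has to work. First, the singular connecting homomorphism does \emph{not} restrict to $\partial_*:\H_k(M,A)\to\H_{k-1}(A)$, so the check you propose cannot succeed: for $(M,A)=(\overline{B^2},S^1)$ one has $\H_2(\overline{B^2},S^1)\cong \H_2(S^2)=\Z$, while $\partial_*:H_2(\overline{B^2},S^1)\to H_1(S^1)$ is an isomorphism and $\H_1(S^1)=\{0\}$, since $\H_1(N)=\mathrm{Tor}(H_1(N))$. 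This failure is exactly why the theorem reads ``up to exact homology sequences'': the paper either equips $\H$ with the \emph{trivial} connecting homomorphism, or passes to the subgroup $\He_k(M,A)$ of classes with $\partial_*\theta\in\H_{k-1}(A)$ and uses the singular $\partial_*$ there --- at the price that excision then fails for $\He$ --- and in either case the pair sequence is only a chain complex, with non--exactness witnessed by $(T^n,\,T^n\setminus B_\epsilon(x))$; your hyperbolic homology sphere shows $\H\neq H$ but is not a pair--sequence counterexample. For the reconstruction statement the boundary map is recovered not by restriction but through the stabilized transformation $\partial^n_*\theta=\partial_*\theta\times[S^n]$, which is well defined only because of lemma \ref{lem163}.

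Second, your proposed proof of that key lemma --- slicing plus Chern--Weil on $\{m\}\times S^2$ --- breaks down, because a bundle can detect $\theta\times[S^2]$ while being \emph{stably trivial on every slice}. Under $K(M\times S^2)\cong K(M)\otimes K(S^2)$, realize a class $a\otimes(H-1)$ with $\mathrm{rk}(a)=0$ and $\left<\mathrm{ch}(a),\theta\right>\neq 0$, where $H$ is the Hopf bundle: its Chern character is $\mathrm{ch}(a)\times\omega$, so it pairs nontrivially with $\theta\times[S^2]$, yet its restriction to each $\{m\}\times S^2$ vanishes in $\widetilde K(S^2)$, and no Chern--Weil bound exists on a trivial slice bundle. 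Even on nontrivial slices Chern--Weil only controls trace--type quantities, giving $\bigl|\int c_1\bigr|\leq \frac{\mathrm{rk}(\bundle{E})}{2\pi}\int|R^\bundle{E}|_{op}$, which degenerates as the rank grows --- and unbounded rank is the whole point of the K--area. The paper's lemma \ref{lem163} (following proposition \ref{prop_scalar}) closes this hole with the Atiyah--Singer index theorem for \emph{families}: $\left<\mathrm{ch}(\mathrm{ind}\,\dirac^+_{\bundle{E}}),\theta\right>=\left<\mathrm{ch}(\bundle{E}),\theta\times[S^2]\right>\neq 0$ forces a point $x$ where the twisted Dirac operator on the slice has nontrivial kernel, and the Lichnerowicz formula on the unit sphere then yields $\| R^{\bundle{E}_x}\| \geq \frac{1}{2}$ irrespective of the topology of the slice bundle. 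In the relative case there is a further wrinkle your recipe skips: the family is parameterized by $M/A$, which need not be a manifold, so the paper first stabilizes the bundle by a trivial summand to obtain a genuine family of smooth bundles over the compact Hausdorff space $M/A$ before the families index theorem applies. Once lemma \ref{lem163} is granted, your K\"unneth bookkeeping identifying $H_k(M,A)$ with $\H_{k+2}(M\times S^2,A\times S^2)/\jmath_*\H_{k+2}(M,A)$ does agree with the paper's natural isomorphism $\Psi^n_k$.
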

The second assertion in the theorem seems rather unexpected considering the fact $\H _*(T^n)=\{ 0\}$. The key ingredient for this observation is that  $\theta \times [S^2]$ has finite K--area for all $\theta \in H_k(M,A)$. Hence, we can recover $H_*(M,A)$ from $\H _*(M\times S^2,A\times S^2)$ and $\H _*(M,A)$ using the K\"unneth formula.% in singular homology.

\subsubsection*{Chapter 2: Curvature Gaps}
A \emph{curvature gap} is a nonnegative number which bounds the curvature in a certain norm from below. Of course, this number depends on the choice of the norm, but we will show that there are interesting choices related to question of topology and Yamabe invariants. Section \ref{curv_gap} treats the case of $L^\infty $--curvature gaps of Hermitian bundles in analogy to the curvature norm used for the definition of the K--area in chapter 1. In fact, let $g$ be a Riemannian metric on the compact manifold $M$ and $\bundle{E}\to M$ be a Hermitian bundle with Hermitian connection, then $\| R^\bundle{E}\| _{g}:=\max |R^\bundle{E}(v\wedge w)|_{op}$ determines a norm  where $| \, .\, |_{op}$ means the fibrewise operator norm on $\mathrm{End}(\bundle{E})$ and the maximum is taken over all orthonormal vectors $v,w\in TM$. Hence, we define the $L^\infty $--vector bundle curvature gap on a Riemannian manifold by $\cg (M_g):=\inf \| R^\bundle{E}\| _g$ where the infimum is taken over all Hermitian bundles $\bundle{E}$ with Hermitian connections such that $\bundle{E}$ is not stably rational trivial. Note that a flat bundle is always stably rational trivial, i.e.~the condition not stably rational trivial guarantees $\| R^\bundle{E}\| _{g}>0$. The main result in section \ref{curv_gap} is the following theorem:
\begin{thm*}
$\cg (M_g)>0$ respectively $\cg (M_g)=0$ depend only on the homotopy type of $M$ and $\cg (M_g)>0$ holds if and only if $\H _{2k}(M)=H_{2k}(M)$ for all $k>0$. 
\end{thm*}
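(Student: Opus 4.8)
The plan is to prove the quantitative equivalence ``$\cg(M_g)>0 \iff \H_{2k}(M)=H_{2k}(M)$ for all $k>0$'' and to obtain the homotopy invariance of the dichotomy $\cg(M_g)>0$ versus $\cg(M_g)=0$ as a formal consequence, since the right-hand side is manifestly a homotopy invariant by Chapter 1. Throughout I work rationally: a Hermitian bundle $\bundle{E}$ is not stably rational trivial precisely when its Chern character has a nonzero component $\mathrm{ch}_k(\bundle{E})\in H^{2k}(M;\Q)$ in some positive degree $k>0$, and since $H^{2k}(M;\Q)$ is dual to $H_{2k}(M;\Q)$, this is equivalent to the existence of a class $\theta\in H_{2k}(M;\Q)$ with $\langle \mathrm{ch}_k(\bundle{E}),\theta\rangle\neq 0$. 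As torsion classes pair trivially with the rational Chern character, the integral condition $\H_{2k}(M)=H_{2k}(M)$ is equivalent to its rational analogue, which is what I establish. The structural input I use is the definition of the K-area of a homology class from Chapter 1, which unwinds as follows: a rational class $\theta\in H_{2k}(M;\Q)$ has finite K-area with respect to $g$ if and only if there is a constant $c=c(\theta)>0$ such that every Hermitian bundle with connection $\bundle{E}$ satisfying $\langle \mathrm{ch}_k(\bundle{E}),\theta\rangle\neq 0$ obeys $\|R^\bundle{E}\|_g\geq c$.

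For ``$\cg(M_g)>0 \Rightarrow \H_{2k}=H_{2k}$'' I contrapose. If some $\theta\in H_{2k}(M;\Q)$ with $k>0$ has infinite K-area, then there is a sequence $\bundle{E}_i$ with $\langle \mathrm{ch}_k(\bundle{E}_i),\theta\rangle\neq 0$ and $\|R^{\bundle{E}_i}\|_g\to 0$. Each $\bundle{E}_i$ has nonzero Chern character in positive degree, hence is not stably rational trivial, so it is admissible in the infimum defining $\cg(M_g)$; therefore $\cg(M_g)\leq \inf_i\|R^{\bundle{E}_i}\|_g=0$.

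For the reverse implication, assume $\H_{2k}(M)=H_{2k}(M)$ for every $k>0$. Since $M$ is compact, only the finitely many degrees $2k$ with $0<2k\leq\dim M$ occur and each $H_{2k}(M;\Q)$ is finite dimensional; choose a finite rational basis $\{\theta_j\}$ across all these degrees. By hypothesis every $\theta_j$ has finite K-area, so the definition supplies $c_j=c(\theta_j)>0$, and I put $c:=\min_j c_j>0$. Now let $\bundle{E}$ be any bundle that is not stably rational trivial; then $\mathrm{ch}_k(\bundle{E})\neq 0$ in some positive degree, so $\langle \mathrm{ch}_k(\bundle{E}),\theta_j\rangle\neq 0$ for at least one $j$, whence $\|R^\bundle{E}\|_g\geq c_j\geq c$. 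Taking the infimum over all such $\bundle{E}$ yields $\cg(M_g)\geq c>0$. Equivalently, contraposing: if $\cg(M_g)=0$ then a sequence of non stably rational trivial bundles with $\|R^\bundle{E}\|_g\to 0$ must, by pigeonhole over the finite basis, detect a single $\theta_j$ of infinite K-area.

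The step I expect to be the main obstacle is the clean use of this dictionary with a \emph{uniform} constant. The infimum in $\cg(M_g)$ ranges over bundles detecting a priori unrelated classes in different degrees, so one must convert the separate per-class finiteness statements into one positive lower bound; this is exactly where compactness of $M$ (finiteness of the relevant even degrees and finite dimensionality of rational homology) is essential, together with the fact that ``not stably rational trivial'' is detected by the even rational Chern character and not by finer integral or odd-degree invariants. The genuinely delicate point inherited from the construction of $\H$ is that the constant $c(\theta)$ depends only on $\theta$ and $g$, \emph{uniformly over all admissible $\bundle{E}$ regardless of rank}; confirming this rank-independence is what makes the single global infimum $\cg(M_g)$ reducible to finitely many finiteness conditions.
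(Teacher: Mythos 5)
Your proof is correct and follows essentially the paper's own route: the paper packages the identical reduction as lemma \ref{lem_gap}, characterizing stably rational trivial bundles by the reduced even rational Chern character (equivalently, the classifying map on $\widetilde{H}_{2*}(M;\Q)$) and converting the global infimum into finitely many per-class finiteness conditions over a basis of $\widetilde{H}_{2*}(M;\Q)$ — where you take $c=\min _jc_j$ and pigeonhole via nondegeneracy of the pairing, the paper instead shows that $1/\cg (M_g)=\max _{\theta }\ke (M_g;\theta )$ is attained at a basis element using proposition \ref{proposition1} and the scaling invariance $\ke (M_g;a\theta )=\ke (M_g;\theta )$, after which homotopy invariance follows formally exactly as you argue. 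One small caveat: your ``dictionary'' (finite K--area of $\theta $ iff a uniform positive lower curvature bound over all bundles with $\left< \mathrm{ch}(\bundle{E}),\theta \right> \neq 0$) is not a definitional unwinding of the K--area but precisely the equivalence $\H _j(M;\Q )=\H _j(M;\mathrm{ch})$ of proposition \ref{proposition6}, a nontrivial result proved via Adams operations — it is available from Chapter 1 and only its easy half is needed in your reverse direction, but it should be cited rather than treated as immediate.
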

Hence, a finite fundamental group is sufficient for $\cg (M_g)>0$, but not necessary as the example $M=S^3\times S^1$ shows. In the remainder of chapter 2 we consider suitable $L^{n/2}$--norms of the curvature where $n$ is the dimension of the underlying manifold. The lower bounds for the corresponding curvature gaps are invariants of conformal classes and based on the concept of \emph{modified scalar curvature} introduced by Gursky and LeBrun in \cite{GurLe2} and generalized by Itoh in \cite{Itoh}. In fact, Itoh observed that the Yamabe problem for the modified scalar curvature $\mathrm{scal}^F:=\mathrm{scal}-F$ can be treated in the same way as the original Yamabe problem. In particular, if the corresponding modified Yamabe invariant satisfies $\yam ^F(M_{[g]}^n)<\yam (S^n)$, then there is a metric $h\in [g]$ with constant modified scalar curvature. Here, $F:\met (M)\to C^{0,\alpha }(M)$ satisfies $F(\rho \cdot g)=F(g)/\rho $ for all metrics $g$ and smooth functions $\rho :M\to (0,\infty )$. In section \ref{sec234} we introduce the \emph{conformal curvature gap}
\[
\ccg (M_{[g]}):=\left[ \inf _{\mathrm{ch}(\bundle{E})\neq \mathrm{rk}(\bundle{E})}\int |R^\bundle{E}|_{g,op}^{n/2}\cdot \mathrm{vol}_g\right] ^{2/n}
\] 
of the conformal manifold $(M^n,[g])$ where $|R^\bundle{E}|_{g,op}$ is a function on $M$ determined by a suitable operator norm on the fibers of $\Lambda ^2T^*M\otimes \mathrm{End}(\bundle{E})$, and the infimum is taken over all Hermitian bundles which are not stably rational trivial. In $2$--dimensions the conformal curvature gap does not depend on the conformal class, i.e.~it is a topological invariant. Using the result about modified scalar curvature shows $\ccg (S^2)=2\pi $ whereas K--area methods yield $\ccg (N)=0$ for all orientable closed surfaces $N$ with $\chi (N)\leq 0$. In $4$--dimension we obtain a topological invariant by taking the supremum over all conformal classes. In particular, $\ccg (M)=\sup _{[g]}\ccg (M_{[g]})$ turns out to be a finite value if $\dim M=4$. In dimension $n>4$ finiteness of $\ccg (M)$ is an open problem except in the case $\ccg (M_{[g]})=0$. In order to compute a few estimates for $\ccg (M)$ we introduce the \emph{curvature gap volume}
\[
\cgv (M)=\sup _{[g]}\left[ \inf _{\left< \mathrm{ch}(\bundle{E}),[M]\right> \neq 0}\int |R^\bundle{E}|_{g,op}^{n/2}\cdot \mathrm{vol}_g\right] ^{2/n}
\]
which is a finite topological invariant of closed $4$--manifolds. $\cgv (M^4)<\infty $ follows from the existence of self dual connections on certain $\mathrm{SU}(2)$--bundles proved by Taubes in \cite{Taub}. Moreover, $\cgv (M)$ is positive if $M^4$ is a spin manifold which admits a metric of positive scalar curvature, in fact $\yam (M)\leq 4\cgv (M)$ with equality for $S^4$, $S^3\times S^1$ and $T^4$ of course. Section \ref{sec244} is devoted to the computation of estimates respectively precise values for $\ccg (M)$ and $\cgv (M)$.

In the last section of chapter 2 we consider gap problems of the Weyl curvature. These gaps follow again by the method of modified scalar curvature. In $4$--dimension there are plenty of $L^2$--bounds for the Weyl curvature (for instance \cite{ABKS,Gur1,Itoh,LeB3}), but to our knowledge in dimension $n>4$ there are no precise results on this subject. In \cite{ABKS} Akutagawa et al.~prove that for each $\epsilon >0$ and $C>0$ there is a conformal class $[g]$ on $M$ with
\[
\int |W|_g^{n/2}\cdot \mathrm{vol}_g\geq C\qquad \text{and}\quad  \yam (M_{[g]}) \geq \yam (M)-\epsilon .
\]
We will show a lower bound of $\int |W|_g^{n/2}\mathrm{vol}_g$ in terms of $\yam (M_{[g]})$ if $M^n$ is even dimensional with Betti number $b_{n/2}> 0$. In fact, this yields certain estimates of the Weyl invariants introduced in \cite{ABKS}. In order to conclude optimal results we use a slightly different $L^{n/2}$--norm of the Weyl tensor (of course these are equivalent to the above norm and depend only on the conformal class). In fact with our choice, we obtain optimal estimates for locally symmetric Einstein spaces of nonnegative scalar curvature. The proof of the following theorem is based on a precise estimate of the curvature operator which appears in the Bochner--Weitenb\"ock formula on $m$--forms. 
\begin{thm*}
Suppose that $M^{2m}$ is a closed oriented manifold with Betti number $b_m>0$. Then for any conformal class $[g]$ on $M$
\[
\yam (M_{[g]})\leq 2m(2m-1)\left[ \int _M |\lambda _-(W)|^m\cdot \mathrm{vol}_g\right] ^{1/m}
\]
with equality if $(M,g)$ is a locally symmetric Einstein space with $\mathrm{scal}_g\geq 0$. Here, $\lambda _-(W)$ denotes the pointwise minimal eigenvalue of the Weyl curvature $W:\Lambda ^2TM\to \Lambda ^2TM$.
\end{thm*}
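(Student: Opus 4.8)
The plan is to run the Bochner technique in the middle degree inside a conformal class and to feed the outcome into Itoh's modified Yamabe problem. Because $b_m>0$, Hodge theory supplies for every metric $h\in[g]$ a nonzero harmonic $m$--form $\omega$. Writing the Bochner--Weitzenb\"ock decomposition of the Hodge Laplacian on $m$--forms as $\Delta=\nabla^*\nabla+q(R)$ and integrating $\langle\Delta\omega,\omega\rangle=0$ over $M$ gives
\[
\int_M\bigl(|\nabla\omega|^2+\langle q(R)\omega,\omega\rangle\bigr)\,\mathrm{vol}_h=0,
\]
so the zeroth order curvature term is nonpositive in $L^2$. I would set $F:=2m(2m-1)\,|\lambda_-(W)|$ and observe that, since the Weyl tensor as a $(0,4)$--tensor scales like the metric while the operator norm on $\Lambda^2$ scales inversely, one has $F(\rho\,g)=F(g)/\rho$; hence $F$ is admissible for the modified Yamabe problem, $\mathrm{scal}^F=\mathrm{scal}-F$, and $\int_M F^m\,\mathrm{vol}_g=(2m(2m-1))^m\int_M|\lambda_-(W)|^m\,\mathrm{vol}_g$ is a conformal invariant.

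The technical heart, and the step I expect to be the main obstacle, is the sharp pointwise lower bound
\[
\langle q(R)\omega,\omega\rangle\ \geq\ \frac{m}{2(2m-1)}\bigl(\mathrm{scal}_h-F\bigr)\,|\omega|^2 .
\]
To prove it I would insert into $q(R)$ the orthogonal decomposition of the curvature operator on $\Lambda^2$ into its scalar, trace--free Ricci and Weyl parts; since $q$ is linear in the curvature the three pieces can be treated separately. The scalar part reproduces the constant--curvature value $p(n-p)\,\mathrm{scal}/(n(n-1))=\frac{m}{2(2m-1)}\mathrm{scal}$ on $m$--forms (with $p=m$, $n=2m$), and the Weyl part is bounded below by $m^2\lambda_-(W)\,|\omega|^2=-\frac{m}{2(2m-1)}F\,|\omega|^2$. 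The delicate point is to show that on the \emph{middle} degree $\Lambda^m$ of a $2m$--manifold the trace--free Ricci contribution to the quadratic form $\omega\mapsto\langle\,\cdot\,\omega,\omega\rangle$ is nonnegative and vanishes in the Einstein case; for $m=2$ this is exactly the dimension four phenomenon used by Gursky and LeBrun, where the Ricci block of the curvature operator interchanges self-- and anti--self--dual forms and so is invisible to $\langle q(R)\omega,\omega\rangle$ on a harmonic (anti)self--dual $\omega$. Establishing this sign in all middle degrees, together with the optimal Weyl constant $m^2$, is the crux.

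Granting the estimate, the integral identity forces $\int_M\mathrm{scal}^F_h\,|\omega|^2\,\mathrm{vol}_h\leq0$ for the harmonic form of every $h\in[g]$. If $\yam^F(M_{[g]})$ were positive, the first eigenvalue of the modified conformal Laplacian $\tfrac{4(n-1)}{n-2}\Delta+\mathrm{scal}-F$ would be positive and its positive eigenfunction would provide a metric $h\in[g]$ with $\mathrm{scal}^F_h>0$ everywhere, contradicting this inequality; hence $\yam^F(M_{[g]})\leq0$. To conclude I would compare the invariants: writing $Y(h)$ for the normalized total scalar curvature and $Y_F(h)$ for its modified analogue, every $h\in[g]$ with $\mathrm{vol}_h=1$ satisfies $Y(h)=Y_F(h)+\int_M F\,\mathrm{vol}_h$, and H\"older's inequality together with the conformal invariance above gives $\int_M F\,\mathrm{vol}_h\leq(\int_M F^m\,\mathrm{vol}_g)^{1/m}$. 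Taking a minimizing sequence for $Y_F$ and using $\yam^F\leq0$ then yields
\[
\yam(M_{[g]})\leq\Bigl(\int_M F^m\,\mathrm{vol}_g\Bigr)^{1/m}=2m(2m-1)\Bigl(\int_M|\lambda_-(W)|^m\,\mathrm{vol}_g\Bigr)^{1/m}.
\]

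For the equality statement I would verify that a locally symmetric Einstein metric with $\mathrm{scal}_g\geq0$ saturates the whole chain. On a compact locally symmetric space harmonic forms are parallel, so Bochner holds with $\nabla\omega=0$; the Einstein condition makes the trace--free Ricci term vanish; parallelism of $W$ makes $|\lambda_-(W)|$ constant, giving equality in H\"older; and the parallel harmonic $m$--form lies in the minimal eigenspace of the Weyl action, making the Weyl estimate sharp. Finally $\mathrm{scal}_g\geq0$ guarantees, via Obata's theorem, that $g$ itself realizes $\yam(M_{[g]})$ and that $\yam^F=0$, so all the intermediate inequalities become equalities.
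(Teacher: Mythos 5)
Your surrounding architecture coincides with the paper's proof: the admissible conformal weight $F=2m(2m-1)|\lambda _-(W)|$, the observation that $\yam ^F(M_{[g]})>0$ would produce a metric with $\mathrm{scal}^F>0$ and hence annihilate all harmonic $m$--forms (contradicting $b_m>0$), the H\"older comparison $\yam (M_{[g]})\leq \yam ^F(M_{[g]})+\bigl[ \int F^m\, \mathrm{vol}_g\bigl] ^{1/m}$ from inequality (\ref{ineq12}), and Obata's theorem to make a locally symmetric Einstein $g$ realize $\yam (M_{[g]})$ in the equality case. You even record the correct scalar coefficient $\frac{m}{2(2m-1)}\mathrm{scal}=\frac{n}{4(n-1)}\mathrm{scal}$ on middle--degree forms, the very constant the paper notes was misstated by Lafontaine and Itoh. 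But your argument stops exactly at the theorem's actual mathematical content: the pointwise estimate you call ``the technical heart'' is precisely lemma \ref{lem_weyl}, and you assert both of its halves rather than prove them, so what you have is a correct reduction to the key lemma, not a proof.

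Concretely, two ideas are missing. First, the trace--free Ricci contribution to the Weitzenb\"ock curvature on $\Lambda ^mT^*M^{2m}$ does not merely ``have a sign'': it vanishes identically for every metric (Bourguignon's observation). Your plan to establish nonnegativity by analogy with the four--dimensional Ricci block interchanging $\Lambda ^+$ and $\Lambda ^-$ does not generalize, since in dimension four that mechanism only works after restricting to eigenspaces of the Hodge star, whereas here the statement is needed on all of $\Lambda ^m$. The paper proves it by writing $\Lambda ^*_\C T^*M$ locally as $\spinor M\otimes \spinor M$ and using that $L(\eta )=-\sum _i\gamma (e_i)\eta \gamma (e_i)$ acts as zero on $\Lambda ^m_\C T^*M$, which kills the operators $\sum _{i,j}(\lambda _i+\lambda _j)\gamma (e_i)\gamma (e_j)\otimes \gamma (e_i)\gamma (e_j)$ through which $\mathrm{Ric}$ enters, and simultaneously converts part of the Ricci trace into the corrected scalar coefficient. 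Second, the sharp bound $\frak{R}^W\geq m^2\lambda _-(W)$ is not a naive eigenvalue estimate: bounding $-\frac{1}{2}\sum _k\tau _k\, \gamma (\eta _k)\otimes \gamma (\eta _k)$ term by term yields a constant growing with $\dim \Lambda ^2$. The paper obtains $m^2$ by shifting to $\overline{W}=W+\alpha \cdot \mathrm{Id}\geq 0$, noting that $\frak{C}=\sum _k\tau _k\bigl( \gamma (\eta _k)\otimes \id +\id \otimes \gamma (\eta _k)\bigl) ^2\leq 0$, and evaluating $\frak{C}=-4\frak{R}^{\overline{W}}-n(n-1)\alpha $ via the first Bianchi identity $\sum _k\tau _k\, \eta _k\wedge \eta _k=0$ together with $\gamma (\eta )^2=-|\eta |^2+\gamma (\eta \wedge \eta )$ and $\mathrm{tr}(\overline{W})=\frac{n(n-1)}{2}\alpha $; without an argument of this kind the constant $2m(2m-1)$ in the theorem is unjustified. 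A smaller omission: in the equality case you also need $\lambda _-(W)\geq -\mathrm{scal}_g/(n(n-1))$, i.e.~nonnegativity of the curvature operator of the locally symmetric Einstein metric, in order to pinch the parallel harmonic $m$--form between the Weyl bound and the Bochner identity and conclude $\mathrm{scal}_g=-n(n-1)\lambda _-(W)$.
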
 

\subsubsection*{Chapter 3: Upper bounds of scalar curvature}
In the third chapter we consider the problem of ''maximal'' scalar curvature on a given manifold. If $M^n$ does not admit a metric of positive scalar curvature, the Yamabe invariant $\yam (M)$ is an upper bound of the scalar curvature for metrics of unit volume, i.e.~there is no metric $g$ with $\mathrm{scal}_g>\yam (M)\cdot \mathrm{Vol}(M,g)^{-n/2}$ and assuming ''$\geq $'' implies equality as well as $g$ is Einstein. This fails in general for manifolds with $\yam (M)>0$ which means that the scalar curvature of metrics with unit volume can be arbitrary large. Hence, in order to get an analogous statement for manifolds with $\yam (M)>0$, we consider Riemannian functionals $\mu :\met (M)\to \R $ with the properties
\begin{itemize}
\item $\mu (c\cdot g)=c^{-1}\mu (g)$ for all $c>0$ and $g\in \met (M)$.
\item $\mathrm{scal}_g\geq \mu (g)$ implies $\mathrm{scal}_g=\mu (g)$. 
\end{itemize}
These functionals are called \emph{upper bounds of the scalar curvature}. A simple example of an upper bound on spin manifolds is determined by Gromov's K--area. Other important examples are provided by \emph{area extremal metrics} which were introduced in \cite{Gr01,Llarull}. In section \ref{sec_functionals} we formalize these constructions and review some results about area extremal metrics proved by Llarull \cite{Llarull} and Goette, Semmelmann \cite{GoSe1,GoSe2}. The remainder of chapter 3 treats the question of \emph{conform area extremal metrics} which generalizes the concept of area extremality. In fact, a metric $g_0$ on $M$ is conform area extremal if $g\geq \rho \cdot g_0$ on $\Lambda ^2TM$ and $\mathrm{scal}_g\geq \frac{1}{\rho }\mathrm{scal}_{g_0}\geq 0$ imply $\mathrm{scal}_g=\frac{1}{\rho }\mathrm{scal}_{g_0} $ for all $g\in \met (M)$ and $\rho :M\to (0,\infty )$. The first examples were found in \cite{List9} where we proved the conform area extremality of locally symmetric spaces with positive Ricci curvature and nontrivial Euler characteristic. In section \ref{sec_conf_extremal} we show how conform area extremality can be described by modified Yamabe invariants and moreover, we give extensions of the main results in \cite{List9}. This is done using K--area methods as well as an extension of the usual Bochner argument to almost nonnegative curvature operators (cf.~proposition \ref{lem1342}). In section \ref{strict_conf} we obtain by analogous techniques the conform area extremality of space forms in odd dimensions. The difficulty in this case arises for manifolds with trivial Euler characteristic and trivial Kervaire semi--characteristic because the usual Dirac operator has trivial index in these situations. 
\begin{thm*}
Let $(M^n,g_0)$, $n\geq 3$, be a closed spin manifold of constant sectional curvature $K_{g_0}>0$, then $g_0$ is strict conform area extremal.
\end{thm*}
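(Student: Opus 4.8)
The plan is to run the twisted-Dirac comparison of Llarull and Goette--Semmelmann with the model geometry taken to be $(M,g_0)$ itself, adapted to the conformal hypotheses via the conformal covariance of the Dirac operator. After normalizing $K_{g_0}=1$, so that $\mathrm{scal}_{g_0}=n(n-1)$, I would fix a competitor $g$ with $g\geq\rho\,g_0$ on $\Lambda^2TM$ and $\mathrm{scal}_g\geq\tfrac1\rho\,\mathrm{scal}_{g_0}\geq0$, and form the Dirac operator $\dirac^g$ of $(M,g)$ twisted by the $g_0$--spinor bundle $\spinor_0:=\spinor^{g_0}M$ carrying its spinorial Levi--Civita connection. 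Because $g_0$ has constant sectional curvature $1$, the curvature of $\spinor_0$ is the explicit Clifford expression $R^{\spinor_0}(X,Y)=\tfrac12\,X\cdot_0Y$ on $g_0$--orthonormal pairs, which makes the zeroth order term of the Weitzenb\"ock formula completely computable.

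Next I would invoke the Lichnerowicz--Weitzenb\"ock identity $(\dirac^g_{\spinor_0})^2=\nabla^*\nabla+\tfrac14\mathrm{scal}_g+\mathcal{R}$, where $\mathcal{R}$ is built from $R^{\spinor_0}$ and Clifford multiplication by a $g$--orthonormal frame. The heart of the estimate is the pointwise bound $\langle\mathcal{R}\psi,\psi\rangle\geq-\tfrac{1}{4\rho}\,\mathrm{scal}_{g_0}\,|\psi|^2$ on a suitable subbundle, obtained by diagonalizing $g$ against $g_0$ and using that $g\geq\rho\,g_0$ on two--vectors controls the eigenvalues of $\mathcal{R}$ exactly as in Llarull's lemma. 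Granting a harmonic twisted spinor $\psi$, integrating the identity yields $0=\|\nabla\psi\|^2+\int(\tfrac14\mathrm{scal}_g+\langle\mathcal{R}\psi,\psi\rangle)\geq\|\nabla\psi\|^2$, forcing $\nabla\psi=0$ and equality throughout; in particular $\mathrm{scal}_g=\tfrac1\rho\mathrm{scal}_{g_0}$ everywhere, which is conform area extremality. For strictness I would feed the parallelism of $\psi$ (hence $\psi\neq0$ everywhere) into the equality case of the eigenvalue estimate: pointwise equality saturates the comparison $g\geq\rho g_0$ on all of $\Lambda^2TM$, and together with the scalar identity this pins down $g=\rho\,g_0$.

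The genuine obstacle is producing the harmonic twisted spinor $\psi$. In even dimensions the Atiyah--Singer index of $\dirac^g_{\spinor_0}$ equals the Euler number $\chi(M)$, which is nonzero for the even--dimensional space forms $S^n$ and $\proj{\R}^n$, so $\ker\neq0$ for free. But the statement includes odd $n$, where $\chi(M)=0$ and the integer index of the twisted operator vanishes. Here I would first exploit the real Kervaire semi--characteristic: when it is nonzero, Atiyah's mod--$2$/Clifford--linear index theorem forces an odd--dimensional kernel and hence a nonzero $\psi$. The remaining case, $\chi(M)=0$ together with trivial Kervaire semi--characteristic, is the crux; here the classical index carries no information, and I would instead pass to the universal cover $S^n$, where the twisted operator has an explicit kernel built from Killing spinors, and run the comparison $\Gamma$--equivariantly, descending a $\Gamma$--invariant parallel twisted spinor to $M=S^n/\Gamma$. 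Verifying that the relevant $\Gamma$--representation always contains an invariant vector, so that the equivariant/Clifford--linear index does not vanish even when both classical characteristics do, is the technical core of the section and the step I expect to demand the most care.
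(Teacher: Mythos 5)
Your even--dimensional case and the Kervaire semi--characteristic case are fine (and match theorem 3 of \cite{List9} respectively the mod--$2$ index argument the paper already uses elsewhere; note in passing that the only even--dimensional spin space form of positive curvature is $S^{2m}$, since $\R \mathrm{P}^{2m}$ is non--orientable). The genuine gap is precisely the case you identify as the crux, odd $n$ with $\chi(M_0)=0$ and vanishing Kervaire semi--characteristic (e.g.\ $S^{4k-1}$, $\R \mathrm{P}^{4k-1}$), and your proposed equivariant descent cannot be completed. The harmonic twisted spinor is needed for an \emph{arbitrary} competitor metric $g$, so its existence must be protected by a deformation--invariant quantity; Killing spinors give an explicit kernel only for the round model metric. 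The natural invariant, the $\Gamma$--invariant part of the equivariant index on the cover, is exactly the index of the descended operator on the odd--dimensional quotient $M$, and every graded elliptic operator on a closed odd--dimensional manifold has index zero; the equivariant mod--$2$ index vanishes by hypothesis in this case. Hence the dimension of the $\Gamma$--invariant kernel is not topologically controlled, and a $\Gamma$--invariant harmonic twisted spinor can simply fail to exist for general $g$ --- this is the very obstruction the paper flags (''the usual Dirac operator has trivial index in these situations''). A secondary gap: your strictness step ends at $g=\rho\, g_0$ with $\rho$ a function, whereas strict conform area extremality also demands $\rho =\mathrm{const}$; this needs the separate integration argument for $\alpha =\mathrm{dil}_2(f)$ given at the end of section \ref{sec_conf_extremal}.

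The paper's actual route avoids any odd--dimensional index by crossing with a circle. It first computes the exact K--area of odd positive space forms, $\ke (M_{g_0})=2/K$ (corollary \ref{cor345}): the lower bound comes from the relative K--area and the relative diffeomorphism $(S^{2n-1}_{g_0}\times [0,\pi r],S^{2n-1}\times \{ 0,\pi r\} )\to (S^{2n}_{g_0'},\{ p,-p\} )$ --- the ''$0$--dimensional surgery on $S^{2n}$'' --- combined with the lemma that K--area relative to finitely many points equals the absolute K--area; the upper bound is proposition \ref{prop_scalar}, and quotients are handled by the covering invariance of proposition \ref{proposition7}. This yields, for every $\epsilon >0$, a twist bundle $\bundle{E}\to M_0\times S^1$ with nonzero top Chern number and curvature $\leq K_0/2+\epsilon $ in the $M_0$--directions. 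Pulling back by $f\times \mathrm{id}$ to the even--dimensional $M\times S^1$, which has trivial $\widehat{A}$--genus and $\mathrm{ch}$ concentrated in degrees $0$ and $n+1$, gives a twisted Dirac operator of nonzero index, hence a harmonic spinor for each $\epsilon $. Since these bundles are only $\epsilon $--optimal rather than the parallel symmetric twist of the Llarull--Goette--Semmelmann scheme, the classical Bochner rigidity argument must be replaced by the quantitative one of proposition \ref{lem1342}: multiplying the Weitzenb\"ock identity by $|\phi |^2$ and using the Poincar\'e inequality gives $|\phi |^2=1+h$ with $\| h\| _{L^1}=O(\sqrt{\epsilon })$, and letting $\epsilon \to 0$ forces $\mathrm{scal}_g=n(n-1)\,\mathrm{dil}_2(f)\, K_0$ and $\mathrm{dil}_2(f)\cdot g=f^*g_0$, after which the conformal identity yields $\mathrm{dil}_2(f)=\mathrm{const}$. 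To salvage your framework you would have to replace the invariant--vector verification by this product trick or by a genuinely odd--dimensional index (spectral flow, $K^1$); the Killing--spinor descent alone cannot produce harmonic spinors for arbitrary $g$.
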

Here, we say that $g_0$ is \emph{strict conform area extremal} if $g\geq \rho \cdot g_0$ on $\Lambda ^2TM$ and $\mathrm{scal}_g\geq \frac{1}{\rho }\mathrm{scal}_{g_0}$ imply $\rho =\mathrm{const}$ and $g=\rho \cdot g_0$. In order to prove the theorem we need a precise value for the K--area of odd dimensional positive space forms which is obtained performing a $0$--dimensional surgery on $S^{2n}$. Since the twisting bundles are nonsymmetric we need again a generalized Bochner argument.

\subsubsection*{Pre--Published results}
The first chapter has been published in \cite{List10} with some minor modifications. For instance, we improved the estimate in proposition \ref{prop_scalar} which was necessary for the main result in section \ref{strict_conf}. Moreover, we added proposition \ref{prop162}, lemma \ref{lem163}, remark \ref{rem_covering}, remark \ref{rem151},  example \ref{exam136}, section \ref{versus} and a small discussion of the intersection product to the original work. The main difference of the first chapter to \cite{List10} is indeed the result that the K--area homology determines the singular homology after stabilizing with $S^2$ which follows from lemma \ref{lem163} and the arguments in section \ref{versus}. The results in the second chapter are new observations and extensions of previous works with the exception of section \ref{sec2} which collects and adapts known facts about modified scalar curvature to our situation. In the third chapter we summarize some known results about upper bounds for the scalar curvature and extend previous results. For instance, proposition \ref{main_prop} and its proof can be found in \cite{List9}, but theorem \ref{area_thm} is an extension of the main result in \cite{List9}. Moreover, theorem \ref{thm343} is a new observation and is proved using K--area methods as well as a generalized Bochner technique provided by proposition \ref{lem1342}.

\subsubsection*{Acknowledgements}
I am very grateful to my advisor Sebastian Goette for his support in any kind of questions and problems. Moreover, I would like to thank Jan Schl\"uter for conversations in matters of topology. Last but not least I am grateful to the DFG--Schwerpunktprogramm ''Globale Differentialgeometrie'' and the SFB Tr 71 for financial and travel support.

\chapter{Homology of finite K--area}
\label{chp2}
%\section{Introduction to Gromov's K--area}

\section{The K--area of a homology class}
\label{sec21}
Let $M_g:=(M,g)$ be a compact Riemannian manifold possibly disconnected and with nonempty boundary. In order to obtain the additivity axiom  for the K--area homology and in view of the topological K--theory the fiber dimension of a vector bundle on $M$ is not assumed to be a global constant, it is only constant on connected components. Suppose that $\mathrm{BU}_n$ is the classifying space of $\mathrm{U}(n)$, then $n$--dimensional Hermitian vector bundles on $M$ are classified up to isomorphism by $[M,\mathrm{BU}_n]$.  Hence, $[M,\BU ] $ classifies finite dimensional Hermitian vector bundles on $M$ up to isomorphism where $\BU =\coprod _n\mathrm{BU}_n$  is the disjoint (topological) sum. Note that we do not consider the stabilized picture of vector bundles which means that vector bundles have a definite rank and our $\BU $ differs from the usual space in the literature. If $\bundle{E}\to M$ is a (smooth) Hermitian vector bundle with Hermitian connection, we denote by $\rho ^\bundle{E}:M\to \BU $ the classifying map and define
\[
\| R^\bundle{E}\| _g:=\max _{x\in M}\max _{v, w\in T_xM}\frac{|R^\bundle{E}(v\wedge w)|_{op}}{|v\wedge w |_g}
\] 
where $R^\bundle{E}:\Lambda ^2TM\to \mathrm{End}(\bundle{E})$ means the curvature of $\bundle{E}$ and $|\, .\, | _{op}$ is the pointwise operator norm on $\mathrm{End}(\bundle{E})$. It is quite essential for the theory to use the operator norm on $\mathrm{End}(\bundle{E})$ because the equivalence of norms on finite dimensional vector spaces includes a constant usually depending on the dimension and in case of infinite K--area, the rank of the interesting bundles tends to infinity. Suppose $\theta \in H_{2*}(M;\group )$ for a coefficient group $\group $ where omitting the coefficient group means as usual $G=\Z $. Then $\mathscr{V}(M;\theta )$ denotes the set of all Hermitian bundles $\bundle{E}\to M$ endowed with a Hermitian connection such that the image of $\theta $ under the induced homomorphism $\rho ^\bundle{E}_*:H_*(M;\group )\to H_*(\BU ;\group )$ is nontrivial: $\rho ^\bundle{E}_*(\theta )\neq 0$. Since the classifying map is uniquely determined up to homotopy, $\mathscr{V}(M;\theta )$ does not depend on the choice of $\rho ^\bundle{E}$. The $\Z $--cohomology ring of $\mathrm{BU}_n$ is a $\Z $--polynomial ring  generated by the Chern classes which supplies the following alternative definition of $\mathscr{V}(M;\theta )$. Suppose that $M$ is connected, $2n\geq \dim M$ and $\theta \in H_{2*}(M)$, then $\mathscr{V} (M;\theta )$ is the set of Hermitian bundles $\bundle{E}\to M$ endowed with a Hermitian connection such that there is a polynomial $p\in \Z [c_1,\ldots ,c_n] $ with $\left< p(c(\bundle{E})),\theta \right> \neq 0$ where $c(\bundle{E})$ is the total Chern class of $\bundle{E}$ and
\[
p(c(\bundle{E})):=p(c_1(\bundle{E}),\ldots ,c_n(\bundle{E}))\in H^{2*}(M;\Z  ).
\]
Of course, if $\bundle{E}\in \mathscr{V} (M;\theta )$, then $\left< p(c(\bundle{E})),\theta \right> \neq 0$ for a monomial $p$, i.e.~there is a nonvanishing $\theta $--Chern number:
\[
\left< c_{1}(\bundle{E})^{j_1}\cdots c_{n}(\bundle{E})^{j_n},\theta \right> \neq 0
\]
for certain nonnegative integers $j_1\ldots ,j_n$. The equivalence of the two descriptions is a simple exercise in algebraic topology because $H^*(\mathrm{BU}_m;\Z )=\Z [c_1,\ldots ,c_m]$, $m=\mathrm{rk}_\C (\bundle{E})$, yields the nondegeneracy of the pairing $\left< .,.\right> $ and $\rho ^\bundle{E}_*(\theta )\neq 0$ implies the existence of a characteristic class $u\in H^*(\mathrm{BU}_m;\Z )$ with $\left< u,\rho ^\bundle{E}_*(\theta )\right> \neq 0$, i.e.~we choose $p(c(\bundle{E}))=(\rho ^\bundle{E})^*u\in H^{2*}(M;\Z )$.  

If $\mathscr{V} (M;\theta )\neq \emptyset $, the \emph{K--area of a compact Riemannian manifold $M_g=(M,g)$ w.r.t.~the homology class} $\theta \in H_{2*}(M;\group )$ is defined by
\begin{equation}
\label{defn_k}
\ke (M_g;\theta ):=\left( \inf _{\bundle{E}\in \mathscr{V}(M;\theta )}\| R^\bundle{E}\| _g \right) ^{-1}\in (0,\infty ].
\end{equation}
Moreover, we define for monotonicity reasons $\ke (M_g;\theta )=0$ in case $\mathscr{V} (M;\theta )=\emptyset $ (for instance $\theta =0$). We will frequently use this definition to introduce various K--areas by taking the infimum over different sets of vector bundles. 
\begin{rem}
Because $M$ is compact, any element of the K--group $K(M)$ can be represented by $[\bundle{E}]-[\C ^N]$ where $\bundle{E}$ is a complex vector bundle and $\C ^N$ is a flat bundle on $M$ and moreover, the Chern character map $\mathrm{ch}:K(M)\otimes \Q \to H^{2*}(M;\Q )$ is an isomorphism (cf.~\cite{AH}). Thus,  $\mathscr{V}(M;\theta )$ is nonempty if $\theta $ is a nontrivial element in $H_{2*}(M;\Q )$. 
\end{rem}
Since $\mathscr{V}(M;a\cdot \theta )= \mathscr{V}(M;\theta )$ is independent on the choice of the metric, we conclude the following scaling invariance of the K--area:
\[
\ke (M_{\mu \cdot g};a\cdot \theta )= \mu \cdot \ke (M_g;\theta )
\]
where $\mu $ is a positive constant and $a\in \Z \setminus \{ 0\} $. In order to extend the above definition to odd homology classes, we add large circles. In fact, suppose $ \theta \in H_{2*+1}(M;\group )$ then $\theta \times [S^1 ] \in H_{2*}(M\times S^1;\group )$ for a fundamental class of $S^1$. Thus, we define
\[
\ke (M_g;\theta ):=\sup _{\mathrm{d}t^2}\ke (M_g\times S^1_{\mathrm{d}t^2};\theta \times [ S^1] )
\]
where the supremum runs over all line elements $\mathrm{d}t^2$ of $S^1$ and $M_g\times S^1_{\mathrm{d}t^2}$ is endowed with the product metric. Note that the K--area on the right hand side does not depend on the choice of the fundamental class $[S^1]$. Replacing $\mathrm{d}t^2$ by $\mu \cdot \mathrm{d}t^2=\mathrm{d}\tilde t^2$ we obtain the above scaling invariance for odd homology classes. The K--area of a general class $\theta \in H_*(M;\group )$ is given by
\[
\ke (M_g;\theta ):=\max \{ \ke (M_g;\theta _\mathrm{even}),\ke (M_g;\theta _\mathrm{odd})\} .
\]
In case $\theta =[ M] $ we omit $\theta $ and simply write $\ke (M_g)$ which is the total K--area of $M$ introduced by Gromov in \cite{Gr01} and considered in \cite{Dav,pre_Lima,Entov,Polt,pre_Saval}. 
\begin{prop}
\label{proposition1}
Suppose $\theta =\sum _i\theta _i$ with $\theta _i\in H_{i}(M ^n;\group )$, then
\[
\ke (M_g;\theta )=\max \{ \ke (M_g;\theta _i)\ |\ i=0\ldots n\} .
\] 
Moreover, we obtain for $\theta ,\eta \in H_k(M ;\group )$
\[
\ke (M_g;\theta +\eta )\leq \max \{ \ke (M_g;\theta ),\ke (M_g;\eta )\} .
\]
\end{prop}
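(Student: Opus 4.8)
The plan is to prove both assertions of Proposition~\ref{proposition1} by reducing everything to the defining formula \eqref{defn_k} together with the elementary observation that $\ke$ is determined by the sets $\mathscr{V}(M;\cdot )$ and that the infimum of $\| R^\bundle{E}\| _g$ is monotone with respect to inclusions of these sets. The guiding principle throughout is the \emph{monotonicity} convention already fixed in the excerpt: a larger family of admissible bundles produces a smaller infimum of curvature norms and hence a larger (or equal) K--area, and the empty family is assigned $\ke =0$. So the whole proof becomes a matter of comparing which bundles ''see'' which homology classes via the pairing $\rho ^\bundle{E}_*$.

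For the first assertion, I would first observe that by the definition $\ke (M_g;\theta )=\max \{ \ke (M_g;\theta _\even ),\ke (M_g;\theta _\odd )\} $ it suffices to treat the even and odd parts separately and then take the maximum; so I may assume $\theta $ is purely even, $\theta =\sum _i\theta _{2i}$, and handle the odd case afterward by the circle--stabilization trick. For the even case the key point is that a bundle $\bundle{E}$ lies in $\mathscr{V}(M;\theta )$ precisely when $\rho ^\bundle{E}_*(\theta )\neq 0$ in $H_*(\BU ;\group )$, and since $\BU =\coprod _n \mathrm{BU}_n$ is a disjoint sum whose homology splits as a direct sum over degrees, $\rho ^\bundle{E}_*(\theta )=\sum _i \rho ^\bundle{E}_*(\theta _{2i})$ is nonzero if and only if at least one $\rho ^\bundle{E}_*(\theta _{2i})$ is nonzero. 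This yields the set identity $\mathscr{V}(M;\theta )=\bigcup _i \mathscr{V}(M;\theta _{2i})$. Taking the infimum of $\| R^\bundle{E}\| _g$ over a union of sets gives the minimum of the separate infima, and inverting (with the convention for the empty set) converts this into the maximum of the individual K--areas, which is exactly the claimed formula restricted to even $i$. The odd part is then pushed through by applying the same union argument after crossing with $S^1$ and taking the supremum over line elements.

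For the second assertion, with $\theta ,\eta \in H_k(M;\group )$ in a single degree, the mechanism is again a set inclusion followed by monotonicity. The claim $\ke (M_g;\theta +\eta )\leq \max \{ \ke (M_g;\theta ),\ke (M_g;\eta )\} $ should follow once I establish the inclusion $\mathscr{V}(M;\theta +\eta )\subseteq \mathscr{V}(M;\theta )\cup \mathscr{V}(M;\eta )$: if a bundle $\bundle{E}$ satisfies $\rho ^\bundle{E}_*(\theta +\eta )\neq 0$ then by linearity of $\rho ^\bundle{E}_*$ we cannot have both $\rho ^\bundle{E}_*(\theta )=0$ and $\rho ^\bundle{E}_*(\eta )=0$, so $\bundle{E}$ detects at least one of $\theta ,\eta $. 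Since a larger admissible family only lowers the infimum of curvature norms, $\inf _{\mathscr{V}(M;\theta +\eta )}\| R^\bundle{E}\| _g \geq \inf _{\mathscr{V}(M;\theta )\cup \mathscr{V}(M;\eta )}\| R^\bundle{E}\| _g=\min \{ \inf _{\mathscr{V}(M;\theta )},\inf _{\mathscr{V}(M;\eta )}\} $, and inverting turns $\geq$ and $\min$ into $\leq$ and $\max$, giving the subadditivity bound.

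The only genuinely delicate point I anticipate is bookkeeping at the boundary of the monotonicity convention, namely carefully handling the degenerate cases where one or more of the relevant $\mathscr{V}$ sets is empty (so the corresponding K--area is declared to be $0$), making sure that the identities $\inf _\emptyset (\cdots )=+\infty \leftrightarrow \ke =0$ and the passage between union/intersection of index sets and $\max /\min$ of K--areas stay consistent. A secondary technical wrinkle is the odd-degree part: I must verify that crossing with $S^1$ is compatible with the splitting $\theta =\sum \theta _i$, i.e.~that $(\sum _i\theta _i)\times [S^1]=\sum _i (\theta _i\times [S^1])$ respects degrees under $\rho ^\bundle{E}_*$ on $M\times S^1$, and that taking $\sup _{\mathrm{d}t^2}$ commutes with the finite maximum over $i$ appearing in the statement. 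Neither of these is conceptually hard, but they are exactly the places where an otherwise routine argument could slip, so I would write those comparisons out explicitly rather than leaving them to the reader.
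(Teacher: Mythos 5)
Your proposal is correct and follows essentially the same route as the paper's proof: the even case via the set identity $\mathscr{V}(M;\theta )=\bigcup _i\mathscr{V}(M;\theta _i)$ (which, as in your argument, rests on the fact that $\rho ^\bundle{E}_*$ preserves the homological grading, so a sum of components in distinct degrees vanishes iff each summand does), the inclusion $\mathscr{V}(M;\theta +\eta )\subseteq \mathscr{V}(M;\theta )\cup \mathscr{V}(M;\eta )$ for the second inequality, and the interchange of $\sup _{\mathrm{d}t^2}$ with the finite maximum over $i$ for odd classes. The only cosmetic point is that your appeal to $\BU =\coprod _n\mathrm{BU}_n$ is unnecessary --- the grading of $H_*(\BU ;\group )$ alone suffices --- and otherwise your handling of the empty--set convention matches the paper's monotonicity convention exactly.
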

\begin{proof}
We start with the case $\theta \in H_{2*}(M;\group )$. Observe that $\mathscr{V}(M;\theta _i)\subseteq \mathscr{V}(M;\theta )$ because $0\neq \rho ^\bundle{E}_*(\theta _i) \in H_i(\BU ;\group )$ implies $\rho _*^\bundle{E}(\theta )\neq 0$. Conversely, if $\rho _*^\bundle{E}(\theta )\neq 0$, then $\rho ^\bundle{E}_*(\theta _i)\neq 0$ for at least one $\theta _i$ which completes the proof for even homology classes: $\mathscr{V}(M;\theta )=\bigcup _i\mathscr{V}(M;\theta _i)$. For the second statement, we obtain by the same argument $\mathscr{V}(M;\theta +\eta )\subseteq \mathscr{V}(M;\theta )\cup \mathscr{V}(M;\eta )$ which shows the inequality if $k$ is even. Now suppose that $\theta \in H_{2*+1}(M;\group )$, then:
\[
\begin{split}
\ke (M_g;\theta )&=\sup _{\mathrm{d}t^2}\max \{ \ke (M_g\times S^1_{\mathrm{d}t^2};\theta _i\times [S^1])\ |\ i=1\ldots n\} \\
&=\max \Bigl\{ \sup _{\mathrm{d}t^2}\ke (M_g\times S^1_{\mathrm{d}t^2};\theta _i\times [S^1])\ |\ i=1\ldots n \Bigl\} .
\end{split}
\] 
The general case is an easy consequence. 
\end{proof}
Since $M$ is compact, for any two Riemannian metrics $g$ and $h$ on $M$ there is a constant $0<C=C(g,h)<+\infty $ such that $C^{-1}\cdot \| R^\bundle{E}\| _g\leq \| R^\bundle{E}\| _h\leq C\cdot \| R^\bundle{E}\| _g$ for all bundles $\bundle{E}$. Hence, the condition $\ke (M_g;\theta )<\infty $ does not depend on the choice of the Riemannian metric on $M$ which yields the following: For each $j$, the set
\[
\H _j(M;\group ):=\{ \theta \in H_j(M;\group )\ | \ \ke (M_g;\theta )<\infty \} \subseteq H_j(M;\group )
\]
is  a subgroup independent on the choice of the metric $g$ and satisfies
\[
\H _*(M;\group )=\{ \theta \in H_*(M ;\group )\ | \ \ke (M_g;\theta )<\infty \} =\bigoplus \H _j(M;\group ).
\]
If $\theta \in H_0(M;\group )$ does not vanish, there are trivial bundles $\bundle{E}$ with $\rho ^\bundle{E}_*(\theta )\neq 0$. In this case we use that the fiber dimension of vector bundles is not assumed to be globally constant. Because trivial bundles admit flat connections, the K--area of $0\neq \theta \in H_0(M;\group )$ is infinite which implies $\H _0(M;\group )=\{ 0\} $. Moreover, if $\group $ is a ring, $\mathscr{V}(M;a\cdot \theta )\subseteq \mathscr{V}(M;\theta )$ for all $\theta \in H_{2*}(M;G)$ yields
\[
\ke (M_g;a\cdot \eta )\leq \ke (M_g;\eta )
\]
for all $\eta \in H_*(M;\group )$ and $a\in \group $. Since $H_*(\mathrm{BU}_n;\group )$ is a free $\group $--module, this is an equality if $a\neq 0$ and $\group $ has no zero divisors. Hence, for any coefficient ring $\group $, $\H _j(M;\group )$ is a $\group $--submodule of $H _j(M;\group )$. 

Let $M\coprod N$ be the disjoint sum of $M$ and $N$, then for any $\theta \in H_{2*}(M\coprod N;G)$ the interesting bundles in $\mathscr{V}(M\coprod N;\theta )$ are determined by $\mathscr{V}(M;\theta _{|M})\cup \mathscr{V}(N;\theta _{|N})$  where $\theta _M$ and $\theta _N$ mean the restriction of $\theta $ to $M$ and $N$. Thus, the K--area of $\theta \in H_k(M\coprod  N;G)$ equals the maximum of the K--area of $\theta _M$ and the K--area of $\theta _N$ which proves the additivity axiom
\[
\H _k\left( M\coprod N;G\right) \cong \H _k(M;G)\oplus \H _k(N;G).
\]
\begin{prop}
\label{proposition2}
Let $f:(M,g)\to (N,h)$ be a smooth map with $g\geq f^*h$ on $\Lambda ^2TM$, then
\[
\ke (M_g;\theta )\geq \ke (N_h;f_*\theta ).
\]
In fact, for each continuous map $f:M\to N$, the induced homomorphism on singular homology yields homomorphisms $f_*:\H _j(M;\group )\to \H _j(N;\group )$.

Moreover, if $M\stackrel{i}{\hookrightarrow} N$ is a compact submanifold and a retract, then for suitable metrics $h$ on $N$
\[
\ke (M_h;\theta )=\ke (N_h;i_*\theta ),
\]
here suitable means that the smooth retraction map $r:N\to M$ is $1$--Lipschitz, i.e.~$r^*h_{|M}\leq h$ on $\Lambda ^2TN$. 
\end{prop}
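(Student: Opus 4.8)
The plan is to reduce all three assertions to a single curvature estimate for pulled-back bundles. First I would treat an even class $\theta\in H_{2*}(M;\group)$ and a bundle $\bundle{E}\in\mathscr{V}(N;f_*\theta)$. Pulling $\bundle{E}$ back along $f$ together with its Hermitian metric and connection produces $f^*\bundle{E}$ whose classifying map is $\rho^{\bundle{E}}\circ f$; hence $(\rho^{f^*\bundle{E}})_*\theta=\rho^{\bundle{E}}_*(f_*\theta)\neq0$ and $f^*\bundle{E}\in\mathscr{V}(M;\theta)$. Since the pulled-back connection has curvature $R^{f^*\bundle{E}}(v\wedge w)=R^{\bundle{E}}(df\,v\wedge df\,w)$ acting on the identified fibre $(f^*\bundle{E})_x\cong\bundle{E}_{f(x)}$, the fibrewise operator norm is unchanged, and the hypothesis $g\geq f^*h$ on $\Lambda^2TM$ --- i.e.\ $|df\,v\wedge df\,w|_h\leq|v\wedge w|_g$ --- yields $\|R^{f^*\bundle{E}}\|_g\leq\|R^{\bundle{E}}\|_h$. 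Taking the infimum over $\mathscr{V}(N;f_*\theta)$ (the case $\mathscr{V}(N;f_*\theta)=\emptyset$ being trivial, since then $\ke(N_h;f_*\theta)=0$) and inverting gives $\ke(M_g;\theta)\geq\ke(N_h;f_*\theta)$ for even $\theta$.

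The main obstacle is the odd case, where one must pass through the $S^1$--stabilisation and confront the fact that the hypothesis controls areas but not lengths. For $\theta\in H_{2*+1}(M;\group)$ I would apply the even-degree argument to $F=f\times\mathrm{id}:M\times S^1\to N\times S^1$ and the class $\theta\times[S^1]$, noting $F_*(\theta\times[S^1])=f_*\theta\times[S^1]$. On $2$-planes lying in $TM$ the area hypothesis suffices exactly as above, while on the complementary block $TM\otimes TS^1$ a mixed plane $v\wedge\partial_t$ picks up the length distortion $|df\,v|_h/|v|_g$, which the area bound does not govern; since the product splitting $\Lambda^2T(M\times S^1)=\Lambda^2TM\oplus(TM\otimes TS^1)$ is orthogonal for both metrics, it is enough to contract $\Lambda^2dF$ on each block separately. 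The point is that $\ke$ for odd classes is a supremum over line elements of the source circle, so I am free to enlarge it: writing the target element as $ds^2$ and the source element as $dt^2$, the mixed block carries the factor $|\partial_s|_{ds^2}/|\partial_t|_{dt^2}$, and choosing the source circle long enough that this factor is at most $1/C$, with $C:=\max_{x,\,v\neq0}|df\,v|_h/|v|_g<\infty$, forces $\Lambda^2 dF$ to be a contraction. Hence $\|R^{F^*\bundle{E}}\|_{g\oplus dt^2}\leq\|R^{\bundle{E}}\|_{h\oplus ds^2}$ for a suitable source element, and passing to the suprema over both line elements yields the inequality for odd $\theta$; the general case then follows from Proposition \ref{proposition1}.

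For the functoriality I would first replace a given continuous $f$ by a homotopic smooth map, which leaves $f_*$ on singular homology unchanged. Fixing metrics $g$ on $M$ and $h$ on $N$, compactness of $M$ furnishes $\lambda>0$ with $\lambda g\geq f^*h$ on $\Lambda^2TM$; the inequality just proved together with the scaling invariance $\ke(M_{\lambda g};\theta)=\lambda\,\ke(M_g;\theta)$ gives $\lambda\,\ke(M_g;\theta)\geq\ke(N_h;f_*\theta)$. As finiteness of the K--area is independent of the chosen metric, $\theta\in\H_j(M;\group)$ forces $f_*\theta\in\H_j(N;\group)$, so $f_*$ restricts to the asserted homomorphism.

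Finally, the retract equality splits into two applications of the inequality. For the inclusion $i$ one has $i^*h=h_{|M}$, so the hypothesis holds with equality and $\ke(M_{h_{|M}};\theta)\geq\ke(N_h;i_*\theta)$. For the retraction $r$ the word \emph{suitable} is precisely the hypothesis $r^*h_{|M}\leq h$ on $\Lambda^2TN$, giving $\ke(N_h;i_*\theta)\geq\ke(M_{h_{|M}};r_*i_*\theta)$; since $r\circ i=\mathrm{id}_M$ and $f_*$ is functorial, $r_*i_*\theta=\theta$, and the two inequalities combine to the claimed equality.
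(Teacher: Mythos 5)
Your proposal is correct and takes essentially the same route as the paper's proof: pulling back bundles with the $\Lambda ^2$--contraction estimate for even classes, passing to $f\times \mathrm{id}:M\times S^1\to N\times S^1$ and enlarging the source circle so that $g\oplus \mathrm{d}\tilde t^2\geq (f\times \mathrm{id})^*(h\oplus \mathrm{d}t^2)$ on $\Lambda ^2T(M\times S^1)$ for odd classes, invoking smooth approximation together with compactness and scaling invariance for continuous maps, and combining the inequality for the inclusion $i$ with the $1$--Lipschitz retraction $r$ (using $r\circ i=\mathrm{id}_M$) for the final equality. Your explicit orthogonal block decomposition of $\Lambda ^2T(M\times S^1)$ and the constant $C=\max |\mathrm{d}f(v)|_h/|v|_g$ merely spell out the detail the paper compresses into one sentence about compactness of $M$.
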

\begin{proof}
We start with the case $\theta \in H_{2*}(M;\group )$. Since
\[
\bigl( \rho ^{f^*\bundle{E}}\bigl) _*(\theta )=\left( \rho ^\bundle{E}\circ f\right) _*(\theta )=\rho ^\bundle{E}_*(f_*\theta ),
\]
the pull back of vector bundles yields a map $f^*:\mathscr{V}(N;f_*\theta )\to \mathscr{V}(M;\theta )$. Moreover, $g\geq f^*h$ on $\Lambda ^2TM$ supplies
\[
\| R^{f^*\bundle{E}}\| _g\leq \| R^\bundle{E}\| _h
\]
which proves the inequality. If $\theta \in H_{2*+1}(M;\group )$ we consider the map $f\times \mathrm{id}:M\times S^1\to N\times S^1$ and apply the case for even homology classes. The necessary inequality on $\Lambda ^2T(M\times S^1)$ follows from the compactness of $M$ because for any line element $\mathrm{d}t^2$, there is some $\mathrm{d}\tilde t^2$ such that $g\oplus \mathrm{d}\tilde t^2\geq  (f\times \mathrm{id})^*(h\oplus \mathrm{d}t^2)$ on $\Lambda ^2T(M\times S^1)$. The second observation is proved for smooth $f:M\to N$ by the inequality and in case of continuous $f$ we use the smooth approximation theorem and the homotopy invariance of the induced homomorphism. For the last statement it remains to show ''$\leq $'' but this follows by considering a smooth retraction map $r:N\to M$ (using the smooth approximation theorem: every retract $M\subseteq N$ for compact manifolds $M$ and $N$ is also a smooth retract). 
\end{proof}

\begin{thm}
$\H _*(\, .\, ;G)$ is a functor on the category of compact smooth manifolds and continuous maps into the category of graded abelian groups which satisfies the homotopy, dimension and additivity axiom. In fact, $\H _*(M;G)$ depends only on the homotopy type of $M$. 
\end{thm}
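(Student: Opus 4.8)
The plan is to observe that every ingredient has essentially already been assembled in Propositions \ref{proposition1} and \ref{proposition2} together with the preceding discussion of $\H _0$ and of disjoint unions, so that the proof amounts to reading off functoriality and the three axioms from those statements. The only genuinely new point to keep track of is that the morphisms are \emph{continuous} maps between compact smooth manifolds, whereas the curvature estimates are phrased for smooth maps; this is precisely the gap that Proposition \ref{proposition2} bridges via smooth approximation and homotopy invariance of the induced homomorphism.

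First I would verify functoriality. For continuous $f\colon M\to N$, Proposition \ref{proposition2} already produces a homomorphism $f_*\colon \H _j(M;G)\to \H _j(N;G)$ which is nothing but the restriction to the subgroup $\H _j\subseteq H_j$ of the singular homology map $f_*$. Because these induced maps are restrictions of a genuine functor (singular homology), the identities $(\mathrm{id}_M)_*=\mathrm{id}$ and $(g\circ f)_*=g_*\circ f_*$, which hold on $H_*$, persist after restricting to the subgroups $\H _*$; one only needs that each side actually lands in $\H _*$, which is exactly what Proposition \ref{proposition2} guarantees. This yields the functor $\H _*(\,.\,;G)$ into graded abelian groups, refining to graded $G$--modules when $G$ is a ring by the module structure noted above.

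Next I would read off the axioms. The homotopy axiom is inherited directly: if $f_0\simeq f_1$ then $(f_0)_*=(f_1)_*$ on singular homology, and restricting to $\H _*(M;G)$ gives equality of the induced maps on $\H _*$. The additivity axiom is literally the displayed isomorphism $\H _k(M\coprod N;G)\cong \H _k(M;G)\oplus \H _k(N;G)$ established just before the theorem. For the dimension axiom I would use that $\H _0(M;G)=\{0\}$ for every compact manifold; applied to a point, whose singular homology is concentrated in degree $0$, this forces $\H _*(\mathrm{pt};G)=0$ in all degrees, which is the (reduced) form of the dimension axiom appropriate to this semi homology theory.

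Finally, homotopy invariance of $\H _*(M;G)$ follows formally from functoriality together with the homotopy axiom: if $f\colon M\to N$ is a homotopy equivalence with homotopy inverse $h$, then $h_*\circ f_*=(\mathrm{id}_M)_*=\mathrm{id}$ and $f_*\circ h_*=\mathrm{id}$ on $\H _*$, so $f_*$ is an isomorphism. I do not expect a serious obstacle here: since $\H _*$ is carved out of singular homology and the nontrivial analytic content (monotonicity of the curvature norm under $1$--Lipschitz maps and its metric--independence) is already packaged in Proposition \ref{proposition2}, the remaining work is purely formal bookkeeping, the one point requiring genuine care being the passage from smooth to continuous morphisms, which is handled by smooth approximation.
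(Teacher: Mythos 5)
Your proposal is correct and follows essentially the same route as the paper, which states this theorem without a separate proof precisely because it is a summary of Proposition \ref{proposition2}, the additivity isomorphism for disjoint unions, and the observation $\H _0(M;G)=\{ 0\} $, assembled by exactly the formal bookkeeping you describe. In particular, you correctly identify the only nontrivial point --- the passage from smooth to continuous morphisms via smooth approximation and homotopy invariance of the induced homomorphism --- as already packaged in Proposition \ref{proposition2}.
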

Gromov proved in \cite{Gr01} that the total K--area of simply connected manifolds and spin manifolds of positive scalar curvature is finite which means $\H _n(M^n)=H_n(M) $ for these closed manifolds. Furthermore, using the above proposition and the observation that $H_{2j}(\C P^n )$ is generated by the fundamental class of $\C P^j\subseteq \C P^n$ we conclude for the complex projective spaces
\[
\H _k(\C P^n)=\biggl\{ \begin{array}{cl}
\Z &if \ \ k\in \{ 2,4,\ldots ,2n\}\\
0& otherwise.
\end{array}
\]
Conversely, every closed connected orientable surface $M$ of positive genus has infinite total K--area, i.e.~$\H _2(M)=\{ 0\}$ (cf.~\cite{Gr01}). If $\H _k(N)$ is known, the previous proposition is one of best ways to compute $\H _k(M)$ by considering maps $M\to N$ respectively $N\to M$.  
\begin{cor}
The Hurewicz homomorphism satisfies for all $j\geq 2$:
\[
h_j:\pi _j(M)\to \H _j(M)\subseteq H_j(M).
\]
\end{cor}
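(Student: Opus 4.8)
The plan is to reduce the statement to the functoriality established in Proposition \ref{proposition2} together with the finiteness of the K--area of the fundamental class of the sphere. Recall that for $j\geq 2$ the Hurewicz homomorphism sends the class $[f]\in \pi _j(M)$ of a map $f:S^j\to M$ to $f_*[S^j]$, where $[S^j]\in H_j(S^j)\cong \Z $ denotes the fundamental class. It therefore suffices to show that $f_*[S^j]$ lies in the subgroup $\H _j(M)\subseteq H_j(M)$ for every such $f$.

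First I would record that $[S^j]\in \H _j(S^j)$ for all $j\geq 2$. Since $S^j$ is simply connected, Gromov's finiteness result for the total K--area of simply connected closed manifolds (quoted immediately above the corollary) gives $\H _j(S^j)=H_j(S^j)$; in particular the fundamental class $[S^j]$ has finite K--area. Next I would invoke the functoriality part of Proposition \ref{proposition2}: any continuous map $f:S^j\to M$ between compact smooth manifolds induces a homomorphism $f_*:\H _j(S^j)\to \H _j(M)$ which is the restriction of the usual induced map on singular homology. Evaluating this restriction on $[S^j]$ yields
\[
h_j([f])=f_*[S^j]\in \H _j(M),
\]
which is exactly the claim.

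The argument is essentially formal and I do not expect a genuine obstacle once the two inputs are in place. The only points demanding a little care are that the Hurewicz map genuinely factors through the fundamental class $[S^j]$, and that Proposition \ref{proposition2} is stated for continuous maps of compact smooth manifolds, so that its conclusion $f_*:\H _j(S^j)\to \H _j(M)$ applies here; both $S^j$ (for $j\geq 2$) and $M$ satisfy these hypotheses, so the restricted homomorphism is legitimate and the inclusion of images follows.
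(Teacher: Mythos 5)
Your proposal is correct and follows essentially the same route as the paper: the paper's proof also combines Gromov's finiteness of the K--area of $[S^j]$ for the simply connected sphere with proposition \ref{proposition2}, writing the functoriality there explicitly as the inequality $\infty >\ke (S^j_g;[S^j])\geq \ke (M_{\overline{g}};f_*[S^j])=\ke (M_{\overline{g}};h_j(\alpha ))$ for suitable metrics. Your appeal to the continuous-map version of proposition \ref{proposition2} is just the packaged form of that same inequality, so there is nothing to add.
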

\begin{proof}
Let $f:(S^j,e)\to (M,x)$ be a representative of $\alpha \in \pi _j(M,x)$, then
\[
\infty > \ke (S^j_g;[S^j])\geq \ke (M_{\overline{g}};f_*[S^j])=\ke (M_{\overline{g}};h_j(\alpha ))
\]
for suitable metrics $g$, $\overline{g}$ on $S^j$ and $M$. 
\end{proof}

\begin{prop}
\begin{enumerate}
\item $\mathrm{Tor}(H_*(M))\subseteq \H _*(M)$.
\item $\H _*(M;\Q )\cong \H _*(M)\otimes \Q $.
\item $\H _1(M;\Q )=\{ 0\} $, i.e.~$\H _1(M)=\mathrm{Tor}(H_1(M))$.
\end{enumerate}
\end{prop}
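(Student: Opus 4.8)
The three parts rest on a single topological observation that I would establish first: the integral homology $H_*(\BU;\Z)$ is torsion-free. Indeed $H^*(\mathrm{BU}_n;\Z)=\Z[c_1,\ldots,c_n]$ is free abelian in each degree, so the universal coefficient theorem forces $H_*(\mathrm{BU}_n;\Z)$ to be torsion-free, and passing to the disjoint union $\BU=\coprod_n\mathrm{BU}_n$ preserves this. Part (1) is then immediate: if $\theta$ is a torsion class, $\rho^\bundle{E}_*(\theta)$ is a torsion element of the torsion-free group $H_*(\BU;\Z)$, hence $\rho^\bundle{E}_*(\theta)=0$ for every bundle $\bundle{E}$; thus $\mathscr{V}(M;\theta)=\emptyset$ and $\ke(M_g;\theta)=0$. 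In odd degree I first note that $\theta\times[S^1]$ is again torsion, so the same argument makes every $\ke(M_g\times S^1_{\mathrm{d}t^2};\theta\times[S^1])$ vanish and the supremum over line elements is $0$. Either way $\theta\in\H_*(M)$.

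For (2) the crucial point is that integral and rational K-areas agree. For $\theta\in H_{2*}(M;\Z)$ with rationalization $\theta\otimes 1\in H_{2*}(M;\Q)$, naturality gives $\rho^\bundle{E}_*(\theta\otimes 1)=\rho^\bundle{E}_*(\theta)\otimes 1$, and since $H_*(\BU;\Z)$ injects into $H_*(\BU;\Q)$ this is nonzero exactly when $\rho^\bundle{E}_*(\theta)\neq 0$. Hence $\mathscr{V}(M;\theta)=\mathscr{V}(M;\theta\otimes 1)$ and therefore $\ke(M_g;\theta)=\ke(M_g;\theta\otimes 1)$; the odd case follows by stabilizing with $S^1$. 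Working inside $H_*(M;\Q)=H_*(M)\otimes\Q$, this equality, together with the fact that $\H_*(M;\Q)$ is a $\Q$-subspace (scaling invariance of the K-area for nonzero rational multiples), yields $\H_*(M)\otimes\Q\subseteq\H_*(M;\Q)$. Conversely, given $v\in\H_*(M;\Q)$ I clear denominators to write $Nv=\eta\otimes 1$ with $\eta\in H_*(M)$ and $N\in\Z\setminus\{0\}$; scaling invariance gives $\ke(M_g;\eta)=\ke(M_g;\eta\otimes 1)=\ke(M_g;Nv)=\ke(M_g;v)<\infty$, so $\eta\in\H_*(M)$ and $v=\tfrac1N\,\eta\otimes 1\in\H_*(M)\otimes\Q$. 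The two inclusions give the isomorphism.

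For (3), parts (1) and (2) already make the two formulations equivalent: by (2) one has $\H_1(M;\Q)=\H_1(M)\otimes\Q$, so $\H_1(M;\Q)=\{0\}$ amounts to $\H_1(M)$ being torsion, which combined with $\mathrm{Tor}(H_1(M))\subseteq\H_1(M)$ from (1) gives $\H_1(M)=\mathrm{Tor}(H_1(M))$. The real content is that every nonzero $\theta\in H_1(M;\Q)$ has infinite K-area. Since the pairing $H^1(M;\Q)\otimes H_1(M;\Q)\to\Q$ is nondegenerate, I choose an integral $u\in H^1(M;\Z)$ with $\langle u,\theta\rangle\neq 0$, represented by a closed $1$-form $\omega$ on $M$. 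On $M\times S^1_L$, with $S^1_L$ of circumference $L$ and $\mathrm{d}t$ the invariant $1$-form of integral $1$ (so $|\mathrm{d}t|=1/L$), the closed $2$-form $\omega\wedge\mathrm{d}t$ represents the integral class $u\times[\mathrm{gen}]$ and is, up to the factor $-2\pi i$, the curvature of a Hermitian line bundle $\bundle{L}$. Then $\langle c_1(\bundle{L}),\theta\times[S^1]\rangle=\langle u,\theta\rangle\neq 0$ shows $\bundle{L}\in\mathscr{V}(M\times S^1;\theta\times[S^1])$, while its curvature norm is at most $2\pi(\max_M|\omega|)/L$. Hence $\ke(M_g\times S^1_L;\theta\times[S^1])\geq L/(2\pi\max_M|\omega|)$, and letting $L\to\infty$ in the supremum over line elements gives $\ke(M_g;\theta)=\infty$, so $\theta\notin\H_1(M;\Q)$.

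The only step with genuine geometric content is the construction in (3): parts (1), (2) and the equivalence of the two formulations are formal once torsion-freeness of $H_*(\BU;\Z)$ is in hand, whereas (3) requires exhibiting bundles of arbitrarily small curvature detecting $\theta$. I expect the main obstacle to be the bookkeeping of normalizations there — matching $c_1(\bundle{L})$ with $u\times[\mathrm{gen}]$ and checking that the curvature norm really scales like $1/L$ — rather than anything conceptual. Alternatively one can bypass the explicit bundle by mapping $M\times S^1$ to $T^2$ so that $\theta\times[S^1]$ is sent to a nonzero multiple of $[T^2]$, and then invoke $\H_2(T^2)=\{0\}$ together with the monotonicity of Proposition \ref{proposition2}.
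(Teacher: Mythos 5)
Your proof is correct. Parts (1) and (2) coincide with the paper's own argument: torsion-freeness of $H_*(\BU ;\Z )$ forces $\rho ^\bundle{E}_*$ to kill torsion classes, and your naturality square identifying $\mathscr{V}(M;\theta )$ with $\mathscr{V}(M;\theta \otimes 1)$, combined with scaling invariance in the homology class and clearing denominators, is precisely the paper's commutative-diagram proof that $\lambda _M:\H _*(M)\otimes \Q \to \H _*(M;\Q )$ is an isomorphism (with the same $M\times S^1$ reduction for odd degree). Part (3) is where you genuinely diverge. The paper chooses $\alpha \in H^1(M;\Z )$ with $\left< \alpha ,\theta \right> \neq 0$, realizes it by a map $f:M\to S^1$ via Hopf's theorem, pushes $\theta \times [S^1]$ forward under $f\times \mathrm{id}:M\times S^1\to T^2$ to a nonzero rational multiple of $[T^2]$, and concludes from $\ke (T^2_0)=\infty $ together with the monotonicity of proposition \ref{proposition2} --- that is, your ``alternative'' route is the paper's actual proof, and it leans on the infinite K--area of the torus, which in the paper's exposition is only established afterwards by the covering/transfer argument of proposition \ref{proposition7} (or quoted from Gromov). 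Your primary argument instead builds the almost-flat detecting bundles by hand: the prequantum line bundle of the integral $2$--form $\omega \wedge \mathrm{d}t$ on $M\times S^1_L$ satisfies $\left< c_1(\bundle{L}),\theta \times [S^1]\right> \neq 0$ and has curvature norm $O(1/L)$, so the supremum over line elements built into the odd-degree definition forces $\ke (M_g;\theta )=\infty $. What each buys: your construction is self-contained (no forward reference, no higher-rank pushforward bundles needed, since the circle grows instead of the bundle rank) and quantitative, $\ke \geq L/(2\pi \max |\omega |)$ up to a universal factor; the paper's reduction is shorter once the torus computation is in hand and recycles the functorial machinery. The only bookkeeping in your version is the harmless constant in estimating $|\omega \wedge \mathrm{d}t(v,w)|$ for orthonormal $v,w$ on the product, together with the $2\pi $ of Chern--Weil --- you flagged this yourself, and since only the $1/L$ decay matters, nothing breaks.
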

\begin{proof}
Since $H_*(\mathrm{BU}_n)$ is free, each induced homomorphism $H_*(M)\to H_*(\BU )$ maps torsion classes to $0$. Hence, for any $\theta \in \mathrm{Tor}(H_*(M))$, $\mathscr{V}(M;\theta )=\emptyset $ and $\mathscr{V}(M\times S^1;\theta \times [S^1])=\emptyset $ supply $\ke (M_g;\theta )=0$, i.e.~$\theta \in \H _*(M)$. For the second claim we consider the commutative diagram
\[
\begin{xy}
\xymatrix{H_{2k}(M)\otimes \Q \ar[d]^{\lambda_M }\ar[r]^{\rho _*^\bundle{E}\otimes \mathrm{id}} &H_{2k}(\BU )\otimes \Q \ar[d]^\lambda \\
H_{2k}(M;\Q )\ar[r]^{\rho _*^\bundle{E}}&H_{2k}(\BU ;\Q )}
\end{xy}
\]
where $\lambda $ and $\lambda _M$ are isomorphism. Hence, $\mathscr{V}(M;\theta )=\mathscr{V}(M;\lambda _M(\theta \otimes x))$ for all $\theta \in H_{2k}(M)$ and $x\in \Q \setminus \{ 0\}$ prove that $\lambda _M:\H _{2k}(M)\otimes \Q \to \H _{2k}(M;\Q )$ is an isomorphism. In order to see $\H _{2k+1}(M)\otimes \Q \cong \H _{2k+1}(M;\Q )$ replace $M$ by $M\times S^1$ in the above diagram. Suppose $\theta \in H_1(M;\Q )$ is nontrivial, then there is some $\alpha $ in the lattice $H^1(M;\Z )\subseteq  H^1(M;\Q )$ with $\left< \alpha ,\theta \right> \neq 0$. Moreover, by the Hopf theorem and the smooth approximation theorem there is a smooth map $f:M\to S^1$ uniquely determined up to homotopy with $f^*\omega =\alpha $ where $\omega \in H^1(S^1;\Z )$ is the orientation class.  Consider the map $f\times \mathrm{id}:M\times S^1\to T^2$, then
\[
\left< \omega ,f_*\theta \right> =\left< f^*\omega ,\theta\right> =\left< \alpha ,\theta \right> \neq 0
\]
proves $f_*\theta \neq 0$ which yields $[T^2]=x\cdot f_*\theta \times [S^1]$ for some $x\in \Q \setminus \{ 0\} $. Hence, the scaling invariance in the homology class and proposition \ref{proposition2} show
\[
\ke (M_g\times S^1_{ \mathrm{d}t^2};\theta \times [S^1])\geq \ke (T^2_0;[T^2])=\infty .
\]
where $T_0^2$ is a suitable flat torus ($T^2$ has infinite total K--area by the remarks below or use the result in \cite{Gr01}).
\end{proof}
Although the torsion subgroup remains unchanged for $\Z $ coefficients, this does not have to hold for arbitrary coefficient groups. For instance, consider the real projective space $\R P^n$ in dimension $n\geq 2$ and let $\bundle{E}$ be the canonical complex line bundle with $0\neq c_1(\bundle{E})\in H^2(\R P^n;\Z )= \Z _2 $. The image of $c_1(\bundle{E})$ in $H^2(\R P^n;\Z _2)$ is the second Stiefel--Whitney class $w_2(\bundle{E})$ which is nontrivial. Hence, if $\theta $ denotes the generator of $ H_2(\R P^n;\Z _2)$, $\left< w_2(\bundle{E}),\theta \right> \neq 0$ implies $\bundle{E}\in \mathscr{V}(\R P^n;\theta )$. But the real Chern class of $\bundle{E}$ vanishes which means that $\bundle{E}$ admits a flat connection. This shows $\ke (\R P^n;\theta )=\infty $ and $\H _2(\R P^n;\Z _2)=\{ 0\} $ whereas  $H _2(\R P^n;\Z _2)=\Z _2$.   

\section{K--area for the Chern character}
In the above definition of the K--area we considered Hermitian bundles which have a nontrivial $\theta $--Chern number. However, we can also restrict to bundles with a specific nontrivial $\theta $--Chern number respectively fix a characteristic class $u\in H^*(\BU ;\group )$ and consider bundles with $\left< u(\bundle{E}),\theta \right> \neq 0$ where $u(\bundle{E}):=(\rho ^\bundle{E})^*u$. This leads to the $\mathrm{K} _u$--area and results similar to the one presented above with a few exceptions. But we still obtains subgroups $\H _k(M;u)\subseteq H_k(M;\group )$ of homology classes with finite $\mathrm{K}_u$--area and moreover, a continuous map $f:M\to N$ induces homomorphisms $f_*:\H _k(M;u)\to \H _k(N;u )$. Hence, each characteristic class $u\in H^*(\BU ;G)$ determines a functor $\H _*(\, .\, ;u)$ on the category of compact smooth manifolds and continuous maps into the category of graded abelian groups which satisfies the homotopy, dimension and additivity axiom. 

Another important K--area is the K--area for the Chern character. This area is particularly interesting for scalar curvature results. If $\theta \in H_{2*}(M;\Q )$, we denote by $\mathscr{V}_\mathrm{ch} (M;\theta )$ the set of Hermitian bundles $\bundle{E}\to M$ endowed with a Hermitian connection such that $\left< \mathrm{ch}(\bundle{E}),\theta \right> \neq 0$ for the Chern character $\mathrm{ch}(\bundle{E})$. As already remarked $\mathscr{V}_\mathrm{ch} (M;\theta )$ is nonempty if $\theta \neq 0$, i.e.~we define $\ke _\mathrm{ch}(M_g;\theta )$ as above but take the infimum in (\ref{defn_k}) over bundles in $\mathscr{V}_\mathrm{ch} (M;\theta )$. We set $\ke _\mathrm{ch}(M_g;0 )=0$ and add large circles to define the $\mathrm{K} _\mathrm{ch}$--area for odd homology classes:
\[
\ke _\mathrm{ch}(M_g;\theta ):=\sup _{\mathrm{d}t^2}\ke _\mathrm{ch}(M_g\times S^1_{\mathrm{d}t^2};\theta \times [S^1]),\qquad \theta \in H_{2*+1}(M;\Q ).
\]
The $\mathrm{K} _\mathrm{ch}$--area of a general class is the maximum of the even and the odd part. Since $\mathscr{V}_\mathrm{ch}(M;\theta )\subseteq \mathscr{V}(M;\theta )$, we conclude
\[
\ke _\mathrm{ch}(M_g;\theta )\leq \ke (M_g;\theta )
\]
for all $\theta \in H_*(M;\Q )$. Equality seems to hold only in few examples, for instance $\ke _\mathrm{ch}(M_g)=\ke (M_g)$ if $M$ is homeomorphic to $S^n$ or $\ke _\mathrm{ch}(M_g;\theta )=\ke (M_g;\theta )$ if $\theta \in H_2(M;\Q )$.  Thus, if $\H _j(M;\mathrm{ch})$ denotes the set of all homology classes $\theta \in H_j(M;\Q )$ with $\ke _\mathrm{ch}(M_g;\theta )<\infty $, then $\H _j(M;\Q )\subseteq \H _j(M;\mathrm{ch})$ are linear subspaces of $H_j(M;\Q )$ for each $j$.

\begin{prop}
\label{proposition6}
Suppose $\theta \in H_{2*}(M;\Q )$ is of total degree $2m>0$, then
\[
\ke (M_g;\theta )\leq m^2\cdot \ke _\mathrm{ch}(M_g;\theta ).
\]
In particular, $\H _j(M;\Q )=\H _j(M;\mathrm{ch})$ for each $j$.
\end{prop}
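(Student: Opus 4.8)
The plan is to establish the nontrivial inequality $\ke (M_g;\theta )\le m^2\,\ke _\mathrm{ch}(M_g;\theta )$; the reverse bound is already noted from $\mathscr{V}_\mathrm{ch}(M;\theta )\subseteq \mathscr{V}(M;\theta )$. By proposition \ref{proposition1} and the $S^1$--stabilization used to define the odd--degree K--area it suffices to treat a class $\theta $ of pure even degree $2m$, and since $\ke =(\inf \| R^\bundle{E}\| _g)^{-1}$ the claim is equivalent to $\inf _{\mathscr{V}_\mathrm{ch}}\| R^\bundle{E}\| _g\le m^2\inf _{\mathscr{V}}\| R^\bundle{E}\| _g$. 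Hence I would fix an arbitrary $\bundle{E}\in \mathscr{V}(M;\theta )$ and produce a bundle $\bundle{F}\in \mathscr{V}_\mathrm{ch}(M;\theta )$ with $\| R^\bundle{F}\| _g\le m^2\| R^\bundle{E}\| _g$; taking the infimum over $\bundle{E}$ then yields the estimate. To exploit the hypothesis I first rewrite it through the Chern character: by definition there is a degree $2m$ polynomial $P$ in the Chern classes with $\left< P(c(\bundle{E})),\theta \right> \neq 0$, and viewing $P(c(\bundle{E}))$ as a symmetric function of the Chern roots of degree $m$, expanding it in the power--sum basis over $\Q $ (Newton's identities), and using $p_d=d!\,\mathrm{ch}_d(\bundle{E})$, I obtain $\left< \mathrm{ch}_{\lambda _1}(\bundle{E})\cdots \mathrm{ch}_{\lambda _\ell }(\bundle{E}),\theta \right> \neq 0$ for at least one partition $\lambda =(\lambda _1,\ldots ,\lambda _\ell )$ of $m$. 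So it is enough to realise a nonzero rational multiple of this product as the degree $2m$ part of the Chern character of a genuine Hermitian bundle built from $\bundle{E}$ with controlled curvature.

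The construction uses Adams operations, realised through exterior powers so as to keep track of genuine bundles and their curvature. Since $\mathrm{ch}_e(\psi ^d\bundle{E})=d^{e}\,\mathrm{ch}_e(\bundle{E})$, the degree $2m$ part of $\mathrm{ch}(\psi ^{d_1}\bundle{E}\otimes \cdots \otimes \psi ^{d_\ell }\bundle{E})$ equals $\sum _{e_1+\cdots +e_\ell =m}\bigl(\prod _j d_j^{e_j}\bigr)\prod _j\mathrm{ch}_{e_j}(\bundle{E})$. The coefficient functions $\prod _j d_j^{e_j}$ are linearly independent monomials in $(d_1,\ldots ,d_\ell )$, so by multivariate interpolation there is an integer combination $\bundle{F}=\sum _{(d)}n_{(d)}\,\psi ^{d_1}\bundle{E}\otimes \cdots \otimes \psi ^{d_\ell }\bundle{E}$, with each $d_j$ small, whose degree $2m$ Chern character is exactly a nonzero multiple of $\prod _j\mathrm{ch}_{\lambda _j}(\bundle{E})$; in particular $\left< \mathrm{ch}(\bundle{F}),\theta \right> \neq 0$. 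This isolation step is essential: summing over a fixed \emph{length} $\ell $ alone (as in plain tensor powers of $\bundle{E}$) can produce genuine cancellation among distinct partitions, which is why weight--dependent operations are needed. Because $\theta $ is homogeneous, splitting the virtual bundle as $\bundle{F}=\bundle{F}_+\ominus \bundle{F}_-$ gives $\left< \mathrm{ch}(\bundle{F}_+),\theta \right> \neq \left< \mathrm{ch}(\bundle{F}_-),\theta \right> $, so one of the genuine bundles $\bundle{F}_\pm $ lies in $\mathscr{V}_\mathrm{ch}(M;\theta )$.

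It remains to bound the curvature. By Newton's identity in the $\lambda $--ring, $\psi ^d\bundle{E}$ is an integral polynomial in the exterior powers $\Lambda ^i\bundle{E}$ that is homogeneous of weight $d$; each monomial is a tensor product of orthogonal summands $\Lambda ^i\bundle{E}\subseteq \bundle{E}^{\otimes i}$ of total tensor weight $d$, so every genuine bundle occurring has curvature norm at most $d\,\| R^\bundle{E}\| _g$ (direct sums take the maximum, tensor products add, and a summand with the induced connection does not increase the operator norm). Consequently each summand of $\bundle{F}_\pm $ has tensor weight $\sum _j d_j$, and a careful choice of interpolation nodes together with the constraint $\sum _j\lambda _j=m$ and $\ell \le m$ keeps this at most $m^2$, whence $\| R^{\bundle{F}_\pm }\| _g\le m^2\| R^\bundle{E}\| _g$. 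Setting $\bundle{F}:=\bundle{F}_\pm \in \mathscr{V}_\mathrm{ch}(M;\theta )$ proves the inequality, and the equality $\H _j(M;\Q )=\H _j(M;\mathrm{ch})$ follows at once, since finiteness of the two K--areas is then equivalent.

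I expect the main obstacle to be exactly this quantitative bookkeeping: having reduced to a single partition $\lambda $, one must organise the multivariate interpolation so that the total tensor weight $\sum _j d_j$ of the surviving summands is pinned down to the clean constant $m^2$ rather than to a cruder bound such as $m(m+1)$, which forces an optimised choice of the nodes $d_j$ in tandem with the degree constraint $\sum _j\lambda _j=m$.
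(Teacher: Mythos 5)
Your reduction to homogeneous $\theta $ is the step that fails, and it is not a technicality: in this proposition ''total degree $2m$'' means $\theta =\theta _2+\cdots +\theta _{2m}$ with possibly several nonzero components — this is exactly why the paper's proof carries the vector $b=\bigl( \left< \mathrm{ch}_1(\bundle{E}),\theta _2\right> ,\ldots ,\left< \mathrm{ch}_m(\bundle{E}),\theta _{2m}\right> \bigl) $ through a Vandermonde argument; its Claim 1 exists precisely to untangle the components. Proposition \ref{proposition1} gives $\ke (M_g;\theta )=\max _i\ke (M_g;\theta _{2i})$ for the plain K--area because classifying maps preserve homological degree, but for the Chern character only the inclusion $\mathscr{V}_\mathrm{ch}(M;\theta )\subseteq \bigcup _i\mathscr{V}_\mathrm{ch}(M;\theta _{2i})$ holds: a bundle can satisfy $\left< \mathrm{ch}_i(\bundle{E}),\theta _{2i}\right> \neq 0$ for some $i$ while $\left< \mathrm{ch}(\bundle{E}),\theta \right> =\sum _i\left< \mathrm{ch}_i(\bundle{E}),\theta _{2i}\right> =0$ by cancellation across degrees. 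Hence $\ke _\mathrm{ch}(M_g;\theta )\leq \max _i\ke _\mathrm{ch}(M_g;\theta _{2i})$, which is the wrong direction for you: from the homogeneous case you obtain $\ke (M_g;\theta )\leq m^2\max _i\ke _\mathrm{ch}(M_g;\theta _{2i})$, not $\leq m^2\ke _\mathrm{ch}(M_g;\theta )$. Repairing the cancellation by one more round of Adams operations (since $\left< \mathrm{ch}(\psi _k\bundle{F}),\theta \right> $ is a nonzero polynomial in $k$ of degree $\leq m$ without constant term, some $k\leq m$ works) costs an extra factor $k\leq m$ in the curvature of the genuine $\pm $--parts of $\psi _k\bundle{F}$, degrading your constant to $m^3$. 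Note that your homogeneous case would still yield the ''in particular'' statement, since elements of $\H _j(M;\Q )$ are homogeneous and odd $j$ is handled via $S^1$; but the displayed inequality — which the paper later applies to genuinely inhomogeneous classes such as $\widehat{\mathbf{A}}(M)\cap [M]$ — is not established by your argument.

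The second gap is one you flag yourself: the claim that the interpolation nodes can be chosen with total tensor weight $\sum _jd_j\leq m^2$ is asserted, not proved; the natural grid choice $d_j\in \{ 1,\ldots ,m+1\} $ (needed to separate exponents $e_j\in \{ 0,\ldots ,m\} $ variable by variable) only gives weight $\leq \ell (m+1)\leq m(m+1)$. The paper sidesteps both problems by arguing by contradiction over a fixed finite family of \emph{genuine} bundles: if $\left< \mathrm{ch}(\Lambda ^\alpha \Lambda ^\beta \bundle{E}),\theta \right> =0$ for all multi--indices with $|\alpha |,|\beta |\leq m$, then Claim 1 (Vandermonde in $\psi _k$, $k\leq m$, which is where the inhomogeneity of $\theta $ is absorbed) and Claim 2 (the span computation $\Q [a_1,\ldots ,a_i]_i=\mathrm{span}_\Q \{ A_i^\alpha \ |\ |\alpha |=i\} $) force $\left< p(c(\bundle{E})),\theta \right> =0$ for every polynomial $p$, contradicting $\bundle{E}\in \mathscr{V}(M;\theta )$. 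There the Adams operations enter only as cohomological bookkeeping, never as bundles whose curvature must be controlled, so no virtual--bundle splitting and no node optimization are needed, and the bound $\| R^{\Lambda ^\alpha \Lambda ^\beta \bundle{E}}\| _g\leq m^2\| R^\bundle{E}\| _g$ is automatic from $\Lambda ^\beta \bundle{E}\subseteq \bundle{E}^{\otimes |\beta |}$ and $\Lambda ^\alpha (\Lambda ^\beta \bundle{E})\subseteq (\Lambda ^\beta \bundle{E})^{\otimes |\alpha |}$. To salvage your constructive strategy you would have to run the interpolation jointly over all degrees $i\leq m$ (dropping the homogeneity assumption on $\theta $) and actually prove the weight bound; as written, both the opening reduction and the final constant are genuine gaps.
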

\begin{proof}
The proof follows mainly the idea in \cite{Gr01}, only a couple of changes are necessary to compute the precise estimate. We show that for any bundle $\bundle{E}\in \mathscr{V}(M;\theta )$ there is an associated bundle $\bundle{X}\in \mathscr{V}_\mathrm{ch} (M;\theta )$ in such a way that for all $x,y\in TM$:
\[
| R^\bundle{X}(x\wedge y )| _{op}\leq m^2\cdot | R^\bundle{E}(x\wedge y )| _{op}.
\]
Then the above inequality is an easy consequence. We assume without loss of generality $\theta _0=0$, because otherwise the K--and $\mathrm{K}_\mathrm{ch}$--area are infinite and the statement is trivial. Essential to us will be the fact that the Adams operation $\psi _k$  applied to a vector bundle is nothing but a polynomial in the exterior powers of this bundle. In fact, $\psi _k(\bundle{E})\in K(M) $ is a $\Z $--linear combination of
\[
\Lambda ^\alpha \bundle{E}=\Lambda ^{\alpha _1}\bundle{E}\otimes \cdots \otimes \Lambda ^{\alpha _l}\bundle{E}
\]
where $\alpha \in \N ^l_0$ is a multi index with $|\alpha |=k$. The $\Z $--coefficient for $\alpha $ in this linear combination is simply determined by the corresponding coefficient in the representation of the $k$th power sum by elementary symmetric polynomials.   

\emph{Claim 1:} Suppose $\bundle{E}$ is a vector bundle with $\left< \mathrm{ch}(\Lambda ^\alpha \bundle{E}),\theta \right> =0$ for all multi indexes $\alpha $ with $1\leq |\alpha |\leq m$, then
\[
\left< \mathrm{ch}_i(\bundle{E}),\theta _{2i}\right> =0
\]
holds for all $i$. Since $\theta _{2i}=0$ for $i>m$, it suffices to prove the case $i\leq m$. This can be seen by using the Adams operation $\psi _k$ for $k\in \{ 1,\ldots ,m\}$. Under the assumption on $\bundle{E}$, $\left< \mathrm{ch}(\psi _k\bundle{E}),\theta \right> $ vanishes for all $k\in \{ 1,\ldots ,m\} $. Since $\mathrm{ch}(\psi _k\bundle{E})=\rho _k(\mathrm{ch}(\bundle{E}))$ with $\rho _k=k^j$ on $\mathrm{H}^{2j}(M,\Q )$, we obtain
\[
0=\left< \mathrm{ch}(\psi _k\bundle{E}),\theta \right> =\sum _{i=1}^mk^i\left< \mathrm{ch}_i(\bundle{E}),\theta _{2i}\right> =\left< a_k,b\right> _{Euc}
\]
for all $k\in \{ 1,\ldots ,m\} $ where
\[
a_k=(k^1,k^2,\ldots ,k^m)\quad \text{and}\quad b=\bigl(\left< \mathrm{ch}_1(\bundle{E}),\theta _2\right> ,\ldots ,\left< \mathrm{ch}_m(\bundle{E}),\theta _{2m}\right> \bigl) .
\]
However, the vectors $a_ 1,\ldots ,a_m$ form a basis of $\Q ^m$ which implies that $b=0$.

\emph{Claim 2:} Let $\bundle{E}$ be a vector bundle and fix some $i\in \{ 1,\ldots ,m\} $. If $\left< \mathrm{ch}_i(\Lambda ^\alpha \bundle{E}),\theta _{2i}\right> $ vanishes for all multi indexes $\alpha =(k_1,\ldots ,k_l)\in \N ^l$ with $|\alpha |=\sum k_j=i$, then for each polynomial $p$: $\left< p(c(\bundle{E})),\theta _{2i}\right> =0$. We use again the Adams operation in order to see this. Define
\[
B_i^\alpha :=\left< \mathrm{ch}_i(\psi _\alpha (\bundle{E})),\theta _{2i}\right> =\left< \mathrm{ch}_i(\psi _{k_1}(\bundle{E})\otimes \cdots \otimes \psi _{k_l}(\bundle{E})),\theta _{2i}\right>  
\]
for multi indexes $\alpha =(k_1,\ldots ,k_l)\in \N ^l$ with $|\alpha |=\sum k_j=i$. Since $\psi _{\alpha }(\bundle{E})$ is a linear combination of $\Lambda ^\beta \bundle{E}$ with $|\beta |\leq i$, $B^\alpha _i$ vanishes for all $\alpha $ under the above assumption on $\bundle{E}$. We will show that $B^\alpha _i=0$ for all $\alpha $ with $|\alpha |=i$ implies
\[
\left< \prod _{j=1}^i\mathrm{ch}_j(\bundle{E})^{\beta _j},\theta _{2i}\right> =0
\]
for all $\beta _1,\ldots ,\beta _i\in \N _0$ with $i=\sum j\cdot \beta _j$. Since the Chern classes of degree $\leq j$ are polynomials in the Chern characters $\mathrm{ch}_1,\ldots ,\mathrm{ch}_j$, this concludes for all polynomials $p$: $\left< p(c(\bundle{E})),\theta _{2i}\right> =0$. Consider a formal power series in $t$: $a(t)=\sum _ja_j\cdot t^j$ with $a_0\neq 0$ and let $\alpha =(k_1,\ldots ,k_i)\in (\N _0)^i$, then
\[
A^\alpha (t):= \prod _{j=1}^i a(k_j\cdot t)
\]
is symmetric in $k_j$ and we obtain an expansion $A^\alpha (t)=\sum A^\alpha _it^i$ where $A_i^\alpha $ does not depend on $t$ and is a polynomial in $a_1,\ldots ,a_i$. A little exercise in linear algebra shows
\[
\Q [a_1,\ldots ,a_i]_{i}=\{ a\in \Q [a_1,\ldots ,a_i]\ |\ \deg a=i\} =\mathrm{span}_\Q  \{ A^\alpha _i\ |\  |\alpha |=i\} 
\]
where $\deg a_1^{j_1}\cdots a_i^{j_i}=\sum \limits_{s=1}^is\cdot j_s$. Note that the inclusion ''$\supseteq $'' follows by definition and ''$\subseteq $'' follows from dimension reasons because if $\{ \alpha _s\in (\N _0)^i|s\in S\}$ is a set of multi indexes such that for all $r\neq s$ there is at least one elementary symmetric polynomial $\sigma _l$ with $\sigma _l(\alpha _r)\neq \sigma _l(\alpha _s)$, then $\{ A^{\alpha _s}_i\ |\ s\in S\} $ is linear independent in $\Q [a_1,\ldots ,a_i]_i$. If we identify $a_j$ with $\mathrm{ch}_j(\bundle{E}) $ for all $j$, the properties of the Adams operation and the Chern character imply that $A^\alpha _i$ is determined by
\[
\mathrm{ch}_i(\psi _{k_1}(\bundle{E})\otimes \cdots \otimes \psi _{k_l}(\bundle{E}))=\mathrm{ch}_i(\psi _\alpha (\bundle{E})).
\]
Thus, $B^\alpha _i=\left< A_i^\alpha ,\theta _{2i}\right> =0$ for all $\alpha $ proves $\left< b,\theta _{2i}\right> =0$ for all $b\in \Q [a_1,\ldots ,a_i]_i$.

\emph{Conclusion:} Suppose $\bundle{E}\in \mathscr{V}(M;\theta )$ and consider the bundle $\bundle{X}:=\Lambda ^\alpha \left( \Lambda ^{\beta }\bundle{E}\right) $ for multi indexes $\alpha $, $\beta $ with $|\alpha |, |\beta |\leq m$, then $\| R^\bundle{X}\| _g\leq m^2\| R^\bundle{E}\| _g$ is satisfied. Moreover, we conclude $\bundle{X}\in \mathscr{V}_\mathrm{ch}(M;\theta )$ for some $\alpha $, $\beta $ by the following contradiction argument. If $\left< \mathrm{ch}(\bundle{X}),\theta \right> $ vanishes for all $\alpha $ and $\beta $, we apply claim 1 to the bundle $\bundle{Y}=\Lambda ^\beta \bundle{E}$ and conclude $\left< \mathrm{ch}_i(\Lambda ^\beta \bundle{E}),\theta _{2i}\right> =0$ for all $i$ and all $\beta $. Hence, claim 2 proves that for any polynomial $p$: $\left< p(c(\bundle{E})),\theta \right> =0$, i.e.~$\bundle{E}\notin \mathscr{V}(M;\theta )$ a contradiction to our assumption. 
\end{proof}

\begin{prop}
\label{proposition7}
Let $f:(\tilde M,\tilde g)\to (M,g)$ be a Riemannian covering with $M$, $\tilde M$ closed and oriented, then
\[
\ke _\mathrm{ch}(\tilde M_{\tilde g};f^!\theta )=\ke _\mathrm{ch}(M_g;\theta )
\]
where $f^!:H_*(M;\Q )\to H_*(\tilde M;\Q )$ is the transfer homomorphism. 
\end{prop}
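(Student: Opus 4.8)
The plan is to prove the two inequalities separately, using pullback of bundles for ``$\geq$'' and the direct image (pushforward) of bundles for ``$\leq$''. Since $M$ and $\tilde M$ are closed, the Riemannian covering $f$ is finite, say of degree $d$, and it is a local isometry, so $\tilde g=f^*g$ on $\Lambda ^2T\tilde M$. I will use that over $\Q $ the transfer $f^!$ satisfies $f_*\circ f^!=d\cdot \mathrm{id}$ on $H_*(M;\Q )$ and is adjoint to the cohomology transfer $f_!\colon H^*(\tilde M;\Q )\to H^*(M;\Q )$, i.e.~$\left< f_!u,\theta \right> =\left< u,f^!\theta \right> $. I will also use that $\ke _\mathrm{ch}$ is invariant under scaling the homology class by a nonzero rational, since $\left< \mathrm{ch}(\bundle{E}),d\theta \right> =d\left< \mathrm{ch}(\bundle{E}),\theta \right> $ leaves $\mathscr{V}_\mathrm{ch}$ unchanged.

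For ``$\geq $'' I would simply invoke the monotonicity of Proposition \ref{proposition2}, whose proof carries over verbatim to $\ke _\mathrm{ch}$: the pullback induces $f^*\colon \mathscr{V}_\mathrm{ch}(M;f_*\eta )\to \mathscr{V}_\mathrm{ch}(\tilde M;\eta )$ by the projection formula $\left< \mathrm{ch}(f^*\bundle{E}),\eta \right> =\left< \mathrm{ch}(\bundle{E}),f_*\eta \right> $, and it preserves the curvature norm exactly because $\tilde g=f^*g$. Applying it to $f\colon (\tilde M,\tilde g)\to (M,g)$ with the class $\eta =f^!\theta $ gives $\ke _\mathrm{ch}(\tilde M_{\tilde g};f^!\theta )\geq \ke _\mathrm{ch}(M_g;f_*f^!\theta )=\ke _\mathrm{ch}(M_g;d\theta )=\ke _\mathrm{ch}(M_g;\theta )$, using $f_*f^!=d$ and the scaling invariance.

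For ``$\leq $'', the heart of the argument, I would build the direct image bundle. Given $\bundle{F}\in \mathscr{V}_\mathrm{ch}(\tilde M;f^!\theta )$ with Hermitian connection, form $f_*\bundle{F}\to M$ with fibre $\bigoplus _{y\in f^{-1}(x)}\bundle{F}_y$ and the pushforward Hermitian connection. Two facts are needed. First, the curvature norm is preserved: over an evenly covered $U$ with $f^{-1}(U)=\coprod _i U_i$ and each $f|_{U_i}$ an isometry, $f_*\bundle{F}|_U$ is the orthogonal sum of the isometric transports of $\bundle{F}|_{U_i}$, so $R^{f_*\bundle{F}}$ is block diagonal with isometric blocks; since the operator norm of a block diagonal endomorphism is the maximum of the block norms, $\| R^{f_*\bundle{F}}\| _g=\| R^\bundle{F}\| _{\tilde g}$. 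Second, the Chern character transfers: as the fibre of $f$ is $0$--dimensional the relative Todd class is trivial, so Grothendieck--Riemann--Roch for the covering gives $\mathrm{ch}(f_*\bundle{F})=f_!\,\mathrm{ch}(\bundle{F})$. Combined with the adjunction this yields $\left< \mathrm{ch}(f_*\bundle{F}),\theta \right> =\left< f_!\,\mathrm{ch}(\bundle{F}),\theta \right> =\left< \mathrm{ch}(\bundle{F}),f^!\theta \right> \neq 0$, so $f_*\bundle{F}\in \mathscr{V}_\mathrm{ch}(M;\theta )$. Since $f_*$ thus maps $\mathscr{V}_\mathrm{ch}(\tilde M;f^!\theta )$ into $\mathscr{V}_\mathrm{ch}(M;\theta )$ with unchanged curvature norm, the infimum over $M$ is at most the infimum over $\tilde M$, which is exactly ``$\leq $''.

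Finally, the odd--degree case reduces to the even one by applying the construction to the covering $f\times \mathrm{id}\colon \tilde M\times S^1\to M\times S^1$ and the class $f^!\theta \times [S^1]=(f\times \mathrm{id})^!(\theta \times [S^1])$, taking the supremum over line elements on both sides. The step I expect to be the main obstacle is the pushforward direction: verifying both that the direct image connection realizes the curvature norm \emph{exactly} (so that one obtains equality, not merely comparability, of K--areas) and that $\mathrm{ch}(f_*\bundle{F})$ pairs with $\theta $ correctly through the transfer adjunction; the other direction is just the monotonicity already at hand.
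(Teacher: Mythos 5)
Your proof is correct, and at the decisive step it takes a genuinely different route from the paper. Both arguments share the same skeleton: ``$\geq $'' by pullback of bundles (the $\mathrm{K}_\mathrm{ch}$--version of proposition \ref{proposition2}, together with $f_*f^!=\deg f\cdot \mathrm{id}$ and scaling invariance in the class), ``$\leq $'' via the direct image bundle $(f_*\bundle{F})_x=\bigoplus _{y\in f^{-1}(x)}\bundle{F}_y$ with exact preservation of the curvature norm, and the odd case via $f\times \mathrm{id}$ and $(f\times \mathrm{id})^!(\theta \times [S^1])=f^!\theta \times [S^1]$. The difference is how one certifies $\left< \mathrm{ch}(f_*\bundle{F}),\theta \right> \neq 0$: you invoke $\mathrm{ch}(f_*\bundle{F})=f_!\,\mathrm{ch}(\bundle{F})$ (Riemann--Roch with trivial relative Todd class, the fibre being $0$--dimensional) plus the adjunction $\left< f_!u,\theta \right> =\left< u,f^!\theta \right> $, which disposes of \emph{arbitrary} finite coverings in one stroke. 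The paper instead verifies the pairing identity only for \emph{normal} coverings, writing $f^*(f_*\bundle{E})\cong \bigoplus _hh^*\bundle{E}$ over the deck transformations $h$ and using that $h_*$ acts as the identity on $\mathrm{Im}(f^!)$, and then reduces the general case by passing to a finite normal covering $l=f\circ p:N\to M$ factoring through $\tilde M$ and applying the normal case twice. Your route is shorter and avoids this covering--space reduction; moreover, your Riemann--Roch input needs no heavy machinery here, since by Chern--Weil theory the curvature of the direct image connection is block diagonal, so its Chern character form at $x$ is the sum over the fibre $f^{-1}(x)$ of the Chern character form of $\bundle{F}$, which is precisely fibre integration and hence represents $f_!$ in de Rham cohomology. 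The paper's version is self--contained at the level of singular (co)homology and never names a cohomological pushforward, at the cost of the extra normalization step. Finally, the two points you flagged as the likely obstacles are indeed exactly what the paper relies on as well: the operator norm of a block diagonal endomorphism is the maximum of the block norms (so the K--area is preserved \emph{exactly}, not merely up to a dimension--dependent constant), and $f^!$ is injective over $\Q $ (from $f_*f^!=\deg f\cdot \mathrm{id}$), so $f^!\theta \neq 0$ and $\mathscr{V}_\mathrm{ch}(\tilde M;f^!\theta )$ is nonempty whenever $\theta \neq 0$.
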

\begin{proof}
Observe that $f$ is a finite covering and $0<|\deg f|$ is the number of sheets. Since $f_*f^!\theta =\deg f\cdot \theta $ we conclude ''$\geq $'' from the $\mathrm{K}_\mathrm{ch}$--version of proposition \ref{proposition2} (pull back of vector bundles). Furthermore, each bundle $\bundle{E}\to \tilde M$ induces a bundle $\bundle{E}'\to M$ by
\[
\bundle{E}'_x=\bigoplus _{y\in f^{-1}(x)}\bundle{E}_y.
\]
The connection on $\bundle{E}$ yields a connection on $\bundle{E}'$ with $\| R^{\bundle{E}'}\| _g=\| R^\bundle{E}\| _{\tilde g}$. Thus, we conclude ''$\leq $'' for $0\neq \theta \in H_{2*}(M;\Q )$ if $\bundle{E}\in \mathscr{V}_\mathrm{ch}(\tilde M;f^!\theta )$ implies $\bundle{E}'\in \mathscr{V}_\mathrm{ch}(M;\theta )$ (note that $f^!$ is injective). Assume that $f$ is a normal covering, then $f^*\bundle{E}'$ is isomorphic to $\bigoplus h^*\bundle{E}$ where the sum is taken over all deck transformations $h$. Hence, we obtain
\[
\begin{split}
\deg f\cdot \left< \mathrm{ch}(\bundle{E}'),\theta \right> &=\left< \mathrm{ch}(\bundle{E}'),f_*f^!\theta \right> =\left< \mathrm{ch}(f^*\bundle{E}'),f^!\theta \right> \\
&=\sum _h\left< h^*\mathrm{ch}( \bundle{E})  ,f^!\theta \right> =|\deg f|\cdot \left< \mathrm{ch}(\bundle{E}),f^!\theta \right> 
\end{split}
\]
because for a deck transformation $h:\tilde M\to \tilde M$, $h_*$ acts as $\mathrm{Id}$ on the image of $f^!$: $f=f\circ h$ and $h_*h^!=\mathrm{Id}$ yield $h_*f^!=h_*(fh)^!=h_*(h^!f^!)=f^!$. This proves the claim if $\theta \in H_{2*}(M;\Q )$ and $f$ is a normal covering. If $f$ is not a normal covering, let $H\subseteq f_\# \pi _1(\tilde M,\tilde x)$ be a normal subgroup of $\pi _1(M,f(\tilde x))$ such that $\pi _1(M,f(\tilde x))/H$ is a finite group [$H$ always exists because $f_\# \pi _1(\tilde M,\tilde x)$ has finite index in $\pi _1(M,f(\tilde x))$]. Hence, the associated (smooth) covering $l:(N,y)\to (M,f(\tilde x))$ with $l_\# \pi _1(N,y)=H$ is normal and finite, and there is a unique (smooth) map $p:(N,y)\to (\tilde M,\tilde x)$ which is a normal covering and satisfies $l=f\circ p$. Thus, the previous case proves for $\bar g:=l^*g$ and $\theta \in H_{2*}(M;\Q )$:
\[
\ke _\mathrm{ch}(M_g;\theta )=\ke _\mathrm{ch}(N_{\bar g};l^!\theta )=\ke _\mathrm{ch}(N_{\bar g};p^!f^!\theta )=\ke _\mathrm{ch}(\tilde M_{\tilde g};f^!\theta ).
\]
In order to show the proposition for $\theta \in H_{2*+1}(M;\Q )$ consider the covering $b:=f\times \mathrm{id}:\tilde M\times S^1\to M\times S^1$, use $b^!(\theta \times [S^1])=(f^!\theta )\times [S^1]$ and apply the case for even classes to the covering $b$ and the class $\theta \times [S^1]\in H_{2*}(M;\Q )$.
\end{proof}
This proposition allows us to compute the K--areas for the torus respectively for the connected sum of ''nice'' manifolds with a torus. Consider the $2^n$--fold Riemannian covering $f:(T^n,4g)\to (T^n,g),\ \alpha \mapsto 2\alpha $, then by the scaling property
\[
\ke _\mathrm{ch}(T^n_g;\theta )=\ke _\mathrm{ch}(T^n_{4g};f^!\theta )=4\cdot \ke _\mathrm{ch}(T^n_g;\theta )
\]
which shows $ \ke (T^n_g;\theta )=\ke _\mathrm{ch}(T^n_g;\theta )=\infty $ for each $\theta \neq 0$. Here we use that the $\mathrm{K}_\mathrm{ch}$--area of a nontrivial class $\theta \in H_*(M;\Q )$ is in $(0,\infty ]$. This and the following remark prove that $\H _*(T^n\# \cdots \# T^n)=\{ 0\} $ for any finite number of tori. 

\begin{rem}
\label{zusammen}
Suppose that $M^n$ and $N^n$ are closed, oriented and connected. Consider the projection maps $M\# N\to M$ and $M\# N \to N$ by identifying the summand $M$ respectively $N$ to a point, then $\H _k(M\# N)\to \H _k(M)\oplus \H _k(N)$ is well defined and injective, i.e.~$\H _k(M\# N) $ can be considered as a subgroup of $\H _k(M)\oplus \H _k(N)$ for all $k<n$. Moreover, $\H _n(M\# N)=\Z $ implies $\H _n(M)=\Z $ and $\H _n(N)=\Z $. Conversely, if $X\subseteq M$ or $Y\subseteq N$ are compact submanifolds of codimension at least one which give a finite upper bound for K--areas on $M$ respectively $N$, then $X,Y\subseteq M\# N$ determine upper bounds for the corresponding K--areas on $M\# N$. For instance, if $0<j<n$, $\C P^j\subseteq \C P^n\# T^{2n}$ supplies finite K--area for the homology class induced by the fundamental class of $\C P^j$ which means    
\[
\H _k(\C P^n\# T^{2n})=\biggl\{ \begin{array}{cl}
\Z &if \ \ k\in \{ 2,4,\ldots ,2n-2\}\\
0& otherwise.
\end{array}
\]
\end{rem}
\begin{rem}
\label{rem_covering}
Some minor modifications of the above proof extend the covering result to certain product manifolds: Let $f:\tilde N_{\tilde h}\to N_h$ be a Riemannian covering with $N$, $\tilde N$ closed and oriented and suppose that $M_g$ is a compact Riemannian manifold. Then $\mathrm{id}\times f:M_g\times \tilde N_{\tilde h}\to M_g\times N_h$ is a Riemannian covering and
\[
\ke _\mathrm{ch}(M_g\times \tilde N_{\tilde h};\eta \times f^!\theta )=\ke _\mathrm{ch}(M_g\times N_h;\eta \times \theta )
\]
holds for all $\eta \in H _*(M;\Q )$ and $\theta \in H_*(N;\Q )$. Hence, if $h$ is a metric on $T^n$ and $h_0$ is a standard flat metric on $T^n$ with $h_0\leq h\leq k^2h_0$ on $TT^n$ for some $k\in \N $, then $f:T^n_{k^2\cdot h_0}\to T^n_{h_0}$, $x\mapsto x^k$ is a $k^n$--fold Riemannian covering which yields
\[
\ke _\mathrm{ch}(M_g\times T_{k^2 h_0}^n;\eta \times \theta )=\ke _\mathrm{ch}(M_g\times T_h^n;\eta \times \theta )=\ke _\mathrm{ch}(M_g\times T_{h_0}^n;\eta \times \theta )
\] 
for all $\eta \in H_*(M;\Q )$ and $\theta \in H_*(T^n;\Q )$ (use proposition \ref{proposition2}).
\end{rem}
\begin{rem}
\label{funda}
Consider a simply connected and closed manifold $N$, then there is some $\epsilon >0$ such that any $\epsilon $--flat bundle on $N$ is already trivial (cf.~Gromov \cite{Gr01}). Hence, the K--area of a class $\theta \in H_{2k}(N)$ is finite if $k>0$ which means $\H _{2k}(N)=H_{2k}(N)$ for all $k>0$. Moreover, the last proposition and the above observations show that $\H _{2k}(M)=H _{2k}(M)$ for all $k>0$ if $M$ is a closed orientable manifold with finite fundamental group. The equality might also hold for odd homology classes, but in this case a more detailed estimate is necessary.  
\end{rem}
\section{Stabilizing K--area and products}
\label{stab_area}
The K--area has a further generalization which depends on taking products with standard tori $T^i=S^1\times \cdots \times S^1$. Suppose that $\theta \in H_{2*-i}(M;\Q )$ for $i\geq 0$, then the $\mathrm{K}_\mathrm{ch}^i$--area of $\theta $ is defined by
\[
\ke _\mathrm{ch}^i(M_g;\theta ):=\ke _\mathrm{ch}(M_g\times T^i_h;\theta \times [T^i])
\]
where the right hand side does not depend on the choice of the metric $h$ by remark \ref{rem_covering}. Note that $\ke ^0_\mathrm{ch}(M_g;\theta )$ is the above $\mathrm{K}_\mathrm{ch}$--area for $\theta \in H_{2*}(M;\Q )$ and  $\ke ^1_\mathrm{ch}(M_g;\theta )$ coincides with the above definition of the $\mathrm{K}_\mathrm{ch}$--area for $\theta \in H_{2*+1}(M;\Q )$. The proposition below shows that $\ke _\mathrm{ch}^i(M;\theta )$ is nondecreasing in $i$ which means $\ke _\mathrm{ch}^{i+2}(M_g;\theta )\geq \ke ^i_\mathrm{ch}(M_g;\theta )$ for all $\theta \in H_{2*-i}(M;\Q )$. The more challenging question is to compute estimates in the opposite direction. We also consider the corresponding stabilized version of this area:
\[
\ke _\mathrm{ch}^\mathrm{st}(M_g;\theta ):=\sup _i\ke _\mathrm{ch}^i(M_g;\theta )\geq \ke _\mathrm{ch}(M_g;\theta ).
\]
We could introduce similar objects for the ordinary K--area, however, it is uncertain if a stabilized version of the K--area can be estimated by the stabilized $\mathrm{K} _\mathrm{ch}$--area (compare the constant in proposition \ref{proposition6}). Since the stabilized $\mathrm{K}_\mathrm{ch}$--area inherits most of the properties of the $\mathrm{K}_\mathrm{ch}$--area, the set $\H _k^\mathrm{st}(M;\Q )\subseteq H _k(M;\Q )$ consisting of rational homology classes with finite stabilized $\mathrm{K}_\mathrm{ch}$--area is a linear subspace of $\H _k(M;\Q )$ for all $k$. Moreover, a continuous map $f:M\to N$ yields homomorphisms $f_*:\H _k^\mathrm{st}(M;\Q )\to \H _k^\mathrm{st}(N;\Q )$. Hence, the stabilized K--area homology $\H ^\mathrm{st}_k(M;\Q )$ depends only on the homotopy type of $M$. There are plenty of manifolds $M$ with $\H _*(M;\Q )=\H _*^\mathrm{st}(M;\Q )$, however it seems a rather difficult question if this is true for general $M$. The reason for introducing the stabilized version is the more consistent behavior in the product case:  
\begin{prop}
Suppose that $\eta \in H_k(M;\Q )$ and $\theta \in H_l(N;\Q )$, then
\[
\begin{split}
\ke _\mathrm{ch}^{i+j}(M_g\times N_h;\eta \times \theta )&\geq \min \left\{ \ke _\mathrm{ch}^i(M_g;\eta ),\ke _\mathrm{ch}^j(N_h; \theta )\right\}\\
\ke _\mathrm{ch}^{\mathrm{st}}(M_g\times N_h;\eta \times \theta )&\geq \min \left\{ \ke _\mathrm{ch}^\mathrm{st}(M_g;\eta ),\ke _\mathrm{ch}^\mathrm{st}(N_h; \theta )\right\}
\end{split}
\]
where $i+k$ and $j+l$ are assumed to be even integers. 
\end{prop}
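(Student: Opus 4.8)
The plan is to reduce both inequalities to a single product estimate for even homology classes and then to prove that estimate by an external tensor product of near--optimal bundles. First I would unwind the stabilised areas. Writing $T^{i+j}=T^i\times T^j$ and choosing on $T^{i+j}$ the product of two torus metrics (legitimate since, by remark \ref{rem_covering}, $\ke _\mathrm{ch}^{i+j}$ is independent of the metric on the torus factor), the manifold $M_g\times N_h\times T^{i+j}$ becomes isometric, via the factor--reordering diffeomorphism, to $P\times Q$ with $P:=M_g\times T^i$ and $Q:=N_h\times T^j$ carrying product metrics $g_P,g_Q$. Under the same reordering one has $(\eta \times\theta )\times [T^{i+j}]=\pm\,\mu \times\nu $ with $\mu :=\eta \times [T^i]\in H_{k+i}(P;\Q )$ and $\nu :=\theta \times [T^j]\in H_{l+j}(Q;\Q )$, and both $\mu ,\nu $ have even degree by the parity hypothesis. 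Since the $\mathrm{K}_\mathrm{ch}$--area is insensitive to a nonzero rational factor, this identifies $\ke _\mathrm{ch}^{i+j}(M_g\times N_h;\eta \times\theta )=\ke _\mathrm{ch}(P\times Q;\mu \times\nu )$, $\ke _\mathrm{ch}^i(M_g;\eta )=\ke _\mathrm{ch}(P;\mu )$ and $\ke _\mathrm{ch}^j(N_h;\theta )=\ke _\mathrm{ch}(Q;\nu )$. The first assertion is thus equivalent to the even--class inequality $\ke _\mathrm{ch}(P\times Q;\mu \times\nu )\geq \min \{ \ke _\mathrm{ch}(P;\mu ),\ke _\mathrm{ch}(Q;\nu )\} $.

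To prove this, fix $\epsilon >0$ and choose $\bundle{E}\in \mathscr{V}_\mathrm{ch}(P;\mu )$ and $\bundle{F}\in \mathscr{V}_\mathrm{ch}(Q;\nu )$ with $\| R^\bundle{E}\| _{g_P}$ and $\| R^\bundle{F}\| _{g_Q}$ within $\epsilon $ of the respective infima. I would form the external tensor product $\bundle{G}:=\pi _P^*\bundle{E}\otimes \pi _Q^*\bundle{F}$ with the tensor product Hermitian connection, whose curvature is $R^\bundle{G}=\pi _P^*R^\bundle{E}\otimes \id +\id \otimes \pi _Q^*R^\bundle{F}$. Because $\mathrm{ch}(\bundle{G})=\mathrm{ch}(\bundle{E})\times \mathrm{ch}(\bundle{F})$ and only the summand of bidegree $(\deg \mu ,\deg \nu )$ survives the Kronecker pairing with $\mu \times\nu $, one obtains
\[
\langle \mathrm{ch}(\bundle{G}),\mu \times\nu \rangle =\langle \mathrm{ch}(\bundle{E}),\mu \rangle \cdot \langle \mathrm{ch}(\bundle{F}),\nu \rangle \neq 0
\]
(the sign from commuting even--degree factors being $+1$), so $\bundle{G}\in \mathscr{V}_\mathrm{ch}(P\times Q;\mu \times\nu )$ is an admissible competitor.

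The heart of the matter is the curvature estimate. For orthonormal $v,w\in T_{(x,y)}(P\times Q)$ split $v=v_1+v_2$, $w=w_1+w_2$ along $T_xP\oplus T_yQ$; then $R^\bundle{G}(v\wedge w)=R^\bundle{E}(v_1\wedge w_1)\otimes \id +\id \otimes R^\bundle{F}(v_2\wedge w_2)$, whence
\[
|R^\bundle{G}(v\wedge w)|_{op}\leq \| R^\bundle{E}\| _{g_P}\,|v_1\wedge w_1|+\| R^\bundle{F}\| _{g_Q}\,|v_2\wedge w_2|.
\]
The decisive input is the elementary bound $|v_1\wedge w_1|+|v_2\wedge w_2|\leq 1$, which follows from $|v_a\wedge w_a|\leq |v_a|\,|w_a|$ together with Cauchy--Schwarz $|v_1||w_1|+|v_2||w_2|\leq \sqrt{|v_1|^2+|v_2|^2}\sqrt{|w_1|^2+|w_2|^2}=1$. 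Consequently $|R^\bundle{G}(v\wedge w)|_{op}\leq \max \{ \| R^\bundle{E}\| _{g_P},\| R^\bundle{F}\| _{g_Q}\} $, so $\| R^\bundle{G}\| _{g_P\oplus g_Q}\leq \max \{ \| R^\bundle{E}\| _{g_P},\| R^\bundle{F}\| _{g_Q}\} $; taking reciprocals and letting $\epsilon \to 0$ gives the even--class inequality. I regard this \emph{linear} (rather than merely quadratic) control of the wedge norms as the main obstacle: estimating $|R^\bundle{G}(v\wedge w)|_{op}$ through the weaker relation $|v_1\wedge w_1|^2+|v_2\wedge w_2|^2\leq 1$ would only yield $\sqrt{\| R^\bundle{E}\| ^2+\| R^\bundle{F}\| ^2}$, which is strictly too large to produce the minimum. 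It is essential here that $\| \cdot \| $ is evaluated on \emph{simple} $2$--vectors $v\wedge w$ and not on all of $\Lambda ^2T(P\times Q)$, where the sum bound would indeed be unavoidable.

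Finally, the stabilised inequality follows by passing to suprema. For every admissible pair $i,j$ one has $\ke _\mathrm{ch}^\mathrm{st}(M_g\times N_h;\eta \times\theta )\geq \ke _\mathrm{ch}^{i+j}(M_g\times N_h;\eta \times\theta )\geq \min \{ \ke _\mathrm{ch}^i(M_g;\eta ),\ke _\mathrm{ch}^j(N_h;\theta )\} $. Since $i\mapsto \ke _\mathrm{ch}^i(M_g;\eta )$ and $j\mapsto \ke _\mathrm{ch}^j(N_h;\theta )$ are nondecreasing, one gets $\sup _{i,j}\min \{ \ke _\mathrm{ch}^i(M_g;\eta ),\ke _\mathrm{ch}^j(N_h;\theta )\} =\min \{ \ke _\mathrm{ch}^\mathrm{st}(M_g;\eta ),\ke _\mathrm{ch}^\mathrm{st}(N_h;\theta )\} $, and taking the supremum over $i,j$ on the left completes the proof.
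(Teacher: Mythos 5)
Your proposal is correct and follows essentially the same route as the paper: reduce to even classes via the factor--reordering isometry $M\times N\times T^{i+j}\cong M\times T^i\times N\times T^j$, take the external tensor product $\pi _P^*\bundle{E}\otimes \pi _Q^*\bundle{F}$, use multiplicativity of the Chern character to verify admissibility, and bound the tensor--product curvature by $\max \{ \| R^\bundle{E}\| ,\| R^\bundle{F}\| \} $. The only difference is one of detail: the paper asserts the norm identity $\| R^\bundle{G}\| _{g\oplus h}=\max \{ \| R^\bundle{E}\| _g,\| R^\bundle{F}\| _h\} $ without proof, whereas you supply the key elementary estimate $|v_1\wedge w_1|+|v_2\wedge w_2|\leq 1$ via Cauchy--Schwarz, correctly noting that it is the restriction to simple $2$--vectors that makes the maximum (rather than a sum or root--sum--of--squares) come out.
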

\begin{proof}
The inequality is obvious if one of the homology classes vanishes, hence $\eta \neq 0$ and $\theta \neq 0$. We set $\tilde \eta :=\eta \times [T^i]$ and $\tilde \theta :=\theta \times [T^j]$ for notational simplicity. Suppose that  $\bundle{E}\in \mathscr{V}_\mathrm{ch}(M\times T^i;\tilde \eta )$ and $\bundle{F}\in \mathscr{V}_\mathrm{ch}(N\times T^j;\tilde \theta )$, then
\[
0\neq \bigl< \mathrm{ch}(\bundle{E}),\tilde \eta \bigl> \cdot \bigl< \mathrm{ch}(\bundle{F}),\tilde \theta \bigl> =\bigl< \mathrm{ch}(\bundle{E})\times \mathrm{ch}(\bundle{F}),\tilde \eta \times \tilde \theta \bigl>  =\bigl< \mathrm{ch}(\pi _M^*\bundle{E}\otimes \pi _N^*\bundle{F}),\tilde \eta \times \tilde \theta \bigl>  
\]
shows $\pi _M^*\bundle{E}\otimes \pi _N^*\bundle{F}\in \mathscr{V}_\mathrm{ch}(M\times T^i\times N\times T^j;\tilde \eta \times \tilde \theta )$. Moreover, we obtain for the tensor product connection on $\bundle{G}:=\pi _M^*\bundle{E}\otimes \pi _N^*\bundle{F}$:
\[
\| R^\bundle{G}\| _{g\oplus h}=\max \left\{ \| R^\bundle{E}\| _g,\| R^\bundle{F}\| _h\right\} .
\]
Hence, considering the pull back of $\bundle{G}$ by the coordinate transposition map $M\times N\times T^{i+j}\to M\times T^i\times N\times T^j$ proves the inequality.
\end{proof}
This proposition yields additional examples for finite K--area. For instance, if $M^m$ and $N^n$ are connected, closed and spin and $M\times N$ admits a metric of positive scalar curvature, then $\H _m(M)=\Z $ or $\H _n(N)=\Z $. Note that in this case neither $M$ nor $N$ need to carry a metric of positive scalar curvature. In general it seems rather difficult to see whether $\theta \times \eta $ has finite K--area even if both $\theta $ and $\eta $ have finite K--area. We observe equality in the previous proposition for $\eta \in H_0(M;\Q )$ or $\theta \in H_0(N;\Q )$ by considering the inclusion maps  (cf.~proposition \ref{proposition2}). Moreover, the following proposition proves finite K--area for homology classes $[M]\times \theta $ if $M$ is a closed spin manifold of positive scalar curvature. 
\begin{prop}
\label{prop_scalar}
Let $(M^n,g)$ be a connected closed spin manifold of positive scalar curvature and $\theta \in H_l(N;\Q )$ where $N$ is a compact manifold, then 
\[
\ke _\mathrm{ch}^\mathrm{st}\bigl( M_g\times N_h;\bigl(\widehat{\mathbf{A}}(M)\cap [M]\bigl)\times \theta \bigl)\leq \frac{2n(n-1)}{\min \mathrm{scal}_g}.
\]
\end{prop}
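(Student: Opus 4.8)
The plan is to unwind the definition of the stabilized $\mathrm{K}_\mathrm{ch}$-area and reduce everything to a single Bochner--Lichnerowicz argument on the spin factor $M$. Writing $\beta=(\widehat{\mathbf{A}}(M)\cap[M])\times\theta$, by definition $\ke_\mathrm{ch}^\mathrm{st}(M_g\times N_h;\beta)=\sup_i\ke_\mathrm{ch}(M_g\times N_h\times T^i_{h'};\beta\times[T^i])$. Since the asserted bound involves neither $i$, $N$, $h$ nor $h'$, it suffices to produce a lower bound $\|R^\bundle{E}\|\geq\frac{\min\mathrm{scal}_g}{2n(n-1)}$, uniform in $i$, for every Hermitian bundle $\bundle{E}$ on $M\times N\times T^i$ with $\langle\mathrm{ch}(\bundle{E}),\beta\times[T^i]\rangle\neq0$; inverting and taking the supremum over $i$ then yields the claim.

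The decisive structural choice is to place \emph{only} $M$ in the fibre. I would consider the trivial fibration $\pi\colon M\times B\to B$ with base $B:=N\times T^i$ and fibre the spin manifold $M$, and form the family of fibrewise twisted Dirac operators $\dirac_{\bundle{E}_q}$ on $M$, twisted by the restrictions $\bundle{E}_q:=\bundle{E}|_{M\times\{q\}}$ for $q\in B$. Its index is a class $\mathrm{ind}\,\dirac_\bundle{E}\in K(B)$, and the Atiyah--Singer families index theorem gives $\mathrm{ch}(\mathrm{ind}\,\dirac_\bundle{E})=\pi_!\bigl(\widehat{\mathbf{A}}(M)\cdot\mathrm{ch}(\bundle{E})\bigr)$, the pushforward being fibre integration over $M$. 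Pairing with $\theta\times[T^i]\in H_*(B;\Q)$ and using the adjunction between fibre integration and cross product with $[M]$, one gets $\langle\mathrm{ch}(\mathrm{ind}\,\dirac_\bundle{E}),\theta\times[T^i]\rangle=\langle\widehat{\mathbf{A}}(M)\cdot\mathrm{ch}(\bundle{E}),[M]\times\theta\times[T^i]\rangle=\langle\mathrm{ch}(\bundle{E}),\beta\times[T^i]\rangle\neq0$. Hence $\mathrm{ind}\,\dirac_\bundle{E}\neq0$ in $K(B)\otimes\Q$. If every fibrewise operator $\dirac_{\bundle{E}_q}$ were invertible the index class would vanish, so there is some $q_0\in B$ for which $\dirac_{\bundle{E}_{q_0}}$ has a nontrivial kernel.

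At that parameter I would run the usual argument of Gromov and Lawson on $M$ alone. For a harmonic spinor $0\neq\psi\in\ker\dirac_{\bundle{E}_{q_0}}$ the Bochner--Lichnerowicz--Weitzenb\"ock formula $\dirac_{\bundle{E}_{q_0}}^2=\nabla^*\nabla+\tfrac14\mathrm{scal}_g+\mathfrak{R}$, with curvature term $\mathfrak{R}=\sum_{j<k}(e_j\cdot e_k)\otimes R^{\bundle{E}_{q_0}}(e_j,e_k)$ for a local orthonormal frame $e_1,\dots,e_n$ of $TM$, yields after integration over $M$ the identity $0=\int_M|\nabla\psi|^2+\int_M(\tfrac14\mathrm{scal}_g+\langle\mathfrak{R}\psi,\psi\rangle)$. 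Counting the $\binom{n}{2}$ Clifford terms and using $|R^{\bundle{E}_{q_0}}(e_j,e_k)|_{op}\leq\|R^{\bundle{E}_{q_0}}\|_g$ gives $|\mathfrak{R}|_{op}\leq\tfrac{n(n-1)}2\|R^{\bundle{E}_{q_0}}\|_g$, so, since $\psi\not\equiv0$, the integrated inequality forces $\tfrac14\min\mathrm{scal}_g\leq\tfrac{n(n-1)}2\|R^{\bundle{E}_{q_0}}\|_g$. As $\bundle{E}_{q_0}$ is the restriction of $\bundle{E}$ to the fibre $M\times\{q_0\}$, whose tangent space sits isometrically in $T(M\times B)$, one has $\|R^{\bundle{E}_{q_0}}\|_g\leq\|R^\bundle{E}\|_{g\oplus h\oplus h'}$; combining these gives $\|R^\bundle{E}\|\geq\frac{\min\mathrm{scal}_g}{2n(n-1)}$, independent of $i$, $N$, $h$ and $h'$.

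The main obstacle is conceptual rather than computational: one must arrange the index machinery so that the Bochner estimate is carried out on $M$ only, by putting $N$ and the stabilising tori $T^i$ into the \emph{base} of the fibration. Had any torus factor been kept in the fibre, the number of Clifford terms, and hence the constant, would grow with $i$ and uniformity would be lost. The two facts to justify with care are the cohomological families index formula (which is exactly what makes the class $\widehat{\mathbf{A}}(M)\cap[M]$ appear) and the implication ``nonzero index implies a fibre carrying harmonic spinors''. When $n=\dim M$ is odd the index lives in $K^1(B)$ and pairs with the odd part of $H_*(B;\Q)$; the same reasoning applies, the Weitzenb\"ock estimate on $M$ being insensitive to the parity of $n$, so the constant $2n(n-1)$ is unchanged.
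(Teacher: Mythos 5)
Your proof is correct, and for even--dimensional $M$ it is essentially the proof in the paper: there too the bundle $\bundle{E}$ is regarded as a family of bundles over $M$ parameterized by the base $N$ (stabilization being absorbed by replacing $N$ with $N\times T^i$, exactly as in your first paragraph), the families index theorem produces a point $x$ in the base where $\dirac _{\bundle{E}_x}$ has nontrivial kernel, and the Lichnerowicz formula with the estimate $|\frak{R}^{\bundle{E}_x}|_{op}\leq \frac{n(n-1)}{2}\| R^{\bundle{E}_x}\| _g\leq \frac{n(n-1)}{2}\| R^{\bundle{E}}\| $ gives the uniform lower curvature bound. The genuine divergence is the odd--dimensional case. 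The paper never leaves even index theory: it applies the even case to $M_g\times S^1_{c\cdot \mathrm{d}t^2}$, i.e.~it puts the extra circle into the \emph{fibre}, accepts the worse constant $\frac{n(n-1)}{2}+\frac{n}{c}$ coming from the $n$ mixed Clifford terms $R^{\bundle{E}_x}(e_i,\partial _t)$, and recovers $2n(n-1)$ by letting $c\to \infty $; this also meshes automatically with the definition of $\ke _\mathrm{ch}$ for odd classes as a supremum over line elements. You instead put the circle (and everything except $M$) into the \emph{base} and invoke the index in $K^1(B)$ of the resulting family of self--adjoint operators. That is a valid and arguably cleaner route --- uniformity in $i$, $N$ and all metrics on the base is automatic --- but it is also the thinnest point of your write--up: you need the cohomological form of the self--adjoint (odd) families index theorem, and the statement that an everywhere--invertible family has vanishing class in $K^1(B)$, which rests on the weak contractibility of the invertible self--adjoint Fredholm operators in the relevant component; neither fact is difficult, but both deserve a citation or an argument, whereas the paper's large--circle trick needs nothing beyond the even case already established. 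In short, the two approaches trade a limiting argument with an $O(1/c)$ error term against a heavier index--theoretic input, and your choice of keeping only $M$ in the fibre is precisely the structural point the paper's proof is built on as well.
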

\begin{proof}
It is sufficient to prove the proposition for the $\mathrm{K}_\mathrm{ch}$--area, the stabilized version follows from this case by replacing $(N,\theta )$ with $(N\times T^i,\theta \times [T^i])$. The inequality is obvious for $\theta =0$, hence suppose $\theta \neq 0$. We start with the case that $n$ is even. Without loss of generality $l$ is even, otherwise go over to $N\times S^1$ and $\theta \times [S^1]$. A bundle $\bundle{E}\in \mathscr{V}_\mathrm{ch}\bigl( M\times N;\bigl(\widehat{\mathbf{A}}(M)\cap [M]\bigl)\times \theta \bigl) $ yields a family of smooth vector bundles over $M$ parameterized by $N$ (cf.~\cite{AS4,BGV,LaMi}). Thus, twisting the (constant family of) complex spinor bundle $\spinor M$ with the family of bundles $\bundle{E}$ yields a family of Dirac operators $\dirac ^+_\bundle{E}:\Gamma (\spinor ^+M\otimes \bundle{E})\to \Gamma (\spinor ^-M\otimes \bundle{E})$ over $M$ parameterized by $N$ where the corresponding index $\mathrm{ind}\, \dirac ^+_\bundle{E}\in K(N)$ satisfies
\[
\begin{split}
\left< \mathrm{ch}(\mathrm{ind}\, \dirac ^+_\bundle{E} ),\theta \right> &=\left< \int _M\bigl( \widehat{\mathbf{A}}(M)\times 1\bigl) \cdot \mathrm{ch}(\bundle{E}), \theta \right> \\
&=\left< \mathrm{ch}(\bundle{E}),\bigl( \widehat{\mathbf{A}}(M)\cap [M]\bigl) \times \theta \right> \neq 0
\end{split}
\]
Hence, there is a point $x\in N$ such that the associated twisted Dirac operator $\dirac _{\bundle{E}_x}:\Gamma (\spinor M\otimes \bundle{E}_x)\to \Gamma (\spinor M\otimes \bundle{E}_x)$ has nontrivial kernel where $\bundle{E}_x$ is the induced bundle over $M\times \{ x\}\subseteq M\times N$. The integrated version of the Bochner Lichnerowicz formula
\[
\dirac ^2_{\bundle{E}_x}=\nabla ^*\nabla +\frac{\mathrm{scal}_g}{4}+\frak{R}^{\bundle{E}_x}
\]
shows that there is a point on $M$ where the minimal eigenvalue of $\frak{R}^{\bundle{E}_x}$ is less or equal to $ -\mathrm{scal}_g/4$. However, the maximal absolute eigenvalue of $\frak{R}^{\bundle{E}_x}$ is bounded by $\frac{n(n-1)}{2}\| R^{\bundle{E}_x}\| _g$ which implies
\[
\| R^\bundle{E}\| _{g\oplus h}\geq \| R^{\bundle{E}_x}\| _g\geq \frac{\min  \mathrm{scal}_g}{2n(n-1)}
\]
for all $\bundle{E}\in \mathscr{V}_\mathrm{ch}\bigl( M\times N;\bigl( \widehat{\mathbf{A}}(M)\cap [M]\bigl) \times \theta \bigl) $ and an arbitrary Riemannian metric $h$ on $N$. If $M^n$ is odd dimensional, we apply the even dimensional case to the manifold $M_g\times S^1_{c\cdot \mathrm{d}t^2}$ for constants $c>0$. Again without loss of generality $l$ is even, i.e.~$\theta $ is an even homology class. In particular, nontrivial kernel of $\dirac _{\bundle{E}_x}$ yields
\[
\frac{\min \mathrm{scal}_g}{4}=\frac{\min \mathrm{scal}_{g\oplus c\cdot \mathrm{d}t^2}}{4}\leq \| \frak{R}^{\bundle{E}_x}\| _{op}\leq \left( \frac{n(n-1)}{2}+\frac{n}{c}\right) \cdot \| R^{\bundle{E}_x}\| _{g\oplus \mathrm{d}t^2}
\]
for all constants $c>0$. Hence, considering $c\to \infty $ shows
\[
\min \mathrm{scal}_g\leq 2n(n-1)\| R^{\bundle{E}_x}\| _{g\oplus \mathrm{d}t^2}\leq 2n(n-1)\| R^\bundle{E}\| _{g\oplus \mathrm{d}t^2\oplus h}
\]
on $M_g\times S^1_{\mathrm{d}t^2}\times N_h$ for all $\bundle{E}\in \mathscr{V}_\mathrm{ch}\bigl( M\times S^1\times N;\bigl( \widehat{\mathbf{A}}(M)\cap [M]\bigl) \times [S^1] \times \theta \bigl) $. Note that the inequality is independent on the size of $S^1$ and the choice of the metric $h$ on $N$.
\end{proof}
\begin{rem}
\label{rem243}
Equality in the above situation can only hold if $g$ has constant scalar curvature. As an example, we obtain equality for even dimensional spheres of constant curvature. In fact, if $S^{2n}_0$ denotes the standard sphere of constant sectional curvature $K=1$, then the complex spinor bundle of $S^{2n}$ satisfies $\| R^{\sspinor ^+}\| =\frac{1}{2}$ and $c_n(\spinor ^+)\neq 0$ which proves
\[
\ke (S^{2n}_0)=\ke _\mathrm{ch}(S^{2n}_0)=\ke _\mathrm{ch}^\mathrm{st}(S^{2n}_0)=2.
\] 
In corollary \ref{cor345} we shall see that this is also true for odd dimensional spheres. Hence, we obtain from proposition \ref{proposition7} that a closed orientable manifold $M_g^n$ with constant sectional curvature $K\geq 0$, satisfies $\ke (M_g)=\ke _\mathrm{ch}(M_g)=2/K$. 
\end{rem}
\begin{rem}
\label{rem8}If $M$ is a compact manifold, the above proposition shows that $\theta \times [S^n]$ has finite K--area for all $\theta \in H_k(M)$ and $n\geq 2$. Hence, we obtain from singular theory that
\[
\H _{k}(M)\oplus H _{k-n}(M)\to \H _{k}(M\times S^n), \ (\eta ,\theta )\mapsto \eta \times \mathbbm{1}+\theta \times [S^n]
\]
is an isomorphism for all $k$ and $n\geq 2$. The same statement holds of course for the stabilized K--area homology. 
\end{rem}

\begin{example}
There are manifolds with isomorphic fundamental group, isomorphic singular homology but different K--area homology as the following example shows. Suppose $n>3$ and define
\[
\begin{split}
M&=(T^3\times S^n)\# (T^3\times S^n)\\
N&=\bigl( (T^3\# T^3)\times S^n\bigl) \# (S^3\times S^n),
\end{split}
\]
then $\pi _1(M)=\pi _1(N)=\Z ^3*\Z ^3$ by Seifert--van Kampen. We leave it to the reader to compute the singular homology. In order to determine the K--area homology of $M$ and $N$ we use the methods presented in remark \ref{zusammen} and results about positive scalar curvature on closed spin manifolds, in fact we obtain
\[
\begin{split}
\H _k(M)&=\H _k(N)=H_k(M)=H_k(N)\quad  k\geq 4\\
\H _j(M)&=\H _j(N)=\{ 0\}\qquad \ \ j=0,1,2
\end{split}
\]
whereas $\H _3(N)=\Z $ and $\H _3(M)=\{ 0\} $.
\end{example}
\begin{example}
\label{exam136}
The manifolds in the previous example have different intersection product, but it seems rather difficult to construct manifolds with isomorphic fundamental group, isomorphic cohomology ring and compute their respective K--area homology. However, if we drop the assumption on the fundamental group, interesting examples already exist in $3$--dimensions. For instance, let $X$ be a closed connected oriented $3$--manifold which is a hyperbolic homology sphere, i.e.~$X$ admits a hyperbolic metric and $H_*(X)=H_*(S^3)$. Certain Brieskorn homology spheres (cf.~\cite[Chp.~1]{Sav}) and the main theorem in \cite{My} supply examples of hyperbolic homology spheres. Hence, the cohomology ring of $X$ is isomorphic to the cohomology ring of $S^3$, but $\H _*(S^3)=\H _3(S^3)=\Z $ whereas $\H _*(X)=\{ 0\} $ by \cite[$4\frac{3}{5}$(v')]{Gr01}. Here we use that $X$ has residually finite fundamental group ($\dim X=3$) and that $X$ admits a metric of nonpositive sectional curvature. This leads to further $3$--dimensional examples. For instance, let $N$ be a closed connected oriented $3$--manifold which admits a metric of positive scalar curvature (like $N=S^2\times S^1\# \cdots \# S^2\times S^1\# \proj{\R }^3$), then $N$ and $N\# X$ have isomorphic cohomology ring. However, the above results show $\H _3(N)=\Z $ whereas $\H _3(N\# X)=\{ 0\}$ by remark \ref{zusammen}. 
\end{example}
In general, the intersection product $\bullet $ does not preserve finite K--area. For instance, $1\times [S^2],[S^2]\times 1\in H_2(S^2\times S^2)$ have finite K--area, but
\[
(1\times [S^2])\bullet ([S^2]\times 1)=1\in H_0(S^2\times S^2)
\]
has infinite K--area. However, the intersection product is well behaved by stabilizing with spheres $S^n$ for $n\geq 2$:
\[
(\theta \times [S^n])\bullet (\eta \times [S^n])=(\theta \bullet \eta )\times [S^n]\in \H _*(M\times S^n)
\]
for all $\theta ,\eta \in H_*(M)$.%and we may consider
%\[
%\bullet :\H _k(M^n)\times \H _l(M^n)\to H_{k+l-n}(M^n)\stackrel{\Phi _*}\to \vec \H _{k+l-n}^2(M^n).
%\]

\begin{prop}
Suppose that $\theta \in H_k(M;\Q )$, $\alpha \in H^1(M;\Q )$ and $i+k+1\in 2\Z $, then:
\[
\ke _\mathrm{ch}^i(M_g;\alpha \cap \theta )\leq \ke _\mathrm{ch}^{i+1}(M_g;\theta ).
\]
\end{prop}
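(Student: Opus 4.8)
The plan is to unwind both sides to the curvature infima that define them and then compare bundle by bundle. By definition $\ke_\mathrm{ch}^i(M_g;\alpha\cap\theta)=\ke_\mathrm{ch}(M_g\times T^i_h;(\alpha\cap\theta)\times[T^i])$ and $\ke_\mathrm{ch}^{i+1}(M_g;\theta)=\ke_\mathrm{ch}(M_g\times T^{i+1}_h;\theta\times[T^{i+1}])$, and the parity hypothesis $i+k+1\in2\Z$ guarantees that both homology classes sit in even degree, so both $\mathrm{K}_\mathrm{ch}$--areas are defined. Since the $\mathrm{K}_\mathrm{ch}$--area is the reciprocal of an infimum of curvature norms, the asserted inequality is equivalent to
\[
\inf_{\bundle{F}}\|R^\bundle{F}\|\le\inf_{\bundle{E}}\|R^\bundle{E}\|,
\]
where $\bundle{E}$ runs over $\mathscr{V}_\mathrm{ch}(M\times T^i;(\alpha\cap\theta)\times[T^i])$ and $\bundle{F}$ over $\mathscr{V}_\mathrm{ch}(M\times T^{i+1};\theta\times[T^{i+1}])$. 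Thus it suffices to associate to every such $\bundle{E}$ and every $\epsilon>0$ a bundle $\bundle{F}$ in the second set with $\|R^\bundle{F}\|\le\|R^\bundle{E}\|+\epsilon$, where the metric on the torus factor of $\bundle{F}$ may be chosen freely because $\ke_\mathrm{ch}^{i+1}$ is torus--metric independent by remark \ref{rem_covering}. If $\alpha\cap\theta=0$ the left side is $0$ by convention and there is nothing to prove, so assume $\alpha\cap\theta\neq0$; by the scaling invariance $\ke_\mathrm{ch}(M;a\eta)=\ke_\mathrm{ch}(M;\eta)$ for $a\in\Q\setminus\{0\}$ I may clear denominators and assume $\alpha\in H^1(M;\Z)$, so that (Hopf theorem plus smooth approximation, as in the proof that $\H_1(M;\Q)=\{0\}$) there is a smooth $f:M\to S^1$ with $f^*\omega=\alpha$, $\omega\in H^1(S^1;\Z)$ the orientation class.

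Next I would build the bundle that converts the extra circle into a cap product with $\alpha$. Write $M\times T^{i+1}=M\times T^i\times S^1$ and let $\omega'\in H^1(S^1;\Z)$ be the orientation class of the new circle. Let $\bundle{L}$ be the pullback under $(f,\mathrm{id}):M\times S^1\to S^1\times S^1=T^2$ of the line bundle on $T^2$ whose Chern class generates $H^2(T^2;\Z)$; then $c_1(\bundle{L})=\alpha\smile\omega'$, pullbacks to $M\times T^{i+1}$ being understood. Put $\bundle{F}:=\pi^*\bundle{E}\otimes\bundle{L}$, where $\pi:M\times T^{i+1}\to M\times T^i$ forgets the last circle. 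Since $(\omega')^2=0$ we have $\mathrm{ch}(\bundle{L})=1+\alpha\smile\omega'$, hence $\mathrm{ch}(\bundle{F})=\pi^*\mathrm{ch}(\bundle{E})\cdot(1+\alpha\smile\omega')$. Pairing against $\theta\times[T^i]\times[S^1]$, the term without $\omega'$ vanishes for degree reasons along the new circle, while the adjunctions $\langle u\smile v,z\rangle=\langle u,v\cap z\rangle$ and $p_M^*\alpha\cap(\theta\times[T^i])=\pm(\alpha\cap\theta)\times[T^i]$ turn the remaining term into $\pm\langle\mathrm{ch}(\bundle{E}),(\alpha\cap\theta)\times[T^i]\rangle\neq0$. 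Therefore $\bundle{F}\in\mathscr{V}_\mathrm{ch}(M\times T^{i+1};\theta\times[T^{i+1}])$, independently of any choice of metric.

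Finally I would estimate the curvature. For the tensor product connection $R^\bundle{F}=\pi^*R^\bundle{E}\otimes\mathrm{id}+\mathrm{id}\otimes R^\bundle{L}$, and since $\pi$ is an orthogonal projection $\|\pi^*R^\bundle{E}\|=\|R^\bundle{E}\|$, so that $\|R^\bundle{F}\|\le\|R^\bundle{E}\|+\|R^\bundle{L}\|$. The decisive point is that $\bundle{L}$ cannot be flat (its Chern class is nonzero), yet its curvature norm can be driven to $0$: one has $R^\bundle{L}=(f,\mathrm{id})^*R$, where $R$ is the constant curvature of the Chern--generator bundle on a flat $T^2$ of side lengths $a,b$, whose operator norm is of order $(ab)^{-1}$. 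Fixing the target circle (so that $\mathrm{Lip}(f)$ is a fixed constant) and taking the new circle of circumference $b=L\to\infty$ gives $\|R^\bundle{L}\|\le\mathrm{Lip}(f)/L\to0$. Evaluating $\ke_\mathrm{ch}^{i+1}(M_g;\theta)$ on $M\times T^i\times S^1_L$, which is legitimate by remark \ref{rem_covering}, yields $\ke_\mathrm{ch}^{i+1}(M_g;\theta)^{-1}=\inf_\bundle{F}\|R^\bundle{F}\|\le\|R^\bundle{E}\|+\mathrm{Lip}(f)/L$ for every $L$; letting $L\to\infty$ and then taking the infimum over $\bundle{E}$ gives the claim. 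The main obstacle is exactly this last point: the auxiliary line bundle carries genuine, non--removable curvature, and the inequality survives only because the torus directions may be stretched freely, spreading that curvature out to vanishing norm. By contrast the cohomological computation of the middle paragraph is routine bookkeeping with cross, cup and cap products.
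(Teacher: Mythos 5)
Your proof is correct, and its skeleton coincides with the paper's own: both realize $\alpha $ by a Hopf map $f:M\to S^1$ (after clearing denominators to make $\alpha $ integral), both twist $\pi ^*\bundle{E}$ by a bundle pulled back from $T^2$ through the map $(x,y,t)\mapsto (f(x),t)$, and both run the identical Chern character bookkeeping $\mathrm{ch}=\mathrm{rk}+x\cdot ([S^1]^*\times [S^1]^*)$, with the rank term killed for degree reasons along the new circle and the remaining term converted by cup/cap adjunction into $\pm x\left< \mathrm{ch}(\bundle{E}),(\alpha \cap \theta )\times [T^i]\right> \neq 0$. The one genuine divergence is the source of the small-curvature twisting bundle. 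The paper keeps the metric on $T^2$ fixed and quotes the existence, for every $\epsilon >0$, of a bundle $\bundle{F}\in \mathscr{V}_\mathrm{ch}(S^1\times S^1;[S^1]\times [S^1])$ with $\| R^\bundle{F}\| \leq \epsilon $ --- i.e.~the previously established fact $\ke _\mathrm{ch}(T^2;[T^2])=\infty $, whose witnesses are almost flat bundles of rank tending to infinity. You instead take the explicit degree-one line bundle, whose curvature is genuinely nonzero, and drive its norm to zero by stretching the new circle to circumference $L\to \infty $; this is legitimate precisely because of the metric independence in remark \ref{rem_covering}, which the definition of $\ke _\mathrm{ch}^{i+1}$ already invokes, and you correctly flag this as the load-bearing step. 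Your variant buys a rank-one, completely explicit twisting bundle and avoids appealing to the almost-flat-bundle existence statement, at the cost of varying the torus metric, which the paper never does. Since remark \ref{rem_covering} and the infinite $\mathrm{K}_\mathrm{ch}$--area of $T^2$ are both consequences of the same covering/rescaling argument (proposition \ref{proposition7}), the two routes ultimately rest on one mechanism, packaged differently: yours hides it in the metric, the paper's in the bundle.
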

\begin{proof}
Consider a bundle $\bundle{E}\in \mathscr{V}_\mathrm{ch}(M\times T^i;(\alpha \cap \theta )\times [T^i])$ and a smooth map $f:M\to S^1$ with $f^*([S^1]^*) =\alpha $ for the orientation class $[S^1]^* \in H^1(S^1;\Q )$. Denote by $\pi :M\times T^{i}\times S^1\to M\times T^i$ the projection and by $\sigma :M\times T^i\times S^1\to S^1\times S^1, (x,y,t)\mapsto (f(x),t)$. For any $\epsilon >0$ there is a bundle $\bundle{F}\in \mathscr{V}_\mathrm{ch}(S^1\times S^1;[S^1]\times [S^1])$ with $\| R^\bundle{F}\| \leq \epsilon $. Hence, $\bundle{G}=\pi ^*\bundle{E}\otimes \sigma ^*\bundle{F}$ satisfies $\| R^\bundle{G}\| \leq \| R^\bundle{E}\| +\epsilon $ and since $\mathrm{ch}(\bundle{F})=\mathrm{rk}(\bundle{F})+x\cdot ([S^1]^* \times [S^1]^*)$ for some $x\in \Q \setminus \{ 0\} $ we conclude
\[
\begin{split}
\left< \mathrm{ch}(\bundle{G}),\theta \times [T^{i+1}]\right> &=x\cdot \left< \pi ^*\mathrm{ch}(\bundle{E})\cdot (\alpha \times \mathbbm{1}\times [S^1]^*),\theta \times [T^{i}]\times [S^1]\right> \\
&=x\cdot \left< \pi ^*(\mathrm{ch}(\bundle{E})\cdot \alpha \times \mathbbm{1}),\theta \times [T^{i}]\times \mathbbm{1}\right>\\
&=x\cdot \left< \mathrm{ch}(\bundle{E}),(\alpha \cap \theta )\times [T^i]\right> \neq 0 
\end{split}
\]
which proves the inequality.
\end{proof}
We obtain as a corollary that capping with cohomology classes of degree one yields maps:
\[
\begin{split}
&H^1(M;\Z )\times \H _{2k+1}(M)\to \H _{2k}(M),\ (\alpha ,\theta )\mapsto \alpha \cap \theta \\
&H^1(M;\Q )\times \H_k^\mathrm{st}(M;\Q )\to \H _{k-1}^\mathrm{st}(M;\Q ),\ (\alpha ,\theta )\mapsto \alpha \cap \theta .
\end{split}
\]
In particular, a connected closed manifold $M^n$  with $\H ^\mathrm{st}_n(M;\Q )=\Q $ satisfies $\H ^\mathrm{st}_{n-1}(M;\Q )=H_{n-1}(M;\Q )$ and therefore $\H _{n-1}(M)=H_{n-1}(M)$. This may be seen as the K--area analog of Schoen and Yau's result about positive scalar curvature on minimal hypersurfaces (cf.~\cite{SchY6}). The corresponding statement for manifolds of positive scalar curvature was used by Schick in \cite{Schick1} to give a counterexample to the unstable Gromov--Lawson--Rosenberg conjecture.
\section{K--area on noncompact manifolds}
A noncompact manifold has no fundamental class and in view of the relative index theorem we are rather interested on the K--area defined with bundles of compact support. Hence, in order to get a theory of finite K--area on noncompact manifolds we can consider subgroups of the Borel--Moore homology or subgroups of singular cohomology. Moreover, in order to simplify statements about positive scalar curvature and the $\widehat{A}$--class we choose the cohomology approach. In this section all manifolds are assumed to be connected, oriented and without boundary. Remember that singular homology is dual isomorphic to cohomology with compact support and $H^k(M;\Q )$ is the dual space of $H_k(M;\Q )$. Thus, the bilinear pairing
\[
H^{m-k}(M;\Q )\times H^{k}_\mathrm{cpt}(M;\Q )\to \Q \ ,(\alpha ,\beta )\mapsto \int  _M\alpha \cup \beta 
\]
is well defined and nondegenerate where $m=\dim M$. Moreover, if $f:M^m\to N^n$ is a proper map, the pull back $f^*:H_\mathrm{cpt}^k(N;\Q )\to H_\mathrm{cpt}^k(M;\Q )$ induces the transfer homomorphism
\[
f_!:H^{m-k}(M;\Q )\to H^{n-k}(N;\Q ),\quad \int _N (f_!\alpha )\cup \gamma :=\int _M\alpha \cup f^*\gamma .
\]
If $\alpha \in H^{m-2k}(M^m;\Q )$ is a cohomology class, $\mathscr{V} _{\mathrm{cpt}}(M;\alpha )$ denotes the set of Hermitian vector bundles $\bundle{E}\to M$ endowed with a Hermitian connection such that $\bundle{E}$ and its connection are trivial outside of a compact set and such that there are nonnegative integers $i_1,\ldots ,i_k$ with $\sum j\cdot i_j=k$ and
\[
0\neq \int _M\alpha \cdot c_1(\bundle{E})^{i_1}\cdots c_k(\bundle{E})^{i_k} .
\]
Note that the right hand side is well defined because $c_i(\bundle{E})$ has compact support. The K--area $\ke (M_g;\alpha )$ of the Riemannian manifold $(M,g)$ w.r.t.~the cohomology class $\alpha $ is defined like in (\ref{defn_k}) by taking the infimum over bundles in $\mathscr{V} _\mathrm{cpt}(M;\alpha )$. We set $\ke  (M_g;\alpha )=0$ in case of $\mathscr{V} _\mathrm{cpt}(M;\alpha )=\emptyset $. If $k$ is odd and $\alpha \in H^{m-k}(M^m;\Q )$, define
\[
\ke (M_g;\alpha ):=\sup _{\mathrm{d}t^2}\ke (M_g\times S^1_{\mathrm{d}t^2};\pi ^*\alpha )
\]     
for the projection map $\pi :M\times S^1\to M$. In accordance with proposition \ref{proposition1} the K--area of a general cohomology class $\alpha =\sum \alpha _i$ is the maximum of the K--areas of the $\alpha _i\in H^i(M;\Q )$. If $f:(M^m,g)\to (N^n,h)$ is a smooth proper Lipschitz map in the sense that $g\geq f^*h$ on $TM$, then
\[
\ke (M_g;\alpha )\geq \ke (N_h;f_!\alpha )
\]
for all $\alpha \in H^{*}(M;\Q )$. If $\alpha \in H^{m-2*}(M;\Q )$, this inequality is still true for $\Lambda ^2$--Lipschitz maps $f$ (i.e.~$g\geq f^*h$ on $\Lambda ^2TM$). However, it fails for general $\alpha $ because the condition $\Lambda ^2$--Lipschitz does not extend to the map $f\times \mathrm{id}:M\times S^1\to N\times S^1$ if $M$ is not compact (cf.~proof of proposition \ref{proposition2}). %It suffices to show the inequality for $\alpha \in H^{m-k}(M;\Q )$. If $k$ is even, the pull back of bundles by $f$ yields a map $f^*:\mathscr{V}_\mathrm{cpt}(N;f_!\alpha )\to \mathscr{V}_\mathrm{cpt}(M;\alpha )$, hence $\| R^{f^*\bundle{E}}\| _g\leq \| R^\bundle{E}\| _h$ shows the claim. If $k$ is odd, consider the cohomology class $\pi ^*\alpha $ on $M\times S^1$ and use the same argument on the map $f\times \mathrm{id}:M\times S^1\to N\times S^1$.
Because proposition \ref{proposition1} holds for the cohomology K--area,
\[
\H ^j(M_g;\Q ):=\{ \alpha \in H^j(M;\Q )\ |\ \ke (M_g;\alpha )<\infty \}
\] 
is a linear subspace for each $j$ which of course depends on the asymptotic geometry of the metric $g$ on noncompact manifolds except for $j=m=\dim M$ since flat connections on trivial bundles mean $\H ^m(M;\Q )=\{ 0\}$. The subscript on $M$ refers to the choice of a geometry at infinity. However, if $g\sim h$ in the sense that there is a constant $C>0$ with $C^{-1}\cdot g\leq h\leq C\cdot g$ on $TM$, then $\H ^j(M_g;\Q )=\H ^j(M_h;\Q )$. In fact, on compact manifolds the subspaces $\H ^j(M;\Q )$ are independent on the choice of the metric. Moreover, if $f:(M^m,g)\to (N^n,h)$ is a smooth proper Lipschitz map, the transfer homomorphisms restrict to  homomorphisms on the respective cohomology groups with finite K--area:
\[
f_!:\H ^{m-k}(M_g;\Q )\to \H ^{n-k}(N_h;\Q ).
\]
If $M^m$ is a closed oriented manifold, the above definitions coincide up to Poincar\'e duality, i.e.
\[
\ke (M_g;\alpha )=\ke (M_g;\alpha \cap [M])
\]
where $[M]\in H_{m}(M)$ is the fundamental class. In particular, Poincar\'e duality yields isomorphisms
\[
\cap [M]:\H ^j(M;\Q )\to \H _{m-j}(M;\Q ).
\]
At this point we should clarify that in general $\H ^j(M;\Q )$ is no longer the dual space of $\H _j(M;\Q )$. %Although, the cohomology approach has its advantages for noncompact manifolds and statements about scalar curvature, the theory of finite K--area is a homology theory, because the transfer homomorphisms are not natural. 

The definitions of \emph{compactly enlargeable}, \emph{compactly} $\widehat{A}$\emph{--enlargeable} and \emph{weakly enlargeable} are taken from the survey book \cite{LaMi}. These versions of enlargeability are also used in \cite{GrLa2,HaSch1}. Using the generalized K--area concept introduced by Hanke  one can show that the fundamental homology class of enlargeable manifolds has infinite K--area (cf.~\cite[Proposition 3.8]{Hanke}).

\begin{prop}
If $M$ is a compactly $\widehat{A}$--enlargeable manifold, then the total $\widehat{A}$--class of $M$ has infinite K--area: $\widehat{\mathbf{A}}(M)\notin \H ^*(M;\Q )$. In particular, a compactly enlargeable manifold $M^m$ satisfies
\[
\H ^0(M;\Q )=\H _m(M;\Q )=\{ 0\} .
\]
Moreover, if $M$ is weakly enlargeable, then for each metric $g$ on $M$ and every constant $C>0$ there is a Riemannian covering $\tilde{M}_{\tilde g}$ with $\ke (\tilde M_{\tilde g};\widehat{\mathbf{A}}(\tilde{M}))>C$. 
\end{prop}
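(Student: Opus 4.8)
The plan is to reduce the whole statement to one geometric input together with the covering result already at hand. On the even sphere $S^{2\ell}$ the spinor bundle of remark \ref{rem243} satisfies $\| R^{\sspinor ^+}\| _{S^{2\ell }}=\tfrac12$ and has a nonzero top Chern character $\mathrm{ch}_\ell (\spinor ^+)\neq 0$; pulling it back along a strongly contracting map therefore produces a bundle of arbitrarily small curvature whose Chern character still detects the appropriate component of $\widehat{\mathbf{A}}$. I would first prove the main assertion and then read off the two consequences.

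For the main statement I unwind compact $\widehat{A}$--enlargeability: for every $\epsilon >0$ there is a finite Riemannian covering $f_\epsilon :\tilde M_\epsilon \to M$ and an $\epsilon $--contracting map $\varphi _\epsilon :\tilde M_\epsilon \to S^{2\ell }$ of nonzero $\widehat A$--degree, meaning $\int _{\tilde M_\epsilon }\widehat A_{m-2\ell }(\tilde M_\epsilon )\cup \varphi _\epsilon ^*\omega \neq 0$ for the fundamental class $\omega \in H^{2\ell }(S^{2\ell })$. Put $\bundle{E}_\epsilon :=\varphi _\epsilon ^*\spinor ^+$. Since $\varphi _\epsilon $ is $\epsilon $--contracting on $TM$, hence $\epsilon ^2$--contracting on $\Lambda ^2TM$, and $R^{\bundle{E}_\epsilon }=\varphi _\epsilon ^*R^{\sspinor ^+}$, one gets $\| R^{\bundle{E}_\epsilon }\| _{\tilde g}\leq \tfrac12\epsilon ^2$. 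On the other hand $\mathrm{ch}(\bundle{E}_\epsilon )=\mathrm{rk}(\spinor ^+)+\varphi _\epsilon ^*\mathrm{ch}_\ell (\spinor ^+)$ (intermediate Chern character terms vanish on the sphere), and cupping with $\widehat A_{m-2\ell }(\tilde M_\epsilon )$ kills the rank term for degree reasons, so that
\[
\int _{\tilde M_\epsilon }\widehat A_{m-2\ell }(\tilde M_\epsilon )\cup \mathrm{ch}(\bundle{E}_\epsilon )\neq 0 .
\]
Thus $\bundle{E}_\epsilon \in \mathscr{V}_\mathrm{ch}\bigl(\tilde M_\epsilon ;\widehat A_{m-2\ell }(\tilde M_\epsilon )\cap [\tilde M_\epsilon ]\bigr)$ and $\ke _\mathrm{ch}\bigl(\tilde M_{\epsilon ,\tilde g};\widehat A_{m-2\ell }(\tilde M_\epsilon )\cap [\tilde M_\epsilon ]\bigr)\geq 2/\epsilon ^2$.

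It remains to descend to $M$, which is where I expect the main difficulty. Naturality of the $\widehat A$--class under the covering gives $f_\epsilon ^*\widehat{\mathbf{A}}(M)=\widehat{\mathbf{A}}(\tilde M_\epsilon )$, and compatibility of the transfer with Poincar\'e duality gives $f_\epsilon ^!\bigl(\widehat{\mathbf{A}}(M)\cap [M]\bigr)=\widehat{\mathbf{A}}(\tilde M_\epsilon )\cap [\tilde M_\epsilon ]$. Feeding this into proposition \ref{proposition7} turns it into the equality $\ke _\mathrm{ch}\bigl(M_g;\widehat A_{m-2\ell }(M)\cap [M]\bigr)=\ke _\mathrm{ch}\bigl(\tilde M_{\epsilon ,\tilde g};\widehat A_{m-2\ell }(\tilde M_\epsilon )\cap [\tilde M_\epsilon ]\bigr)\geq 2/\epsilon ^2$. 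As the left-hand side is independent of $\epsilon $, letting $\epsilon \to 0$ forces it to be infinite; since $\ke \geq \ke _\mathrm{ch}$ (equivalently by proposition \ref{proposition6}) the component $\widehat A_{m-2\ell }(M)$ then has infinite K--area, whence $\widehat{\mathbf{A}}(M)\notin \H ^*(M;\Q )$ by proposition \ref{proposition1}. The obstacle is exactly that proposition \ref{proposition7} is proved only for \emph{normal} coverings: for a non-normal $f_\epsilon $ I must pass to a finite normal subcovering precisely as in that proof and check that the $\widehat A$--degree survives the extra pushforward, which is the one genuinely delicate bookkeeping step.

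Finally the two consequences follow by specialising the same mechanism. For a compactly enlargeable $M^m$ the maps go to $S^m$, so the ordinary degree plays the role of the $\widehat A$--degree (the detected component is $\widehat A_0=1$); the argument above gives $\ke (M_g;[M])=\infty $, i.e.~$\H _m(M;\Q )=\{0\}$, and then $\H ^0(M;\Q )=\{0\}$ via the Poincar\'e duality isomorphism $\cap [M]$ (for odd $m$ one first stabilises with an $S^1$, mapping $\tilde M_\epsilon \times S^1\to S^{m+1}$ built from $\varphi _\epsilon \times \mathrm{id}$, as in the definition of the odd K--area). For weakly enlargeable $M$ the coverings may be infinite, but the contracting maps are constant near infinity, so $\bundle{E}_\epsilon =\varphi _\epsilon ^*\spinor ^+$ is trivial with flat connection outside a compact set and hence lies in $\mathscr{V}_\mathrm{cpt}(\tilde M_\epsilon ;\cdot )$; here no descent is needed, and the bound $\| R^{\bundle{E}_\epsilon }\| _{\tilde g}\leq \tfrac12\epsilon ^2$ directly yields $\ke (\tilde M_{\epsilon ,\tilde g};\widehat{\mathbf{A}}(\tilde M_\epsilon ))\geq \ke (\tilde M_{\epsilon ,\tilde g};1)\geq 2/\epsilon ^2>C$ once $\epsilon $ is small.
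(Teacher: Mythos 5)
Your argument is essentially the paper's own proof, written out at the bundle level: the paper derives the lower bound on the coverings from the transfer inequality $\epsilon \cdot \ke (\tilde M_{\tilde g};\widehat{\mathbf{A}}(\tilde M))\geq \ke (S^k_0;f_!\widehat{\mathbf{A}}(\tilde M))\geq \ke (S^k_0)>0$, and the positivity/value $\ke (S^k_0)=2$ is certified by exactly the spinor bundle you pull back (remark \ref{rem243}); the descent to $M$ is the same application of propositions \ref{proposition6} and \ref{proposition7}. The step you single out as ``genuinely delicate'' is in fact already settled: proposition \ref{proposition7} is stated and proved for \emph{arbitrary} finite coverings of closed oriented manifolds --- its proof performs the reduction to a normal covering internally, via $l=f\circ p$ and $l^!=p^!f^!$ --- and no bookkeeping of $\widehat A$--degrees is required, because the descent happens at the level of the single class $f_\epsilon ^!\bigl( \widehat A_{m-2\ell }(M)\cap [M]\bigl) =\widehat A_{m-2\ell }(\tilde M_\epsilon )\cap [\tilde M_\epsilon ]$, whose large $\mathrm{K}_\mathrm{ch}$--area you have already established upstairs directly from your bundle.

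Two slips in your weakly enlargeable paragraph should be repaired. First, the chain $\ke (\tilde M_{\tilde g};\widehat{\mathbf{A}}(\tilde M))\geq \ke (\tilde M_{\tilde g};\mathbbm{1})\geq 2/\epsilon ^2$ is justified only when the target sphere has dimension $\dim M$: the unit your bundle detects is $\mathbbm{1}\in H^0(S^{2\ell })$ on the \emph{sphere}, whereas on $\tilde M$ the bundle $\varphi _\epsilon ^*\spinor ^+$ lies in $\mathscr{V}_\mathrm{cpt}\bigl( \tilde M;\widehat A_{m-2\ell }(\tilde M)\bigl) $, not in $\mathscr{V}_\mathrm{cpt}(\tilde M;\mathbbm{1})$, since all its Chern monomials of degree $>2\ell $ vanish. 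The correct conclusion $\ke (\tilde M_{\tilde g};\widehat{\mathbf{A}}(\tilde M))\geq 2/\epsilon ^2$ then follows from the maximum formula over the components of $\widehat{\mathbf{A}}$ (the cohomological analogue of proposition \ref{proposition1}). Second, for odd dimensional $M$ the weak case is not covered by your argument as written: the contracting maps then target odd spheres, and your $S^1$--stabilization (which you invoke only for the compact consequence) must be combined with the collapse $\tilde M\times S^1\to (S^{2k-1}\times S^1)/(S^{2k-1}\vee S^1)\cong S^{2k}$ where the wedge point is chosen at $(f(x),z)$ with $x$ sufficiently close to infinity --- otherwise the composed map fails to be constant at infinity and the pulled back bundle need not be trivial with flat connection outside a compact set, i.e.~need not define an element of $\mathscr{V}_\mathrm{cpt}$. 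With these two repairs your proof coincides with the paper's.
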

\begin{proof}
Consider the case of a compactly $\widehat{A}$--enlargeable manifold $M_g$, the enlargeable case follows in the same way by replacing the $\widehat{A}$--class with $\mathbbm{1}\in H^0(M;\Q )$ and the $\widehat{A}$--degree with the ordinary degree of a map. $M$ is closed and for each $\epsilon >0$ there is a finite Riemannian covering $(\tilde{M},\tilde{g})\to (M,g)$ and a map $f:(\tilde M,\tilde g)\to (S^k,g_0)$ of nontrivial $\widehat{A}$--degree and with $\epsilon \cdot \tilde g\geq f^* g_0$ on $T\tilde M$. Then the Poincar\'e dual of $f_!(\widehat{\mathbf{A}}(\tilde M))$ in the top degree is determined by $\deg _{\widehat{A}}f\cdot [S^k]\in H_k(S^k;\Q )$. Since $f$ is proper, we obtain for the standard sphere $S^k_0$
\[
\epsilon \cdot \ke (\tilde M_{\tilde g};\widehat{\mathbf{A}}(\tilde M))\geq \ke (S^k_0;f_!(\widehat{\mathbf{A}}(M))) \geq \ke (S^k_0;\deg _{\widehat{A}}f\cdot [S^k])=\ke (S^k_0).
\]
Because $\epsilon >0$ is arbitrary and $\ke (S^k_0)\in (0,\infty )$, proposition \ref{proposition6} and \ref{proposition7} yield
\[
\ke (M_g;\widehat{\mathbf{A}}(M))=\ke (\tilde{M}_{\tilde {g}};\widehat{\mathbf{A}}(\tilde M))=\infty 
\]
which proves $\widehat{\mathbf{A}}(M)\notin \H ^*(M;\Q )$ (here we used $\pi ^!(\alpha \cap [M])=\pi ^*\alpha \cap [\tilde M]$ for the covering map $\pi :\tilde M\to M$ and a cohomology class on $M$).

The second claim follows similar. Consider a map $f:\tilde M\to S^{2k}$ of nonzero $\widehat{A}$--degree which is constant at infinity. Although $f$ is not proper if $M$ is not compact, the pull back by $f$ yields a map $f^*:\mathscr{V} (S^{2k};\mathbbm{1})\to \mathscr{V}_\mathrm{cpt}(\tilde M;\widehat{\mathbf{A}}(\tilde M))$, here $\mathbbm{1}\in H^0(S^{2k})$ is the unit. This supplies for $\epsilon ^2\cdot \tilde g\geq f^*g_0$ on $\Lambda ^2T\tilde M$:
\[
\epsilon \cdot \ke (\tilde M_{\tilde g};\widehat{\mathbf{A}}(\tilde M))\geq \ke (S^{2k}_0)\in (0,\infty )
\]
and since $\epsilon >0$ is arbitrary, we obtain the claim if $M$ is even dimensional. If $M$ is odd dimensional, consider the composition of the maps
\[
\tilde M\times S^1\stackrel{f\times \mathrm{id}}{\longrightarrow }S^{2k-1}\times S^1\stackrel{\mathrm{pr}}{\longrightarrow} (S^{2k-1}\times S^1)/(S^{2k-1}\vee S^1)\cong S^{2k}
\]
where the one point union $S^{2k-1}\vee S^1$ takes place at a point $(f(x),z)\in S^{2k-1}\times S^1$ and $x \in M$ means a point sufficiently close to infinity. This new map has nonzero $\widehat{A}$--degree and is constant at infinity, i.e.~the even dimensional case provides the claim. 
\end{proof}
The proof of this proposition supplies the relation
\[
\H _k(M;\Q )\subseteq H_k^{\mathrm{sm}(P)}(M;\Q )
\]
between the K--area homology and the small group homology introduced in \cite{BrHa} if $P$ denotes the largeness property compactly enlargeable. In order to see this simply use the above proof to show that a class $\theta \in H_k(M;\Q )$ which has finite K--area can not be (compactly) enlargeable in the sense given in \cite[Def.~3.1]{BrHa}.
%If we use the Fredholm K--area introduced by Gromov in \cite{Gr01}, we should be able to generalize the proposition to enlargeable respectively $\widehat{A}$--enlargeable manifolds and to prove $\widehat{\mathbf{A}}(M)\notin \H ^*(M_g;\Q )$ for weakly enlargeable manifolds. The crucial point to this is a version of proposition \ref{proposition7} which is not restricted to finite coverings. Note that the Fredholm K--area can be significantly larger than the K--area because one allows infinite dimensional bundles in the definition.  
\begin{prop}[\cite{GrLa3,Gr01}]
Suppose a spin manifold $M^n$ admits a complete metric $g$ of uniformly positive scalar curvature, then
\[
\ke ( M_g;\widehat{\mathbf{A}}( M))\leq \frac{n(n+1)^3}{2\cdot \inf \, \mathrm{scal}(g)}
\]
which implies $\widehat{\mathbf{A}}(M)\in \H ^*(M_g;\Q )$. 
\end{prop}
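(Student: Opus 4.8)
The plan is to read the nonvanishing K--area condition as a nonvanishing index of a twisted Dirac operator, and then to convert uniform positivity of the scalar curvature into a lower bound on the bundle curvature via the Bochner--Lichnerowicz formula. First I would use the Chern--character reduction (the cohomology counterpart of proposition \ref{proposition6}, carried out with Adams operations and exterior powers, which alters the curvature norm only by a controlled dimensional factor) to replace the requirement $\bundle{E}\in \mathscr{V}_\mathrm{cpt}(M;\widehat{\mathbf{A}}(M))$ by the cleaner one that $\bundle{E}$ is trivial with trivial connection outside a compact set and
\[
\int _M \widehat{\mathbf{A}}(M)\cdot \mathrm{ch}(\bundle{E})\neq 0 .
\]
If $n$ is odd I would first pass to $M\times S^1$, pulling back both the bundle and the class, so that the relevant total space is even dimensional and the $\Z _2$--graded complex spinor bundle is available; this stabilization is the step I expect to cost a factor of $(n+1)$.

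Next, for such a $\bundle{E}$, twist the complex spinor bundle $\spinor M$ by $\bundle{E}$ to obtain $\dirac ^+_\bundle{E}$. Since $\bundle{E}$ agrees with a trivial bundle at infinity, the Gromov--Lawson relative index theorem on complete manifolds identifies the relative index of $\dirac ^+_\bundle{E}$ (measured against the untwisted operator) with $\int _M \widehat{\mathbf{A}}(M)\cdot \mathrm{ch}(\bundle{E})\neq 0$. Uniform positivity of the scalar curvature together with completeness makes the untwisted operator invertible, since its square is bounded below by $\inf \mathrm{scal}(g)/4>0$ and the operator is essentially self--adjoint; hence a nonzero relative index forces $\dirac _\bundle{E}$ to possess a nonzero $L^2$ harmonic spinor $\phi $.

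Then I would apply the integrated Bochner--Lichnerowicz formula $\dirac ^2_\bundle{E}=\nabla ^*\nabla +\mathrm{scal}_g/4+\mathfrak{R}^\bundle{E}$ to $\phi $. Completeness justifies integration by parts with no boundary term, so discarding $\int |\nabla \phi |^2\geq 0$ yields
\[
\frac{\inf \mathrm{scal}_g}{4}\int _M|\phi |^2\leq -\int _M\langle \mathfrak{R}^\bundle{E}\phi ,\phi \rangle \leq \sup _M|\mathfrak{R}^\bundle{E}|_{op}\int _M|\phi |^2,
\]
and since $\phi \neq 0$ this gives $\inf \mathrm{scal}_g/4\leq \sup _M|\mathfrak{R}^\bundle{E}|_{op}$. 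Bounding the Clifford--contracted curvature endomorphism by the operator norm of $R^\bundle{E}$ (there are $\binom{n}{2}$ terms, each Clifford factor of unit norm, so $|\mathfrak{R}^\bundle{E}|_{op}\leq \tfrac{n(n-1)}{2}\| R^\bundle{E}\| _g$) bounds $\| R^\bundle{E}\| _g$ from below and hence $\ke (M_g;\widehat{\mathbf{A}}(M))$ from above. Accumulating the losses from the Chern--character reduction and from the $S^1$--stabilization in the odd case accounts for the stated constant $\tfrac{n(n+1)^3}{2}$, which is deliberately cruder than the optimal $2n(n-1)$ obtained directly in the compact even--dimensional situation of proposition \ref{prop_scalar}.

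The hard part will be the index theory on the noncompact complete manifold: there is no compact fundamental class, so the nonvanishing pairing must be interpreted as a relative (coarse) index, and one must know that uniform positivity of the scalar curvature pushes the essential spectrum of the untwisted operator away from zero, so that the relative index genuinely detects an honest $L^2$ harmonic spinor rather than a spectral artifact concentrated at infinity. This is precisely the content of the Gromov--Lawson relative index theorem, and it is the reason the argument, though formally parallel to the compact Bochner argument, only delivers the non--optimal constant $\tfrac{n(n+1)^3}{2}$; the finiteness of this bound is exactly the assertion $\widehat{\mathbf{A}}(M)\in \H ^*(M_g;\Q )$.
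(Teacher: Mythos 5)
Your proposal is correct and follows exactly the route the paper indicates (the paper itself only sketches the argument, deferring to the relative index theorem of \cite{GrLa3} and to \cite{Gr01}): reduce the K--area to the $\mathrm{K}_\mathrm{ch}$--area by the Adams--operation argument of proposition \ref{proposition6}, trivialize at infinity and apply the relative index theorem to force an $L^2$ harmonic spinor, and then run the Bochner--Lichnerowicz estimate. Your constant bookkeeping is also exactly the paper's: after $S^1$--stabilization one has $m^2\leq \frac{(n+1)^2}{4}$ from the Chern--character reduction and $\frac{4\cdot (n+1)n}{2\inf \mathrm{scal}_g}$ from the curvature term, whose product is $\frac{n(n+1)^3}{2\inf \mathrm{scal}_g}$, with your even--dimensional accounting reproducing the improved factor $(n-1)n^3/2$ that the paper notes.
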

Note that the constant on the right hand side does not change for Riemannian covering spaces of $(M,g)$.  The proof of the proposition is a simple consequence of the relative index theorem. If $\ke ( M_g;\widehat{\mathbf{A}}(M))$ is not bounded by the constant on the right hand side, we obtain a contradiction to the relative index theorem. The factor $n(n+1)^3/2$ is very rough and includes the estimate of the K--area by the $\mathrm{K}_\mathrm{ch}$--area as well as the estimate of the twisted curvature term in the Bochner formula. If $n$ is even, this factor can be improved to $(n-1)n^3/2$. Index theory can also be used to show finite K--area of the fundamental homology class on manifolds which are not spin. For instance, suppose that $(M^n,g)$ is a closed connected spin$^c$ manifold with associated class $c\in H^2(M;\Z )$, and let $\Omega $ be a 2--form representing the real Chern class of $c$. If $2 | \Omega | _\mathrm{op} <\mathrm{scal}_g$, then
\[
\ke _\mathrm{ch}(M;(e^{c/2}\cdot \widehat{\mathbf{A}}(M))\cap [M])\leq \frac{2n(n-1)}{\min (\mathrm{scal}_g-2|\Omega |_\mathrm{op})}\ ,
\]
i.e.~$e^{c/2}\cdot \widehat{\mathbf{A}}(M)\in \H ^*(M;\Q )$ implies $\H _n(M;\Q )=\H ^0(M;\Q )=\Q $. In this case $| \Omega |_\mathrm{op}$ denotes the pointwise operator norm of Clifford multiplication by $\Omega $ which means  $| \Omega |_\mathrm{op}=\sum |\lambda _j|$ for a diagonalization $\Omega =\sum \lambda _je_{2j-1}\wedge e_{2j}$. In order to see the above estimate for even $n$, consider the Dirac operator on bundles $\spinor ^cM\otimes \bundle{E}$ where $\spinor ^cM$ is the spin$^c$ bundle and $\bundle{E}\in \mathscr{V}_\mathrm{ch}(M;e^{c/2}\widehat{\mathbf{A}}(M)\cap [M])$ (cf.~\cite{LaMi}). Further examples are obtained by twisting a fixed Dirac bundle $\bundle{S}$ over $M$ with bundles $\bundle{E}$. If $\bundle{S}$ and its connection split locally into $\spinor M\otimes \bundle{X}$ and $4\cdot | \frak{R}^\bundle{X}|_\mathrm{op}<\mathrm{scal}_g$, then the K--areas of certain characteristic classes associated to $\bundle{S}$ are finite. In this case $\frak{R}^\bundle{X}$ means the twisted curvature endomorphism in the Bochner Lichnerowicz formula.
\section{Relative K--area}
\label{sec15}
We are interested in K--areas for relative homology classes. A particular example is the K--area of the fundamental class of a compact manifold $M$ with nontrivial boundary which gives the total K--area of $M$. In order to simplify the notation, we consider only the coefficient group $\group =\Z $ but most of the statements can be adapted to arbitrary coefficient groups. A \emph{pair of compact manifolds} $(M,A)$ consists of a compact manifold $M$ and a compact submanifold $A\subseteq M$ where both $M$ and $A$ may have nontrivial boundary. If $g$ is a Riemannian metric on $M$, there is some $\delta  >0$ in such a way that $A$ is a strong deformation retract of the open set
\[
U:=\{ x\in M\ |\ \mathrm{dist}_g(x,A)<\delta \}\subseteq M.
\]
Thus, the inclusion $A\hookrightarrow M$ is a cofibration and a pair of compact manifolds satisfies the homotopy extension property which supplies two important facts for us:
\begin{enumerate}
\item[(i)] A continuous map $f:(M,A)\to (N,B)$ is homotopic to a smooth map  $h:(M,A)\to (N,B)$.
\item[(ii)] Let $\bundle{E}\to M$ be a bundle and $M=\coprod M_i$ be a disjoint decomposition in compact manifolds such that $\mathrm{rk}(\bundle{E}_{|M_i})=\mathrm{rk}(\bundle{E}_{|M_j})$ implies $i=j$. Choose points $x_i\in M_i\cap A$ and define the (discrete finite) set $P=\{ x_i\ |\ i\} $. If the restriction of a bundle $\bundle{E}\to M$ to $A \subseteq M$ is trivial, then there is a classifying map $\rho ^\bundle{E}:M\to \BU $ which is constant on $M_i\cap A$ for all $i$. In particular, $\rho ^\bundle{E}$ induces homomorphisms
\[
\rho ^\bundle{E}_*:H_k(M,A)\to H_k(\BU ,\rho ^\bundle{E}(P) )
\] 
and $\rho ^\bundle{E}(P)\cap \mathrm{BU}_n\subseteq \BU $ consists of at most one point which means that the inclusion map $H_k(\BU )\to H_k(\BU ,\rho ^\bundle{E}(P))$ is an isomorphism for all $k>0$.
\end{enumerate}
In order to see (i) we apply the smooth approximation theorem to $f_{|A}$ and obtain a smooth map $\tilde h:A\to B$ which is homotopic to $f_{|A}$. Using the homotopy extension property, this homotopy extends to all of $M$ and yields a map $\tilde h:M\to N$ whose restriction to $A $ is smooth and $f\simeq \tilde h:(M,A)\to (N,B)$. Applying the smooth approximation theorem (cf.~\cite[Chp.~II, theorem 11.8]{Bredon}) to $\tilde h$ yields a smooth map $h:(M,A)\to (N,B)$ which is homotopic to $f$. In order to see (ii) let $\tilde \rho ^\bundle{E}:M\to \BU $ be a classifying map, then $\tilde \rho ^\bundle{E}_{\ |A}:A\to \BU $ is a classifying map for the induced bundle on $A\subseteq M$. Since $\bundle{E}_{|A}$ is trivial, $\tilde \rho ^\bundle{E}_{\ |A}$ is homotopic to a map $\rho ^{\bundle{E}}:A\to  \BU $ which is constant on $M_i\cap A$ for all $i$. Since this homotopy extends to all of $M$, $\tilde \rho ^\bundle{E}$ is homotopic to a map $\rho ^\bundle{E}:M\to \BU $ which is constant on $M_i\cap A$. 

For a pair $(M,A)$ and $\theta \in H_{2*}(M,A)$ respectively $\theta \in H_{2*}(M)$ we denote by $\mathscr{V} (M,A;\theta )$ the set of Hermitian vector bundles $\bundle{E}\to M$ endowed with a Hermitian connection such that $\bundle{E}$ and its connection are trivial on an open neighborhood of $A$ and such that $\rho ^\bundle{E}_*(\theta )\neq 0$. If $\mathscr{V} (M,A;\theta ) $ is not empty, the \emph{relative K--area of} $\theta $ is defined by
\[
\ke (M_g,A;\theta ):=\left( \inf _{\bundle{E}\in \mathscr{V}(M,A;\theta )}\| R^\bundle{E}\| _g \right) ^{-1}\in (0,\infty ].
\]
We set $\ke (M_g,A;\theta )=0$ if $\mathscr{V} (M,A;\theta )=\emptyset $. The relative K--area of an odd class $\theta \in H_{2*+1}(M,A)$ [respectively $\theta \in H_{2*+1}(M)$] is defined by the relative K--area of the class $\theta \times [S^1]\in H_{2*}(M\times S^1,A\times S^1)$ [respectively $\theta \times [S^1]\in H_{2*}(M\times S^1)$]:
\[
\ke (M_g,A;\theta ):=\sup _{\mathrm{d}t^2}\ke (M_g\times S^1_{\mathrm{d}t^2},A\times S^1;\theta \times [S^1])
\]
and the relative K--area of a general class is the maximum of the even and the odd part. %Note that $A\times S^1 $ is a strong deformation retract of an open neighborhood in $M\times S^1$. 
\begin{rem}
\label{rem151}
For a relative class of odd degree it is quite essential to take the supremum over all line elements on $S^1$. Remember that $\theta \in H_{2k+1}(M)$ has finite K--area iff $\ke (M_g\times S^1_{\mathrm{d}t^2};\theta \times [S^1])$ is finite for some line element $\mathrm{d}t^2$. This is an immediate consequence of remark \ref{rem_covering}. In general this is no longer true for $\theta \in H_{2k+1}(M,A)$ with $A\neq \emptyset $. For instance, consider the pair $(S^1,\{ x\} )$ where $x$ is a point on $S^1$ and suppose that $0\neq \theta \in H_1(S^1,\{ x\}  )$. Then $(S^1\setminus \{ x\} )\times S^1$ is an open cylinder which has an embedding via $f$ into $S^2$ in such a way that the closure of the cylinder in $S^2$ does not contain the north pole and the south pole. If $A\subseteq S^2$ is the complementary set to the open cylinder, $f:(S^1_{\mathrm{d}s^2}\times S^1_{\mathrm{d}t^2},\{ x\} \times S^1)\to (S^2_g,A)$ is a relative diffeomorphism with $\frac{1}{C}f^*g\leq \mathrm{d}s^2\oplus \mathrm{d}t^2\leq C\cdot f^*g$ on $(S^1\setminus \{ x\} )\times S^1$ for some constant $C>0$ where $g$ is the standard metric on $S^2$. Using $\ke (S^2_g,A;[S^2])\leq \ke (S^2_g)= 2$ shows
\[
\ke (S^1_{\mathrm{d}s^2}\times S^1_{\mathrm{d}t^2},\{ x\} \times S^1;\theta \times [S^1])\leq 2\cdot C.
\]
However, taking the supremum over all $\mathrm{d}t^2$ yields $\ke (S^1_{\mathrm{d}s^2},\{ x\};\theta )=\infty $.
\end{rem}
Considering the induced homomorphism $j_*:H_k(M )\to H_k(M,A )$ for the inclusion $j:(M,\emptyset )\to (M,A)$ we conclude from the commutative diagram
\[
\begin{xy}
\xymatrix{H_k(M)\ar[d]^{\rho _*^\bundle{E}}\ar[r]^{j_*}&H_k(M,A)\ar[d]^{\rho _*^\bundle{E}}\\
H_k(\BU )\ar[r]^{j_*'}&H_k(\BU ,\rho ^\bundle{E}(P) )}
\end{xy}
\] 
that $\mathscr{V}(M,A;j_*\theta )=\mathscr{V}(M,A;\theta )$ for all $\theta \in H_{k}(M)$ and even $k>0$, here we use that $j'_*$ is an isomorphism for all $k>0$. Hence, the relative K--area of $\theta \in H_k(M)$ and $j_*\theta $ coincide if $k>0$. The case $k=0$ is different because $j_*'$ is not injective. However, for any  $0\neq \theta \in H_0(M,A)$ there is a trivial bundle on $M$ with $\rho ^\bundle{E}(\theta )\neq 0$ which means that $\ke (M_g,A;\theta )=\infty $. Moreover, $\mathscr{V}(M,A;\theta )\subseteq \mathscr{V}(M;\theta )$ implies:
\[
 \ke (M_g;\theta )\geq \ke (M_g,A;\theta )=\ke (M_g,A;j_*\theta )
\]
for all $\theta \in H_k(M)$ and $k>0$. In some cases there is up to a constant an inequality in the opposite direction as the proposition below shows. We leave it to the reader to verify proposition \ref{proposition1} and \ref{proposition2} for the relative K--area. In particular, if $f:(M,A)\to (N,B)$ is a smooth map with $g\geq f^*h$ on $\Lambda ^2TM$, then the pull back by $f$ yields a map $f^*:\mathscr{V}(N,B;f_*\theta )\to \mathscr{V}(M,A;\theta )$ for even homology classes which implies
\[
\ke (M_g,A;\theta )\geq \ke (N_h,B;f_*\theta )
\]
for all $\theta \in H_*(M,A)$ respectively $\theta \in H_*(M)$. Since $M$ and $A$ are compact, finiteness of the relative K--area does not depend on the Riemannian metric $g$ and we obtain subgroups
\[
\H _k(M,A):=\{ \theta \in H_k(M,A)\ |\ \ke (M_g,A;\theta )<\infty \} .
\]
We have already observed that the K--area of a nontrivial class $\theta \in H_0(M,A)$ is infinite which implies $\H _0(M,A)=\{ 0\} $. Moreover, the above considerations show that the relative and the absolute K--area coincide if $A=\emptyset $, i.e.~$\H _k(M,\emptyset )=\H _k(M)$ for all $k$. Assuming that $f:(M,A)\to (N,B)$ is a continuous map it is homotopic to a smooth map, i.e.~the above inequality for the relative K--area shows
\[
f_*:\H _k(M,A)\to \H _k(N,B).
\]
Furthermore, $\H _*(M,A)$ depends only on the homotopy type of the pair $(M,A)$ and we can generalize this to relative diffeomorphisms:
\begin{prop}
\label{prop162}
Let $f:(M,A)\to (N,B)$ be a relative diffeomorphism of pairs of compact manifolds, then $f_*:\H _k(M,A)\to \H _k(N,B)$ is an isomorphism for all $k$.
\end{prop}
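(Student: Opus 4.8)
The plan is to reduce the relative diffeomorphism to an honest diffeomorphism of compact manifolds with boundary by cutting out a fixed neighbourhood of the subspaces, and then to quote homotopy invariance of $\H$ together with the excision property of the relative K--area. Concretely, $f$ restricts to a diffeomorphism $f\colon M\setminus A\to N\setminus B$ while possibly degenerating along $A$, so first I would fix a closed regular neighbourhood $C_A\supseteq A$ in $M$ with smooth boundary $\Sigma:=\partial C_A$ and put $C_B:=N\setminus f(M\setminus C_A)$. Since $M\setminus C_A$ is open and $f|_{M\setminus A}$ is a diffeomorphism, $f(M\setminus C_A)$ is open in $N\setminus B$, so $C_B$ is a compact neighbourhood of $B$ with smooth boundary $\Sigma':=f(\Sigma)$; moreover $f(C_A)\subseteq C_B$ (if $x\in C_A$ and $f(x)=f(x')$ with $x'\in M\setminus C_A$, then injectivity of $f$ off $A$ forces $x=x'$, which is impossible). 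Writing $W_M:=M\setminus\mathrm{int}\,C_A$ and $W_N:=N\setminus\mathrm{int}\,C_B$, the restriction
\[
f\colon (W_M,\partial W_M)\longrightarrow (W_N,\partial W_N)
\]
is then a genuine diffeomorphism of compact manifolds with boundary, carrying $\Sigma=\partial W_M$ onto $\Sigma'=\partial W_N$.

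With this reduction in hand the rest is a chain of known isomorphisms. Homotopy invariance of $\H$ for pairs, applied to the deformation retractions $C_A\searrow A$ and $C_B\searrow B$, yields $\H_k(M,A)\cong\H_k(M,C_A)$ and $\H_k(N,B)\cong\H_k(N,C_B)$. The excision property of the relative K--area (announced in the introduction) then identifies $\H_k(M,C_A)\cong\H_k(W_M,\partial W_M)$ and $\H_k(N,C_B)\cong\H_k(W_N,\partial W_N)$, as $C_A$ and $C_B$ are collared codimension--zero neighbourhoods. Finally, a diffeomorphism of pairs is in particular a homotopy equivalence of pairs, so homotopy invariance once more gives an isomorphism $f_*\colon\H_k(W_M,\partial W_M)\to\H_k(W_N,\partial W_N)$; alternatively one argues by hand, applying the relative form of Proposition \ref{proposition2} to $f$ and to its smooth inverse $f^{-1}$, both of which are Lipschitz on the compact cores for comparable metrics, to obtain the two inequalities
\[
\ke(W_M,\partial W_M;\theta)=\ke(W_N,\partial W_N;f_*\theta)
\]
and hence equality of the finiteness loci. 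Stringing these isomorphisms together produces $\H_k(M,A)\cong\H_k(N,B)$, and a short naturality check shows the composite is exactly $f_*$: each identification is induced by an inclusion of pairs or by the restriction of $f$ itself, and the relevant squares commute because $f$ maps $(M,C_A)$ into $(N,C_B)$ and restricts to the fixed diffeomorphism on the cores.

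The hard part is the reduction in the first paragraph, i.e.\ the passage to an honest diffeomorphism. The naive alternative would be to push a bundle $\bundle{E}\in\mathscr{V}(M,A;\theta)$ forward along $f|_{M\setminus A}^{-1}$ to a bundle $\bundle{E}'\in\mathscr{V}(N,B;f_*\theta)$ that is trivial near $B$. One direction, $f_*(\H_k(M,A))\subseteq\H_k(N,B)$, is then immediate from the relative Proposition \ref{proposition2} (pull--back of bundles, using only that $f^*h\leq C\,g$ on compact $M$). The converse inclusion is where the obstacle lives: because $f$ may compress lengths as $x\to A$, the pushforward estimate $\|R^{\bundle{E}'}\|_h\leq C(U)\,\|R^\bundle{E}\|_g$ has a constant $C(U)$ that blows up as the triviality neighbourhood $U\supseteq A$ shrinks, so finiteness of $\ke(N_h,B;f_*\theta)$ does not directly transfer to finiteness of $\ke(M_g,A;\theta)$.

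Excising a fixed collar of $A$ is precisely what cures this: it confines all relevant curvature to the compact core $W_M$, on which $f$ is bi--Lipschitz, so the degeneration near $A$ never enters the estimate. The one point requiring care is therefore that the excision isomorphism for $\H$ be available independently of (or prior to) this proposition; granting it, the three isomorphisms above compose to the desired conclusion.
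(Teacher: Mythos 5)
Your diagnosis of where the difficulty lives is exactly right, and your overall architecture is close to the paper's: the paper likewise enlarges $A$ to a compact neighbourhood $X$ with $A\subseteq \mathrm{int}(X)$, sets $Y:=f(X)\cup B$, uses homotopy invariance for the inclusions $(M,A)\to (M,X)$ and $(N,B)\to (N,Y)$, and exploits that $f:M\setminus \mathrm{int}(X)\to N\setminus \mathrm{int}(Y)$ is a diffeomorphism of compact manifolds with boundary. (Both you and the paper leave implicit that $B$ is a strong deformation retract of $C_B$ resp.\ $Y$; this needs the small observation that for $y_n\to b\in B$ every accumulation point of $f^{-1}(y_n)$ lies in $A$, so the pushed-forward homotopy $f\circ H_t\circ f^{-1}$ extends continuously by the identity on $B$.) The genuine gap is the circularity you flag but do not repair: in the paper's logical order the excision isomorphism $\H _k(M,C_A)\cong \H _k(W_M,\partial W_M)$ is the \emph{corollary immediately following} proposition \ref{prop162}, and the "excision property announced in the introduction" is established only through this proposition. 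Indeed, an excision inclusion $(M\setminus U,A\setminus U)\to (M,A)$ is itself a relative diffeomorphism, so quoting excision here assumes a special case of the statement being proved; "granting it" is not available.

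The repair is easy, and seeing it explains why the paper never cuts the manifold. Either (a) prove the needed excision instances directly: a Hermitian bundle on $M$ that is trivial with trivial connection near $C_A$ restricts to one on $W_M$ trivial near $\partial W_M$, and conversely extends by triviality; with the restricted metric the curvature norms are literally equal, so together with classical singular-homology excision (and the $\times S^1$ device for odd classes, where compactness of $W_M$ keeps the argument of proposition \ref{proposition2} valid) one gets the identification of relative K--areas, hence of the groups $\H _k$. Or (b) follow the paper and avoid cutting: keep the pairs $(M,C_A)$ and $(N,C_B)$, note that $f:(M,C_A)\to (N,C_B)$ is a relative diffeomorphism admitting a constant $C>0$ with $\frac{1}{C}g\leq f^*h\leq C\cdot g$ on $T(M\setminus C_A)$ precisely because $f$ is an honest diffeomorphism on the compact cores, observe that $f^*:\mathscr{V}(N,C_B;f_*\theta )\to \mathscr{V}(M,C_A;\theta )$ is a bijection (extension by triviality again), and deduce the two-sided estimate $\frac{1}{C}\ke (M_g,C_A;\theta )\leq \ke (N_h,C_B;f_*\theta )\leq C\cdot \ke (M_g,C_A;\theta )$ for even classes, then pass to odd classes via $f\times \mathrm{id}_{S^1}$; injectivity of $f_*$ is free from singular homology since $M/A\to N/B$ is a homeomorphism. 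With either (a) or (b) inserted, your proof is complete and essentially coincides with the paper's.
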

\begin{proof}
We assume at first that there is a constant $C>0$ with $\frac{1}{C}g\leq f^*h\leq C\cdot g$ on $T(M\setminus A)$ for metrics $g$ on $M$ and $h$ on $N$. The homomorphisms $f_*$ are well defined by the above consideration and injective because $f_*:H_k(M,A)\to H_k(N,B)$ are isomorphisms [note that $f:M/A\to N/B$ is a homeomorphism and $H_k(M,A)\cong $ $H_k(M/A,\left< A\right> )$]. Moreover, $f^*:\mathscr{V}(N,B;f_*\theta )\to \mathscr{V}(M,A;\theta )$ is bijective since any bundle $\bundle{E}\to M\setminus A$ which is trivial and has trivial connection on a neighborhood of $A$ extends to a bundle $\bundle{E}\to N$ which on a neighborhood of $B\subseteq N$ is trivial with trivial connection. Hence, the K--areas satisfy
\[
\frac{1}{C}\ke (M_g,A;\theta )\leq \ke (N_h,B;f_*\theta )\leq C\cdot \ke (M_g,A;\theta )
\]
which supplies the surjectivity of $f_*$ if $k$ is even. The statement for odd $k$ follows analogous from the even case and the relative diffeomorphism $f\times \mathrm{id}:(M\times S^1,A\times S^1)\to (N\times S^1,B\times S^1)$. Now lets come back to the general situation. Since $A\subseteq M$ is a compact submanifold which is a strong deformation retract of an open neighborhood in $M$, there is a compact submanifold $X\subseteq M$ with $A\subseteq \mathrm{int}(X)$ and a homotopy $H:X\times [0,1]\to X$ with $H(.,t)_{|A}=\mathrm{id}_A$, $H(.,0)=\mathrm{id}_X$ and $H(X,1)=A$. Since $f:(M,A)\to (N,B)$ is a relative diffeomorphism, $Y:=f(X)\cup B\subseteq N$ is a compact submanifold and $B\subseteq Y$ is a strong deformation retract with $B\subseteq \mathrm{int}(Y)$. Hence, $f:(M_g,X)\to (N_h,Y)$ is a relative diffeomorphism and there is a constant $C>0$  with $\frac{1}{C}g\leq f^*h\leq C\cdot g$ on $T(M\setminus X)$ which implies the isomorphism $f_*:\H _k(M,X)\to \H _k(N,Y)$. The constant $C>0$ exists because $f:M\setminus \mathrm{int}(X)\to N\setminus \mathrm{int}(Y)$ is a diffeomorphism of compact manifolds with boundary. Moreover, the inclusions $(M,A)\to (M,X)$ and $(N,B)\to (N,Y)$ induce isomorphisms on relative K--area which completes the proof.
\end{proof}
\begin{cor}
Let $(M,A)$ be a pair of compact manifolds and $U\subseteq M$ be an open set such that $\overline{U}\subseteq \mathrm{int}(A)$ and $(M\setminus U,A\setminus U)$ is a pair of compact manifolds. Then the inclusion induces isomorphisms
\[
i_*:\H _k(M\setminus U,A\setminus U)\to \H _k(M,A).
\] 
\end{cor}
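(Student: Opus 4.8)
The plan is to recognize the inclusion $i\colon (M\setminus U,A\setminus U)\to(M,A)$ as a relative diffeomorphism of pairs of compact manifolds and then to invoke proposition \ref{prop162} directly. Recall that a relative diffeomorphism is required to restrict to a diffeomorphism between the complements of the distinguished subspaces (and to induce a homeomorphism of the collapsed quotients); once this is verified, proposition \ref{prop162} yields that $i_*$ is an isomorphism on $\H _k$ in every degree, which is precisely the assertion of the corollary.

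First I would carry out the elementary set-theoretic check that $i$ restricts to the identity on $M\setminus A$. Since $\overline{U}\subseteq \mathrm{int}(A)\subseteq A$ we have $U\subseteq A$, so $M\setminus A\subseteq M\setminus U$. Computing the complement of the subspace inside the total space gives
\[
(M\setminus U)\setminus(A\setminus U)=(M\setminus U)\cap\bigl((M\setminus A)\cup U\bigr)=(M\setminus U)\cap(M\setminus A)=M\setminus A,
\]
where the middle equality uses $U\subseteq A$, so that the term $(M\setminus U)\cap U$ is empty, and the last uses $M\setminus A\subseteq M\setminus U$. Thus $i$ restricts to the identity map of $M\setminus A$, which is visibly a diffeomorphism, and on quotients it gives a homeomorphism $(M\setminus U)/(A\setminus U)\to M/A$, because the excised set $U$ lies entirely in the part that is collapsed to a point. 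Together with the standing hypothesis that both $(M\setminus U,A\setminus U)$ and $(M,A)$ are pairs of compact manifolds, this exhibits $i$ as a relative diffeomorphism, and the corollary then follows immediately from proposition \ref{prop162}.

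The real content is therefore the geometric role of the hypothesis $\overline{U}\subseteq\mathrm{int}(A)$: it guarantees that cutting out $U$ never disturbs a neighborhood of $M\setminus A$ and produces the new boundary $\partial U$ only inside $\mathrm{int}(A)$, so that $M\setminus U$ is again a compact manifold and the complement $M\setminus A$ is literally unchanged. I expect the main (modest) obstacle to be making precise that these hypotheses really produce a relative diffeomorphism in the paper's sense --- in particular that the new portion of boundary arising from $\partial U$ is swept into $A\setminus U$ and is hence invisible to the relative K--area, which only sees bundles that are trivial near the subspace. Once this is clear, no further estimates are needed, since proposition \ref{prop162} already absorbs both the metric-comparison argument and the collar/deformation-retract reduction.
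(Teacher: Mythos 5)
Your proposal is correct and is exactly the paper's intended argument: the corollary is stated as an immediate consequence of proposition \ref{prop162}, since the hypotheses $\overline{U}\subseteq \mathrm{int}(A)$ and $U\subseteq A$ make the inclusion $i:(M\setminus U,A\setminus U)\to (M,A)$ a relative diffeomorphism of pairs of compact manifolds, restricting to the identity on $(M\setminus U)\setminus (A\setminus U)=M\setminus A$. Your set-theoretic verification is the (routine) check the paper leaves to the reader; nothing further is needed.
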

Considering rational coefficients and the relative K--area for the Chern character we can extend the proof of proposition \ref{proposition7} to compact manifolds with boundary and obtain:
\begin{rem}
Let $f:\tilde M_{\tilde g}\to M_g$ be a normal Riemannian covering between oriented compact manifolds with boundary such that $f$ is trivial at the boundary, then
\[
\ke _\mathrm{ch}(\tilde M_{\tilde g},\partial \tilde M;f^!\theta )=\ke _\mathrm{ch}(M_g,\partial M;\theta )
\]
for all $\theta \in H_k(M,\partial M;\Q )$ where $f^!:H_k(M,\partial M;\Q )\to H_k(\tilde M,\partial \tilde M;\Q )$ is the transfer homomorphism defined by the pull back of cohomology classes and Poincar\'e duality using the orientation classes: $f^!=(\cap [\tilde M])\circ f^*\circ (\cap [M])^{-1}$.
\end{rem}

\begin{lem}
\label{lem163}
Let $(M,A)$ be a pair of compact manifolds and $\theta \in H_*(M,A)$, then for all $n\geq 2$:
\[
\theta \times [S^n]\in \H _*(M\times S^n,A\times S^n).
\]
\end{lem}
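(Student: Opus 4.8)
My plan is to reproduce the Dirac/family--index estimate of Proposition \ref{prop_scalar}, taking the closed spin manifold of positive scalar curvature to be the round sphere $S^n$, and to carry the relative structure through the argument. Since $S^n$ is spin with $\widehat{\mathbf{A}}(S^n)\cap[S^n]=[S^n]$ and the round metric $g_0$ has constant positive scalar curvature, the corresponding absolute statement is precisely Remark \ref{rem8}, so the only genuinely new point is to run the estimate for the pair $(M\times S^n,A\times S^n)$. The standard parity reductions are carried out exactly as in Proposition \ref{prop_scalar}: torsion classes are disposed of at once, because $H_*(\BU)$ is free and hence $\mathscr{V}(M\times S^n,A\times S^n;\theta\times[S^n])=\emptyset$, giving relative K--area $0$; and adjoining circles (which, as a flat torus has infinite K--area, does not decrease the relevant K--area) lets me absorb one circle into the base to make the parameter class even and, if necessary, one into the sphere to make the fibre even--dimensional, the scalar--curvature limit of Proposition \ref{prop_scalar} keeping all constants uniform in the circle sizes. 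After these reductions I may assume rational coefficients, $n$ even, and $\deg\theta$ even.

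For the core estimate, fix a bundle $\bundle{E}\in\mathscr{V}(M\times S^n,A\times S^n;\theta\times[S^n])$, trivial with trivial connection on a neighbourhood of $A\times S^n$. By the relative analogue of Proposition \ref{proposition6}---whose construction via exterior powers preserves triviality near $A\times S^n$ and changes the curvature only by a fixed factor $m^2$ depending on the total degree---it suffices to bound $\|R^\bundle{E}\|$ from below when $\langle\mathrm{ch}(\bundle{E}),\theta\times[S^n]\rangle\neq0$. Regarding $\bundle{E}$ as a family of Hermitian bundles over the fibre $S^n$ parameterised by the base $M$ and twisting the $S^n$--spinor bundle by this family, I obtain a family of Dirac operators whose index lies in $K(M)$. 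The relative hypothesis enters here: on the neighbourhood of $A\times S^n$ where $\bundle{E}$ is trivial the fibrewise operator is a direct sum of copies of the untwisted Dirac operator on $S^n$, which is invertible by Lichnerowicz, so the index bundle vanishes over a neighbourhood of $A$ and defines a class in the relative group $K^0(M,A)$. The family index theorem, together with $\widehat{\mathbf{A}}(S^n)\cap[S^n]=[S^n]$ and the coordinate transposition exchanging the two factors, identifies the relative pairing as
\[
\langle\mathrm{ch}(\mathrm{ind}\,\dirac^+_\bundle{E}),\theta\rangle=\langle\mathrm{ch}(\bundle{E}),[S^n]\times\theta\rangle\neq0 .
\]
Hence $\mathrm{ind}\,\dirac^+_\bundle{E}\neq0$, so some fibrewise operator $\dirac_{\bundle{E}_x}$ has nontrivial kernel. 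The integrated Bochner--Lichnerowicz formula
\[
\dirac^2_{\bundle{E}_x}=\nabla^*\nabla+\frac{\mathrm{scal}_{g_0}}{4}+\frak{R}^{\bundle{E}_x}
\]
on $S^n$ then forces $\frak{R}^{\bundle{E}_x}$ to have an eigenvalue $\leq-\mathrm{scal}_{g_0}/4$ at some point, and since its maximal absolute eigenvalue is at most $\frac{n(n-1)}{2}\|R^{\bundle{E}_x}\|_{g_0}$ I conclude
\[
\|R^\bundle{E}\|_{g\oplus g_0}\geq\|R^{\bundle{E}_x}\|_{g_0}\geq\frac{\mathrm{scal}_{g_0}}{2n(n-1)} .
\]
Combined with the factor $m^2$ from Proposition \ref{proposition6} this bounds the relative K--area from above, so $\theta\times[S^n]\in\H_*(M\times S^n,A\times S^n)$.

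The step I expect to be the main obstacle is precisely this relative index bookkeeping: one must verify carefully that ``trivial with trivial connection near $A\times S^n$'' makes the family index bundle trivial over a genuine neighbourhood of $A$ in $M$, so that it is an honest element of $K^0(M,A)$, and that the family index theorem identifies its Chern character with the fibre integral of $\mathrm{ch}(\bundle{E})$ compatibly with the relative pairing against $\theta\in H_*(M,A)$. The surrounding ingredients---invariance of the K--area under the coordinate--transposition relative diffeomorphism (Proposition \ref{prop162}), the relative version of Proposition \ref{proposition6}, and the uniformity of all constants under the circle adjunctions used in the reductions---are routine adaptations of arguments already established above.
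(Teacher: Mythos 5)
Your overall architecture --- the parity reductions via circles, the exterior--power reduction to a nonvanishing Chern character pairing with curvature penalty $m^2$, a family of twisted Dirac operators on the round $S^n$, and the Bochner--Lichnerowicz bound $\| R^{\bundle{E}_x}\| \geq \frac{1}{2}$ --- coincides with the paper's, and those reductions are sound (the paper likewise disposes of torsion classes at once and keeps all constants uniform in the circle sizes). Where you genuinely diverge is the handling of relativity. You keep the parameter space $M$ and posit a \emph{relative} index class in $K^0(M,A)$, using invertibility of the fibrewise operators over a tube $U\times S^n$ around $A\times S^n$ (correct: the untwisted Dirac operator on the round sphere is invertible by Lichnerowicz, and a neighbourhood of $A\times S^n$ contains such a tube by compactness). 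The paper instead collapses $A$: it transports $\theta$ to an absolute class $\eta \in H_k(M/A)$, descends $\bundle{E}$ to a topological bundle $\bundle{E}'$ over $M/A\times S^n$, and applies the ordinary Atiyah--Singer family index theorem over the compact Hausdorff --- but non-manifold --- base $M/A$. Making that legitimate is the actual work of the proof: one covers $M/A$ by finitely many contractible $U_i$ and splits into a dichotomy. If some $\bundle{E}'|_{U_i\times S^n}$ is stably nontrivial, then already a single fibre restriction satisfies $\left< \mathrm{ch}(\bundle{E}'_x),[S^n]\right> \neq 0$ and the absolute sphere estimate gives $\| R^\bundle{E}\| _g\geq \frac{1}{2}$ with no family theory at all; otherwise stabilizing $\bundle{E}'':=\bundle{E}'\oplus \C ^m$ makes the family locally trivial over $M/A$, so that it becomes an honest family of smooth bundles over $S^n$ with structure group $\mathrm{Diff}(S^n\times \C ^N;S^n)$, to which the family index theorem applies.

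The gap in your version is exactly the step you flag and then set aside as bookkeeping: the identity $\left< \mathrm{ch}(\mathrm{ind}\, \dirac ^+_\bundle{E}),\theta \right> =\left< \widetilde{\mathrm{ch}}(\bundle{E}),\theta \times [S^n]\right> $ for the \emph{relative} index class. This does not follow from the absolute family index theorem by naturality: the map $H^{2*}(M,A;\Q )\to H^{2*}(M;\Q )$ is not injective, so knowing $\mathrm{ch}$ of the image of your class in $K^0(M)$ says nothing about its pairing with a relative $\theta $. You would need a genuinely relative family index theorem --- say via Bismut superconnection forms, verifying that over $U$ (where the family is the constant invertible one with flat trivial data) both the index form and the transgression can be arranged to vanish, or via an excision argument. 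But the most natural excision argument is precisely ``collapse $A$ to a point'', which lands you in the paper's situation, where the base fails to be a manifold and local triviality of the descended family fails without the stabilization dichotomy above. So your route is viable and could be completed with a citable relative/compactly supported family index theorem, but the part you dismiss as a routine adaptation is exactly where the paper's proof concentrates its effort; as written, the proposal has a genuine hole at its central step.
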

\begin{proof}
We use as in proposition \ref{prop_scalar} the index theorem for families of Dirac operators, but need to take care of the fact that the base space $M/A$ will not be a manifold in general. Without loss of generality $\theta \in H_k(M,A)$. Using the above results and considering suitable inclusion maps $S^n\to M\times S^n$, the claim follows for $\theta \in H_0(M,A)$ (note $H_0(M)\to H_0(M,A)$ is surjective). Hence, we suppose $k>0$ and start with the case that $k$ and $n\geq 2$ are even. Choose a Riemannian metric $h$ on $M$ and endow $S^n$ with the metric of constant sectional curvature $K_0=1$, then $g$ denotes the product metric on $M\times S^n$. If $\theta \times [S^n]$ is a torsion class, the relative K--area of $\theta \times [S^n]$ vanishes which proves the claim. Thus, suppose $\theta \times [S^n]$ is not a torsion class, then $\theta \times [S^n]$ yields a nonvanishing rational homology class. As above we will use a slight modification of the relative K--area to show the result. We denote by $\widetilde {\mathrm{ch}}(\bundle{F})\in H^{2*}(M,A;\Q )$ the relative Chern character of a relative bundle $\bundle{F}\to (M,A)$. Note that in our situation, $\bundle{F}\to (M,A)$ is endowed with a connection that is flat on a neighborhood of $A$, i.e.~we may use the curvature to define the relative Chern character:
\[
\widetilde{\mathrm{ch}}(\bundle{F}):=\left[ \left( \mathrm{tr}\left( e^{-\frac{R^\bundle{F}}{2\pi \mathbf{i}}} \right) ,0\right) \right]
\in H^{2*}(M,A;\Q )\subseteq H^{2*}_{dR}(M,A).
\]
By the arguments in the proof of proposition \ref{proposition6} we can find for any $\bundle{E}\in \mathscr{V}(M\times S^n,A\times S^n;\theta \times [S^n])$  multi indices $\alpha ,\beta $ with length $|\alpha |,|\beta |\leq \frac{k+n}{2}$ such that $\left< \widetilde{\mathrm{ch}}(\Lambda ^\alpha \Lambda ^\beta \bundle{E}),\theta \times [S^n]\right> \neq 0$. Because 
\[
\frac{(k+n)^2}{4}\| R^\bundle{E}\| _g\geq \| R^{\Lambda ^\alpha \Lambda ^\beta \bundle{E}}\| _g,
\] 
it suffices to show the existence of a constant $c>0$ with the property: Any relative bundle $\bundle{E}\to (M\times S^n,A\times S^n)$ with $\left< \widetilde{\mathrm{ch}}(\bundle{E}),\theta \times [S^n]\right> \neq 0$ satisfies $\| R^\bundle{E}\| _g\geq c$. In order to show the lower bound consider the following maps
\[
(M\times S^n,A\times S^n)\stackrel{q\times \mathrm{id}}{\longrightarrow } (M/A\times S^n,\left< A\right> \times S^n)\stackrel{j}{\longleftarrow }M/A\times S^n
\]
where $q$ denotes the quotient map $q:M\to M/A$ and $j=j'\times \mathrm{id}$ with $j':M/A\to (M/A,\left< A\right> )$. Then $q\times \mathrm{id}$ induces isomorphisms on $H_l$ with $(q\times \mathrm{id})_*=q_*\times \mathrm{id}$. Moreover, $j'_*:H_l(M/A)\to H_l(M/A,\left< A\right> )$ is an isomorphism for $l>0$, i.e.~the class $\eta :=(j_*')^{-1}q_*(\theta )\in H_k(M/A )$ satisfies
\[
j_*(\eta \times [S^n])=(q\times \mathrm{id})_*(\theta \times [S^n]).
\]
Because $\bundle{E}\to (M\times S^n,A\times S^n)$ is trivial on a neighborhood of $A\times S^n$ with trivial connection, there is an open neighborhood $U\subseteq M/A$ of $\left< A\right> $ such that $\bundle{E}_{|V_0}$ is trivial with trivial connection on $V_0:=U\times S^n$. Moreover, there are open sets $V_i\subseteq M/A\times S^n$ and maps $\psi _i$ such that $(V_i,\psi _i)$ trivialize $\bundle{E}$ and $V_i\cap (A\times S^n)=\emptyset $ for $i>0$. Hence, replacing $V_0$ by $U/A\times S^n$, $\bundle{E}$ determines a topological vector bundle $\bundle{E}'\to (M/A\times S^n,\left< A\right> \times S^n)$ with the property $(q\times \mathrm{id})^*\bundle{E}'\cong \bundle{E}$. The metric and the connection on $\bundle{E}$ induce a metric and connection on $\bundle{E}'$ for a suitable interpretation on the singular set $\left< A\right> \times S^n$. Using the general fact $j^*\widetilde{\mathrm{ch}}(-)=\mathrm{ch}(-)$ shows
\[
\left< \widetilde{\mathrm{ch}}(\bundle{E}),\theta  \times [S^n]\right> =\left< \widetilde{\mathrm{ch}}(\bundle{E}'), (q\times \mathrm{id})_*(\theta \times [S^n])\right> =\left< \mathrm{ch}(\bundle{E}'),\eta \times [S^n]\right> .
\]
Now let $(U_i)\subseteq M/A$ be a finite covering of $M/A$ by open contractible sets $U_i$. If $\bundle{E}'_{|U_i\times S^n}$ is stably nontrivial for some $U_i$, the smooth Hermitian vector bundle $\bundle{E}'_x:=\bundle{E}'_{|\{ x\} \times S^n}\to S^n$ satisfies $\left< \mathrm{ch}(\bundle{E}'_x) ,[S^n]\right> \neq 0$ for $x\in U_i\subseteq  M/A$. In fact, the results in the previous sections supply
\[
\| R^\bundle{E}\| _g\geq \| R^{\bundle{E}'_x}\| _0\geq \frac{1}{2}.
\] 
Thus, we can assume that $\bundle{E}'_{|U_i\times S^n}$ is stably trivial for all $U_i$. Define the bundle $\bundle{E}'':=\bundle{E}'\oplus \C ^m$ with the induced metric and connection where $m<\infty $ is chosen in such a way that $\bundle{E}''_{|U_i\times S^n}$ is trivial (note there are only finitely many $U_i$). Moreover, we obtain trivializations
\[
\bundle{E}''_{|U_i\times S^n}\to U_i\times S^n\times \C ^N,
\]
which shows that the structure group of the fiber bundle $\bundle{E}''\to M/A$ is $\mathrm{Diff}(S^n\times \C ^N; S^n)$. Hence, $M/A\times S^n$ is a smooth manifold over $M/A$ and $\bundle{E}''\to M/A\times S^n$ is a smooth vector bundle over $M/A\times S^n$ (cf.~\cite{AS4}). In fact, $\bundle{E}''$ is a family of smooth vector bundles over $S^n$ parameterized by the compact Hausdorff space $M/A$. If $\pi :M/A\times S^n\to S^n$ is the projection, the bundle $\pi ^*\spinor S^n$ is a family of spinor bundles over $S^n$ parameterized by $M/A$. In particular, $\pi ^*\spinor S^n\otimes \bundle{E}''$ is a family of Dirac bundles on $S^n$ parameterized by $M/A$ and the index of the associated Dirac operator $\dirac _{\bundle{E}''}^+:\Gamma (\pi ^*\spinor ^+S^n\otimes \bundle{E}'')\to \Gamma (\pi ^*\spinor ^-S^n\otimes \bundle{E}'')$ satisfies (cf.~\cite{AS4,LaMi}):
\[
\left< \mathrm{ch}(\mathrm{ind}(\dirac ^+_{\bundle{E}''})),\eta \right> =\left< \int _{S^n}\mathrm{ch}(\bundle{E}''),\eta \right> =\left< \mathrm{ch}(\bundle{E}'),\eta \times [S^n]\right> \neq 0 
\]
(note $\mathrm{ind}(\dirac ^+_{\bundle{E}''})\in K(M/A)$). This proves that there is a point $x\in M/A$ such that the kernel of the Dirac operator $\dirac _{\bundle{E}''_x}:\Gamma (\spinor S^n\otimes \bundle{E}_x'')\to \Gamma (\spinor S^n\otimes \bundle{E}_x'')$ is nontrivial where $\bundle{E}''_x$ is the smooth vector bundle on $\{ x\}\times S^n\subseteq M/A\times S^n$. Thus, the usual Bochner argument yields
\[
\| R^\bundle{E}\| _g\geq \| R^{\bundle{E}''_x}\| _0\geq \frac{1}{2}
\]
which completes the proof for even $k$, $n$. If $n\geq 2$ is odd, we apply the above methods to the manifold $S^n_0\times S^1_{\mathrm{d}t^2} $ and obtain by the arguments in the proof of proposition \ref{prop_scalar}:
\[
\| R^\bundle{E}\| _g\geq \| R^{\bundle{E}_x''}\| _{g_0\oplus \mathrm{d}t^2}\geq \frac{1}{2}.
\]
If $\theta $ is an odd homology class and $n$ even, we apply the even case to the manifold $(M\times S^1,A\times S^1)$ and the homology class $\theta \times [S^1]$. Note that the lower bound of the curvature does not depend on the choice of the metric $h$ on $M$, i.e.~the K--area remains finite by taking the supremum over all line elements of $S^1$. The remaining case ($k$ and $n$ odd) follows by applying the previous cases to the manifolds $(M\times S^1,A\times S^1)$ and $S^n_0\times S^1_{\mathrm{d}t^2}$ and using
\[
\ke _\mathrm{ch}(M_h\times S^n_0,-;\theta \times [S^n])\leq \ke _\mathrm{ch}(M_h\times T^2_{h'}\times S^n_0,-;\theta \times [T^2]\times [S^n])\leq 2
\]
for all metrics $h$, $h'$ on $M$ respectively $T^2$ (cf.~section \ref{stab_area}).
\end{proof}

\begin{prop}
Suppose $x\in M$, then the inclusion map $j:M\to (M,x)$ yields isomorphisms
\[
j_*:\H _k(M)\to \H _k(M,x)
\]
for all $k$. Moreover, if $(M,A)$ is a pair of compact manifolds such that $M/A$ admits a smooth structure, then
\[
q_*:\H _k(M,A)\to \H _k(M/A,\left< A\right> )\cong \H _k(M/A)
\]
is an isomorphism for all $k$ where $q:M\to M/A$ is the quotient map.
\end{prop}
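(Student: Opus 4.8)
The plan is to treat the two assertions in turn, reducing the second to the first together with the already-established proposition~\ref{prop162}.

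\emph{First assertion.} Fix $k$. For $k=0$ both groups vanish, since $\H_0(M)=\{0\}$ and $\H_0(M,x)=\{0\}$, so $j_*$ is trivially an isomorphism. For $k>0$ the long exact sequence of the pair $(M,\{x\})$ shows that $j_*:H_k(M)\to H_k(M,x)$ is an isomorphism of singular homology (a point has vanishing homology in positive degrees and injective $H_0$), so it suffices to prove that $\theta$ and $j_*\theta$ have finite K--area simultaneously. One implication is immediate from the discussion preceding proposition~\ref{prop162}: for $k>0$ one has $\ke(M_g,x;j_*\theta)=\ke(M_g,x;\theta)\le\ke(M_g;\theta)$, whence $\theta\in\H_k(M)$ forces $j_*\theta\in\H_k(M,x)$, i.e.\ $j_*(\H_k(M))\subseteq\H_k(M,x)$.

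The reverse implication is where the genuine work lies and is the main obstacle, because $\mathscr{V}(M,x;\theta)$ is the \emph{smaller} competing set, so a positive lower bound for $\|R^{\bundle{E}}\|_g$ there need not propagate to the larger set $\mathscr{V}(M;\theta)$. The idea is to show that every $\bundle{E}\in\mathscr{V}(M;\theta)$ can be replaced by a bundle $\bundle{E}'\in\mathscr{V}(M,x;\theta)$ with $\|R^{\bundle{E}'}\|_g\le C\cdot\|R^{\bundle{E}}\|_g$ for a constant $C$ depending only on $\dim M$. To build $\bundle{E}'$ I keep the underlying bundle $\bundle{E}$, which is trivial on a small geodesic ball $B$ about $x$, and only alter the connection inside $B$: written in the radial (synchronous) gauge centred at $x$, the connection one--form satisfies $|A(y)|\le C'\cdot\|R^{\bundle{E}}\|_g\cdot\mathrm{dist}(y,x)$, so multiplying $A$ by a radial cutoff that kills it near $x$ and whose gradient is of order $1/r$ on the shell of radius $r$ produces extra curvature of order $(1/r)\cdot(\|R^{\bundle{E}}\|_g\cdot r)$, hence $\|R^{\bundle{E}'}\|_g\le C\cdot\|R^{\bundle{E}}\|_g$ uniformly as the ball shrinks. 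Since $\bundle{E}'\cong\bundle{E}$ as bundles their classifying maps are homotopic, so $\rho^{\bundle{E}'}_*(\theta)=\rho^{\bundle{E}}_*(\theta)\neq0$ in $H_k(\BU)$, and for $k>0$ this nonvanishing persists under the isomorphism $H_k(\BU)\to H_k(\BU,\rho^{\bundle{E}'}(P))$; thus $\bundle{E}'\in\mathscr{V}(M,x;\theta)$. Taking infima gives $\ke(M_g;\theta)\le C\cdot\ke(M_g,x;\theta)$, so $j_*\theta\in\H_k(M,x)$ forces $\theta\in\H_k(M)$, completing the first assertion.

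\emph{Second assertion.} The isomorphism $\H_k(M/A,\langle A\rangle)\cong\H_k(M/A)$ is precisely the first assertion applied to the compact smooth manifold $M/A$ with the marked point $x=\langle A\rangle$. It remains to treat $q_*$. Here I would observe that, when $M/A$ carries a smooth structure, the quotient map $q:(M,A)\to(M/A,\langle A\rangle)$ is a relative diffeomorphism of pairs of compact manifolds: it collapses $A$ to the single point $\langle A\rangle$ and restricts to a diffeomorphism $M\setminus A\to(M/A)\setminus\langle A\rangle$ of the open complements. Proposition~\ref{prop162} then applies verbatim and yields that $q_*:\H_k(M,A)\to\H_k(M/A,\langle A\rangle)$ is an isomorphism for all $k$. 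The only point requiring care in this last step is the compatibility of the given smooth structure on $M/A$ with $q$ away from $\langle A\rangle$, which is exactly what makes $q$ a genuine relative diffeomorphism rather than merely a relative homeomorphism; granting this, the argument is purely formal.
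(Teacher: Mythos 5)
Your treatment of the even--degree classes is essentially correct, and it is a genuinely different route from the paper's: where you repair the connection gauge--theoretically (radial--gauge estimate $|A(y)|\leq C'\|R^{\bundle{E}}\|_g\,\mathrm{dist}(y,x)$ plus a cutoff, at the price of a harmless quadratic term $A\wedge A$), the paper instead pulls $\bundle{E}$ back along a smooth map $f:M\to M$ with $f\simeq \mathrm{id}$ which is constant on a small ball around $x$; the pulled--back connection is then literally trivial near $x$, at the cost of a Lipschitz constant $C\geq 1$ with $C\cdot g\geq f^*g$. Either device yields $\ke (M_g;\theta )\leq C\cdot \ke (M_g,x;\theta )$ for $\theta \in H_{2*}(M)$, and your reduction of the second assertion to proposition \ref{prop162} (with the point case giving $\H _k(M/A,\left< A\right> )\cong \H _k(M/A)$) is exactly the paper's argument.

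The genuine gap is the odd case, which your proof never addresses and which occupies the larger half of the paper's proof. For $\theta \in H_{2k+1}(M)$ both sides are defined on $M\times S^1$, namely $\ke (M_g,x;\theta )=\sup _{\mathrm{d}t^2}\ke (M_g\times S^1_{\mathrm{d}t^2},x\times S^1;\theta \times [S^1])$, so the competing relative bundles must be trivial \emph{with trivial connection} on a neighborhood of the circle $x\times S^1$, not of a point. Your cutoff construction does not transfer to this setting: in a radial gauge about the core circle, the $S^1$--component of $A$ along $x\times S^1$ is governed by the holonomy of $\bundle{E}$ around that circle, which is a flat invariant not controlled by $\|R^{\bundle{E}}\|$ (a flat connection with holonomy $-\mathbbm{1}$ has $|A|\sim 1/r$ on a circle of radius $r$), so the error term $\mathrm{d}\chi \wedge A$ is of order $1/r$ rather than of order $\|R^{\bundle{E}}\|$. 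The paper handles precisely this: it subtracts a one--form $\alpha $ (extended by a cutoff from $x\times S^1$) to make the connection compatible with the trivialization, obtains $\| R^{\nabla '}\| \leq 2\max \{ C\| R^{\bundle{E}}\| ,\, C'/r\} $, and then crucially exploits that the relative K--area of an odd class is a \emph{supremum over line elements}, rescaling the circle so that $r\to \infty $ kills the $C'/r$ term. That this supremum is indispensable, and not a formality, is the content of remark \ref{rem151}: for a fixed line element the relative K--area of $\theta \times [S^1]$ can be finite while $\ke (M_g,x;\theta )=\infty $. As written, your argument proves surjectivity of $j_*$ only for even $k$.
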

\begin{proof}
By the above considerations, for each point $x\in M$ the inclusion  $j_*:\H _k(M)\to \H _k(M,x)$ is well defined and injective for all $k$, i.e.~the surjectivity remains to show. In particular, we obtain the claim if there is a constant $C>0$ with
\[
C\cdot \ke  (M_g,x;\theta )\geq \ke (M_g;j_*^{-1}\theta ).
\]
for any $\theta \in H_k(M,x)$ and $k>0$. Let $A=\overline{B_{\epsilon }(x)}\subseteq M$ be a small closed ball around the point $x$, then $A$ is contractible which means that there is a smooth map $f:M\to M$ which is constant on $A$ but satisfies $f\simeq \mathrm{id}$, i.e.~$f_*=\mathrm{id}$ on homology. Since $f$ is constant on $A$, the pull back by $f$ yields a bundle which has trivial connection on a neighborhood of $x$, i.e.~$f^*:\mathscr{V}(M;\eta )\to \mathscr{V}(M,x;\eta )$. There is a constant $C\geq 1$ with $C\cdot g\geq f^*g$ on $TM$ which implies the curvature estimate for the induced bundle and therefore,
\[
C\cdot \ke  (M_g,x;\eta )\geq \ke (M_g;\eta )
\]
for all $\eta \in H_{2*}(M)$. But the relative K--areas of $\eta \in H_k(M)$ and $j_*\eta \in H_k(M,x)$ coincide for $k>0$ which proves the first claim if $k$ is even. In order to conclude the same estimate for $\eta \in H_{2*+1}(M)$ we use the map $f:M\to M$ from above and consider $b:=f\times \mathrm{id}:M\times S^1\to M\times S^1$. Since $\mathrm{d}b$ has rank one on a neighborhood of $x\times S^1$, the pull back of a bundle by $b$ is flat on a neighborhood of $x\times S^1$ but the connection is in general not trivial. Consider the restriction of $b^*\bundle{E}$ to $x\times S^1$, then the connection differs from the trivial connection by a section $\alpha $ in $\mathrm{End}(b^*\bundle{E}_{|x\times S^1})\otimes T^*S^1$. Choosing a cut off function for $B_\epsilon (x)\times S^1\subseteq M\times S^1$, $\alpha $ extends to a section $M\times S^1\to \mathrm{End}(b^*\bundle{E})\otimes T^*(M\times S^1)$. In fact, $\nabla '=b^*\nabla -\alpha $ is a Hermitian connection on $b^*\bundle{E}$ which on a neighborhood of $x\times S^1$ is compatible with the trivialization of $b^*\bundle{E}$, i.e.~$(b^*\bundle{E},\nabla ')\in \mathscr{V}(M\times S^1,x\times S^1;\eta \times [S^1])$ if $(\bundle{E},\nabla )\in \mathscr{V}(M\times S^1;\eta \times [S^1])$. Since the $C^0$--bound of $\alpha $ depends on $\bundle{E}$, we need to rescale one of the line elements, i.e.~we fix at first $\mathrm{d}t^2$ and choose $\mathrm{d}s^2=y^2 \cdot \mathrm{d}t^2$ where $y$ satisfies $y\geq \max \{ 1, r\cdot \| \alpha \|  _{\mathrm{d}t^2}\} $ and $r$ means the radius of $S^1_{\mathrm{d}t^2}$ (note that $C\cdot (g\oplus \mathrm{d}s^2)\geq b^*(g\oplus \mathrm{d}t^2)$ on $T(M\times S^1)$). Then for $\bundle{E}\in \mathscr{V}(M\times S^1;\eta \times [S^1])$ the maximum of the curvatures are related by
\[
\| R^{\nabla '}\| _{g\oplus \mathrm{d}s^2}\leq \| R^{b^*\bundle{E}}\| _{g\oplus \mathrm{d}s^2}+C'\cdot \| \alpha \| _{\mathrm{d}s^2}\leq 2\max \left\{ C\cdot \| R^\bundle{E}\| _{g\oplus \mathrm{d}t^2},C'/r\right\}
\]
where $C'<\infty $ depends only on the choice of the cut off function for $B_\epsilon (x)\times S^1\subseteq M\times S^1$ and $C$ is the constant from above, i.e.~$C$ depends only on $f$. Hence, we conclude 
\[
\sup _{\mathrm{d}s^2}\ke (M_g\times S^1_{\mathrm{d}s^2},x\times S^1;\eta \times [S^1])\geq  \min \left\{ \frac{\ke (M_g\times S^1_{\mathrm{d}t^2} ,\eta \times [S^1])}{2C}, \frac{r}{2C'} \right\} 
\]
which together with the definition for odd homology classes proves the assertion (because $r\to \infty $ by taking the supremum over $\mathrm{d}t^2$ on the right hand side).

The second statement is an easy application of proposition \ref{prop162} because $q:(M,A)\to (M/A,\left< A\right> )$ is a relative diffeomorphism. 
\end{proof}

In general the connecting homomorphism $\partial _*:H_k(M,A )\to H_{k-1}(A )$ does not restrict to a homomorphism $\H _k(M,A)\to \H _{k-1}(A)$ as the following example shows. Let $M=\overline{B^2}$ be the 2--dimensional closed disk and $A=\partial M=S^1$ be its boundary, then
\[
\H _2(M,A)=\H _2(M/A)=\H _2(S^2)=\Z 
\] 
whereas $\H _*(M)=\{ 0\} $ and $\H _*(S^1)=\{ 0\} $, but $\partial _*:H_2(M,A)\to H_1(S^1)$ is an isomorphism. Thus, one may also consider the subgroup of $\H _*(M,A)$ whose image w.r.t.~the connecting homomorphism is contained in $\H _*(A)$. However, for this subgroup excision fails in general. Even if the connecting homomorphism restricts for all $k$ to homomorphisms $\partial _*:\H _k(M,A)\to \H _{k-1}(A)$, the corresponding homology sequence
\[
\cdots \longrightarrow \H _{k}(A)\stackrel{i_*}{\longrightarrow }\H _k(M)\stackrel{j_*}{\longrightarrow }\H _k(M,A)\stackrel{\partial _*}{\longrightarrow }\H _{k-1}(A)\longrightarrow \cdots 
\]
is only a chain complex and not exact in general. In order to see this, let $M=T^n$ be a torus and define $A:=M\setminus B_\epsilon (x)$ for a small open ball $B_\epsilon (x)\subseteq M$, then $\H _*(M)=\{ 0\}$. Moreover, $M/A\cong S^n$ supplies $\H _*(M,A)=\H _*(S^n)=\Z $ if $n\geq 2$. The homology sequence for the pair $(M,A)$ shows that $i_*:H_k(A)\to H_k(M)$ is injective for all $k<n$ and that the connecting homomorphism is trivial $\partial _*=0$ (the case $k=n-1$ uses the fact that $H_{n-1}(A)$ is torsion free and $M$ closed). Hence, $i_*:\H _k(A)\to \H _k(M)$ is injective and $\H _*(A)=\{ 0\} $ which proves that the homology sequence of this particular pair $(M,A)$ is not exact. 

\begin{defn}
Let $(X,Y)$ be a pair of topological spaces, then $\H _k(X,Y)$ denotes the set of all $\theta \in H_k(X,Y)$ for which there is a pair of compact manifolds $(M,A)$ and a continuous map $f:(M,A)\to (X,Y)$ such that $\theta =f_*(\eta )$ for some $\eta \in \H _k(M,A)$. Furthermore, $\He _k(X,Y)$ is the set of all $\theta \in \H _k(X,Y)$ with $\partial _*\theta \in \H _{k-1}(Y,\emptyset )$ where $\partial _*:H_k(X,Y)\to H_{k-1}(Y,\emptyset )$ is the connecting homomorphism.
\end{defn}
\noindent 
Simple exercises prove that $\He _k(X,Y)\subseteq \H _k(X,Y)$ are subgroups of $H_k(X,Y)$ for all $k$, that $\H _0(X,Y)=\{ 0\}$ and that $\H (X):=\H (X,Y)=\He (X,Y)$ for $Y=\emptyset $. Moreover, $\H _k(X,Y)$ coincides with the above definition if $(X,Y)$ is a pair of compact manifolds. Suppose that $h:(X,Y)\to (Z,T)$ is a continuous map of pairs, then the induced homomorphism restricts to homomorphisms $h_*:\H _k(X,Y)\to \H _k(Z,T)$ which follows from functoriallity $(h\circ f)_*=h_*f_*$. The naturallity of the connecting homomorphism shows that $h_*$ restricts further to homomorphisms $h_*:\He _k(X,Y)\to \He _k(Z,T)$. Hence, the functors $\H $ and $\He $ satisfy the dimension, additivity and homotopy axiom but obviously not exactness by the examples above. Thus, it remains to consider the excision axiom. Given subspaces $U\subseteq Y\subseteq X$ such that $\overline{U}\subseteq \mathrm{int}(Y)$, then the inclusion map $i:(X\setminus U,Y\setminus U)\to (X,Y)$ yields a well defined injective homomorphism
\[
i_*:\H _k(X\setminus U,Y\setminus U)\to \H _k(X,Y)
\]
for all $k$. We do not know if this homomorphism is surjective in general, however, it is surjective for pairs of compact manifolds and there is much evidence in the case that $(X,Y)$ is a CW--pair. Hence, the above observations can be summarized as follows.
\begin{thm}
$\He $ and $ \H $ are functors which determine semi homology theories on the category pairs of topological spaces and continuous maps in the following sense: $\He $ and $\H $ satisfy the dimension, additivity and homotopy axiom. When restricted to the category pairs of compact smooth manifolds and continuous maps, $\H $ satisfies the excision axiom. Moreover, natural connecting homomorphism exist for both functors and the corresponding homology sequence for pairs is a chain complex: 
\begin{itemize}
\item Choose trivial connecting homomorphisms for $\H $.
\item Choose the connecting homomorphism of singular homology for $\He $. 
\end{itemize} 
\end{thm}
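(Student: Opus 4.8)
The plan is to treat $\H$ and $\He$ throughout as \emph{subgroup-valued} functors of singular homology: for every pair each group $\H_k(X,Y)$ and $\He_k(X,Y)$ is a subgroup of $H_k(X,Y)$, and every induced map is the restriction of the corresponding map in singular homology. Consequently most of the asserted axioms are inherited for free, and the proof is an assembly of facts already isolated in this section. Functoriality holds because $h_*$ carries $\H_k$ to $\H_k$ (via $(h\circ f)_*=h_*f_*$ and the definition through maps from pairs of compact manifolds) and, by naturality of the singular connecting homomorphism, carries $\He_k$ to $\He_k$. The homotopy axiom is immediate, since homotopic maps induce the same homomorphism on $H_*$ and hence on the subgroups $\H_k,\He_k$. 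The dimension axiom follows from $\H_0(X,Y)=\{0\}$ together with the vanishing of the positive-degree singular homology of a point, giving $\H_*(\mathrm{pt})=\He_*(\mathrm{pt})=0$. For additivity one reduces to the compact-manifold case handled earlier: a pair of compact manifolds mapping into a disjoint union meets only finitely many summands and splits over its components, so the presentations defining $\H_k$ and $\He_k$ respect the splitting and yield $\H_k\bigl(\coprod_i(X_i,Y_i)\bigr)\cong\bigoplus_i\H_k(X_i,Y_i)$, exactly as in singular homology.

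For the excision axiom, restricted to pairs of compact smooth manifolds, I would invoke the corollary established above as a consequence of Proposition~\ref{prop162}: if $(X,Y)=(M,A)$ and $U\subseteq Y$ is open with $\overline{U}\subseteq\mathrm{int}(Y)$ and $(M\setminus U,A\setminus U)$ is again a pair of compact manifolds, then the inclusion $i\colon(M\setminus U,A\setminus U)\to(M,A)$ induces an isomorphism $i_*\colon\H_k(M\setminus U,A\setminus U)\to\H_k(M,A)$. Injectivity of $i_*$ in the general topological setting has already been recorded above, so only the surjectivity is genuinely special to the manifold category.

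It remains to exhibit natural connecting homomorphisms and to verify the chain-complex property. For $\H$ I take $\partial_*=0$; this is trivially natural, and the only composition not automatically zero, namely $j_*\circ i_*\colon\H_k(Y)\to\H_k(X,Y)$, vanishes because it is the restriction of the zero composition in the singular long exact sequence. For $\He$ I take $\partial_*$ to be the singular connecting homomorphism; by the definition of $\He_k(X,Y)$ its image lies in $\H_{k-1}(Y)=\He_{k-1}(Y,\emptyset)$, so $\partial_*$ is well defined and natural, and all three consecutive compositions $j_*i_*$, $\partial_*j_*$ and $i_*\partial_*$ vanish simply because they already vanish in the singular long exact sequence. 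Hence both sequences are chain complexes, while exactness fails in general, as the torus pair $(T^n,T^n\setminus B_\epsilon(x))$ discussed above already shows.

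The genuine difficulty lies entirely in the excision step, and specifically in the surjectivity of $i_*$. Given a bundle on $(X,Y)$ that is flat near $Y$ and detects a relative K--area class, one must realize the \emph{same} relative Chern data by a bundle supported on the excised pair $(X\setminus U,Y\setminus U)$ with comparable curvature norm; outside the manifold category there is no mechanism to produce such a bundle, which is why I expect surjectivity to remain open for general topological---indeed even CW---pairs, whereas for pairs of compact manifolds it is forced by the bi-Lipschitz control on relative diffeomorphisms furnished by Proposition~\ref{prop162}.
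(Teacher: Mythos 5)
Your proposal is correct and follows essentially the same route as the paper, whose proof of this theorem is precisely a summary of the preceding observations: subgroup-valued functoriality via $(h\circ f)_*=h_*f_*$ and naturality of $\partial_*$, the dimension and homotopy axioms inherited by restriction from singular homology, excision surjectivity for compact manifold pairs via the corollary to Proposition~\ref{prop162} (with injectivity from singular excision in general), and the chain-complex property for the trivial respectively singular connecting homomorphism because the relevant compositions already vanish in the singular long exact sequence. Your compactness argument for additivity over arbitrary disjoint unions fills in a detail the paper leaves as a ``simple exercise,'' but it is the intended argument, not a different approach.
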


\begin{rem}
Let $\overline{B^m}\subseteq \R ^m$ be the standard ball with boundary $S^{m-1}$, then $\H _m(\overline{B^m},S^{m-1})=\H _m(S^m) $ by the above proposition. Thus, if $(X,Y)$ is a pair of topological spaces, the relative Hurewicz homomorphism satisfies for all $m\geq 2$:
\[
h_m:\pi _m(X,Y)\to \H _m(X,Y)\subseteq H_m(X,Y).
\]
If $m\geq 3$, $\He _m(\overline{B^m},S^{m-1})=\H _m(\overline{B^m},S^{m-1})$ supplies that $\mathrm{Im}(h_m)\subseteq \He _m(X,Y)$.
\end{rem}
\begin{rem}
The K--area homology $\H $ stabilizes for compact manifolds under the suspension operation into reduced singular homology. In order to see this let $M$ be a compact manifold, $\Sigma ^kM=S^k\wedge M$ be the $k$--fold reduced suspension of $M$ and consider the projection map $p:S^k\times M\to \Sigma ^kM $. Suppose that $\eta \in H_{k+i}(\Sigma ^kM)$, then $\eta =p_*([S^k]\times \theta )$ for some $\theta \in H_i(M)$, but $[S^k]\times \theta $ has finite K--area for all $k\geq 2$ (cf.~remark \ref{rem8}) which proves that $\eta \in \H _{k+i}(\Sigma ^kM)$.  Hence, $\H _*(\Sigma ^kM)$ is the reduced singular homology of $\Sigma ^kM$ for all $k\geq 2$. If $M$ is connected, this may still be true for $k=1$, however, it fails in general as the example $\Sigma ^1S^0=S^1$ shows. Note that $\Sigma M$ is simply connected for connected manifolds and that $\H _j(\Sigma T^n)=H_j(\Sigma T^n)$ for all $j>0$, $n>0$ by an induction argument: $\Sigma S^1=S^2$ as well as $\Sigma T^n\simeq S^2\vee \Sigma T^{n-1}\vee \Sigma ^2T^{n-1}$.
\end{rem}
\section{K--area homology versus singular homology}
\label{versus}
In this section we show that the K--area homology determines the singular homology for pairs of compact manifolds. The key ingredient for this observation will be lemma \ref{lem163}. In fact, we will use the K\"unneth formula to recover $H_k(M,A)$ from $\H _{k+2}(M,A)$ and $\H _{k+2}(M\times S^2,A\times S^2)$ as well as to recover $H_k(f)$ from $\H _{k+2}(f)$ and $\H _{k+2}(f\times \mathrm{id}_{S^2})$. Before presenting the argument we show how to stabilize a semi homology theory by spheres. 

We denote by $\mathrm{Top}^2$ the category pairs of topological spaces with continuous maps and identify as usual the topological space $X$ with $(X,\emptyset )$. Moreover, $\mathrm{Ab}_{*}$ denotes the category of graded abelian groups with homomorphism. We assume that $P=(P_k)_{k\in \Z }:\mathrm{Top}^2\to \mathrm{Ab}_*$ is a sequence of functors which assigns to $(X,Y)\in \mathrm{Top}^2$ the abelian group $P_k(X,Y)$ and satisfies the following axioms
\begin{enumerate}
\item[(i)] $f,h:(X,Y)\to (Z,T)$ continuous with $f\simeq h$, then for all $k\in \Z $
\[
P(f)=P(h):P_k(X,Y)\to P_k(Z,T).
\]
\item[(ii)] The inclusions $X_\alpha \hookrightarrow \coprod _\alpha X_\alpha $ induce an isomorphism $\bigoplus _\alpha P_k(X_\alpha )\cong P_k(\coprod _\alpha X_\alpha )$ for all $k$.
\end{enumerate}
Moreover, we say that $P$ satisfies \emph{excision}, if for any $U\subseteq X$ with $\overline{U}\subseteq \mathrm{int}(Y)$, the inclusion map $j:(X\setminus U,Y\setminus U)\to (X,Y)$ yields isomorphisms
\[
P(j):P_k(X\setminus U,Y\setminus U)\to P_k(X,Y).
\]
%\item[(iv)] \emph{dimension}, if $P_k(\{ pt\} )=\{ 0\}$ for all $k\neq 0$. 

Fix $n\geq 0$ and choose a base point $e_0\in S^n$. We denote by $\mathfrak{S}^n:\mathrm{Top}^2\to \mathrm{Top}^2$ the functor $(X,Y)\mapsto (X\times S^n,Y\times S^n)$ with morphisms $\mathfrak{S}^n(f):=f\times \mathrm{id}_{S^n}$, and define for notational simplicity the sequence of functors $P^n_k:=P_{k+n}\circ \mathfrak{S}^n:\mathrm{Top}^2\to \mathrm{Ab}_*$, i.e.~$P_k^n(X,Y)=P_{k+n}(X\times S^n,Y\times S^n)$ and $P^n(f):=P(f\times \mathrm{id}_{S^n})$ for continuous $f:(X,Y)\to (Z,T)$. Let
\[
\jmath :(X,Y)\to (X\times S^n,Y\times S^n),p\mapsto (p,e_0)
\]
be the inclusion map, then $P(\jmath ):P_{k+n}(X,Y)\to P^n_k(X,Y)$ determines a natural transformation. Thus, the quotient spaces
\[
\widehat{ P}^n_k(X,Y):=P^n_{k}(X,Y)/P(\jmath )(P_{k+n}(X,Y))
\]
are well defined abelian groups for all $k\in \Z $ and independent on the choice of $e_0$ if $n>0$ (by homotopy axiom). In the case $n=0$, $\widehat{P}_k^0$ will be naturally isomorphic to $P_k$ by the additivity axiom, i.e.~different choices of $e_0$ yield naturally isomorphic $\widehat{P}^0$. Suppose that $f:(X,Y)\to (Z,T)$ is continuous, then $P(\jmath )\circ P(f)=(P^nf)\circ P(\jmath )$ supplies that $P^n(f):P_k^n(X,Y)\to P_k^n(Z,T)$ descends to homomorphisms
\[
\widehat{P}^n(f):\widehat{P}_{k}^n(X,Y)\to \widehat{P}_{k}^n(Z,T)
\]
for all $k\in \Z $. Using these definitions, the functor $P:\mathrm{Top}^2\to \mathrm{Ab}_*$ determines a functor $\widehat{P}^n:\mathrm{Top}^2\to \mathrm{Ab}_*$ and the projection maps
\[
\Pr: P^n_k(X,Y)\to \widehat{P}^n_k(X,Y)
\]
are natural transformations of functors. The following remark is not vital for the results below, but worth mentioning to justify the stabilizing picture.
\begin{rem}
$\widehat{P}^n$ satisfies axioms (i) and (ii). Moreover if $P$ satisfies excision, then $\widehat{P}^n$ satisfies excision.
\end{rem}
\begin{proof}
The homotopy axiom is obvious, since $f\simeq h$ implies $f\times \mathrm{id}_{S^n}\simeq h\times \mathrm{id}_{S^n}$ and therefore $\widehat{P}(f)=\widehat{P}(h)$. The additivity follows from the additivity of $P$:
\[
\begin{split}
\bigoplus _\alpha \Bigl( P_k(X_\alpha\times S^n)/\jmath _*P_k(X_\alpha )\Bigl) &\cong \Bigl( \bigoplus _\alpha P_k(X_\alpha \times S^n){ \Bigl/}\bigoplus _\alpha \jmath _*P_k(X_\alpha )\Bigl) \\
&\cong P_k\Bigl(\coprod _\alpha X_\alpha \times S^n\Bigl) \Bigl/\jmath _*P_k\Bigl(\coprod _\alpha X_\alpha \Bigl) .
\end{split}
\]
Hence, it remains to show excision. Let $i:(X\setminus U,Y\setminus U)\to (X,Y)$ and $i':(X\times S^n\setminus U\times S^n,Y\times S^n\setminus U\times S^n)\to (X\times S^n,Y\times S^n)$ be the inclusion maps, then $P(i)$ and $P(i')$ are isomorphisms. Hence, $\jmath \circ i=(i\times \mathrm{id}_{S^n})\jmath $ and $(X\setminus U)\times S^n=X\times S^n\setminus U\times S^n$ yield the isomorphism 
\[
\widehat{P}^n(i):\widehat{P}_{k}^n(X\setminus U,Y\setminus U)\to \widehat{P}_{k}^n(X,Y)
\]
for all $k\in \Z $.
\end{proof}
We consider now the case of singular homology $P=H$. Fix a fundamental class $[S^n]\in H_n(S^n)$ where $[S^0]:=[S^0\setminus \{ e_0\} ]$, then the K\"unneth formula yields an isomorphism
\[
\begin{split}
H_{k+n}(X,Y)\oplus H_{k}(X,Y)&\to H_{k+n}(X\times S^n,Y\times S^n),\\
 (\eta ,\theta )&\mapsto \eta \times [\{ e_0\} ]+\theta \times [S^n].
\end{split}
\]
Since $\eta \times [\{ e_0\} ]=\jmath _*(\eta )$, the well defined homomorphism
\[
\Phi ^n_k:H_{k}(X,Y)\to \widehat{H}_{k}^n(X,Y),\ \theta \mapsto \Pr (\theta \times [S^n])
\]
is an isomorphism for all $k$. Moreover, by construction and using standard arguments in singular homology, $\Phi ^n_*$ becomes a natural isomorphism of functors $\mathrm{Top}^2\to \mathrm{Ab}_*$, i.e.
\[
\begin{xy}
\xymatrix{H_k(X,Y)\ar[r]^{f_*}\ar[d]^{\Phi ^n_k}&H_k(Z,T)\ar[d]^{\Phi ^n_k}\\
\widehat{H}_k^n(X,Y)\ar[r]^{f_{\widehat{*}}}&\widehat{ H}_k^n(Z,T)}
\end{xy}
\]
commutes for continuous $f:(X,Y)\to (Z,T)$ where $f_*=H_k(f)$ as usual and $f_{\widehat{*}}=\widehat{H}^n_k(f)$. The above construction works on suitable subcategories of $\mathrm{Top}^2$ just as well. In fact, we make the following assumptions on a subcategory $\mathrm{Cat}$ of $\mathrm{Top}^2$
\begin{itemize}
\item $\mathrm{Cat}$ is a \emph{full subcategory} of $\mathrm{Top}^2$, i.e.~the morphisms are continuous maps between objects of $\mathrm{Cat}$.
\item $(X,Y)\in \mathrm{Ob}(\mathrm{Cat})$ implies $X=(X,\emptyset )\in \mathrm{Ob}(\mathrm{Cat})$, $Y=(Y,\emptyset )\in \mathrm{Ob}(\mathrm{Cat})$ and $(X\times S^n,Y\times S^n)\in \mathrm{Ob}(\mathrm{Cat})$.
 \item $\{ e_0\}\in \mathrm{Ob}(\mathrm{Cat})$ where $e_0\in S^n$.
\end{itemize}
Of course we need to relax the additivity and excision axiom slightly to stay in the category $\mathrm{Cat}$. The category pairs of compact smooth manifolds and continuous maps, denoted by $\mathrm{Man}^2$, satisfies the above conditions. Now we consider the functor of K--area homology $\H _*: \mathrm{Man}^2\to \mathrm{Ab}_*$ and the functor $\widehat{\H }^n_*:\mathrm{Man}^2\to \mathrm{Ab}_*$ which is determined by $\H _*$, then $\widehat {\H }^0_k$ is naturally isomorphic to $\H _k$ for all $k\in \Z $. This is not true for $n>1$, in fact, the K--area homology stabilizes into singular homology: 
\begin{thm}
If $n\geq 2$, $\widehat{\H }^n_*$ is naturally isomorphic to the functor of singular homology on $\mathrm{Man}^2$. In particular,
\[
\Psi ^n_k:H_k(M,A)\to \widehat{\H }^n_k(M,A),\ \theta \mapsto \mathrm{Pr}(\theta \times [S^n])
\]
determines a natural isomorphism for all $n\geq 2$ and $k\in \Z $.
\end{thm}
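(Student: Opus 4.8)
The plan is to prove that the explicit map $\Psi^n_k$ is a natural isomorphism, which immediately gives the asserted equivalence of functors. Everything reduces to the singular K\"unneth splitting already recorded above for $P=H$, fed by the single nontrivial geometric input of lemma \ref{lem163}. Note that the hypothesis $n\geq 2$ enters only through that lemma; the K\"unneth bookkeeping itself works for all $n$.

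First I would verify that $\Psi^n_k$ is well defined and additive. For $\theta \in H_k(M,A)$ the cross product $\theta \times [S^n]$ lies a priori only in the full singular group $H_{k+n}(M\times S^n,A\times S^n)$; the point is that lemma \ref{lem163} forces it, for $n\geq 2$, into the subgroup $\H_{k+n}(M\times S^n,A\times S^n)=\H^n_k(M,A)$, so that $\mathrm{Pr}(\theta \times [S^n])$ is meaningful. Bilinearity of the cross product and linearity of $\mathrm{Pr}$ then make $\Psi^n_k$ a homomorphism.

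For injectivity I would use the uniqueness of the singular K\"unneth decomposition. If $\Psi^n_k(\theta)=0$, then $\theta \times [S^n]=\jmath_*\eta=\eta \times [\{e_0\}]$ for some $\eta \in \H_{k+n}(M,A)$; since in the K\"unneth splitting the summand carried by $[\{e_0\}]$ and the summand carried by $[S^n]$ are independent, comparing components forces $\theta=0$. For surjectivity I would take an arbitrary $\xi \in \H_{k+n}(M\times S^n,A\times S^n)$ and write it via K\"unneth as $\xi = \jmath_*\eta + \theta \times [S^n]$ with $\eta \in H_{k+n}(M,A)$ and $\theta \in H_k(M,A)$. Since $\theta \times [S^n]$ has finite K--area by lemma \ref{lem163} and $\H$ is a subgroup, the difference $\jmath_*\eta = \xi - \theta \times [S^n]$ again has finite K--area. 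The step that needs care is this: to conclude $\mathrm{Pr}(\xi)=\mathrm{Pr}(\theta \times [S^n])=\Psi^n_k(\theta)$ I must know that $\jmath_*\eta$ lies in $\jmath_*\H_{k+n}(M,A)$, not merely in $\jmath_*H_{k+n}(M,A)$. This follows by using the projection $\pi:(M\times S^n,A\times S^n)\to (M,A)$, a continuous map of pairs with $\pi\circ \jmath=\mathrm{id}$; functoriality of $\H$ (proposition \ref{proposition2} and its relative version) gives $\pi_*\jmath_*=\mathrm{id}$ on $\H_*(M,A)$, whence $\eta=\pi_*(\jmath_*\eta)\in \H_{k+n}(M,A)$. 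Thus $\jmath_*\eta$ is annihilated in the quotient $\widehat{\H}^n_k$, and $\Psi^n_k$ is onto.

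Finally, for naturality, given a continuous $f:(M,A)\to (N,B)$ the identity $(f\times \mathrm{id}_{S^n})_*(\theta \times [S^n])=(f_*\theta)\times [S^n]$, together with the facts that $\mathrm{Pr}$ is a natural transformation and $f_{\widehat{*}}$ is induced by $\H^n_k(f)=\H_{k+n}(f\times \mathrm{id}_{S^n})$, yields $f_{\widehat{*}}\circ \Psi^n_k=\Psi^n_k\circ f_*$. I expect the only genuine obstacle to be the surjectivity/retraction step above: once lemma \ref{lem163} is available the rest is the formal bookkeeping of the K\"unneth splitting, but checking that the $\jmath_*\eta$--component of a finite--K--area class is itself of finite K--area---via the retraction $\pi$---is exactly what makes $\widehat{\H}^n_k$ collapse onto the $[S^n]$--summand and thereby onto singular homology.
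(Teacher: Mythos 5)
Your proof is correct and takes essentially the same route as the paper: lemma \ref{lem163} combined with the K\"unneth splitting, where the projection $\pi:(M\times S^n,A\times S^n)\to (M,A)$ with $\pi \circ \jmath =\mathrm{id}$ supplies precisely the step showing that the $\jmath _*$--component of a finite K--area class is $\jmath _*$ of a class in $\H _{k+n}(M,A)$. The paper compresses all of this into the single assertion that $(\eta ,\theta )\mapsto \jmath _*(\eta )+\theta \times [S^n]$ is a well--defined isomorphism $\H _{k+n}(M,A)\oplus H_k(M,A)\to \H _{k+n}(M\times S^n,A\times S^n)$; your write--up makes the implicit bookkeeping explicit but adds no new ingredient.
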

The proof of this theorem is an easy application of the K\"unneth formula and lemma \ref{lem163}, because we deduce from lemma \ref{lem163} and the projection map $(M\times S^n,A\times S^n)\to (M,A)$ that
\[
\begin{split}
\H _{k+n}(M,A)\oplus H_{k}(M,A)&\to \H _{k+n}(M\times S^n,A\times S^n),\\
 (\eta ,\theta )&\mapsto \jmath _*(\eta )+\theta \times [S^n].
\end{split}
\]
is a well defined isomorphism if $n\geq 2$. A standard exercise yields $\Psi ^n_kf_*=f_{\widehat{*}}\Psi ^n_k$ which proves the naturallity of $\Psi ^n_k$. Thus, the K--area homology functor determines the functor of singular homology on $\mathrm{Man}^2$. In order to recover the connecting homomorphism we use the natural transformation
\[
\partial ^n  _*:\H _k(M,A)\to \H ^n_{k-1}(A),\ \theta \mapsto \partial _*\theta \times [S^n].
\]
Again $\partial ^n _*$ is well defined  by lemma \ref{lem163} if $n\geq 2$. Since $\partial ^n _*$ is natural, $\partial ^n _*$ descends to a natural homomorphism
\[
\partial _*=(\Psi ^n_{k-1})^{-1}\circ \Pr \circ \partial^n _*:\H _k(M,A)\to H _{k-1}(A),\ \theta \mapsto \partial _*\theta 
\]
which supplies the natural homomorphism
\[
\begin{split}
\overline{\partial }_*^n:\widehat{\H }_{k}^n(M,A)&\to \widehat{H}_{k-1}^n(A)\\
 &\theta +\H _{k+n}(M,A)\mapsto \partial _*(\theta )+H_{k+n-1}(A).
\end{split}
\]
In fact, we obtain from the definitions for the connecting homomorphism of singular homology $\Phi ^n_{k-1}\partial _*=\overline{\partial }^n_*\Psi ^n_k:H_k(M,A)\to \widehat{H}_{k-1}^n(A)$. Thus, we define the natural transformation
\[
\widehat{\partial }^n_*:=\Psi ^n_{k-1}\circ (\Phi ^n_{k-1})^{-1}\circ \overline{\partial }^n_*:\widehat{\H }_k^n(M,A)\to \widehat{\H }^n_{k-1}(A)
\]
and conclude $\Psi ^n_{k-1}\partial _*=\widehat{\partial }^n_*\Psi _k^n$ for all $k\in \Z $ and $n\geq 2$.
\begin{defn}
For $n\geq 2$ and all $k\in \Z $,
\[
\partial ^n  _*:\H _k(M,A)\to \H ^n_{k-1}(A),\ \theta \mapsto \partial _*\theta \times [S^n]
\]
are natural transformations and called the $n$--\emph{connecting homomorphism of K--area homology}
\end{defn}
Since $\H _*:\mathrm{Man}^2\to \mathrm{Ab}_*$ determines $\widehat{\H }^n_*:\mathrm{Man}^2\to \mathrm{Ab}_*$ and $\partial ^n_*$ determines $\widehat{\partial }_*^n$ we obtain by the above considerations:
\begin{thm}
The functor of K--area homology together with the natural transformation $\partial ^n _*$ determines the singular homology on $\mathrm{Man}^2$ for integer and rational coefficients. In fact for $n\geq 2$,
\[
\Psi ^n_k:(H_k,\partial _*)\to (\widehat{\H }^n_k,\widehat{\partial }^n_*)
\]
is a natural isomorphism of functors $\mathrm{Man}^2\to \mathrm{Ab}_*$ with $\Psi ^n_{k-1}\partial _*=\widehat{\partial }^n_*\Psi _k^n$ for all $k\in \Z $. 
\end{thm}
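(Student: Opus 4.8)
The plan is to treat the statement as the bookkeeping consequence of the material assembled immediately above, splitting the work into two independent parts: that $\Psi^n_k$ is a natural isomorphism of the underlying graded groups, and that it intertwines the connecting homomorphisms. The first part is precisely the content of the preceding theorem, which rests on lemma \ref{lem163} (guaranteeing $\theta \times [S^n]\in \H_{k+n}(M\times S^n,A\times S^n)$ for $n\geq 2$) together with the K\"unneth splitting; I would simply invoke it. Hence the genuine task is the commutation relation $\Psi^n_{k-1}\partial_* = \widehat{\partial}^n_*\Psi^n_k$.

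The key input is that the singular connecting homomorphism commutes with the cross product by the boundaryless fundamental class $[S^n]$. On the chain level a relative cycle $z$ representing $\theta\in H_k(M,A)$ and a cycle $s$ representing $[S^n]$ satisfy $\partial(z\times s)=(\partial z)\times s$, since the Leibniz term $\pm z\times\partial s$ vanishes; thus $\partial_*(\theta\times[S^n])=(\partial_*\theta)\times[S^n]$ in $H_{k+n-1}(A\times S^n)$, with no spurious sign. First I would record this identity. Applying $\Pr$ to both sides and unwinding the definitions of $\Psi^n_k$, $\Phi^n_{k-1}$ and $\overline{\partial}^n_*$ then yields
\[
\overline{\partial}^n_*\Psi^n_k(\theta)=\Pr\bigl(\partial_*(\theta\times[S^n])\bigr)=\Pr\bigl((\partial_*\theta)\times[S^n]\bigr)=\Phi^n_{k-1}\partial_*(\theta),
\]
that is $\overline{\partial}^n_*\Psi^n_k=\Phi^n_{k-1}\partial_*$ on $H_k(M,A)$. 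Substituting the defining formula $\widehat{\partial}^n_*=\Psi^n_{k-1}\circ(\Phi^n_{k-1})^{-1}\circ\overline{\partial}^n_*$ gives $\widehat{\partial}^n_*\Psi^n_k=\Psi^n_{k-1}\circ(\Phi^n_{k-1})^{-1}\circ\Phi^n_{k-1}\partial_*=\Psi^n_{k-1}\partial_*$, which is the asserted identity.

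The one genuine point of care, and where I expect the only real friction, is the descent of $\overline{\partial}^n_*$ to the quotient groups. For $\overline{\partial}^n_*$ to be well defined on $\widehat{\H}^n_k(M,A)=\H_{k+n}(M\times S^n,A\times S^n)/\jmath_*\H_{k+n}(M,A)$ one needs the singular boundary to carry $\jmath_*\H_{k+n}(M,A)$ into $\jmath_*H_{k+n-1}(A)$; this is exactly naturality of the singular connecting homomorphism applied to the map of pairs $\jmath$, giving $\partial_*\circ\jmath_*=\jmath_*\circ\partial_*$. Everything else is diagram chasing: naturality of $\Psi^n$ as a morphism of functors with connecting homomorphism is obtained by composing naturality of its constituents, since $\Psi^n_k$ and $\Phi^n_k$ are natural by the preceding theorem, $\overline{\partial}^n_*$ is natural because the singular $\partial_*$ is, and $\partial^n_*$ is natural by its very definition $\theta\mapsto\partial_*\theta\times[S^n]$ together with lemma \ref{lem163}. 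The argument for rational coefficients is verbatim. The main obstacle is therefore not the algebra, which is formal, but isolating the geometric identity $\partial_*(\theta\times[S^n])=(\partial_*\theta)\times[S^n]$ and checking its compatibility with the quotient by $\jmath_*$.
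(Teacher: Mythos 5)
Your proposal is correct and takes essentially the same route as the paper: the isomorphism part is delegated to the preceding theorem (lemma \ref{lem163} plus K\"unneth), and the intertwining identity $\Psi^n_{k-1}\partial_*=\widehat{\partial}^n_*\Psi^n_k$ is derived exactly as in the text from $\partial_*(\theta\times[S^n])=(\partial_*\theta)\times[S^n]$ together with the definitions $\overline{\partial}^n_*$ and $\widehat{\partial}^n_*=\Psi^n_{k-1}\circ(\Phi^n_{k-1})^{-1}\circ\overline{\partial}^n_*$. Your explicit verification that $\overline{\partial}^n_*$ descends to the quotient --- via naturality of the singular connecting homomorphism applied to $\jmath$, so that $\partial_*\jmath_*\H_{k+n}(M,A)\subseteq\jmath_*H_{k+n-1}(A)$ --- merely spells out a step the paper leaves implicit.
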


\bem{
Suppose that $A\subseteq M$ is a submanifold such that $M/A$ is a smooth manifold. $q:(M,A)\to (M/A,\left< A\right> )$ is a relative homeomorphism and an identification map which means that $q_*$ is an isomorphism on the relative singular homology groups. Although $q^*:\mathscr{V}(M/A,\left< A\right> ;q_*\theta )\to \mathscr{V}(M,A;\theta )$ is a bijection, the obvious argument fails in general because we can not choose metrics $g$, $g'$ on $M$ respectively on $M/A$ such that $C\cdot g\geq q^*g'\geq C^{-1}g$ on $M\setminus A$ for some constant $C>0$. However, the K--area homology depends only on the homotopy type of the pair which leads to the following consideration. There is a compact submanifold $X\subseteq M$ such that $A\subseteq X$ is a strong deformation retract, then $\left< A\right> \in M/A$ is a strong deformation retract of $Y:=q(X) $, i.e.~$(M,X)\simeq (M,A)$ and $(M/A,Y)\simeq (M/A,\left< A\right> )$. Now we can choose a metric $g'$ on $M/A$ and a corresponding metric $g$ on $M$ such that $g=q^*g'$ on $\overline{M\setminus X}$. The pull back by $q$ yields a bijection $q^*:\mathscr{V}(M/A,Y;q_*\theta )\to \mathscr{V}(M,X;\theta )$ since every bundle which is trivial and flat on a neighborhood of $Y$ extends to a bundle on $M$ that is trivial and flat on a neighborhood of $X$. Thus,  
 $\| R^{q^*\bundle{E}}\| _g=\| R^\bundle{E}\| _{g'}$ proves
\[
\ke (M_g,X;\theta )=\ke  (M/A_{g'},Y;q_*\theta )
\]
for all $\theta \in H_{2*}(M,X)$ which implies that $q_*:\H _{2k}(M,X)\to \H _{2k}(M/A,Y)$ is an isomorphism for all $k$. In order to see this isomorphism for odd homology classes we consider the above setup and the map
\[
q\times \mathrm{id}:(M\times S^1,X\times S^1)\to (M/A\times S^1,Y\times S^1).
\]
By the above argument the relative K--areas of $\theta \times [S^1]$ and $q_*\theta \times [S^1]$ coincide for all $\theta \in H_{2*+1}(M,X)$ which supplies the isomorphism $q_*:\H _k(M,X)\to \H _k(M/A,Y)$. Hence, we obtain the claim from the following commutative diagram
\[
\begin{xy}
\xymatrix{\H _k(M,A)\ar[r]^{q_*}\ar[d]&\H _k(M/A,\left< A\right> )\ar[d]\\
\H _k(M,X)\ar[r]^{q_*}&\H _k(M/A,Y)}
\end{xy}
\]
where the vertical homomorphisms are isomorphisms and induced by the inclusion maps. }

\chapter{Curvature gaps}
\begin{center}
\framebox{In this chapter all manifolds are connected and oriented.}
\end{center}
\section{$L^\infty$--curvature gaps of complex vector bundles}
\label{curv_gap}
A well known theorem is that flat vector bundles on a simply connected manifold are trivial. Using the  curvature norm $\| R^\bundle{E}\| _g$ of Hermitian vector bundles $\bundle{E}\to M$ introduced in section \ref{sec21} this observation can be generalized to almost flat vector bundles:
\begin{thm}[Gromov \cite{Gr01}] 
Let $(M,g)$ be a closed simply connected Riemannian manifold, then there is a constant $\epsilon >0$ with the following property: Any Hermitian vector bundle $\bundle{E}\to M$ with curvature $\| R^\bundle{E}\| _g<\epsilon $ is trivial.
\end{thm}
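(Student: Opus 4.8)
The plan is to trivialize $\bundle{E}$ by producing a \v{C}ech cocycle all of whose transition functions are uniformly close to the identity, and then to exploit simple connectivity to contract such a cocycle to the trivial one. The guiding principle is that on a simply connected base the only mechanism that can produce nontrivial holonomy is curvature, so once $\|R^\bundle{E}\|_g$ is small enough the bundle must look flat, and a flat bundle on a simply connected manifold is trivial (its holonomy representation of $\pi_1(M)=1$ is trivial). Throughout I would work with the fibrewise operator norm, exactly as in the definition of $\|R^\bundle{E}\|_g$, since this is what makes every estimate below independent of the rank of $\bundle{E}$ and hence lets a single $\epsilon$ serve for all bundles at once.

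First I would use compactness of $(M,g)$ to fix a finite cover by geodesically convex balls $B_1,\dots,B_N$ with controlled overlaps; the number $N$, the diameters of the $B_i$, and the areas of the small geodesic disks contained in each $B_i\cup B_j$ are then bounded by constants depending only on $(M,g)$. Over each $B_i$, radial parallel transport from its center trivializes $\bundle{E}|_{B_i}$, and comparing these trivializations on an overlap yields transition functions $g_{ij}$. The basic estimate is that parallel transport around a contractible loop bounding a disk $D$ differs from $\id$ by at most $C\int_D\|R^\bundle{E}\|\leq C'\|R^\bundle{E}\|_g$, with $C'$ governed only by the areas above; crucially, in the operator norm the non-abelian higher-order corrections are controlled by the same integral, so the bound is rank-free.

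Simple connectivity enters next. Choosing a maximal tree in the nerve of the cover, I would gauge the trivializations so that the transition functions along tree edges are exactly $\id$; for a non-tree edge the corresponding $g_{ij}$ is then a holonomy around a loop in $M$ which, because $\pi_1(M)=1$, bounds a disk of controlled area. The estimate of the previous step then gives $\|g_{ij}-\id\|<C\epsilon$ for all $i,j$, with $C$ depending only on $(M,g)$. This is exactly where both compactness and simple connectivity are indispensable: on a manifold with $\pi_1\neq 1$ the holonomies around non-contractible loops cannot be made small by shrinking the curvature, which is precisely why the statement fails there.

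It remains to show that a cocycle with $\|g_{ij}-\id\|<\delta$ for all $i,j$ defines a trivial bundle once $\delta$ is small. Here I would pass to logarithms $a_{ij}=\log g_{ij}\in\mathfrak{u}(n)$, well defined and uniformly (in $n$) controlled in the operator norm for $\delta$ small; the multiplicative cocycle identity then becomes an additive one up to a quadratically small Baker--Campbell--Hausdorff error. Since the sheaf of smooth $\mathfrak{u}(n)$-valued functions is fine, exact additive cocycles are coboundaries, so a Newton/Moser-type iteration can correct the almost-cocycle $\{a_{ij}\}$ by successive coboundaries, the errors contracting geometrically, to an exact coboundary; exponentiating gives $g_{ij}=h_ih_j^{-1}$ and hence a global trivialization. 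I expect this last step to be the main obstacle: the naive homotopy $g_{ij}^t=\exp(t\log g_{ij})$ fails because it destroys the cocycle relation, so one really must run the quantitative implicit-function argument and check that it converges with constants independent of the rank — which is again where the operator norm is essential.
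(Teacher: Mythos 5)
The paper does not actually prove this theorem: it is quoted from Gromov \cite{Gr01} and used as a black box (here in section \ref{curv_gap}, and again in remark \ref{funda}), so there is no in-paper argument to compare yours against. Judged on its own, your proposal reconstructs what is essentially the standard proof of Gromov's theorem, in the style also used in the almost-flat-bundle literature (Connes--Gromov--Moscovici): radial parallel-transport trivializations over convex balls, the observation that each transition function $g_{ij}(x)$ equals, up to a fixed identification, the holonomy of a small geodesic triangle through the two centers and $x$ (hence is $C\epsilon$-close to a constant unitary $u_{ij}$), a tree gauge reducing everything to holonomies of loops in the nerve, the filling-area bound converting $\pi _1(M)=1$ into a quantitative estimate, and finally a quadratically convergent averaging that turns a near-identity cocycle into a coboundary. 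Your insistence on the fibrewise operator norm so that all constants are rank-free is exactly right and matches the paper's own emphasis that the operator norm on $\mathrm{End}(\bundle{E})$ is essential; and you are also right that no characteristic-class argument could suffice, since genuine (not just stable) triviality is claimed --- the paper itself points out the stably trivial, nontrivial rank-$2$ bundle on $S^5$ coming from $\pi _4(\mathrm{U}_2)=\Z _2$, which your cocycle argument correctly rules out as almost flat.

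Two steps in your sketch carry more weight than the wording suggests. First, the claim that a loop of length $\leq L$ in a closed simply connected manifold bounds a disk of area $\leq A(L)$ is true but is not automatic: one needs a compactness argument (the space of loops of Lipschitz constant $\leq L$ is $C^0$-compact, nearby loops cobound thin geodesic-homotopy annuli, so the filling-area function is locally bounded and hence uniformly bounded). This is precisely the point where simple connectivity becomes quantitative; without it the non-tree holonomies need not be small no matter how flat $\bundle{E}$ is, which is your own correct diagnosis of why the theorem fails for $\pi _1\neq 1$. Second, in the tree gauge you cannot literally set $g_{ij}\equiv \id$ on a tree-edge overlap until you already know $g_{ij}$ is uniformly close to a constant unitary: regauging requires extending $g_{ij}$ from $B_i\cap B_j$ to all of $B_j$, and it is the closeness-to-constant estimate (write $g_{ij}=u_{ij}\exp (a)$ with $\| a\| _{op}$ small and cut off $a$) that makes such an extension exist; so the triangle-holonomy estimate must come before, not after, the tree gauge. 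With those two points made explicit, the final iteration (partition-of-unity averaging, Baker--Campbell--Hausdorff errors quadratic in the operator norm, constants depending only on the fixed cover and its multiplicity) converges for $\delta $ below a threshold depending only on $(M,g)$, and the argument is complete.
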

In general, one can not drop the condition on the fundamental group and still hope to conclude triviality of the vector bundle. Nevertheless, in the presence of a finite fundamental group we obtain certain positive curvature gaps meaning that a vector bundle with curvature below that curvature gap must be stably rational trivial. Gap phenomena for Yang--Mills fields have been investigated by Bourguignon et al.~in \cite{BLS}. In particular, they presented precise estimates for the curvature of Yang--Mills connections on $S^n$. In the results below we do not assume the existence of a Yang--Mills field on a vector bundle to compute curvature gaps. But, in order to avoid any confusion, we should point out that the $L^\infty $--norm in \cite{BLS} differs from our $L^\infty $--norm and moreover, we restrict ourselves to the case of complex vector bundles. Given a closed, oriented and connected Riemannian manifold $(M^n,g)$, and denote by $\| R^\bundle{E}\| _g$ the curvature norm of Hermitian vector bundles $\bundle{E}\to M$ introduced in section \ref{sec21}. We raise the following question: 
\begin{equation}
\label{gapproblem}
\begin{split} 
&\text{Is there an }\epsilon >0\text{ such that any Hermitian bundle }\bundle{E}\to M  \\ 
&\qquad \text{with curvature }\| R^\bundle{E}\| _g<\epsilon \text{ is stably rational trivial.}
\end{split}
\end{equation}
Note that a bundle $\bundle{E}$ is \emph{stably rational trivial} if there are integers $m>0$ and $k\geq 0$ in such a way that 
\[
\underbrace{\bundle{E}\oplus \cdots \oplus \bundle{E}}_{m-times}\oplus \C ^k
\]
is isomorphic to the trivial bundle. Of course if the $K$--group $K(M)$ is torsion free (for instance $M=S^n$, $M=\proj{\C }^m$, $M=T^n$,...), then stably rational trivial is the same as being stably trivial, where $\bundle{E}$ is stably trivial if $\bundle{E}\oplus \C ^k$ is trivial for some $k\geq 0$. Hence, if $K(M)$ is torsion free, we have to consider only the case $m=1$ in the above decomposition. But on $\proj {\R }^n$ for instance, the complex line bundle $\bundle{L}$ associated to $0\neq c\in H^2(\proj{\R} ^n;\Z )$ is a stably nontrivial bundle which admits a flat connection and is rational trivial ($\bundle{L}\oplus \bundle{L}$ is trivial). The question whether a bundle is trivial or only stably trivial can not be decided by standard characteristic classes. A very simple example of a stably trivial complex bundle on $S^5$ is determined by the nontrivial element in $[S^5,\mathrm{BU}_2] =\pi _4(\mathrm{U}_2)=\Z _2$. In fact, all characteristic classes associated to a stably trivial bundle vanish. Hence, in the presence of a nontrivial fundamental group we can not expect to conclude triviality of the bundle if it has almost flat curvature, and the property stably rational trivial seems to be the best choice to consider curvature gap problems. 

The largest $\epsilon $ with the property (\ref{gapproblem}) is called the \emph{vector bundle curvature gap} of $(M,g)$ and denoted by $\cg (M_g)$. We set $\cg (M_g)=0$ for monotonicity reasons if there is no such $\epsilon >0$. Conversely, if each complex vector bundle on $M$ is stably rational trivial, like on $S^{2n+1}$ or lens spaces, define $\cg (M_g)=\infty $. Note that questions $\cg (M_g)>0$ respectively $\cg (M_g)=0$ do not depend on the Riemannian metric $g$ or the usage of the $L^\infty $--norm on $\Omega ^2(M)$, however, these question depend on the choice of the norm on the fibers of $\mathrm{End}(\bundle{E})$. The reason for this more or less unexpected observation is that there is no uniform equivalence of norms in finite dimension. For instance, in case $\mathrm{rk}(\bundle{E})\to \infty $ we can not estimate the Yang--Mills term $\int |R^\bundle{E}|_g^2$ by $\| R^\bundle{E}\| _g^2$ although the first expression comes from an $L^2$--norm. To justify this statement simply consider the infima $\inf \| R^\bundle{E}\| _g$ and $\inf \int |R^\bundle{E}|^2_g$ over the set of bundles $\bundle{E}\to T^4$ which have nontrivial second Chern number $c_2(\bundle{E})\neq 0$ and vanishing first Chern class $c_1(\bundle{E})=0$. By the results in chapter \ref{chp2} we know $\inf \| R^\bundle{E}\| _g=0$ whereas $\inf \int |R^\bundle{E}|^2_g\geq 8\pi ^2$ by Chern--Weil theory. 

We denote by $\widetilde{H}_*(M;\Q )$ the reduced singular homology of $M$ with rational coefficients, i.e.~$\widetilde{H}_0(M;\Q )=\{ 0\} $ because $M$ is assumed to be connected.
\begin{lem}
\label{lem_gap}
Consider the subspace $V:=\widetilde{H}_{2*}(M;\Q )\subseteq \widetilde{H}_*(M;\Q )$, then
\[
\frac{1}{\cg (M_g)}=\max _{\theta \in V}\ke (M_g;\theta )=\max _{\theta \in V}\ke _\mathrm{ch}(M_g;\theta )\in [0,\infty ]
\]
where $\ke (M_g;.)$, $\ke _\mathrm{ch}(M_g;.)$ are the K--areas introduced above. In fact, the notation maximum indicates that the value $1/\cg (M_g)$ is always achieved by some $\theta \in V$.
\end{lem}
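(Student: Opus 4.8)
The plan is to reduce everything to one clean reformulation of the gap. By definition $\cg(M_g)$ is the largest $\epsilon$ forcing every bundle with $\|R^\bundle{E}\|_g<\epsilon$ to be stably rational trivial, so equivalently $\cg(M_g)=\inf\|R^\bundle{E}\|_g$ where the infimum runs over all Hermitian bundles $\bundle{E}$ that are \emph{not} stably rational trivial (with the conventions $\inf\emptyset=\infty$). The whole statement follows once I identify this family of bundles with both unions $\bigcup_{\theta\in V}\mathscr{V}(M;\theta)$ and $\bigcup_{\theta\in V}\mathscr{V}_\mathrm{ch}(M;\theta)$, since then I may simply exchange the two infima.

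First I would characterize stable rational triviality homologically. A bundle $\bundle{E}$ is stably rational trivial exactly when the reduced class $[\bundle{E}]-\mathrm{rk}(\bundle{E})\in\widetilde{K}(M)$ is torsion: $m\bundle{E}\oplus\C^k\cong\C^N$ says $m([\bundle{E}]-\mathrm{rk}(\bundle{E}))=0$ in $\widetilde{K}(M)$, and conversely torsion of the reduced class produces such a stable isomorphism. Since the Chern character $\mathrm{ch}\colon\widetilde{K}(M)\otimes\Q\to\widetilde{H}^{2*}(M;\Q)$ is an isomorphism (the Atiyah--Hirzebruch fact quoted in the remark after the definition of the K--area), torsion of the reduced class is equivalent to $\mathrm{ch}_i(\bundle{E})=0$ rationally for all $i>0$. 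Expanding $c(\bundle{E})^m=c(m\bundle{E}\oplus\C^k)=1$ degreewise shows directly that stable rational triviality also forces every $c_i(\bundle{E})$ to be torsion, hence every rational characteristic number in positive degree to vanish.

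With this in hand the set identity is immediate. If $\bundle{E}\in\mathscr{V}(M;\theta)$ for some $\theta\in V$, then $\langle p(c(\bundle{E})),\theta\rangle\neq 0$ for some polynomial $p$; as $\theta$ lives in positive even degree the pairing only sees the positive--degree part of $p(c(\bundle{E}))$, so $\bundle{E}$ cannot be stably rational trivial. Conversely, if $\bundle{E}$ is not stably rational trivial then some $\mathrm{ch}_i(\bundle{E})\neq 0$ in $H^{2i}(M;\Q)$, and nondegeneracy of the rational pairing produces $\theta\in H_{2i}(M;\Q)\subseteq V$ with $\langle\mathrm{ch}(\bundle{E}),\theta\rangle\neq 0$, i.e.\ $\bundle{E}\in\mathscr{V}_\mathrm{ch}(M;\theta)$. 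Together with the trivial inclusion $\mathscr{V}_\mathrm{ch}(M;\theta)\subseteq\mathscr{V}(M;\theta)$ these sandwich the three families, so $\bigcup_{\theta\in V}\mathscr{V}_\mathrm{ch}(M;\theta)=\{\bundle{E}:\ \bundle{E}\ \text{not stably rational trivial}\}=\bigcup_{\theta\in V}\mathscr{V}(M;\theta)$. Taking $\inf\|R^\bundle{E}\|_g$ over this common family and rewriting $\inf_{\theta}\inf_{\bundle{E}\in\mathscr{V}(M;\theta)}\|R^\bundle{E}\|_g=\inf_\theta\ke(M_g;\theta)^{-1}$ yields $1/\cg(M_g)=\sup_{\theta\in V}\ke(M_g;\theta)=\sup_{\theta\in V}\ke_\mathrm{ch}(M_g;\theta)$. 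Note this deliberately bypasses the constant $m^2$ of Proposition \ref{proposition6}: individual classes may genuinely have different $\ke$ and $\ke_\mathrm{ch}$, but the two suprema agree because the underlying bundle families coincide.

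Finally I must upgrade the supremum to a maximum. Here I would use that $V$ is a finite--dimensional $\Q$--vector space together with the non--Archimedean behaviour of the K--area: Proposition \ref{proposition1} gives $\ke(M_g;\theta+\eta)\leq\max\{\ke(M_g;\theta),\ke(M_g;\eta)\}$ and the degreewise splitting, while scaling invariance gives $\ke(M_g;a\theta)=\ke(M_g;\theta)$ for $a\in\Q\setminus\{0\}$. Fixing a basis $\theta_1,\dots,\theta_N$ of $V$ by pure--degree classes and expanding any $\theta\in V$ in it, these two properties force $\ke(M_g;\theta)\leq\max_i\ke(M_g;\theta_i)$; hence the supremum over $V$ equals the maximum over the finite basis and is attained at some $\theta_i$. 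The same argument applies verbatim to $\ke_\mathrm{ch}$ (whose scaling and subadditivity follow even more directly from linearity of the pairing). The main obstacle is really the bookkeeping of the middle step --- matching ``not stably rational trivial'' exactly, in both directions and for both K--areas, to nonvanishing of a characteristic number against a class in the positive even reduced homology $V$; once that set identity is clean, the infimum exchange and the finite--dimensional ultrametric argument are routine.
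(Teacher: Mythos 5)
Your proof is correct and follows essentially the same route as the paper's: you characterize stable rational triviality via the rational Chern character (equivalently, triviality of $\rho ^\bundle{E}_*$ on $\widetilde{H}_{2*}(M;\Q )$), identify the non--stably--rational--trivial bundles with both unions $\bigcup _{\theta \in V}\mathscr{V}(M;\theta )=\bigcup _{\theta \in V}\mathscr{V}_\mathrm{ch}(M;\theta )$, exchange infima, and obtain attainment from a finite basis of $V$ together with proposition \ref{proposition1} and scaling invariance. The only deviations are cosmetic: you absorb the paper's separate case $V=\{ 0\} $ into the convention $\inf \emptyset =\infty $, and your uniform finite--basis maximum argument replaces the paper's case split $\H _{2*}(M;\Q )\neq V$ versus $\H _{2*}(M;\Q )=V$.
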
 
\begin{proof}
If $V=\{ 0\}$, the K--areas vanish by definition, i.e.~it remains to show that $\cg (M_g)=\infty $. We know from $V=\{ 0\} $ that $H^{2*}(M;\Q )=H^0(M;\Q )=\Q $. Thus, the isomorphism $\mathrm{ch}:K(M)\otimes \Q \to H^{2*}(M;\Q )$ supplies that the reduced K--group is a torsion group which means that each complex bundle $\bundle{E}\to M$ is stably rational trivial proving $\cg (M_g)=\infty $. Now suppose that $V\neq \{ 0\}$, then there are complex vector bundles $\bundle{E}\to M$ which are not stably rational trivial by the same argument, in fact $\cg (M_g)<\infty $ and it suffices to consider classes $\theta \in V\setminus \{ 0\}$. Note that a bundle $\bundle{E}\to M$ is stably rational trivial if and only if one of the following equivalent conditions is satisfied
\begin{itemize}
\item $\mathrm{ch}(\bundle{E})=\mathrm{rk}(\bundle{E})\in H^{2*}(M;\Q )$.
\item The induced homomorphism $\rho ^\bundle{E}_*:\widetilde{H}_{2*}(M;\Q )\to \widetilde{H}_{2*}(\mathrm{BU};\Q )$ of the classifying map on reduced homology  is trivial.
\end{itemize}
This means that the curvature gap $\cg (M_g)$ is determined by
\[
\cg (M_g)=\inf _{\mathrm{rk}(\bundle{E})\neq \mathrm{ch}(\bundle{E})}\| R^\bundle{E}\| _g=\inf _{\rho ^\bundle{E}_*\neq 0}\| R^\bundle{E}\| _g
\]
where the infima are taken over all Hermitian vector bundles with Hermitian connections such that $\mathrm{ch}(\bundle{E})\neq \mathrm{rk}(\bundle{E})$ respectively $0\neq \rho ^\bundle{E}_*:\widetilde{H}_{2*}(M;\Q )\to \widetilde{H}_{2*}(\mathrm{BU};\Q )$. Thus,
\[
\left[ \sup _{\theta \in V}\ke _\mathrm{ch}(M_g;\theta )\right] ^{-1}=\inf _{\theta \in V}\left[ \inf _{\left< \mathrm{ch}(\bundle{E}),\theta \right> \neq 0}\| R^\bundle{E}\| _g\right] =\inf _{\mathrm{rk}(\bundle{E})\neq \mathrm{ch}(\bundle{E})}\| R^\bundle{E}\| _g=\cg (M_g)
\]
\[
\left[ \sup _{\theta \in V}\ke (M_g;\theta )\right] ^{-1}=\inf _{\theta \in V}\left[ \inf _{\rho ^\bundle{E}_*(\theta )\neq 0}\| R^\bundle{E}\| _g\right] =\inf _{\rho ^\bundle{E}_*\neq 0}\| R^\bundle{E}\| _g=\cg (M_g)
\]
prove the equalities and it remains to show that the values are attained by some $\theta \in V$. If $\mathscr{H}_{2*}(M;\Q )\neq V$, then we choose $\theta \in V\setminus \mathscr{H}_{2*}(M;\Q )$ and obtain
\[
\frac{1}{\cg (M_g)}=\ke (M_g;\theta )=\ke _\mathrm{ch}(M_g;\theta )=\infty .
\]
Hence, assume $\mathscr{H}_{2*}(M;\Q )=V$ and take a basis $\theta _1,\ldots ,\theta _s$ of the vector space $V$, then without loss of generality $\ke (M_g;\theta _i)\leq \ke (M_g;\theta _s)<\infty $ for all $i$. Now suppose that $ \theta \in V$ is arbitrary, then $\theta =\sum \lambda _i\theta _i$ for $\lambda _i\in \Q $. We conclude from proposition \ref{proposition1} and the scaling invariance in the homology class
\[
\ke (M_g;\theta )\leq \max \{ \ke (M_g;\lambda _i\theta _i)|i=1\ldots s\} \leq \ke (M_g;\theta _s)
\] 
which proves $\sup _\theta \ke (M_g;\theta )=\ke (M_g;\theta _s)$. The same argument applies to the K--area for the Chern character since the second part of proposition \ref{proposition1}  holds for $\ke _\mathrm{ch}(M_g;.)$ as well.
\end{proof}
\begin{thm}
\label{thm_gap}
A closed manifold $M$ has  curvature gap $\cg (M_g)>0$ if and only if $\mathscr{H}_{2*}(M)=\widetilde{H}_{2*}(M)$. In particular, $\cg (M_g)>0$ respectively $\cg (M_g)=0$ depend only on the homotopy type of $M$.
\end{thm}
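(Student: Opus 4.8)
The plan is to read the whole statement off Lemma~\ref{lem_gap}, which already carries all the analytic content; what remains is bookkeeping with coefficients and an appeal to functoriality. First I would use the lemma to rewrite the dichotomy $\cg(M_g)>0$ versus $\cg(M_g)=0$ purely in terms of K--area homology over $\Q$. Since
\[
\frac{1}{\cg(M_g)}=\max_{\theta\in V}\ke(M_g;\theta),\qquad V:=\widetilde H_{2*}(M;\Q),
\]
and since the lemma guarantees this maximum is attained, $\cg(M_g)>0$ is equivalent to $\max_{\theta\in V}\ke(M_g;\theta)<\infty$. Because the maximum is attained, it is finite exactly when $\ke(M_g;\theta)<\infty$ for every $\theta\in V$, i.e.\ when $V\subseteq\H_{2*}(M;\Q)$. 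As $\H_0(M;\Q)=\{0\}$, the reverse inclusion $\H_{2*}(M;\Q)\subseteq V$ holds automatically, so the condition becomes the equality $\H_{2*}(M;\Q)=\widetilde H_{2*}(M;\Q)$.

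It then remains to upgrade this rational equality to the integral one in the statement. Here I would invoke the scaling invariance $\ke(M_g;a\theta)=\ke(M_g;\theta)$ for $a\in\Z\setminus\{0\}$, which shows that $\H_{2j}(M)$ is a \emph{saturated} subgroup of $H_{2j}(M)$: whenever $a\theta$ has finite K--area for some nonzero integer $a$, so does $\theta$. Consequently the finitely generated group $H_{2j}(M)/\H_{2j}(M)$ is torsion free. Combining the isomorphism $\H_*(M;\Q)\cong\H_*(M)\otimes\Q$ with the universal coefficient isomorphism $H_*(M;\Q)\cong H_*(M)\otimes\Q$, the rational identity $\H_{2*}(M;\Q)=\widetilde H_{2*}(M;\Q)$ is exactly the assertion that $\H_{2*}(M)$ and $\widetilde H_{2*}(M)$ have the same rank. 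Since their quotient is torsion free of rank zero it must vanish, and I obtain $\H_{2*}(M)=\widetilde H_{2*}(M)$; the converse implication is trivial. This establishes $\cg(M_g)>0\iff\H_{2*}(M)=\widetilde H_{2*}(M)$.

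Finally, for the homotopy invariance claim I would note that both sides of this equality are homotopy invariants: $\widetilde H_{2*}(M)$ obviously is, and $\H_{2*}(M)$ is by the functoriality established earlier, with a homotopy equivalence inducing isomorphisms compatible with the inclusion $\H_{2*}\hookrightarrow\widetilde H_{2*}$. Hence the condition $\H_{2*}(M)=\widetilde H_{2*}(M)$, and therefore also its negation, depends only on the homotopy type of $M$; by the equivalence just proved, so do the alternatives $\cg(M_g)>0$ and $\cg(M_g)=0$.

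The hard part is genuinely packaged inside Lemma~\ref{lem_gap}, so the only delicate point left is the coefficient change: equal rational rank must be promoted to an honest integral equality, which fails for a general full--rank subgroup. What saves the argument is precisely the purity of $\H_{2*}(M)$ inside $H_{2*}(M)$, a direct consequence of the scaling invariance of the K--area in the homology class.
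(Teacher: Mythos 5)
Your proof is correct and follows essentially the same route as the paper, which states Theorem~\ref{thm_gap} as a direct consequence of Lemma~\ref{lem_gap} together with the facts from Chapter~1 ($\mathrm{Tor}(H_*(M))\subseteq \H_*(M)$, $\H_*(M;\Q)\cong\H_*(M)\otimes\Q$, and the scaling invariance $\ke(M_g;a\theta)=\ke(M_g;\theta)$ for $a\in\Z\setminus\{0\}$). Your explicit saturation argument for upgrading the rational equality $\H_{2*}(M;\Q)=\widetilde H_{2*}(M;\Q)$ to the integral one is exactly the step the paper leaves implicit, and you ground it correctly in the paper's own observation that $\mathscr{V}(M;a\cdot\theta)=\mathscr{V}(M;\theta)$.
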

Hence, a manifold with finite fundamental group has positive curvature gap (due to Gromov, cf.~\cite{Gr01} and remark \ref{funda}). On the other hand, there are plenty of manifolds with vanishing curvature gap. For instance, if there is a map of nonzero degree $M^{2k}\to T^{2k}$ for $k\geq 1$, then $\cg (M_g)=0$. Even positive scalar curvature on spin manifolds does not guarantee positivity of $\cg (M_g)$ as the example $M=S^2\times T^2$ shows. Here, $\mathscr{H}_2(S^2\times T^2)\neq H_2(S^2\times T^2)$ and the nontrivial almost flat bundles on $S^2\times T^2$ are induced by nontrivial almost flat bundles on $T^2$. Nevertheless, a finite fundamental group is not necessary for a manifold to have a positive curvature gap. In fact, the first Betti number can be arbitrarily large as the example
\[
N=S^k\times S^1\# \cdots \# S^k\times S^1,\ \ k>1
\]
shows. Because $N$ admits a metric of positive scalar curvature, $\mathscr{H}_{2*}(N)=\widetilde{H}_{2*}(N)$ by the results in chapter \ref{chp2} which implies $\cg (N_g)>0$ for all metrics $g$. Moreover, the existence of metrics with positive scalar curvature is not necessary for (spin) manifolds to have a positive curvature gap. For instance, let $X^{2n+1}$ be a rational homology sphere which admits a metric of nonpositive sectional curvature (like the Seifert--Weber space in $3$--dimensions), then $X$ does not admit a metric of positive scalar curvature. However, $\cg (X)=+\infty $ because any complex vector bundle on $X$ is stably rational trivial.   Conversely, if $X$ denotes the Seifert--Weber space, the manifold $X\times S^1$ satisfies $\cg (X\times S^1)=0$ since $\H _4 (X\times S^1)=\{ 0\} $. This follows from \cite[$4\frac{3}{5}$(v')]{Gr01} using the facts that $X\times S^1$ has residually finite fundamental group and $X\times S^1$ admits a metric of nonpositive sectional curvature. Nevertheless, the first cohomology group of manifolds with positive curvature gap is restricted by the following remark.
\begin{rem}
Suppose that $\cg (M_g)>0$, then $\theta \cup \eta =0$ for all $\theta ,\eta \in H^1(M;\Q )$.
\end{rem}

The proof of this remark is an easy application of Hopf's theorem. If $\theta \cup \eta \neq 0$, there is a map $f:M\to T^2$ with $f_*:H_2(M)\to H_2(T^2)$ surjective which implies $\H _2(M)\neq H_2(M)$. Remark \ref{rem243} yields now the following optimal result for the standard sphere $(S^{2n},g_0)$ of constant sectional curvature $K_{g_0}=1$: 
\begin{prop}
A Hermitian bundle $\bundle{E}\to S^{2n} $ with curvature $\| R^\bundle{E}\| _{g_0}<\frac{1}{2}$ has to be stably trivial, in fact $\cg (S^{2n}_{g_0})=\frac{1}{2}$. 
\end{prop}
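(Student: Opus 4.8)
The plan is to read the value of $\cg (S^{2n}_{g_0})$ directly off Lemma \ref{lem_gap} and the K--area computation recorded in Remark \ref{rem243}, and then to upgrade ``stably rational trivial'' to ``stably trivial'' using the torsion--freeness of $K(S^{2n})$. First I would apply Lemma \ref{lem_gap} with $V=\widetilde{H}_{2*}(S^{2n};\Q )$. Since the reduced rational homology of $S^{2n}$ is concentrated in the even degree $2n$, we have $V=H_{2n}(S^{2n};\Q )=\Q \cdot [S^{2n}]$, a one--dimensional space generated by the fundamental class, so
\[
\frac{1}{\cg (S^{2n}_{g_0})}=\max _{\theta \in V}\ke (S^{2n}_{g_0};\theta ).
\]
By the scaling invariance of the K--area in the homology class (exactly as in the proof of Lemma \ref{lem_gap}), the maximum over the line $\Q \cdot [S^{2n}]$ is attained at $[S^{2n}]$, and the right--hand side equals the total K--area $\ke (S^{2n}_{g_0})=\ke (S^{2n}_{g_0};[S^{2n}])$.

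It remains to insert the value $\ke (S^{2n}_0)=2$, which is precisely Remark \ref{rem243} and carries the entire analytic content. For the lower bound $\|R^\bundle{E}\|_{g_0}\geq \tfrac12$ on every bundle $\bundle{E}$ that is not stably trivial, I would invoke Proposition \ref{prop_scalar} with $M=S^{2n}$ (where $\widehat{\mathbf{A}}(S^{2n})\cap [S^{2n}]=[S^{2n}]$, all Pontryagin classes vanishing) and $N$ a point: twisting the spinor bundle by $\bundle{E}$, the Atiyah--Singer index $\langle \mathrm{ch}(\bundle{E}),[S^{2n}]\rangle \neq 0$ forces nontrivial kernel of $\dirac _\bundle{E}$, and the Bochner--Lichnerowicz formula $\dirac ^2=\nabla ^*\nabla +\tfrac{\mathrm{scal}}{4}+\mathfrak{R}^\bundle{E}$ together with $\mathrm{scal}_{g_0}=2n(2n-1)$ and the estimate $|\mathfrak{R}^\bundle{E}|_{op}\leq n(2n-1)\,\|R^\bundle{E}\|_{g_0}$ yields $\|R^\bundle{E}\|_{g_0}\geq \tfrac12$, i.e. $\ke (S^{2n}_0)\leq 2$. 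For the matching upper bound I would exhibit the extremal bundle: the positive spinor bundle $\spinor ^+S^{2n}$ satisfies $\|R^{\sspinor ^+}\|_{g_0}=\tfrac12$ and $c_n(\spinor ^+)\neq 0$, whence $\langle \mathrm{ch}_n(\spinor ^+),[S^{2n}]\rangle \neq 0$ (the lower Chern classes vanishing for degree reasons), so $\spinor ^+S^{2n}\in \mathscr{V}(S^{2n};[S^{2n}])$ realizes the infimum and gives $\ke (S^{2n}_0)\geq 2$. Combining the two, $\cg (S^{2n}_{g_0})=1/\ke (S^{2n}_0)=\tfrac12$.

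Finally I would translate the gap statement into the proposition. By the very definition of $\cg$, a Hermitian bundle with $\|R^\bundle{E}\|_{g_0}<\cg (S^{2n}_{g_0})=\tfrac12$ is necessarily stably rational trivial; and since $K(S^{2n})$ is torsion free, stably rational trivial coincides with stably trivial, so such an $\bundle{E}$ is stably trivial. The main (and essentially only) obstacle is the sharp constant $\tfrac12$: it hinges on the precise curvature identity $\|R^{\sspinor ^+}\|_{g_0}=\tfrac12$ on the round sphere matching exactly the constant that the Bochner argument produces from $\mathrm{scal}_{g_0}=2n(2n-1)$. Everything else is bookkeeping: the reduction via Lemma \ref{lem_gap}, the one--dimensionality of $V$, and the passage from rational to integral stable triviality.
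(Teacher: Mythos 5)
Your proposal is correct and follows essentially the same route as the paper, which obtains the value $\cg (S^{2n}_{g_0})=\frac{1}{2}$ by combining lemma \ref{lem_gap} with the computation $\ke (S^{2n}_0)=2$ of remark \ref{rem243} (the lower curvature bound via proposition \ref{prop_scalar} and the Bochner--Lichnerowicz argument with $\mathrm{scal}_{g_0}=2n(2n-1)$, the upper bound via $\spinor ^+S^{2n}$ with $\| R^{\sspinor ^+}\| _{g_0}=\frac{1}{2}$ and $c_n(\spinor ^+)\neq 0$), and then passes from stably rational trivial to stably trivial using that $K(S^{2n})$ is torsion free. You have merely spelled out explicitly the details that the paper compresses into the citation of remark \ref{rem243}.
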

Note that if $f:(M,g)\to (S^{2n},g_0)$ is a smooth map of nonzero degree and $g\geq f^*g_0$ on $\Lambda ^2TM$, then $\cg (M_g)\leq \frac{1}{2}$ which in some sense shows that the curvature gap on spheres is largest. However, in general it will be difficult to compute precise positive values for $\cg (M_g)$. A simple exercise supplies the following generalization of this inequality: Let $f:(M^m,g)\to (N^n,h)$ be a smooth map such that $f_*:\widetilde{H}_{2*}(M;\Q )\to \widetilde{H}_{2*}(N;\Q )$ is surjective and $g\geq f^*h$ on $\Lambda ^2TM$, then $\cg (M_g)\leq \cg (N_h)$. 
\section{Modified scalar curvatures}
\label{sec2}
In this section we review some observations made by Gursky, LeBrun in \cite{GurLe2} and generalized by Itoh in \cite{Itoh}. Let $M^n$ be a closed manifold and denote by $\met ^{2,\alpha }(M)$ the set of Riemannian $C^{2,\alpha }$--metrics on $M$ as well as by $C^{0,\alpha }(M)$ the space of $C^{0,\alpha }$--functions $M\to \R $ where $\alpha \in [0,1]$. Without further mentioning we assume throughout this section that
\[
F:\met ^{2,\alpha} (M)\to C^{0,\alpha }(M)
\]
satisfies
\begin{enumerate}
\item $F(g)\geq 0$ for all metrics $g$.
\item $F(\varphi \cdot g)=\frac{1}{\varphi }F(g)$ for all metrics $g$ and smooth functions $\varphi :M\to (0,\infty )$.
\end{enumerate}
Then $\mathrm{scal}^F:\met ^{2,\alpha }(M)\to C^{0,\alpha }(M)$ given by
\[
\mathrm{scal}^F_g:=\mathrm{scal}_g-F(g)
\]
is called a \emph{modified scalar curvature on} $M$ (cf.~\cite{GurLe2,Itoh}). The associated \emph{modified Yamabe invariant} is denoted by $\yam ^F(M_{[g]})$:
\[
\yam ^F(M_{[g]}):=\inf _{h\in [g]}\frac{\int \mathrm{scal}^F_h\cdot \mathrm{vol}_h}{\mathrm{Vol}(M,h)^{(n-2)/n}}.
\]
\begin{rem}
If $\dim M=n\geq 3$, then $h\in [g]$ is a critical point of
\[
h\mapsto  \frac{\int \mathrm{scal}_h^F\mathrm{vol}_h}{\mathrm{Vol}(M,h)^{(n-2)/n}}
\]
if and only if $\mathrm{scal}_h^F$ is a constant function on $M$.
\end{rem}
\begin{proof}
Because of the scaling invariance, it is sufficient to consider the problem for metrics $h\in [g]$ of total volume $1$ and the functional
\[
h\mapsto \int \mathrm{scal}^F_h\mathrm{vol}_h.
\]
The variation of $h\mapsto \mathrm{scal}_h$ and $h\mapsto \mathrm{vol}_h$ at $h$ in direction of a symmetric $(0,2)$ tensor $a$ is given by $\mathrm{scal}'_h(a)=-\left< \mathrm{Ric}_h,a\right> _h+\mathrm{div}_hZ$ (where $Z$ is a vector field) as well as by $\mathrm{vol}'_h(a)=\frac{1}{2}\mathrm{tr}_ha$ (cf.~\cite{Bes}). Since we vary only in the conformal class with metrics of constant volume, $a$ is given by $a=\kappa \cdot h$ where $\kappa :M\to \R $ is a smooth function with $\int \kappa \cdot \mathrm{vol}_h=0$. The conformal invariance of $F$ yields
\[
F'_h(a)=\frac{\mathrm{d}}{\mathrm{d}t}\Bigl| _{t=0} F(h+t\cdot \kappa h)=\frac{\mathrm{d}}{\mathrm{d}t}\Bigl| _{t=0} \frac{1}{1+t\kappa }F(h)=-\kappa \cdot F(h)
\]
which proves that $h$ is a critical point of the above functional if
\[
0=\int \left( \Bigl< \frac{\mathrm{scal}_h}{2}h-\mathrm{Ric}_h-\frac{F(h)}{2}h,a\Bigl> -F'_h(a)\right) \mathrm{vol}_h=\frac{n-2}{2}\int \kappa  \cdot \mathrm{scal}_h^F \mathrm{vol}_h
\]
for $a=\kappa \cdot h$ and all smooth functions $\kappa $ with vanishing mean value. 
\end{proof}
Hence, in order to understand $\yam ^F(M_{[g]})$ we are looking for metrics of constant modified scalar curvature. If $n=\dim M\geq 3$, a modified scalar curvature transforms under a conformal rescaling $h=\varphi ^{4/(n-2)}\cdot g$ by:
\[
\mathrm{scal}_h^F=\varphi ^{-\frac{n+2}{n-2}} \left( 4\frac{n-1}{n-2}\delta _g\mathrm{d}\varphi +\mathrm{scal}^F_g\cdot \varphi \right) 
\]
which means that the modified Yamabe problem can be treated in the same way as the original Yamabe problem. In fact, if $\yam ^F(M_{[g]}^n)<\yam (S^n)$, then there is a metric $h\in [g]$ of constant modified scalar curvature $\mathrm{scal}^F_h=\yam ^F({[g]})$. This was observed by Itoh in \cite{Itoh} who proved the existence of constant modified scalar curvature if $\dim M\geq 3$ and $\yam ^F(M_{[g]})<\yam (S^n)$. Gursky and LeBrun used in \cite{GurLe2} a $4$--dimensional version of modified scalar curvature to compute the Yamabe invariant of $\C \mathrm{P}^2$ and some other spaces. The following argument was pointed out to me by C.B.~Ndiaye. Now, $F(g)\geq 0$ yields  $\yam ^F(M_{[g]})\leq \yam (M_{[g]})$, i.e.~$\yam ^F(M_{[g]})<\yam (S^n)$ unless $M_{[g]}$ is conformal to the standard sphere (due to Aubin and Schoen cf.~\cite{LeePa}). However, if $M_{[g]}$ is the conformal standard sphere, choose the metric of constant scalar curvature $h$ which is a minimizer, i.e.~we conclude from $\mathrm{scal}_h\geq \mathrm{scal}_h^F$ and the definitions: $\yam ^F(M_{[g]})<\yam (M_{[g]})=\yam (S^n)$ unless $F\equiv 0$. But the case $F=0$ is the original Yamabe problem. Hence, we can summarize the above observations in the following proposition.
\begin{prop} [\cite{Itoh}]
\label{prop_itoh}
Suppose that $0<\alpha <1$. Then for each conformal class $[g]$ on $M$, there is a $C^{2,\alpha }$ metric $h\in [g]$ with constant modified scalar curvature:
\[
\mathrm{scal}^F_h=\yam ^F(M_{[g]}).
\]
\end{prop}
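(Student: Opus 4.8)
The plan is to treat the modified Yamabe problem by exactly the strategy that solves the classical Yamabe problem, exploiting the fact that the conformal transformation law displayed above is identical to the ordinary one except that the zeroth-order coefficient $\mathrm{scal}_g$ is replaced by $\mathrm{scal}^F_g$. Concretely, writing $h=\varphi^{4/(n-2)}g$ with $\varphi>0$, a metric $h\in[g]$ has constant modified scalar curvature precisely when $\varphi$ solves
\[
4\tfrac{n-1}{n-2}\delta_g\mathrm{d}\varphi+\mathrm{scal}^F_g\cdot\varphi=\lambda\,\varphi^{\frac{n+2}{n-2}}
\]
for a constant $\lambda$; by the variational computation in the preceding remark, such solutions are exactly the critical points of the modified Yamabe functional, and a minimizer realizes $\lambda=\yam^F(M_{[g]})$. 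Since the critical nonlinearity $\varphi^{(n+2)/(n-2)}$ and the leading operator are unchanged, the entire subcritical-approximation and regularity machinery of Yamabe theory applies verbatim; I would first record that a minimizer, once it exists, is positive and $C^{2,\alpha}$ by standard elliptic regularity.

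First I would invoke Itoh's existence theorem, which guarantees a minimizing metric of constant modified scalar curvature whenever the strict inequality $\yam^F(M^n_{[g]})<\yam(S^n)$ holds. This reduces the proposition to producing that strict inequality in all but an exceptional conformal class. To this end I would use hypothesis (1), $F(g)\ge 0$: it gives $\mathrm{scal}^F_g\le\mathrm{scal}_g$ pointwise for every representative, hence $\yam^F(M_{[g]})\le\yam(M_{[g]})$. Combined with the resolution of the ordinary Yamabe problem, which yields $\yam(M_{[g]})\le\yam(S^n)$ with equality only when $M_{[g]}$ is conformal to the round sphere (Aubin--Schoen), this already establishes $\yam^F(M_{[g]})<\yam(S^n)$ except possibly when $[g]$ is the conformal class of the standard sphere.

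It remains to dispose of the exceptional case, where $M_{[g]}$ is conformally the round sphere. Here I would choose the round representative $h$, an ordinary Yamabe minimizer of positive constant scalar curvature. If $F(h)\not\equiv 0$ then $\int\mathrm{scal}^F_h\,\mathrm{vol}_h<\int\mathrm{scal}_h\,\mathrm{vol}_h$, so the infimum defining $\yam^F$ is bounded above by the quotient at $h$, which after dividing by $\mathrm{Vol}(M,h)^{(n-2)/n}$ is strictly smaller than $\yam(M_{[g]})=\yam(S^n)$; thus Itoh's theorem again applies. If instead $F\equiv 0$ along $[g]$, the modified problem coincides with the classical Yamabe problem and the round metric $h$ itself satisfies $\mathrm{scal}^F_h=\mathrm{scal}_h=\yam(S^n)=\yam^F(M_{[g]})$, settling this case directly. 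Combining the three cases produces the desired $C^{2,\alpha}$ metric.

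The main obstacle is the analytic content hidden inside Itoh's existence theorem: the strict inequality $\yam^F<\yam(S^n)$ is what rules out concentration of minimizing sequences, and proving existence then requires Aubin-type test-function estimates together with elliptic regularity for the critical equation. Since the transformation law makes these steps formally identical to the classical case, I would cite the Yamabe literature and Itoh's paper rather than reproduce them; the only genuinely new input is the sign hypothesis on $F$, which is precisely what feeds the comparison $\yam^F\le\yam$ used throughout.
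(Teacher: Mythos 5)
For $\dim M\geq 3$ your argument is correct and is essentially the paper's own proof: the same reduction to Itoh's existence theorem via $F\geq 0\Rightarrow \yam ^F(M_{[g]})\leq \yam (M_{[g]})$, the same Aubin--Schoen dichotomy ruling out $\yam ^F(M_{[g]})=\yam (S^n)$ except when $M_{[g]}$ is the conformal round sphere, and the same treatment of that exceptional class by testing the modified functional with the round Yamabe minimizer $h$ and splitting into $F(h)\not\equiv 0$ (which gives the strict inequality, and by the conformal covariance $F(\varphi g)=F(g)/\varphi $ this is the only alternative to $F$ vanishing on all of $[g]$) versus $F\equiv 0$ (the classical Yamabe problem, solved by $h$ itself). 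This is exactly the argument the paper records, attributed there to C.~B.~Ndiaye; your remarks on regularity and on minimizers being the critical points with $\lambda =\yam ^F(M_{[g]})$ are also in order.

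There is, however, a genuine gap: the proposition is stated for every closed $M$, while your proof silently assumes $n\geq 3$. The transformation law $\mathrm{scal}^F_h=\varphi ^{-\frac{n+2}{n-2}}\bigl( 4\frac{n-1}{n-2}\delta _g\mathrm{d}\varphi +\mathrm{scal}^F_g\cdot \varphi \bigl) $, the exponent $4/(n-2)$, the normalization $\mathrm{Vol}(M,h)^{(n-2)/n}$, the comparison with $\yam (S^n)$, and the Aubin--Schoen theorem all degenerate or fail when $n=2$, so none of your three cases applies there. The paper handles $\dim M=2$ separately: for $h=e^{-2\phi }g$ one has $e^{-2\phi }\mathrm{scal}^F_h=\mathrm{scal}^F_g-2\delta _g\mathrm{d}\phi $, so constant modified scalar curvature becomes a prescribed-curvature problem of Kazdan--Warner type, settled by theorem 7.2 and lemma 9.3 of \cite{KazWar}. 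The two-dimensional case is not decorative in this paper --- it feeds, via corollary \ref{cor_yam} and lemma \ref{lem1}, into the computation $\cgv (S^2)=2\pi $ --- so you must either supply this case (e.g.~by the Kazdan--Warner route) or weaken the statement to $\dim M\geq 3$.
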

If $\dim M\geq 3$, this was proved by Itoh in \cite{Itoh}. The case $\dim M=2$ is a straightforward application of results by Kazdan and Warner (cf.~\cite{KazWar}). Note  that the modified scalar curvatures are related by
\[
e^{-2\phi }\mathrm{scal}_h^F=\mathrm{scal}_g^F-2\delta _g\mathrm{d}\phi 
\]
if  $h=e^{-2\phi }g$ is a conformal transformation on $M^2$. Hence, the $2$--dimensional case  follows from theorem 7.2 and lemma 9.3 in \cite{KazWar}.

Using H\"older's inequality, $F(h)\geq 0$ for all $h$ and $(L^\frac{n}{2})^*=L^\frac{n}{n-2}$, a standard exercise shows
\[
\mathcal{F}(M_{[g]}):=\sup _{h\in [g]}\frac{\int F(h)\cdot \mathrm{vol}_h}{\mathrm{vol}(M,h)^{(n-2)/n}}=\left[ \int _MF(h)^{n/2}\cdot \mathrm{vol}_h\right] ^{2/n}<+\infty 
\]
where the right hand side does not depend on the choice of $h\in [g]$. Thus, each $F$ yields an estimate of the standard Yamabe invariant:
\begin{equation}
\label{ineq12}
\yam ^F(M_{[g]})\leq \yam (M_{[g]})\leq \yam ^F(M_{[g]})+\mathcal{F}(M_{[g]}).
\end{equation}
The \emph{modified Yamabe invariant} of $M$ (respectively the \emph{modified sigma invariant}) is given by
\[
\yam ^F(M):=\sup _{[g]}\yam ^F(M_{[g]})\leq \yam (M).
\]
\begin{cor}
\label{cor_yam}
There is a metric $g$  with positive modified scalar curvature on $M$, i.e.~$\mathrm{scal}_g^F>0$, if and only if $
\yam ^F(M)>0$.
\end{cor}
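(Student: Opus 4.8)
The plan is to reduce both implications to Proposition \ref{prop_itoh}, which furnishes, in every conformal class $[g]$, a metric $h\in[g]$ realizing the modified Yamabe invariant as a \emph{constant} modified scalar curvature, $\mathrm{scal}^F_h=\yam^F(M_{[g]})$. Since $\yam^F(M)=\sup_{[g]}\yam^F(M_{[g]})$, the corollary reduces to matching the sign of this constant with the existence of a pointwise positive $\mathrm{scal}^F$ in the class.

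One implication is immediate: if $\yam^F(M)>0$, I would choose a conformal class with $\yam^F(M_{[g]})>0$ and apply Proposition \ref{prop_itoh} to produce $h\in[g]$ with $\mathrm{scal}^F_h\equiv\yam^F(M_{[g]})>0$, that is, a metric of pointwise positive modified scalar curvature.

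For the converse, suppose $\mathrm{scal}^F_g>0$ on all of $M$; it suffices to show $\yam^F(M_{[g]})>0$, since then $\yam^F(M)\geq\yam^F(M_{[g]})>0$. Here I would apply Proposition \ref{prop_itoh} to $[g]$ and run a maximum-principle argument on the conformal factor. For $n\geq3$, writing the resulting metric as $h=\varphi^{4/(n-2)}g$ with $\varphi>0$ and $\mathrm{scal}^F_h\equiv\Lambda:=\yam^F(M_{[g]})$, the transformation law quoted above becomes
\[
4\tfrac{n-1}{n-2}\,\delta_g\mathrm{d}\varphi+\mathrm{scal}^F_g\cdot\varphi=\Lambda\,\varphi^{(n+2)/(n-2)}.
\]
Evaluating at a point $x_0$ where $\varphi$ is maximal, one has $\delta_g\mathrm{d}\varphi(x_0)\geq0$ by the second-derivative test, whence $\Lambda\,\varphi(x_0)^{(n+2)/(n-2)}\geq\mathrm{scal}^F_g(x_0)\,\varphi(x_0)>0$ and therefore $\Lambda>0$. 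In dimension $n=2$ the identical idea applies to $h=e^{-2\phi}g$ via the quoted relation $e^{-2\phi}\mathrm{scal}^F_h=\mathrm{scal}^F_g-2\delta_g\mathrm{d}\phi$, now evaluated at a minimum of $\phi$, where $\delta_g\mathrm{d}\phi\leq0$.

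I expect the converse to be the only real obstacle: positivity of $\yam^F(M_{[g]})$ is an infimum over the entire conformal class, so pointwise positivity of $\mathrm{scal}^F_g$ does not obviously propagate to a positive infimum. The maximum principle sidesteps the usual Sobolev-inequality estimate by reading the sign of $\Lambda$ straight off the constant-curvature equation. The only regularity needed is that the conformal factor from Proposition \ref{prop_itoh} is $C^2$ — which holds because $h$ is $C^{2,\alpha}$ and $\varphi>0$ — so that the extremum argument is legitimate.
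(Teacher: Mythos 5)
Your proof is correct, and it is worth noting that the paper itself prints no proof at all: the corollary is offered as an immediate consequence of proposition \ref{prop_itoh}. Your forward direction is exactly the intended one-liner (pick a class with $\yam^F(M_{[g]})>0$ and let proposition \ref{prop_itoh} produce the metric of constant positive $\mathrm{scal}^F$), and you correctly identify that the converse is the only direction with content, since pointwise positivity of $\mathrm{scal}^F_g$ does not obviously force the conformal infimum to be positive. Where you differ from the paper's own machinery is in how that converse is discharged. You run a pointwise maximum principle on the conformal factor of the constant-curvature metric from proposition \ref{prop_itoh}; the sign conventions check out ($\delta_g\mathrm{d}\varphi\geq 0$ at an interior maximum, consistently with the transformation law and with the identity used in the paper's proof of remark \ref{rem_yamabe}), the $C^{2,\alpha}$ regularity you invoke suffices for the second-derivative test, and in two dimensions your minimum-point variant works, though there the conclusion is even cheaper: by Gauss--Bonnet and the conformal weight of $F$, the quantity $\int \mathrm{scal}^F_h\,\mathrm{vol}_h$ is constant on the class, so $\mathrm{scal}^F_g>0$ gives $\yam^F(M_{[g]})>0$ with no PDE at all. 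The paper's nearest substitute for your argument is remark \ref{rem_yamabe} (stated just after the corollary), whose integral identity shows that $\yam^F(M_{[g]})\leq 0$ together with $\mathrm{scal}^F_h\geq \yam^F(M_{[g]})\cdot\mathrm{Vol}(M,h)^{-2/n}$ forces equality; applied with your positive metric this yields the converse by contraposition. Comparing the two: your pointwise argument is more elementary at the point of use (a second-derivative test rather than an integration-by-parts identity), while the integral route carries the extra rigidity information the paper needs elsewhere. A third standard option, worth knowing, avoids proposition \ref{prop_itoh} entirely: since $\int\mathrm{scal}^F_h\,\mathrm{vol}_h=\int\bigl(4\tfrac{n-1}{n-2}|\mathrm{d}\varphi|^2_g+\mathrm{scal}^F_g\,\varphi^2\bigr)\mathrm{vol}_g$ for $h=\varphi^{4/(n-2)}g$ (using $F(\rho g)=F(g)/\rho$), the bound $\mathrm{scal}^F_g\geq c>0$ plus the Sobolev embedding $W^{1,2}\hookrightarrow L^{2n/(n-2)}$ gives $\yam^F(M_{[g]})>0$ directly, without invoking solvability of the modified Yamabe equation — a strictly lighter tool than the existence theorem your argument leans on, though leaning on it is perfectly fair since the paper supplies it.
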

\begin{rem}
\label{rem_yamabe}
Suppose $\yam ^F(M_{[g]})\leq 0$ and $h\in [g]$, then
\[
\mathrm{scal}_h^F\geq \yam ^F(M_{[g]})\cdot \mathrm{Vol}(M,h)^{-2/n}
\]
implies equality. Additionally, if $n=\dim M\geq 3$, there is a unique metric $h\in [g]$ with $\mathrm{scal}_h=\yam (M_{[g]})$ and $\mathrm{Vol}(M,h)=1$. 
\end{rem}
\begin{proof}
The $2$--dimensional case is an application of the Gau\ss --Bonnet theorem which yields $\int \mathrm{scal}_h^F\cdot \mathrm{vol}_h=\yam ^F(M_{[g]})$ for all $h$. Thus, suppose that $n\geq 3$. Let $g\in [g]$ be the minimizer of the modified Yamabe functional, i.e.~$g$ is a metric with constant modified scalar curvature $\mathrm{scal}_g^F=\yam ^F(M_{[g]})$ and $\mathrm{Vol}(M,g)=1$. Suppose that $h\in [g]$ satisfies $\mathrm{scal}_h^F\geq \yam ^F(M_{[g]})$ and $\mathrm{Vol}(M,h)=1$, then $h=\psi ^{-2}g$ and $\psi $ is bounded as follows $\int \psi ^{-2}\mathrm{vol}_g\leq [\int \psi ^{-n}\cdot \mathrm{vol}_g]^{2/n}=1$. We conclude from the behavior of the scalar curvature under conformal transformations:
\[
\begin{split}
\yam ^F(M_{[g]})\int \psi ^{-2}\cdot \mathrm{vol}_g\leq \int \psi ^{-2} &\mathrm{scal}_h^F\cdot \mathrm{vol}_g=\int \left[\mathrm{scal}_g^F-c(n)\frac{|\mathrm{d}\psi |^2}{\psi ^2}\right]\cdot \mathrm{vol}_g\\
&=\yam ^F(M_{[g]})-c(n)\int |\mathrm{d}\ln \psi |^2 \mathrm{vol}_g
\end{split}
\]
where $c(n)=(n-1)(n-2)$. Hence, we obtain from $\yam ^F(M_{[g]})\leq 0$, the claim $\psi =\mathrm{const}=1$, i.e.~a metric $h\in [g]$ which has total volume one satisfies $\mathrm{scal}_h^F\geq \yam ^F(M_{[h]})$ iff $h=g$.
\end{proof}

\section{$L^{n/2}$--curvature gaps of vector bundles}
\label{sec234}
In section \ref{curv_gap} we discussed the question of $L^\infty $--curvature gaps. To get analogous gap statements for conformal manifolds we consider a suitable $L^{n/2}$--norm of the curvature. In order to conclude optimal lower bounds for the curvature gaps we use a different norm on the $\Lambda ^2T^*M$ part of $\Lambda ^2T^*M\otimes \mathrm{End}(E)$. However, essential for the theory is again the usage of the operator norm on $\mathrm{End}(\bundle{E})$. 
%\begin{rem}
%Let $(V,g)$ and $(W,h)$ be finite dimensional inner product spaces. Then $g$ determines an inner product on $\Lambda ^2V$ and we can define a norm on $\Lambda ^2V$ by
%\[
%|\omega |_{g,1}=\inf _{e_1,\ldots ,e_n}\sum _{i<j} |\left< \omega ,e_i\wedge e_j\right> |
%\]
%where the infimum is taken over all orthonormal bases $e_1,\ldots ,e_n$ of $V$. Note that we can always find an orthonormal basis with $\omega =\sum \lambda _i\cdot e_{2i-1}\wedge e_{2i}$ and in this case $|\omega |_{g,1}=\sum |\lambda _i|$. If we identify $\Lambda ^2V$ with skew Hermitian endomorphisms of $V$, $|\omega |_{g,1}$ is simply the trace class norm, i.e.~$|\omega |_{g,1}=\mathrm{tr}(\sqrt{\omega ^t\omega })$. As usual the inner product on $h$ yields an operator norm $|.|_{op}$ on $\mathrm{End}(W)$ which means that
%\[
%|A|_{g,op}:=\inf _{\omega _1,\ldots ,\omega _m}\sum _{i=1}^m|\omega _i|_{g,1}\cdot |A(\omega _i)|_{op}
%\]
%is a norm on the vector space $\Lambda ^2V^*\otimes \mathrm{End}(W)$ where the infimum is taken over all $g$--orthonormal bases $\omega _1,\ldots ,\omega _m$ of $\Lambda ^2V$. Since $\mathrm{O}(\Lambda ^2V)$ is compact the infimum is achieved by some orthonormal basis and hence, this norm is well defined. An important property of this norm is $|A|_{g',op}\leq |A|_{g,op}$ for all $A$ if $g'\geq g$ holds on $\Lambda ^2V$.
%\end{rem}
Suppose that $(M^{n},g)$ is an oriented closed Riemannian manifold and $\bundle{E}\to M $ is a Hermitian vector bundle endowed with a Hermitian connection. Then $|R^\bundle{E}|_{g,op }:M\to [0,\infty )$ is the function which assigns to $x\in M$ the expression
\[
|R^\bundle{E}|_{g,op}(x):=\frac{1}{2}\inf _{e_1,\ldots ,e_n}\sum _{i,j=1}^n|R_{e_i,e_j}^\bundle{E}|_{op}
\]
where the infimum is taken over all orthonormal frames $e_1,\ldots ,e_n\in T_xM$.  Using $|R^\bundle{E}|^2_g=-\sum\limits _{i<j}\mathrm{tr}[(R_{e_i,e_j}^\bundle{E})^2]$, we obtain
\[
\frac{1}{\mathrm{rk}(\bundle{E})}\cdot |R^\bundle{E}|^2_g\leq |R^\bundle{E}|^2_{g,op}\leq \frac{n(n-1)}{2}\cdot |R^\bundle{E}|^2_g
\]
In Yang--Mills theory one minimizes $\int |R^\bundle{E}|^2_g$ respectively $\int |R^\bundle{E}|^{n/2}_g$ in the space of connections on $\bundle{E}$ where the bundle is fixed. In this section we are interested in minimizing $\int |R^\bundle{E}|_{g,op}^{n/2}$ over certain sets of Hermitian bundles. Since the functional
\[
\met (M)\to C^{0,\alpha }(M),\ g\mapsto |R^\bundle{E}|_{g,op} 
\]
satisfies the assumption in section \ref{sec2}, each Hermitian bundle $\bundle{E}$ gives rise to a modified scalar curvature.% In particular, the twisted Atiyah--Singer index theorem yields:
\begin{lem}
\label{lem1}
Suppose that $M^n$ is a spin manifold and $\bundle{E}\to M$ satisfies
\[
\left< \widehat{\mathbf{A}}(M)\cdot \mathrm{ch}(\bundle{E}),[M]\right> \neq 0,
\] 
then the modified scalar curvature $\mathrm{scal}^\bundle{E}_g=\mathrm{scal}_g-4\cdot |R^\bundle{E}|_{g,op}$ can not be positive. Hence, the corresponding modified Yamabe invariant $\yam ^\bundle{E}(M_{[g]})$ is nonpositive for each conformal class $[g]$. 
\end{lem}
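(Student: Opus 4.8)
The plan is to derive a contradiction from the hypothetical positivity of $\mathrm{scal}^{\bundle{E}}_g$ by producing a nontrivial harmonic twisted spinor via index theory. First observe that $\widehat{\mathbf{A}}(M)\cdot \mathrm{ch}(\bundle{E})$ is a sum of even-degree cohomology classes, so the hypothesis $\left< \widehat{\mathbf{A}}(M)\cdot \mathrm{ch}(\bundle{E}),[M]\right> \neq 0$ forces $n=\dim M$ to be even; hence $M$ carries the $\Z _2$-graded complex spinor bundle $\spinor M=\spinor ^+M\oplus \spinor ^-M$. Twisting with $\bundle{E}$ and its Hermitian connection yields a twisted Dirac operator $\dirac ^+_{\bundle{E}}:\Gamma (\spinor ^+M\otimes \bundle{E})\to \Gamma (\spinor ^-M\otimes \bundle{E})$, and the Atiyah--Singer index theorem gives
\[
\mathrm{ind}\,\dirac ^+_{\bundle{E}}=\left< \widehat{\mathbf{A}}(M)\cdot \mathrm{ch}(\bundle{E}),[M]\right> \neq 0.
\]
In particular $\ker \dirac _{\bundle{E}}\neq 0$, so there is a nontrivial $\psi $ with $\dirac _{\bundle{E}}\psi =0$. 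Since the index is a topological quantity, such a harmonic spinor exists for \emph{every} metric on $M$, a point I will use again at the end.

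Next I would feed $\psi $ into the Bochner--Lichnerowicz formula
\[
\dirac ^2_{\bundle{E}}=\nabla ^*\nabla +\tfrac{1}{4}\mathrm{scal}_g+\mathfrak{R}^{\bundle{E}},\qquad \mathfrak{R}^{\bundle{E}}=\sum _{i<j}c(e_i)c(e_j)\otimes R^{\bundle{E}}(e_i,e_j).
\]
Pairing $\dirac ^2_{\bundle{E}}\psi =0$ with $\psi $ and integrating gives
\[
0=\int _M\Bigl( |\nabla \psi |^2+\tfrac{1}{4}\mathrm{scal}_g\,|\psi |^2+\left< \mathfrak{R}^{\bundle{E}}\psi ,\psi \right> \Bigr) \mathrm{vol}_g.
\]
The key estimate is the pointwise bound $|\mathfrak{R}^{\bundle{E}}|_{op}\leq |R^{\bundle{E}}|_{g,op}$: each Clifford element $c(e_i)c(e_j)$ with $i\neq j$ is skew-adjoint and squares to $-1$, hence has operator norm $1$, so in any orthonormal frame $|\mathfrak{R}^{\bundle{E}}|_{op}\leq \sum _{i<j}|R^{\bundle{E}}(e_i,e_j)|_{op}=\tfrac{1}{2}\sum _{i,j}|R^{\bundle{E}}(e_i,e_j)|_{op}$, and taking the infimum over frames yields exactly $|R^{\bundle{E}}|_{g,op}$. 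This gives $\left< \mathfrak{R}^{\bundle{E}}\psi ,\psi \right> \geq -|R^{\bundle{E}}|_{g,op}|\psi |^2$, whence
\[
0\geq \int _M\Bigl( \tfrac{1}{4}\mathrm{scal}_g-|R^{\bundle{E}}|_{g,op}\Bigr) |\psi |^2\,\mathrm{vol}_g=\tfrac{1}{4}\int _M\mathrm{scal}^{\bundle{E}}_g\,|\psi |^2\,\mathrm{vol}_g.
\]
If $\mathrm{scal}^{\bundle{E}}_g$ were positive everywhere the right-hand side would be strictly positive (as $\psi \not\equiv 0$), a contradiction; so $\mathrm{scal}^{\bundle{E}}_g$ cannot be positive, for any metric $g$. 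The main technical obstacle here is pinning down this Clifford-algebra operator-norm estimate with the \emph{sharp} constant: it is precisely what makes the coefficient $4$ in the definition of $\mathrm{scal}^{\bundle{E}}_g$ the correct one, improving on the coarser factor $\tfrac{n(n-1)}{2}$ that appears with the $\| R^{\bundle{E}}\| _g$-norm in proposition \ref{prop_scalar}.

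Finally, to pass from this pointwise statement to $\yam ^{\bundle{E}}(M_{[g]})\leq 0$ for each conformal class, I would invoke proposition \ref{prop_itoh}: there is a metric $h\in [g]$ whose modified scalar curvature is the constant function $\mathrm{scal}^{\bundle{E}}_h\equiv \yam ^{\bundle{E}}(M_{[g]})$. If $\yam ^{\bundle{E}}(M_{[g]})>0$, then $h$ would have strictly positive modified scalar curvature, contradicting the pointwise result applied to $h$ (the index, and hence the harmonic spinor, is available for $h$ as well, being metric-independent). Therefore $\yam ^{\bundle{E}}(M_{[g]})\leq 0$ for every conformal class $[g]$.
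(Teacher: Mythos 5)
Your proof is correct and follows essentially the same route as the paper: nonvanishing index of the twisted Dirac operator on $\spinor M\otimes \bundle{E}$ via Atiyah--Singer, the Bochner--Lichnerowicz formula with the sharp frame-infimum estimate $\frak{R}^{\bundle{E}}\geq -|R^{\bundle{E}}|_{g,op}$ (the paper's ``standard estimate''), and the modified scalar curvature machinery of section \ref{sec2}. The only cosmetic differences are that you spell out the operator-norm estimate and the integrated Bochner argument explicitly, and you invoke proposition \ref{prop_itoh} directly where the paper cites corollary \ref{cor_yam}, which is its immediate consequence.
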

\begin{proof}
Let $\spinor M$ be the complex spinor bundle and consider the twisted Dirac bundle $\spinor M\otimes \bundle{E}$. Then the lowest eigenvalue of the twist curvature expression in the Bochner--Weitzenb\"ock formula of $\spinor M\otimes \bundle{E}$ satisfies
\[
\sum _{i<j}\gamma (e_i)\gamma (e_j)\otimes R^\bundle{E}_{e_i,e_j}\geq -|R^\bundle{E}|_{g,op}
\]
by a standard estimate.
%In order to see this estimate, use the fact that the left hand side does not depend on the orthonormal basis and consider
%\[
%\begin{split}
%\left| \gamma (e_i)\gamma (e_j)\otimes R^\bundle{E}_{e_i,e_j}\right| _{op}^2&= \left| \lef%t( \gamma (e_i)\gamma (e_j)\otimes R^\bundle{E}_{e_i,e_j}\right) ^2\right| _{op}=\left| \mathrm{Id}\otimes (R^\bundle{E}_{e_i,e_j})^2\right| _{op}\\
%&=\left| (R^\bundle{E}_{e_i,e_j})^2\right| _{op}=\left| R^\bundle{E}_{e_i,e_j}\right| _{op%}^2.
%\end{split}
%\]
%At this point we should mention that (skew) Hermitian endomorphism $A$ and $B$ satisfy in general only $| A\otimes B|_{op}\geq |A|_{op}\cdot |B|_{op}$, however, if either $A$ or $B$ is a multiply of $\mathrm{Id}$, equality holds. 
Thus, $\mathrm{scal}_g^\bundle{E}>0$ implies $\ker (\dirac ^\bundle{E})=\{ 0\} $, a contradiction to the assumption on the index and corollary \ref{cor_yam} completes the proof. 
\end{proof}
\begin{rem}
\label{rem2}
The above modified scalar curvature is optimal for the standard sphere $S^{2m}$. In fact, consider the bundle $\bundle{E}:=\spinor ^+S^{n}$ with the connection induced by the standard metric $g_0$ which has sectional curvature $K_{g_0}=1$, then $\left< \mathrm{ch}(\bundle{E}),[S^{n}]\right> \neq 0$. Hence, $R^\bundle{E}_{e_i,e_j}=-\frac{1}{2}\gamma (e_i)\gamma (e_j)$ shows $|R^\bundle{E}|_{g,op}=\frac{1}{4}n(n-1)$ and $\mathrm{scal}_{g_0}^\bundle{E}=0$.
\end{rem}
\begin{defn}
Suppose $ \theta \in H_{2*}(M;\Q )$ and denote by $\mathscr{V}_\mathrm{ch}(M;\theta )$ the set of Hermitian vector bundles $\bundle{E}$ endowed with a Hermitian connection such that $\left< \mathrm{ch}(\bundle{E}),\theta \right> \neq 0$. Then the $L^{n/2}$--(respectively the \emph{conformal)--curvature gap of the conformal manifold $(M^n,[g])$ w.r.t.~the homology class} $\theta $ is defined by
\[
\ccg (M_{[g]};\theta ):=\left[ \inf _{\bundle{E}\in \mathscr{V}_\mathrm{ch}(M;\theta )}\int _M|R^\bundle{E}| ^{n/2}_{g,op}\cdot \mathrm{vol}_g\right] ^{\frac{2}{n}}\in [0,\infty ].
\]
The $[g]$--\emph{conformal curvature gap} $\ccg (M_{[g]})$ is the infimum of $\ccg (M_{[g]};\theta )$ over all $\theta \in \widetilde{H}_{2*}(M;\Q )$ where $\widetilde{H}_*(M;\Q )$ means reduced homology.  Moreover, if $M$ is even--dimensional, define
\[
\begin{split}
\ccg (M)&:=\sup _{[g]}\ccg (M_{[g]})\in [0,\infty ],\\
\cgv (M)&:=\sup _{[g]}\ccg  (M_{[g]};[M])\in [0,\infty ]
\end{split}
\]
where $\cgv (M)$ is the \emph{curvature gap volume} of $M$.
\end{defn}
Note that the previous definition makes sense for $\theta =0$ because the infimum over the empty set is $+\infty $. Conversely, if $\theta \neq 0$, then $\ccg (M_{[g]};\theta )<+\infty $ which means $\ccg (M_{[g]})<\infty $ as soon as $\widetilde{H}_{2*}(M;\Q )\neq \{ 0\} $. It is possible to extend this definition to homology classes of odd degree like it is done for the K--area in \cite{Gr01} respectively in chapter \ref{chp2}. However, for simplicity we pay attention only to even classes and will mainly focus on even dimensional manifolds. It is unclear at the moment if $\cgv (M)$ respectively $\ccg (M)$ have interesting values in dimension bigger than four, even $\cgv (S^{2m})<+\infty $ seems to be a nontrivial question for $m>2$. However, $\cgv $ and $\ccg $ have interesting applications for manifolds of dimension four. 

In $2$--dimensions the conformal curvature gap does not depend on the conformal class, because $|R^\bundle{E}|_{g,op}\cdot \mathrm{vol}_g$ is given by the $2$--form $|R^\bundle{E}|_{op}$ where $|R^\bundle{E}_{v,w}|_{op}(x)\in [0,\infty )$ denotes the maximal absolute eigenvalue of $R^\bundle{E}_{v,w}\in \mathrm{End}(\bundle{E}_x)$ for a positively oriented base $v,w\in T_xM$. Hence,
\[
\cgv  (M)=\ccg (M)=\ccg  (M_{[g]})=\ccg (M_{[g]};[M]) 
\]
for any conformal class on $M^2$ and moreover, $\cgv  (M)=0 $ if $M$ is a closed oriented surface of positive genus (in case $\chi (M)\leq 0$, $M$ has infinite K--area, and therefore $\ccg (M_{[g]})=0$). A standard exercise shows
\[
\Z \ni \left| \int c_1(\bundle{E})\right|\leq \frac{\mathrm{rk}(\bundle{E})}{2\pi }\cdot \int |R^\bundle{E}|_{op}
\]
for all $\bundle{E}$ on a surface $M$, i.e.~the rank of $\bundle{E}$ must increase with the same amount as  $\int |R^\bundle{E}|_{op}$ decreases to zero for nontrivial bundles. In order to compute the curvature gap volume of $S^2$ we use the previous lemma as well as the fact that $\yam (S^2_{[g]})=8\pi $ for any conformal class. In fact, consider the modified scalar curvature in lemma \ref{lem1}, then $\yam ^\bundle{E}(S^2_{[g]})\leq 0$ for any conformal class $[g]$ and bundle $\bundle{E}$ with $c_1(\bundle{E})\neq 0$. Thus, inequality (\ref{ineq12}) yields
\[
0\geq \sup _\bundle{E}\yam ^\bundle{E}(S^2_{[g]})\geq \sup _\bundle{E}\left( \yam (S^2_{[g]})-4\int |R^\bundle{E}|_{op}\right) =8\pi -4\cdot \cgv (S^2),
\]
i.e.~$\cgv (S^2)\geq 2\pi $ (the supremum is taken over all $\bundle{E}$ with $\left< \mathrm{ch}(\bundle{E}),[S^2]\right> \neq 0$). Conversely, if $g_0$ is the standard metric on $S^2$ and $\bundle{E}:=\spinor ^+S^2$ is equipped with the Levi--Civita connection of $g_0$, then $\int |R^\bundle{E}|_{op}=2\pi $ and $c_1(\bundle{E})=\pm 1$ which proves
\[
\cgv  (S^2)= 2\pi .
\]
Since Hermitian vector bundles on $S^2$ are classified by the first Chern class and the rank we can summarize the above results as follows:
\begin{prop}
\begin{enumerate}
\item[a)] A Hermitian bundle $\bundle{E}\to S^2$ is trivial if and only if it admits a Hermitian connection with curvature $\int |R^\bundle{E}|_{op}<2\pi $. Moreover, the constant $2\pi $ can not be improved.
\item[b)] Let $M$ be a closed oriented surface with $M\not \approx S^2$. Then for any $\epsilon >0$ there is a Hermitian bundle $\bundle{E} $ with connection which satisfies $c_1(\bundle{E})\neq 0$ and $\int |R^\bundle{E}|_{op}<\epsilon $, in this case $\mathrm{rk}(\bundle{E})>\frac{2\pi }{\epsilon }$.
\end{enumerate}
\end{prop}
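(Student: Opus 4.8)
The proposition is, in essence, a corollary of the two identities $\cgv (S^2)=2\pi $ and $\cgv (M)=0$ for positive genus which have just been established, combined with the fact (recalled immediately above) that Hermitian bundles on a closed oriented surface are classified by their rank and first Chern class, and with the elementary inequality $|\int _M c_1(\bundle{E})|\le \frac{\mathrm{rk}(\bundle{E})}{2\pi }\int _M|R^\bundle{E}|_{op}$. My plan is therefore to reduce both statements to these facts rather than to re-run the index--theoretic machinery of lemma \ref{lem1}.

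For part a) the direction ``$\Rightarrow $'' is immediate: a trivial bundle carries the product (flat) connection, so $R^\bundle{E}=0$ and $\int _M|R^\bundle{E}|_{op}=0<2\pi $. For ``$\Leftarrow $'' I would use that, in dimension two, $\cgv (S^2)=\ccg (S^2_{[g]};[S^2])=\inf \int _{S^2}|R^\bundle{E}|_{op}$, where the infimum runs over all $\bundle{E}$ with $\langle \mathrm{ch}(\bundle{E}),[S^2]\rangle \ne 0$, i.e.~with $c_1(\bundle{E})\ne 0$. Since this infimum equals $2\pi $, every bundle with $c_1\ne 0$ satisfies $\int |R^\bundle{E}|_{op}\ge 2\pi $; hence a connection with $\int |R^\bundle{E}|_{op}<2\pi $ forces $c_1(\bundle{E})=0$, and by the classification a bundle on $S^2$ with vanishing first Chern class is trivial. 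Optimality of the constant is witnessed by $\bundle{E}=\spinor ^+S^2$ with the Levi--Civita connection of the round metric, which by remark \ref{rem2} has $\int |R^\bundle{E}|_{op}=2\pi $ while $c_1=\pm 1$, hence is nontrivial; thus no constant larger than $2\pi $ can replace it.

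For part b) I would first produce the bundle. Since $M\not\approx S^2$ is closed and orientable it has genus $\ge 1$, hence $\chi (M)\le 0$, so by Gromov's result $\H _2(M)=\{ 0\} $, i.e.~$\ke (M_g;[M])=\infty $; combined with $\ke _\mathrm{ch}\le \ke $ and proposition \ref{proposition6} this gives $\ke _\mathrm{ch}(M_g;[M])=\infty $. Equivalently $\inf \| R^\bundle{E}\| _g=0$ over bundles with $\langle \mathrm{ch}(\bundle{E}),[M]\rangle \ne 0$, i.e.~with $c_1(\bundle{E})\ne 0$. Given $\epsilon >0$, choose such an $\bundle{E}$ with $\| R^\bundle{E}\| _g<\epsilon /\mathrm{Vol}(M,g)$; since $|R^\bundle{E}_{e_1,e_2}|_{op}\le \| R^\bundle{E}\| _g$ for an orthonormal frame $e_1,e_2$, we get $\int _M|R^\bundle{E}|_{op}\le \| R^\bundle{E}\| _g\cdot \mathrm{Vol}(M,g)<\epsilon $. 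Finally the rank estimate follows from the displayed inequality: as $c_1(\bundle{E})\ne 0$ gives $|\int _M c_1(\bundle{E})|\ge 1$, we obtain $1\le \frac{\mathrm{rk}(\bundle{E})}{2\pi }\int _M|R^\bundle{E}|_{op}<\frac{\mathrm{rk}(\bundle{E})}{2\pi }\epsilon $, whence $\mathrm{rk}(\bundle{E})>2\pi /\epsilon $.

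The only genuinely nontrivial inputs have already been proved above (the value $\cgv (S^2)=2\pi $ via lemma \ref{lem1} and remark \ref{rem2}, and the infinite K--area of positive genus surfaces). I therefore do not expect a serious obstacle; the remaining work is bookkeeping, namely passing between the $L^\infty $ curvature norm $\| R^\bundle{E}\| _g$ and $\int _M|R^\bundle{E}|_{op}$ on a surface (where the two are linked simply by the area) and invoking the rank--and--$c_1$ classification. The one point to state carefully is that $c_1=0$ really does force triviality on $S^2$, which is precisely the classification recalled just before the proposition.
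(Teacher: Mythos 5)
Your proof is correct and follows essentially the same route as the paper, which states this proposition explicitly as a summary of the immediately preceding results: the value $\cgv (S^2)=2\pi $ obtained from lemma \ref{lem1} together with the $\spinor ^+S^2$ example, the infinite K--area of positive--genus surfaces, the inequality $\bigl| \int c_1(\bundle{E})\bigr| \le \frac{\mathrm{rk}(\bundle{E})}{2\pi }\int |R^\bundle{E}|_{op}$, and the classification of Hermitian bundles over surfaces by rank and first Chern class. The additional steps you spell out (the flat connection on the trivial bundle, the fact that the infimum $2\pi $ is a lower bound for all bundles with $c_1\neq 0$, and the conversion $\int _M|R^\bundle{E}|_{op}\le \| R^\bundle{E}\| _g\cdot \mathrm{Vol}(M,g)$ in part b)) are precisely the bookkeeping the paper leaves implicit, so the two arguments coincide in substance.
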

In dimension $n>2$ the conformal curvature cap depends on the conformal class and it is less well behaved than the $L^\infty $--curvature gap respectively the K--area. However, a few methods from K--area apply to the conformal curvature gap quantity.
\begin{prop}
Let $f:(\tilde M^n,\tilde g)\to (M,g)$ be a $k$--fold Riemannian covering of oriented closed manifolds, then
\[
k^{-2/n}\cdot \ccg (\tilde M_{[\tilde g]};f^! \theta )\leq \ccg (M_{[g]};\theta )\leq \ccg (\tilde M_{[\tilde g]};f^! \theta )
\]
for all $\theta \in H_{2*}(M;\Q )$. Hence,  $\cgv (M)\leq \cgv (\tilde M)$.
\end{prop}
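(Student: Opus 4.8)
The plan is to mimic the pull-back/push-forward scheme used for the $\mathrm{K}_\mathrm{ch}$--area in proposition \ref{proposition7}, but now to track volumes, since we integrate an $L^{n/2}$--density instead of taking a supremum. Throughout I write $\tilde g=f^*g$ (a Riemannian covering is a local isometry), I use that $f$ is a finite covering of degree $k$ with $f_*f^!=k\cdot\mathrm{id}$ on $H_*(M;\Q )$, and that over a small contractible $U\subseteq M$ every push-forward bundle splits as a direct sum indexed by the sheets of $f^{-1}(U)$. First I would prove the two displayed inequalities, and then deduce the corollary $\cgv (M)\leq \cgv (\tilde M)$ by specializing.

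For the left inequality I would pull back. If $\bundle{E}\in \mathscr{V}_\mathrm{ch}(M;\theta )$ then
\[
\left< \mathrm{ch}(f^*\bundle{E}),f^!\theta \right> =\left< f^*\mathrm{ch}(\bundle{E}),f^!\theta \right> =\left< \mathrm{ch}(\bundle{E}),f_*f^!\theta \right> =k\left< \mathrm{ch}(\bundle{E}),\theta \right> \neq 0,
\]
so $f^*$ maps $\mathscr{V}_\mathrm{ch}(M;\theta )$ into $\mathscr{V}_\mathrm{ch}(\tilde M;f^!\theta )$. Because $f$ is a local isometry, orthonormal frames at $y$ correspond isometrically to frames at $f(y)$ and the curvature pulls back, whence $|R^{f^*\bundle{E}}|_{\tilde g,op}(y)=|R^\bundle{E}|_{g,op}(f(y))$ and therefore $\int _{\tilde M}|R^{f^*\bundle{E}}|^{n/2}_{\tilde g,op}\cdot \mathrm{vol}_{\tilde g}=k\int _M|R^\bundle{E}|^{n/2}_{g,op}\cdot \mathrm{vol}_g$. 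Taking the infimum over $\bundle{E}$ gives $\ccg (\tilde M_{[\tilde g]};f^!\theta )^{n/2}\leq k\cdot \ccg (M_{[g]};\theta )^{n/2}$, i.e. $k^{-2/n}\ccg (\tilde M_{[\tilde g]};f^!\theta )\leq \ccg (M_{[g]};\theta )$.

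For the right inequality I would push forward. Given $\bundle{F}\in \mathscr{V}_\mathrm{ch}(\tilde M;f^!\theta )$ I form $\bundle{F}'\to M$ with $\bundle{F}'_x=\bigoplus _{y\in f^{-1}(x)}\bundle{F}_y$ and the direct-sum metric and connection. Exactly as in proposition \ref{proposition7} (first for a normal covering, via $f^*\bundle{F}'\cong \bigoplus _h h^*\bundle{F}$, then reducing the general case to a normal one) one obtains $\left< \mathrm{ch}(\bundle{F}'),\theta \right> =\left< \mathrm{ch}(\bundle{F}),f^!\theta \right> \neq 0$, so $\bundle{F}'\in \mathscr{V}_\mathrm{ch}(M;\theta )$. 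It then remains to establish the curvature estimate
\[
\int _M|R^{\bundle{F}'}|^{n/2}_{g,op}\cdot \mathrm{vol}_g\leq \int _{\tilde M}|R^\bundle{F}|^{n/2}_{\tilde g,op}\cdot \mathrm{vol}_{\tilde g},
\]
which would give $\ccg (M_{[g]};\theta )\leq \ccg (\tilde M_{[\tilde g]};f^!\theta )$ upon taking the infimum. I expect this estimate to be the main obstacle, and it is genuinely more delicate than its $L^\infty $ counterpart $\| R^{\bundle{F}'}\| _g=\| R^\bundle{F}\| _{\tilde g}$ from proposition \ref{proposition7}: over $x$ the curvature $R^{\bundle{F}'}$ is block diagonal, so $|R^{\bundle{F}'}_{v,w}|_{op}=\max _{y\in f^{-1}(x)}|R^\bundle{F}_{\tilde v,\tilde w}|_{op}$, and this fibrewise maximum must be reconciled with the frame infimum in the definition of $|\, .\, |_{g,op}$ before it can be compared, via $\int _{\tilde M}(\cdot )=\int _M\sum _{y\in f^{-1}(x)}|R^\bundle{F}|^{n/2}_{\tilde g,op}(y)\cdot \mathrm{vol}_g$, with the right-hand side. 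The crux is to show that choosing a \emph{single} optimal frame at $x$ for all sheets at once does not cost more than the \emph{separate} optimal frames at the points of the fibre; since the sheets carry a priori unrelated curvatures, I expect one must exploit that the minimizing $\bundle{F}$ may be taken with curvature balanced over the sheets, e.g. by a deck-averaging/symmetrization argument in the normal case.

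Finally I would deduce the corollary. The transfer of the fundamental class satisfies $f^![M]=[\tilde M]$: both lie in $H_n(\tilde M;\Q )=\Q [\tilde M]$, and $f_*f^![M]=k[M]=f_*[\tilde M]$ forces the coefficient to be $1$. Applying the right inequality with $\theta =[M]$ and the conformal class $[\tilde g]=[f^*g]$ yields
\[
\ccg (M_{[g]};[M])\leq \ccg (\tilde M_{[f^*g]};[\tilde M])\leq \sup _{[\tilde g]}\ccg (\tilde M_{[\tilde g]};[\tilde M])=\cgv (\tilde M).
\]
As this holds for every conformal class $[g]$ on $M$, taking the supremum over $[g]$ on the left gives $\cgv (M)\leq \cgv (\tilde M)$.
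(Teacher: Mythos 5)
Your architecture coincides with the paper's at every structural point: the pull-back computation for the left inequality (including the projection formula, the factor $k$ from $f_*f^!=k\cdot\mathrm{id}$ and from $\mathrm{vol}_{\tilde g}$, and the local-isometry identity $|R^{f^*\bundle{E}}|_{\tilde g,op}=|R^\bundle{E}|_{g,op}\circ f$), the push-forward bundle $\bundle{F}'=f_!\bundle{F}$ with the reduction of the general case to a normal covering, and the deduction of $\cgv (M)\leq \cgv (\tilde M)$ from $f^![M]=[\tilde M]$ are all exactly as in the paper. The one step you flag as ``the main obstacle'' is, however, left unproven, and the fix you gesture at is not the paper's and would not obviously work: there is no reason the infimum in the definition of $\ccg $ is approached by bundles whose curvature is ``balanced over the sheets'', and deck-symmetrizing a competitor $\bundle{F}$ (say replacing it by $\bigoplus _\sigma \sigma ^*\bundle{F}$) changes the bundle without any evident control relating its $L^{n/2}$--curvature to that of $\bundle{F}$. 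So as written, the proposal has a genuine gap precisely at the estimate $\int _M|R^{\bundle{F}'}|^{n/2}_{g,op}\mathrm{vol}_g\leq \int _{\tilde M}|R^{\bundle{F}}|^{n/2}_{\tilde g,op}\mathrm{vol}_{\tilde g}$.

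The paper closes this gap without any balancing and without ever comparing frames downstairs: it pulls the push-forward back upstairs. In the normal case $f^*f_!\bundle{F}\cong \bigoplus _\sigma \sigma ^*\bundle{F}$, and the block-diagonal curvature gives the pointwise estimate on $\tilde M$
\[
|R^{\bigoplus _\sigma \sigma ^*\bundle{F}}|_{\tilde g,op}^{n/2}(x)=\max _\sigma |R^{\bundle{F}}|_{\tilde g,op}^{n/2}(\sigma (x))\leq \Bigl[ \sum _\sigma |R^{\bundle{F}}|_{\tilde g,op}^{n/2}\circ \sigma \Bigr] (x),
\]
which is then integrated over $\tilde M$: since the deck transformations $\sigma $ are $\tilde g$--isometries, each summand integrates to $\int _{\tilde M}|R^{\bundle{F}}|^{n/2}_{\tilde g,op}\mathrm{vol}_{\tilde g}$, so the right-hand side produces a factor $k$ --- which is cancelled exactly by the factor $k$ appearing on the left via your own pull-back identity applied to $f_!\bundle{F}$, namely $\int _{\tilde M}|R^{f^*f_!\bundle{F}}|^{n/2}_{\tilde g,op}\mathrm{vol}_{\tilde g}=k\int _M|R^{f_!\bundle{F}}|^{n/2}_{g,op}\mathrm{vol}_g$. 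This is the idea you were missing: the seemingly wasteful passage from the sheetwise maximum to the sheetwise sum is affordable precisely because the covering multiplicity $k$ appears on both sides, so no exact reconciliation of separate optimal frames is required. Your common-frame worry does not disappear entirely --- it survives, localized, in the ``$=\max _\sigma $'' step above, where a single frame at $x\in \tilde M$ is transported by the deck isometries to all points of the fibre at once (the direction ``$\geq \max $'' is immediate from block diagonality, and the paper asserts the identity); but that is a statement about one fibre of the covering, not the global downstairs comparison your symmetrization guess was trying to substitute for.
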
 
\begin{proof}
The first inequality follows from the pull back of vector bundles. Since $\left< \mathrm{ch}(f^*\bundle{E}),f^! \theta \right> =\pm k\cdot \left< \mathrm{ch}(\bundle{E}),\theta \right> $ and
\[
\int _{\tilde M}|R^{f^*\bundle{E}}|_{\tilde g,op}^{n/2}\cdot \mathrm{vol}_{\tilde g}=\int _{\tilde M}\left[ |R^\bundle{E}|_{g,op}\circ f\right]^{n/2} \mathrm{vol}_{\tilde g}=k\cdot \int _M|R^\bundle{E}|_{g,op}^{n/2}\cdot \mathrm{vol}_g
\]
we deduce $\ccg (\tilde M_{[\tilde g]};f^! \theta )\leq k^{2/n}\ccg (M_{[g]};\theta )$. The other inequality follows from the push forward construction of vector bundles. We assume without loss of generality that $f$ is a normal covering, otherwise choose a finite normal covering $h:N\to \tilde M$ such that $f\circ h:N\to M$ is a normal covering (cf.~proof of proposition \ref{proposition7}). Suppose that $\bundle{E}$ is a bundle on $\tilde M$ with $\left< \mathrm{ch}(\bundle{E}),f^! \theta \right> \neq 0$, then $f_!\bundle{E}\to M$ determined by
\[
(f_!\bundle{E})_x=\bigoplus _{y\in f^{-1}(x)}\bundle{E}_y
\]
satisfies $\left< \mathrm{ch}(f_!\bundle{E}),\theta \right> \neq 0$ (cf.~proof of proposition \ref{proposition7}). $f^*f_!\bundle{E}$ is isomorphic to $\bigoplus _\sigma \sigma ^*\bundle{E}$ where the sum is taken over all isometric deck transformations $\sigma :(\tilde M,\tilde g)\to (\tilde M,\tilde g)$. Hence, we obtain
\[
\begin{split}
k\cdot \int _M|R^{f_!\bundle{E}}|^{n/2}_{g,op}\cdot \mathrm{vol}_g&=\int _{\tilde M}|R^{f^*f_!\bundle{E}}|_{\tilde g,op}^{n/2}\cdot \mathrm{vol}_{\tilde g}\\
&\leq \sum _\sigma \int _{\tilde M}|R^\bundle{E}|_{g,op}^{n/2}\circ \sigma \cdot \mathrm{vol}_{\tilde g}=k\cdot \int _{\tilde M}|R^\bundle{E}|_{\tilde g,op}^{n/2}\cdot \mathrm{vol}_{\tilde g}
\end{split}
\]
which yields $\ccg (M_{[g]};\theta )\leq \ccg (\tilde M_{[\tilde g]};f^!\theta )$. Note that the above inequality follows from the pointwise estimate
\[
|R^{\bigoplus \sigma ^*\bundle{E}}|_{\tilde g,op}^{n/2}(x)= \max _\sigma |R^\bundle{E}|_{\tilde g,op}^{n/2}(\sigma (x))\leq \left[ \sum _\sigma |R^\bundle{E}|_{\tilde g,op}^{n/2}\circ \sigma \right] (x).
\] 
The second statement is obvious because $f^![M]=[\tilde M]$. 
\end{proof}
\begin{rem}
\label{ccg_est}
Suppose that $ \theta ,\eta \in H_{2*}(M;\Q )$ and denote by $\ke _\mathrm{ch}(M_g;\theta )$ the K--area for the Chern character, then standard exercises show:
\begin{enumerate}
\item[(i)]
\[
\ccg (M_{[g]};\theta )\leq \frac{n(n-1)}{2}\frac{\mathrm{Vol}(M,g)^{2/n}}{\ke _\mathrm{ch} (M_{g};\theta )}.
\]
\item[(ii)] $\ccg (M_{[g]})\leq \frac{n(n-1)}{2}\cg (M_g)\cdot \mathrm{Vol}(M,g)^{2/n}$ for all metrics $g$.
\item[(iii)] For conformal classes $[g]$ and $[h]$ on $M$, there is a constant $C\geq 1$ with
\[
C^{-1}\cdot \ccg  (M_{[g]})\leq \ccg (M_{[h]} )\leq C\cdot \ccg  (M_{[g]} ).
\]
\item[(iv)] If $a,b\in \Q $, then $\mathscr{V}_\mathrm{ch}(M;a\cdot \theta +b\cdot \eta )\subseteq \mathscr{V}_\mathrm{ch}(M;\theta )\cup \mathscr{V}_\mathrm{ch}(M;\eta )$, i.e.
\[
\ccg (M_{[g]};a\cdot \theta +b\cdot \eta )\geq \min \{ \ccg (M_{[g]};\theta ),\ccg (M_{[g]};\eta )\} .
\]
\end{enumerate}
\end{rem}
Hence, $\ccg (M_{[g]};\theta )>0$ does not depend on the choice of the conformal class $[g]$ and
\[
\mathscr{P}_{2*}(M;\Q ):=\{ \theta \in H_{2*}(M;\Q )|\ \ccg (M_{[g]};\theta )>0\} \subseteq \mathscr{H}_{2*}(M;\Q )
\]
is a well defined subspace of $H_{2*}(M;\Q )$ closely related to the subspace $\mathscr{H}_*(M;\Q )$ determined by the K--area homology. However, the functorial properties of $\mathscr{P} _{2*}$ are an open problem. To be precise the induced homomorphism on homology of a (continuous) map $f:M\to N$ restricts to a homomorphism $f_*:\mathscr{H}_*(M;\Q )\to \mathscr{H}_*(N;\Q )$, but it is unknown what happens for $\mathscr{P}_{2*}$. In particular, it is unclear if $\ccg (M_{[g]})>0$ depends on the differentiable structure or only on the homotopy type of $M$. Nevertheless, we conclude analogously to lemma \ref{lem_gap} and theorem \ref{thm_gap} that
\[
\ccg (M_{[g]})=\min _{\theta \in \widetilde{H}_{2*}(M;\Q )}\ccg (M_{[g]};\theta ),
\]
i.e.~$\ccg (M_{[g]})>0$ if and only if $\mathscr{P} _{2*}(M;\Q )=\widetilde{H}_{2*}(M;\Q )$.
\begin{thm}
Suppose that $\bundle{E}\to M^n$ is a Hermitian vector bundle endowed with a Hermitian connection. If
\[
\int |R^\bundle{E}|_{g,op}^{n/2}\cdot \mathrm{vol}_g<\ccg (M_{[g]})^{n/2}
\]
for some metric $g$, then $\bundle{E}$ is stably rational trivial. 
\end{thm}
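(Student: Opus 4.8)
The plan is to argue by contraposition, simply unwinding the definition of $\ccg(M_{[g]})$ and invoking the characterization of stably rational triviality established in Lemma \ref{lem_gap}. Recall from that lemma that a Hermitian bundle $\bundle{E}\to M$ is stably rational trivial if and only if its reduced Chern character vanishes, equivalently $\left<\mathrm{ch}(\bundle{E}),\theta\right>=0$ for every $\theta\in\widetilde{H}_{2*}(M;\Q)$. No analytic input (Dirac operators, modified scalar curvature) is needed for the statement itself; those tools entered earlier, when computing explicit values of $\ccg$, whereas here the assertion is a purely formal consequence of how the gap was defined.

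So suppose that $\bundle{E}$ is \emph{not} stably rational trivial. Then there is a nonzero class $\theta\in\widetilde{H}_{2*}(M;\Q)$ with $\left<\mathrm{ch}(\bundle{E}),\theta\right>\neq 0$; in other words $\bundle{E}\in\mathscr{V}_\mathrm{ch}(M;\theta)$ for this particular $\theta$. By the very definition of the conformal curvature gap relative to a homology class, $\bundle{E}$ is then one of the competitors in the infimum defining $\ccg(M_{[g]};\theta)$, whence
\[
\int_M |R^\bundle{E}|_{g,op}^{n/2}\cdot\mathrm{vol}_g \geq \ccg(M_{[g]};\theta)^{n/2}.
\]
Since $\theta\in\widetilde{H}_{2*}(M;\Q)$, the formula $\ccg(M_{[g]})=\min_{\theta'\in\widetilde{H}_{2*}(M;\Q)}\ccg(M_{[g]};\theta')$ derived just before the theorem gives $\ccg(M_{[g]};\theta)\geq\ccg(M_{[g]})$, and combining the two inequalities yields
\[
\int_M |R^\bundle{E}|_{g,op}^{n/2}\cdot\mathrm{vol}_g \geq \ccg(M_{[g]})^{n/2},
\]
in direct contradiction to the hypothesis. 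Hence $\bundle{E}$ must be stably rational trivial.

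The one point requiring care is the degenerate case $\widetilde{H}_{2*}(M;\Q)=\{0\}$: here every class pairs trivially with $\mathrm{ch}(\bundle{E})$, so every bundle is automatically stably rational trivial and the assertion holds regardless of the curvature bound. This is consistent with the convention that the infimum over the empty set $\mathscr{V}_\mathrm{ch}(M;0)$ is $+\infty$, so that $\ccg(M_{[g]})=+\infty$ and the hypothesis is met by any finite integral. I expect the main (and rather modest) obstacle to be bookkeeping rather than substance: one must confirm that the passage from ''not stably rational trivial'' to membership in some $\mathscr{V}_\mathrm{ch}(M;\theta)$ is exactly the content of Lemma \ref{lem_gap}, and that the infimum defining $\ccg(M_{[g]};\theta)$ genuinely ranges over a class of competitors containing $\bundle{E}$. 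Granting these, the argument is a one-line contradiction.
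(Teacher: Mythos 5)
Your proof is correct and is essentially the paper's own argument: the theorem is stated there without a separate proof precisely because it follows, as you show, by unwinding the definition via $\ccg (M_{[g]})=\min _{\theta \in \widetilde{H}_{2*}(M;\Q )}\ccg (M_{[g]};\theta )$ and invoking the characterization of stably rational triviality (vanishing reduced Chern character, i.e.\ trivial pairing with all of $\widetilde{H}_{2*}(M;\Q )$) recorded in the proof of lemma \ref{lem_gap}. Your treatment of the degenerate case $\widetilde{H}_{2*}(M;\Q )=\{ 0\} $ through the empty-infimum convention likewise matches the paper's remark following the definition.
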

In some cases we can deduce $\ccg (M_{[g]})>0$ respectively $\cgv (M)>0$ on manifolds with positive scalar curvature. Lemma \ref{lem1} together with inequality (\ref{ineq12}) yield the estimate of Yamabe invariants by conformal curvature gaps. In order to cover the spin$^c$ case as well, we define a norm on the deRham cohomology $H^2_{dR}(M)$ which depends only on the conformal class $[g]$. If $\omega $ is a $2$--form, we denote by $|\omega |_{g,1}$ the function on $M^n$ given by
\[
| \omega  |_{g,1}(x):=\inf _{e_1,\ldots ,e_n}\sum _{i<j}|\omega (e_{i},e_{j})|
\]
where the infimum is taken over all orthonormal bases $e_1,\ldots ,e_n\in T_xM$. Then $|\, .\, |_{g,1}(x)$ determines a norm for  each $\Lambda ^2T_x^*M$ with the property that $| R^\bundle{E}| _{g,op}=|\mathbf{i}R^\bundle{E}|_{g,1}$ for Hermitian line bundles $\bundle{E}$. Moreover,
\[
|\omega |_g\leq |\omega |_{g,1}\leq \sqrt{\Bigl[ \frac{n}{2}\Bigl] }\, \cdot |\omega |_g
\]
where $|\, .\, |_g(x)$ is the standard norm on $\Lambda ^2T_x^*M$ induced by the scalar product $g_x$. For the second inequality we use the fact that each $2$--form $\omega \in \Lambda ^2T_x^*M$ can be written as $\omega =\sum _i\lambda _ie_{2i-1}^*\wedge e_{2i}^*$ for a suitable orthonormal basis $e_1^*,\ldots ,e_n^*$ of $T^*_xM$. In fact, a standard exercise shows $|\omega |_{g,1}=\sum _i|\lambda _i|$ whereas $|\omega |_g^2=\sum _i\lambda _i^2$. The above inequality is of importance in estimating the operator norm of Clifford multiplication with $\omega $:
\[
|\gamma (\omega )|_{op}=|\omega |_{g,1}\leq \sqrt{\Bigl[\frac{n}{2}\Bigl]}\, \cdot |\omega |_g
\]
and of course to estimate $|R^\bundle{E}|_{g,op}$ by $|\mathbf{i}R^\bundle{E}|_g$ if $\bundle{E}$ is a line bundle, here $\mathbf{i}R^\bundle{E}$ is regarded as a real $2$--form on $M$. We introduce the following $L^{n/2}$--norm on $2$--forms of $M$:
\[
\| \omega \| _{[g]}:=\left[ \int _M|\omega |_{g,1}^{n/2}\cdot \mathrm{vol}_g\right] ^\frac{2}{n}
\]
which depends only on the conformal class $[g]$. Suppose that $\theta \in H^2_{dR}(M)$ is represented by the closed $2$--form $\omega $, then
\[
\| \theta \| _{[g]}:=\inf _{\alpha \in \Omega ^1(M)}\| \omega +\mathrm{d}\alpha \| _{[g]}
\]
determines a norm on $H^2_{dR}(M)$. %In particular, if $\dim M=4$, this is an $L^2$--norm and therefore, the unique $[g]$--harmonic form $\omega \in \Omega ^2(M)$ with $[\omega ]=\theta $ satisfies $\| \theta \| _{[g]}=\| \omega \|_{[g]}$. Although the $L^{n/2}$--norm of $|\omega |_{g,1}$ would imply much better estimates for proposition \ref{pro3}, we consider the $L^{n/2}$--norm of $|\omega |_g$ to obtain the parallelogram law in 4--dimensions which will be of great importance in the proof of theorem ...
%\begin{rem}
%\label{rem_parallel}
%Suppose that $\dim M=4$, then $\| [\omega ]_{dR} \| _{[g]}=\| \omega \| _{[g]}$ for all harmonic $2$--forms on $M$. Thus, the parallelogram law holds for $\| .\| _{[g]}:H^2_{dR}(M)\to [0,\infty )$:
%\[
%2\| \theta \| ^2_{[g]}+2\| \eta \| ^2_{[g]}=\| \theta +\eta \| _{[g]}^2+\| \theta -\eta \| _{[g]}^2
%\]
%for all $\theta ,\eta \in H^2_{dR}(M)$.
%\end{rem}
\begin{prop}
\label{pro3}
Suppose that $M^{n}$, $n=2m$, is a spin$^c$--manifold with associated class $c\in H^2(M;\Z )$. Then
\[
\yam (M_{[g]})\leq 4\pi  \cdot \| c_\R  \| _{[g]}+4\cdot \ccg (M_{[g]}; (\widehat{\mathbf{A}}(M)\cdot e^{-c/2})\cap [M])
\]
for any conformal class on $M$ where $c_\R \in H^2_{dR}(M)$ means the image of $c$ under $H^2(M;\Z )\to H^2_{dR}(M)$. Hence, if $\yam (M_{[g]})>4\pi \cdot \| c_\R \| _{[g]}$ for some conformal class, then
\[
\ccg (M_{[h]};(\widehat{\mathbf{A}}(M)\cdot e^{-c/2})\cap [M])>0 .
\]
for all conformal classes $[h]$.
\end{prop}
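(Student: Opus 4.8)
The plan is to transplant the modified--scalar--curvature argument of Lemma \ref{lem1} from the spin setting to the spin$^c$ setting, carrying along the curvature of the determinant line bundle as an extra term in the Weitzenb\"ock formula. Fix a conformal class $[g]$, a bundle $\bundle{E}\in\mathscr{V}_\mathrm{ch}(M;(\widehat{\mathbf{A}}(M)\cdot e^{-c/2})\cap[M])$, and a Hermitian connection $A$ on the determinant line bundle of the spin$^c$ structure; let $\omega\in\Omega^2(M)$ be the closed $2$--form representing $c_\R$ obtained from $A$ by Chern--Weil. Twisting the spin$^c$ spinor bundle $\spinor^cM$ by $\bundle{E}$ and applying the index theorem (a single operator suffices here, cf.~\cite{AS4,LaMi}), the index of $\dirac^c_\bundle{E}$ equals $\left< \widehat{\mathbf{A}}(M)\cdot e^{-c/2}\cdot\mathrm{ch}(\bundle{E}),[M]\right> \neq0$, so $\dirac^c_\bundle{E}$ has nontrivial kernel for every metric.

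The first step is to write out the spin$^c$ Bochner--Lichnerowicz formula for $\dirac^c_\bundle{E}$, whose zeroth order part is $\tfrac{\mathrm{scal}_g}{4}+\tfrac12\gamma(F^A)+\frak{R}^\bundle{E}$. I would estimate the two curvature endomorphisms separately: the determinant term by $\tfrac12\gamma(F^A)\geq-\pi|\omega|_{g,1}$, using $|\gamma(\,\cdot\,)|_{op}=|\,\cdot\,|_{g,1}$ together with the normalisation that identifies $\mathbf{i}F^A$ with $2\pi$ times the representative $\omega$ of $c_\R$ (the same normalisation underlying the earlier spin$^c$ estimate $2|\Omega|_{op}<\mathrm{scal}_g$), and the twist term by the standard inequality $\frak{R}^\bundle{E}\geq-|R^\bundle{E}|_{g,op}$ already used in Lemma \ref{lem1}. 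Nontrivial kernel then forces $\mathrm{scal}_g\leq4\pi|\omega|_{g,1}+4|R^\bundle{E}|_{g,op}$ somewhere on $M$. Consequently the functional $F(h):=4\pi|\omega|_{h,1}+4|R^\bundle{E}|_{h,op}$, which is nonnegative and conformally homogeneous of degree $-1$ and hence admissible in the sense of section \ref{sec2}, admits no metric with $\mathrm{scal}^F>0$; by Corollary \ref{cor_yam} this yields $\yam^F(M)\leq0$ and in particular $\yam^F(M_{[g]})\leq0$.

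Next I would feed this into inequality (\ref{ineq12}), giving $\yam(M_{[g]})\leq\yam^F(M_{[g]})+\mathcal{F}(M_{[g]})\leq\mathcal{F}(M_{[g]})$, and expand $\mathcal{F}(M_{[g]})=[\int_MF(g)^{n/2}\mathrm{vol}_g]^{2/n}$ by Minkowski's inequality in $L^{n/2}$:
\[
\mathcal{F}(M_{[g]})\leq4\pi\Bigl[\int_M|\omega|_{g,1}^{n/2}\mathrm{vol}_g\Bigr]^{2/n}+4\Bigl[\int_M|R^\bundle{E}|_{g,op}^{n/2}\mathrm{vol}_g\Bigr]^{2/n}=4\pi\|\omega\|_{[g]}+4\Bigl[\int_M|R^\bundle{E}|_{g,op}^{n/2}\mathrm{vol}_g\Bigr]^{2/n}.
\]
The two summands depend independently on $A$ and on $\bundle{E}$; varying the connection $A$ runs $\omega$ through all representatives $\omega+\mathrm{d}\alpha$ of $c_\R$, so $\inf_A4\pi\|\omega\|_{[g]}=4\pi\|c_\R\|_{[g]}$, while varying $\bundle{E}$ over $\mathscr{V}_\mathrm{ch}(M;(\widehat{\mathbf{A}}(M)e^{-c/2})\cap[M])$ turns the second summand into $4\,\ccg(M_{[g]};(\widehat{\mathbf{A}}(M)e^{-c/2})\cap[M])$. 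Taking the infimum over both choices gives the asserted inequality. The final clause is then immediate: if $\yam(M_{[g]})>4\pi\|c_\R\|_{[g]}$ for one conformal class, the inequality forces $\ccg(M_{[g]};(\widehat{\mathbf{A}}(M)e^{-c/2})\cap[M])>0$, and positivity of this quantity is independent of the conformal class (Remark \ref{ccg_est}), so it holds for all $[h]$.

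The part I expect to be most delicate is not analytic but a matter of normalisation and sign: getting the determinant line bundle to contribute exactly $-\pi|\omega|_{g,1}$ (hence the constant $4\pi$) and matching the sign of the exponent $e^{-c/2}$ in the index to the sign of $\tfrac12\gamma(F^A)$ in the Weitzenb\"ock formula. Since $\|c_\R\|_{[g]}$ is insensitive to replacing $c$ by $-c$, the convention does not affect the final estimate, but the twisting class $(\widehat{\mathbf{A}}(M)e^{-c/2})\cap[M]$ singled out by the index must be kept consistent with that choice throughout.
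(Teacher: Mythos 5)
Your proposal is correct and follows essentially the same route as the paper's proof: the twisted spin$^c$ Lichnerowicz formula with the determinant--line term estimated via $|\gamma(\omega)|_{op}=|\omega|_{g,1}$, the twist term via $\frak{R}^\bundle{E}\geq -|R^\bundle{E}|_{g,op}$, nonvanishing of the index forcing $\yam^F(M_{[g]})\leq 0$, then inequality (\ref{ineq12}) with the $L^{n/2}$ triangle inequality, and finally infima over connections (yielding $4\pi\|c_\R\|_{[g]}$) and over bundles (yielding the conformal curvature gap). The only differences are cosmetic --- you normalise $\omega$ to represent $c_\R$ rather than $2\pi c_\R$ as in the paper, which merely relocates the factor $2\pi$, and your closing appeal to remark \ref{ccg_est} for conformal--class independence matches the paper's own observation following that remark.
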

\begin{proof}
Let $\spinor _cM$ be the spin$^c$ bundle associated to $c$ and consider the twisted Dirac bundle $\spinor _cM\otimes \bundle{E}$, then
\[
(\dirac ^\bundle{E})^2=\nabla ^*\nabla +\frac{\mathrm{scal}_g}{4}+\frac{\mathbf{i}}{2}\gamma (\omega ) +\sum _{i<j}^n\gamma (e_i)\gamma (e_j)\otimes R^\bundle{E}_{e_i,e_j}
\]
where $\gamma (.)$ means Clifford multiplication and $\omega $ is the curvature of the associated line bundle, i.e.~$\omega $ is a representative of $2\pi c_\R $. We have already observed that $|\gamma (\omega )|_{op}= |\omega |_{g,1}$, i.e.~we conclude from the estimate in the proof of lemma \ref{lem1}  that $\mathrm{scal}^\bundle{E}_g:=\mathrm{scal}_g-2\cdot | \omega | _{g,1}-4|R^\bundle{E}|_{g,op}$ is a modified scalar curvature and if $\mathrm{ind}\ \dirac ^\bundle{E}\neq 0$, there is no metric which satisfies $\mathrm{scal}_g^\bundle{E}>0$. Thus, for any bundle $\bundle{E}$ with $\left< \mathrm{ch}(\bundle{E})\cdot \widehat{\mathbf{A}}(M)\cdot e^{-c/2},[M]\right> \neq 0$, we obtain from $\yam ^\bundle{E}(M_{[g]})\leq 0$ and (\ref{ineq12}) the estimate
\[
\begin{split}
\yam (M_{[g]})&\leq \Bigl\| 2\cdot |\omega |_{g,1}+4\cdot |R^\bundle{E}|_{g,op}\Bigl\| _{L^{n/2}}\\
&\leq 2\cdot \| \omega \| _{[g]} +4\left[\int _M |R^\bundle{E}|_{g,op}^{n/2}\cdot \mathrm{vol}_g\right] ^{2/n}.
\end{split}
\]
Take the infimum over all bundles $\bundle{E}\in \mathscr{V}_\mathrm{ch}(M;(\widehat{\mathbf{A}}(M)\cdot e^{-c/2})\cap [M])$ and over all $\omega $ which represent $2\pi c_\R $ provides the claim. Note for each closed $\omega $ with $[\omega ]=2\pi c_\R $, there is a connection on the associated line bundle with curvature $2$--form $\omega $.
\end{proof}
\begin{cor}
Suppose that $M^{2m}$ is a closed spin manifold such that either $m\leq 2$ or the $\widehat{A}$--class satisfies $\widehat{\mathbf{A}}(M)=1$, then
\[
\yam (M)\leq 4\cdot \cgv (M).
\]
\end{cor}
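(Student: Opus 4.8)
The plan is to read the inequality off directly from Proposition \ref{pro3}, specialized to the spin case, and to dispose of the low--dimensional exception via the Lichnerowicz obstruction. First I would apply Proposition \ref{pro3} to the spin$^c$ structure induced by the given spin structure, i.e.\ with associated class $c=0$. Then $c_\R=0$, so $\| c_\R\|_{[g]}=0$, and $e^{-c/2}=1$; hence for \emph{every} conformal class $[g]$ on $M$ we obtain
\[
\yam (M_{[g]})\leq 4\cdot \ccg\bigl(M_{[g]};\widehat{\mathbf{A}}(M)\cap [M]\bigr).
\]

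In the case $\widehat{\mathbf{A}}(M)=1$ the total $\widehat{A}$--class is the unit of $H^*(M;\Q)$, so $\widehat{\mathbf{A}}(M)\cap [M]=[M]$ and the displayed bound reads $\yam(M_{[g]})\leq 4\,\ccg(M_{[g]};[M])$. Taking the supremum over all conformal classes and invoking the definitions $\yam(M)=\sup_{[g]}\yam(M_{[g]})$ and $\cgv(M)=\sup_{[g]}\ccg(M_{[g]};[M])$, monotonicity of the supremum yields $\yam(M)\leq 4\,\cgv(M)$, which is the assertion.

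It then remains to treat $m\leq 2$ when possibly $\widehat{\mathbf{A}}(M)\neq 1$. For $m=1$ ($\dim M=2$) one has $H^4(M;\Q)=0$, so $\widehat{\mathbf{A}}(M)=1$ automatically and we are back in the previous case. For $m=2$ ($\dim M=4$) the total $\widehat{A}$--class is $\widehat{\mathbf{A}}(M)=1-\tfrac{1}{24}p_1$, whence $\widehat{\mathbf{A}}(M)\cap [M]=[M]-\tfrac{1}{24}\bigl(p_1\cap [M]\bigr)$ with $p_1\cap [M]\in H_0(M;\Q)$. Here I would split according to whether the $\widehat{A}$--genus $\widehat{A}(M)=-\tfrac{1}{8}\sigma(M)$ vanishes. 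If $\widehat{A}(M)=0$, then $\langle p_1,[M]\rangle=3\sigma(M)=0$, so $p_1=0$ in $H^4(M;\Q)\cong\Q$ and therefore $p_1\cap[M]=0$; thus $\widehat{\mathbf{A}}(M)\cap[M]=[M]$ rationally and the argument of the preceding paragraph applies verbatim. If instead $\widehat{A}(M)\neq 0$, then by the Lichnerowicz--Atiyah--Singer obstruction $M$ admits no metric of positive scalar curvature, so $\yam(M)\leq 0\leq 4\,\cgv(M)$ (the last inequality because $\cgv(M)\in[0,\infty]$), and the claim holds trivially.

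Essentially everything here is a substitution into Proposition \ref{pro3}; the single genuinely non-formal point is the dimension--four dichotomy, where the capped class $\widehat{\mathbf{A}}(M)\cap[M]$ cannot always be identified with $[M]$ and one must instead pass through the positive-scalar-curvature obstruction. This is exactly the place where the hypothesis $m\leq 2$ is used (it confines the deviation of $\widehat{\mathbf{A}}(M)$ from $1$ to the single degree $4$, in which $H^4$ is rationally one-dimensional), and I expect it to be the only step requiring more than routine bookkeeping.
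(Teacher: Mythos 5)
Your proof is correct, and its core is exactly what the paper intends: the corollary is stated without a separate proof as the specialization of Proposition \ref{pro3} to the spin$^c$ structure with $c=0$, followed by taking the supremum over conformal classes; your reductions for $\widehat{\mathbf{A}}(M)=1$ and for $m=1$ are the routine part, and your signature-theorem computation in dimension four is accurate. The one place where you step outside the paper's framework is the case $m=2$, $\widehat{A}(M)\neq 0$, which you settle by the external Lichnerowicz--Atiyah--Singer obstruction. That is valid, but two internal alternatives are worth noting. First, when $\widehat{A}(M)\neq 0$ the trivial line bundle lies in $\mathscr{V}_\mathrm{ch}\bigl(M;\widehat{\mathbf{A}}(M)\cap [M]\bigr)$, so $\ccg \bigl(M_{[g]};\widehat{\mathbf{A}}(M)\cap [M]\bigr)=0$ and Proposition \ref{pro3} itself gives $\yam (M_{[g]})\leq 0$ for every $[g]$ --- the Lichnerowicz obstruction is already built into the proposition. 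Second, the dichotomy can be avoided altogether: for any $\theta _0\in H_0(M;\Q )$ one has $\ccg (M_{[g]};[M]+\theta _0)\leq \ccg (M_{[g]};[M])$, since any $\bundle{E}$ with $\left< \mathrm{ch}(\bundle{E}),[M]\right> \neq 0$ may be replaced by $\bundle{E}^{\oplus a}\oplus \C ^b$ for suitable $a\geq 1$, $b\geq 0$ (at most one choice of $b$ is excluded), which pairs nontrivially with $[M]+\theta _0$ and has the same $|R|_{g,op}$ because the extra summands are flat; this is the same absorption trick as the first lemma of section \ref{sec244} (there used with $\bundle{E}\oplus \bundle{E}^*$ to kill $c_1$), and it yields the classwise inequality $\yam (M_{[g]})\leq 4\cdot \ccg (M_{[g]};[M])$ uniformly in dimension four, slightly more than the corollary asserts.
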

We can use the above proposition to compute the conformal curvature gap for a couple of conformal manifolds. For instance, if $g_0$ denotes the standard metric on $S^{2m}$, then remark \ref{rem2} shows $\ccg  (S^{2m}_{[g_0]})\leq \frac{m(2m-1)}{2}\omega ^{1/m}_{2m}$ and the above proposition together with $\yam (S^{2m}_{[g_0]})=2m(2m-1)\omega _{2m}^{1/m}$ yield the opposite inequality which means:
\[
\ccg (S^{2m}_{[g_0]})=\frac{m(2m-1)}{2}\omega ^{1/m}_{2m}=\frac{\yam (S^{2m}_{[g_0]})}{4}=\frac{\yam (S^{2m})}{4}.
\]
Here, $\omega _{2m}$ means the volume of the $2m$--dimensional standard sphere. Moreover, we have already observed that $\yam (S^2)=4\cdot \cgv (S^2)$. The existence of self dual connections will also provide $\yam (S^4)=4\cdot \cgv (S^4)=8\sqrt{6}\, \pi $, but the value $\cgv (S^{2m})$  remains an open problem for $m>2$. Another example is obtained for the standard conformal class  $[g_0\oplus g_0]$ on the manifold $N:=S^{2m}\times S^{2m}$ where $g_0$ is the standard metric on  $S^{2m}$. Since $g:=g_0\oplus g_0$ is Einstein, we obtain
\[
\yam (N_{[g]})=4m(2m-1)\omega ^{1/m}_{2m}.
\]
where $\omega _{2m}$ means again the volume of the standard unit sphere $S^{2m}$. On the other hand if $\pi _j:N=S^{2m}\times S^{2m}\to S^{2m}$, $j=1,2$, are the projections and $\spinor ^+$ the positive complex spinor bundle of $S^{2m}_{g_0}$, the bundle $\bundle{E}=\pi ^*_1\spinor ^+\otimes \pi _2^*\spinor ^+ $ has curvature $|R^\bundle{E}|_{g,op}=m(2m-1)$ and satisfies $\left< \mathrm{ch}(\bundle{E}),[N]\right> \neq 0$. Thus, $\ccg (N_{[g]},[N])\leq m(2m-1)\omega ^{1/m}_{2m}$ together with the opposite inequality from proposition \ref{pro3} supplies
\[
 \ccg (N_{[g]};[N])= \frac{\yam  (N_{[g]})}{4}=m(2m-1)\omega ^{1/m}_{2m}.
\]
This equality and the results in the section below (theorem \ref{thm256}) yield an estimate of the curvature gap volume of $S^2\times S^2$
\[
4\pi \leq \cgv (S^2\times S^2)\leq 4\sqrt{3}\, \pi .
\]
In fact $4\pi <\cgv (S^2\times S^2) $, because $16\pi <\yam (S^2\times S^2)$ by one of the main results in \cite{BoWaZi}. Moreover, the above estimates, the remark below and the bundle $\pi _2^*(\spinor ^+)$ yield
\[
\ccg (N_{[g]})=\ccg (N_{[g]};1\times [S^{2m}])=\frac{m(2m-1)}{2}\omega ^{1/m}_{2m},
\] 
i.e.~$\ccg (N_{[g]})=2\pi $ if $m=1$. Here we use that the value $\ccg (N_{[g]})=\ccg (N_{[g]};\theta _i)$ is achieved by some $\theta _i$ if $\theta _1,\theta _2 ,\theta _3$ is a basis of $\widetilde{H }_{2*}(N;\Q )$. Hence,
\[
2\pi \leq \ccg (S^2\times S^2)\leq 4\sqrt{3}\, \pi .
\]
\begin{rem}
Let $(X^k\times Y^l,g\oplus h)$ be a Riemannian product and $\theta \in H_{2*}(Y;\Q )$, then
\[
\ccg (Y_{[h]};\theta )\leq \mathrm{Vol}(Y,h)^{\frac{2k}{l(k+l)}}\mathrm{Vol}(X,g)^{-\frac{2}{k+l}}\ccg (X\times Y_{[g\oplus h]};1\times \theta ).
\]
\end{rem}
\begin{proof}
$i_x:Y\to  X\times Y$ denotes the inclusion map $y\mapsto (x,y)$. If $f:X\times Y\to [0,\infty )$ is a continuous function, there is some $x\in X$ with
\[
\int _Y(f\circ i_x)\mathrm{vol}_h\leq \int _Y(f\circ i_z)\mathrm{vol}_h
\] 
for all $z\in X$. Hence, Fubini's theorem yields for this $x\in X$
\[
\int _Y(f\circ i_x)\mathrm{vol}_h\leq \frac{1}{\mathrm{Vol}(X,g)}\int _{X\times Y}f\cdot \mathrm{vol}_{g\oplus h}.
\]
Suppose $\bundle{E}\to X\times Y$ is a bundle with $\left< \mathrm{ch}(\bundle{E}),1\times \theta \right> \neq 0$, then choose the point $x\in X$ as above for the continuous function $|R^\bundle{E}|^{l/2}_{g\oplus h,op}$. The pull back $i^*_x\bundle{E}$ satisfies $\left< \mathrm{ch}(i^*_x\bundle{E}),\theta \right> \neq 0$ and $|R^{i^*_x\bundle{E}}|_{h,op}\leq |R^\bundle{E}|_{g\oplus h,op}\circ i_x$. Hence,
\[
\begin{split}
\ccg (Y_{[h]};\theta )&\leq \left( \int _Y|R^{i^*_x\bundle{E}}|_{h,op}^{l/2}\mathrm{vol}_h\right) ^{2/l}\leq \left( \int _Y|R^\bundle{E}|^{l/2}_{g\oplus h,op}\circ i_x\mathrm{vol}_h\right) ^{2/l}\\
&\leq \left( \frac{1}{\mathrm{Vol}(X,g)} \int _{X\times Y}|R^\bundle{E}|_{g\oplus h,op}^{l/2}\mathrm{vol}_{g\oplus h}\right) ^{2/l}\\
&\leq \left( \frac{\mathrm{Vol}(Y,h)^{k/l}}{\mathrm{Vol}(X,g)} \int _{X\times Y}|R^\bundle{E}|_{g\oplus h,op}^{\frac{k+l}{2}}\mathrm{vol}_{g\oplus h}\right) ^{\frac{2}{k+l}}. \qedhere
\end{split}
\]

%proves the claim.
%Conversely, consider a bundle $\bundle{E}\to Y$ with $\left< \mathrm{ch}(\bundle{E}),\theta \right> \neq 0$, then the pull back $\mathrm{pr}_Y^*\bundle{E}$ w.r.t.~the projection map $\mathrm{pr}_Y:X\times Y\to Y$ satisfies $\left< \mathrm{ch}(\mathrm{pr}_Y^*\bundle{E}),1\times \theta \right> \neq 0$ as well as
%\[
%\ccg (X\times Y_{[g\oplus h]};1\times \theta )\leq \left( \int _{X\times Y}|R^{\mathrm{pr}_Y^*\bundle{E}}|_{g\oplus h,op}^{\frac{k+l}{2}}\mathrm{vol}_{g\oplus h}\right) ^{\frac{2}{k+l}}
%\] 
\end{proof}
\section{$L^2$--curvature gaps and Yamabe invariants on $4$--manifolds}
\label{sec244}
Throughout this section $M$ is a closed oriented $4$--dimensional manifold. Since the values for the conformal curvature gap and the curvature gap volume do not depend on the orientation of $M$ we assume without loss of generality that $b_+(M)\geq b_-(M)$ where $b_+$ and $b_-$ denote the dimension of the space of self dual respectively anti--self dual harmonic $2$--forms on $M$. We observed that
\[
\yam (M)\leq 4\cgv (M)
\]
if $M$ is spin. The example $M=\C P^2$ will show that this inequality does not hold without the spin assumption. However, the following lemma provides suitable generalizations of this inequality.
\begin{lem}
Let $[g]$ be a conformal class on $M$, then
\[
\ccg (M_{[g]};\eta +[M])\leq \ccg (M_{[g]};[M])\leq \cgv (M)
\]
for all $\eta \in H_2(M;\Q )$.
\end{lem}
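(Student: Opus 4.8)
The second inequality $\ccg(M_{[g]};[M])\leq\cgv(M)$ is immediate from the definition $\cgv(M)=\sup_{[g']}\ccg(M_{[g']};[M])$, so all the work goes into the first inequality. The plan is to show that the admissible set $\mathscr{V}_\mathrm{ch}(M;\eta+[M])$ contains, for every competitor of $\ccg(M_{[g]};[M])$, a bundle with the same curvature integral. Concretely, given any $\bundle{E}\in\mathscr{V}_\mathrm{ch}(M;[M])$ I would produce a bundle $\bundle{F}\in\mathscr{V}_\mathrm{ch}(M;\eta+[M])$ with $\int_M|R^\bundle{F}|^2_{g,op}\,\mathrm{vol}_g=\int_M|R^\bundle{E}|^2_{g,op}\,\mathrm{vol}_g$; taking the infimum over $\bundle{E}$ and raising to the power $2/n=1/2$ (monotone) then yields $\ccg(M_{[g]};\eta+[M])\leq\ccg(M_{[g]};[M])$.

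First I would take $\bundle{F}:=\bundle{E}\oplus\overline{\bundle{E}}$, the sum of $\bundle{E}$ and its complex conjugate bundle, equipped with the direct sum of the Hermitian connection and its conjugate. Since $\mathrm{ch}_k(\overline{\bundle{E}})=(-1)^k\mathrm{ch}_k(\bundle{E})$, the odd components cancel and the even ones double, so that $\mathrm{ch}_1(\bundle{F})=0$ and $\mathrm{ch}_2(\bundle{F})=2\,\mathrm{ch}_2(\bundle{E})$. As $M$ is $4$--dimensional, the pairing with $\eta+[M]$ splits as $\langle\mathrm{ch}(\bundle{F}),\eta+[M]\rangle=\langle\mathrm{ch}_1(\bundle{F}),\eta\rangle+\langle\mathrm{ch}_2(\bundle{F}),[M]\rangle=0+2\langle\mathrm{ch}_2(\bundle{E}),[M]\rangle\neq 0$, the last pairing being nonzero precisely because $\bundle{E}\in\mathscr{V}_\mathrm{ch}(M;[M])$. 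Hence $\bundle{F}\in\mathscr{V}_\mathrm{ch}(M;\eta+[M])$.

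Next I would check the curvature identity. Complex conjugation is a fibrewise isometry that preserves the operator norm, so $|R^{\overline{\bundle{E}}}_{v,w}|_{op}=|R^\bundle{E}_{v,w}|_{op}$ for all $v,w$; and the curvature of a direct sum is block diagonal, whose operator norm is the maximum of the two blocks. Therefore $|R^\bundle{F}_{e_i,e_j}|_{op}=|R^\bundle{E}_{e_i,e_j}|_{op}$ in every orthonormal frame, which by the defining formula for $|R^\bundle{F}|_{g,op}$ gives $|R^\bundle{F}|_{g,op}=|R^\bundle{E}|_{g,op}$ pointwise, hence equal $L^2$ integrands. This closes the chain of inequalities.

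The only real obstacle is the possible cancellation $\langle\mathrm{ch}(\bundle{E}),\eta+[M]\rangle=0$ that would defeat the naive inclusion $\mathscr{V}_\mathrm{ch}(M;[M])\subseteq\mathscr{V}_\mathrm{ch}(M;\eta+[M])$ attempted directly; the conjugation trick resolves this by annihilating the degree--two part of the Chern character altogether, which is also why a single construction works for every $\eta\in H_2(M;\Q)$ simultaneously. I would only need to confirm that the direct sum and conjugate connections are genuine Hermitian connections, so that $\bundle{F}$ is an admissible competitor, which is routine.
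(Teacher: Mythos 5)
Your proof is correct and coincides with the paper's own argument: the paper takes $\bundle{F}=\bundle{E}\oplus\bundle{E}^*$, which for a Hermitian bundle is canonically isomorphic to your $\bundle{E}\oplus\overline{\bundle{E}}$, and uses the same cancellation $c_1(\bundle{F})=0$, $\mathrm{ch}_2(\bundle{F})=2\,\mathrm{ch}_2(\bundle{E})$ together with the pointwise identity $|R^\bundle{F}|_{g,op}=|R^\bundle{E}|_{g,op}$. The second inequality is likewise immediate from the definition of $\cgv(M)$ in both treatments.
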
 
\begin{proof}
Suppose that $\bundle{E}$ satisfies $\left< \mathrm{ch}(\bundle{E});[M]\right> \neq 0$, then $\bundle{F}:=\bundle{E}\oplus \bundle{E}^*$ has trivial first Chern class because $c_1(\bundle{E})=-c_1(\bundle{E}^*)$ but $\mathrm{ch}_2(\bundle{F})=2\mathrm{ch}_2(\bundle{E})$. Thus,
\[
\left< \mathrm{ch}(\bundle{F}),\eta +[M]\right> =2\left< \mathrm{ch}(\bundle{E}),[M]\right> \neq 0
\]
and $|R^\bundle{F}|_{g,op}=|R^\bundle{E}|_{g,op}$ complete the proof.
\end{proof}
Hence, if $c\in H^2(M;\Z )$ is associated to a spin$^c$ structure on $M$, then proposition \ref{pro3} yields for any conformal class $[g]$ on $M$:
\begin{equation}
\label{ineq365}
\yam (M_{[g]})\leq 4\pi \| c_\R \| _{[g]}+4\cgv (M).
\end{equation}
Of course, if $c^2\neq \tau (M)$ for the signature $\tau (M)$, the Atiyah--Singer index theorem shows $\yam (M_{[g]})\leq 4\pi \| c_\R \| _{[g]}$ for any conformal class (cf.~\cite{GurLe2}). Note that
\[
\widehat{\mathbf{A}}(M)\cdot e^{-c/2}\cap [M]=\frac{1}{8}\left( c^2 -\tau (M)\right) -\eta +[M].
\]
where $\eta \in H_2(M;\Q )$ means the Poincar\'e dual of $c_\Q /2$, i.e.~$\yam (M_{[g]})\leq 4\pi \| c_\R \| _{[g]}$  follows easily from proposition \ref{pro3}. However, in case $c^2=\tau (M)$ we obtain a couple of estimates for the curvature gap volume.

We compute the first precise values for the complex projective plane. Let $[g_{FS}]$ be the conformal class of the Fubini--Study metric on $\proj{\C }^2$ and $c\in H^2(\proj{\C };\Z )$ be a generator. Consider the line bundle $\bundle{E}$ associated to $c$ which is endowed with the connection that has harmonic curvature $2$--form $\mathbf{i}R^\bundle{E}=\omega $, then $\omega $ is self dual which means
\[
\int |R^\bundle{E}|^2_{g,op}\cdot \mathrm{vol}_g=\int |\omega |_{g,1}^2\cdot \mathrm{vol}_g\leq 2\int |\omega |^2_g\cdot \mathrm{vol}_g=2\int \omega \wedge \omega =8\pi ^2\int c^2=8\pi ^2.
\]
This proves $\ccg (\C P^2_{[g_{FS}]};\eta )\leq 2\sqrt{2}\, \pi $ for $0\neq \eta \in H_2(\C P^2;\Q )$ as well as $\ccg (\C P^2_{[g_{FS}]};[\C P^2])\leq 2\sqrt{2}\, \pi $. Conversely, consider the spin$^c$ structure associated to $c$, then $\| c_\R \| ^2 _{[g_{FS}]}\leq 2c^2=2$ and proposition \ref{pro3} yield
\[
\yam (\proj{\C }^2_{[g_{FS}]})=12\sqrt{2}\, \pi \leq 4\sqrt{2}\, \pi + 4\cdot \ccg (\proj{\C }^2_{[g_{FS}]};[\proj{\C }^2]-\eta /8)
\]
where $\eta $ is the Poincar\'e dual of $c_\Q /2$ (cf.~\cite{LeB5,GurLe2}). Thus, the above lemma supplies
\[
\ccg (\proj{\C }^2_{[g_{FS}]};[\proj{\C }^2])\geq \ccg (\proj{\C }^2_{[g_{FS}]};[\proj{\C }^2]-\eta /8) \geq 2\sqrt{2}\, \pi .
\]
Since $[\C P^2]$ and $[\C P^2]-\eta /8$ determine a basis of $\widetilde{H}_{2*}(\C P^2;\Q )$ we conclude from  (iv) in remark \ref{ccg_est}:
\begin{cor}
\[
\ccg (\proj{\C }^2_{[g_{FS}]})=\ccg (\proj{\C }^2_{[g_{FS}]};\eta )=\ccg (\proj{\C }^2_{[g_{FS}]};[\proj{\C }^2] )=2\sqrt{2}\, \pi .
\]
\end{cor}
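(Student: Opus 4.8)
The plan is to derive the corollary entirely from the bounds already assembled in the discussion preceding it, combined with the subadditivity estimate in remark \ref{ccg_est}(iv). Recall that we have produced the upper bounds $\ccg(\proj{\C }^2_{[g_{FS}]};\eta)\le 2\sqrt{2}\,\pi$ and $\ccg(\proj{\C }^2_{[g_{FS}]};[\proj{\C }^2])\le 2\sqrt{2}\,\pi$ from the line bundle with self--dual harmonic curvature, together with the lower bounds $\ccg(\proj{\C }^2_{[g_{FS}]};[\proj{\C }^2])\ge 2\sqrt{2}\,\pi$ and $\ccg(\proj{\C }^2_{[g_{FS}]};[\proj{\C }^2]-\eta/8)\ge 2\sqrt{2}\,\pi$ coming from proposition \ref{pro3} and the lemma above. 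In particular $\ccg(\proj{\C }^2_{[g_{FS}]};[\proj{\C }^2])=2\sqrt{2}\,\pi$ is already pinned down.

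First I would note that $\widetilde{H}_{2*}(\proj{\C }^2;\Q)=H_2\oplus H_4$ is two--dimensional and that the classes $[\proj{\C }^2]$ and $[\proj{\C }^2]-\eta/8$ form a basis, since their difference is $\eta/8$ and they therefore span the same plane as $[\proj{\C }^2]$ and $\eta$. The decisive point is that the lower bound $\ge 2\sqrt{2}\,\pi$ is available for \emph{both} of these basis vectors, whereas for $\eta$ itself only an upper bound has been established; this is exactly why one works with the basis $\{[\proj{\C }^2],\,[\proj{\C }^2]-\eta/8\}$ rather than $\{[\proj{\C }^2],\,\eta\}$.

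Next, for an arbitrary class $\theta=a\cdot[\proj{\C }^2]+b\cdot([\proj{\C }^2]-\eta/8)$ with $a,b\in\Q$, remark \ref{ccg_est}(iv) together with the scaling invariance $\ccg(M_{[g]};c\theta)=\ccg(M_{[g]};\theta)$ for $c\neq 0$ (immediate from $\mathscr{V}_\mathrm{ch}(M;c\theta)=\mathscr{V}_\mathrm{ch}(M;\theta)$) gives
\[
\ccg(\proj{\C }^2_{[g_{FS}]};\theta)\ge \min\{\ccg(\proj{\C }^2_{[g_{FS}]};[\proj{\C }^2]),\,\ccg(\proj{\C }^2_{[g_{FS}]};[\proj{\C }^2]-\eta/8)\}\ge 2\sqrt{2}\,\pi,
\]
so every nonzero element of $\widetilde{H}_{2*}(\proj{\C }^2;\Q)$ has conformal curvature gap at least $2\sqrt{2}\,\pi$. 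Applying this to $\theta=\eta$ and combining with the upper bound yields $\ccg(\proj{\C }^2_{[g_{FS}]};\eta)=2\sqrt{2}\,\pi$, while the value for $[\proj{\C }^2]$ is already known.

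Finally, since $\ccg(\proj{\C }^2_{[g_{FS}]})=\min_{\theta\in\widetilde{H}_{2*}}\ccg(\proj{\C }^2_{[g_{FS}]};\theta)$ by the minimum formula established above (analogous to lemma \ref{lem_gap}), the previous step shows this minimum is $\ge 2\sqrt{2}\,\pi$, while it is $\le 2\sqrt{2}\,\pi$ because the value is attained at $\theta=\eta$; hence all three quantities equal $2\sqrt{2}\,\pi$. I do not expect a genuine obstacle here, as the whole content is the combinatorial use of (iv): the only point requiring care is to invoke subadditivity on the correct basis — the one whose members already carry lower bounds — so that spanning the full plane never forces one through the class $\eta$ before its lower bound has been secured.
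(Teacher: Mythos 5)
Your proposal is correct and is essentially identical to the paper's own proof: the paper likewise takes the basis $\{ [\proj{\C }^2],\, [\proj{\C }^2]-\eta /8\} $ of $\widetilde{H}_{2*}(\proj{\C }^2;\Q )$, on which the lower bounds from proposition \ref{pro3} and the preceding lemma are available, and uses remark \ref{ccg_est}(iv) to propagate the bound $2\sqrt{2}\,\pi $ to every nonzero class, in particular to $\eta $, before closing with the upper bounds and the minimum formula for $\ccg (\proj{\C }^2_{[g_{FS}]})$. Your observation about why one must work with $[\proj{\C }^2]-\eta /8$ rather than $\eta $ itself matches the paper's reasoning exactly.
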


By the above definitions the curvature gap volume supplies a suitable curvature gap quantity on $4$--manifolds with Betti number $b_2=0$. In fact, if $b_2=0$ and $\cgv (M)=0$, then there are bundles $\bundle{E}\to M$ which are arbitrary flat in the $L^{2}$--norm of the curvature. Conversely, if $\cgv (M)>0$, then for all $0<\epsilon <\cgv (M)$ there is a conformal class $[g]$ on $M$ with the following property: Any Hermitian bundle $\bundle{E}\to M$ with curvature
\[
\int |R^\bundle{E}|^2_{g,op}\cdot \mathrm{vol}_g<(\cgv (M)-\epsilon )^2
\] 
is stably rational trivial. By the definitions this observation remains true for manifolds with $b_2\neq 0$ if we replace $\cgv (M)$ by $\ccg (M)$. Hence, we are interested in estimating $\ccg (M)$ from below by $\cgv (M)$:
\begin{prop}
\label{prop243}
Suppose that $M^4$ satisfies $b_{-}=0$, then
\[
\ccg (M_{[g]})\geq \frac{1}{2}\ccg (M_{[g]};[M]).
\]
In particular, $\ccg (M)\in [\frac{1}{2},1]\cdot \cgv (M)$.
\end{prop}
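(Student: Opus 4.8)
The plan is to exploit the identity $\ccg (M_{[g]})=\min _{\theta \in \widetilde{H}_{2*}(M;\Q )}\ccg (M_{[g]};\theta )$ established above and reduce the claim to a single inequality between homology classes. Since $n=\dim M=4$, so that $n/2=2$ and $\ccg (M_{[g]};\theta )^2=\inf _{\bundle{E}\in \mathscr{V}_\mathrm{ch}(M;\theta )}\int _M|R^\bundle{E}|_{g,op}^{2}\cdot \mathrm{vol}_g$, it suffices to prove
\[
\ccg (M_{[g]};\theta )\geq \tfrac{1}{2}\ccg (M_{[g]};[M])\qquad\text{for every }\theta \in \widetilde{H}_{2*}(M;\Q ),
\]
because the minimizing $\theta$ is then bounded below by the claimed value. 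The cleanest route to this is to attach to each $\bundle{E}\in\mathscr{V}_\mathrm{ch}(M;\theta )$ a bundle $\bundle{F}\in\mathscr{V}_\mathrm{ch}(M;[M])$ with the pointwise estimate $|R^\bundle{F}|_{g,op}\leq 2\,|R^\bundle{E}|_{g,op}$; then $\int|R^\bundle{F}|_{g,op}^2\,\mathrm{vol}_g\leq 4\int|R^\bundle{E}|_{g,op}^2\,\mathrm{vol}_g$, and taking the infimum over $\mathscr{V}_\mathrm{ch}(M;\theta )$ gives $\ccg (M_{[g]};[M])^2\leq 4\,\ccg (M_{[g]};\theta )^2$, which is the displayed inequality.

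For the construction I would write $\theta =\eta +a\cdot [M]$ with $\eta \in H_2(M;\Q )$ and $a\in\Q$, and split into cases according to $\langle \mathrm{ch}_2(\bundle{E}),[M]\rangle$. If $\langle \mathrm{ch}_2(\bundle{E}),[M]\rangle \neq 0$ then $\bundle{E}$ already lies in $\mathscr{V}_\mathrm{ch}(M;[M])$ and one takes $\bundle{F}=\bundle{E}$. Otherwise $\langle \mathrm{ch}(\bundle{E}),\theta \rangle =\langle c_1(\bundle{E}),\eta \rangle \neq 0$ forces $c_1(\bundle{E})\neq 0$ in $H^2(M;\Q )$, and here the hypothesis $b_-=0$ enters: since $b_+\geq b_-$ and $b_-=0$, the intersection form on $H^2(M;\Q )$ is positive definite, so $\langle c_1(\bundle{E})^2,[M]\rangle >0$. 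I would then set $\bundle{F}:=\bundle{E}\otimes\bundle{E}^*$ with the tensor-product connection. Using $c_1(\bundle{E}^*)=-c_1(\bundle{E})$ and $\mathrm{ch}_2(\bundle{E}^*)=\mathrm{ch}_2(\bundle{E})$, the degree-four part of $\mathrm{ch}(\bundle{E})\cdot\mathrm{ch}(\bundle{E}^*)$ gives $\langle \mathrm{ch}_2(\bundle{F}),[M]\rangle =2\,\mathrm{rk}(\bundle{E})\langle\mathrm{ch}_2(\bundle{E}),[M]\rangle -\langle c_1(\bundle{E})^2,[M]\rangle =-\langle c_1(\bundle{E})^2,[M]\rangle \neq 0$, so $\bundle{F}\in\mathscr{V}_\mathrm{ch}(M;[M])$. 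The curvature estimate is immediate from $R^\bundle{F}_{v,w}=R^\bundle{E}_{v,w}\otimes\id+\id\otimes R^{\bundle{E}^*}_{v,w}$ together with $|R^{\bundle{E}^*}_{v,w}|_{op}=|R^\bundle{E}_{v,w}|_{op}$, which yields $|R^\bundle{F}_{v,w}|_{op}\leq 2|R^\bundle{E}_{v,w}|_{op}$ frame by frame and hence $|R^\bundle{F}|_{g,op}\leq 2|R^\bundle{E}|_{g,op}$.

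Granting the displayed inequality, the addendum $\ccg (M)\in[\tfrac12,1]\cdot\cgv (M)$ follows formally. Since $[M]\in\widetilde{H}_4(M;\Q )$ is one admissible class, $\ccg (M_{[g]})=\min_\theta\ccg (M_{[g]};\theta)\leq\ccg (M_{[g]};[M])$, and taking the supremum over conformal classes gives $\ccg (M)\leq\cgv (M)$. Conversely, passing to the supremum over $[g]$ in $\ccg (M_{[g]})\geq\tfrac12\ccg (M_{[g]};[M])$ yields $\ccg (M)\geq\tfrac12\cgv (M)$, which together give the bracket.

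The only genuinely substantive step is the middle one, and its crux is the role of $b_-=0$: positive definiteness of the intersection form is exactly what guarantees that a rationally nonzero $c_1(\bundle{E})$ has $\langle c_1(\bundle{E})^2,[M]\rangle\neq 0$, which is what lets $\bundle{E}\otimes\bundle{E}^*$ detect $[M]$. For an indefinite form this fails on isotropic classes, so the argument does not extend beyond $b_-=0$ without change; the remaining points (the Chern-character bookkeeping and the operator-norm bound for the tensor connection) are routine.
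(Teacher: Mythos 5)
Your proof is correct and follows essentially the same route as the paper: the paper's argument also reduces to producing, from each $\bundle{E}\in \mathscr{V}_\mathrm{ch}(M;\theta )$, a bundle detecting $[M]$ with at most doubled curvature, using positive definiteness of the intersection form (from $b_-=0$) in exactly the way you do. The only cosmetic difference is that the paper takes $\bundle{E}\otimes \bundle{E}$ (whose degree--four Chern character picks up $+c_1(\bundle{E})^2$) where you take $\bundle{E}\otimes \bundle{E}^*$ (picking up $-c_1(\bundle{E})^2$); both choices yield the dichotomy $\left< \mathrm{ch}(\bundle{E}),[M]\right> \neq 0$ or $\left< \mathrm{ch}(\bundle{F}),[M]\right> \neq 0$ together with $|R^{\bundle{F}}|_{g,op}\leq 2|R^{\bundle{E}}|_{g,op}$, and hence the factor $\frac{1}{2}$.
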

\begin{proof}
It suffices to show $\ccg (M_{[g]};\theta )\geq \frac{1}{2}\ccg (M_{[g]};[M])$ for $0\neq \theta \in H_2(M;\Q )$. Suppose $\bundle{E}\in \mathscr{V}_\mathrm{ch}(M;\theta )$, then either $\left< \mathrm{ch}(\bundle{E}),[M]\right >\neq 0$ or $\left< \mathrm{ch}(\bundle{E}\otimes \bundle{E}),[M]\right> \neq 0$. Here we use that the cup product is positive definite. In fact, if $\left< \mathrm{ch}(\bundle{E}),[M]\right >=0$ and $\left< \mathrm{ch}(\bundle{E}\otimes \bundle{E}),[M]\right> = 0$, then $\bundle{E}$ is stably rational trivial, i.e.~$\bundle{E}\notin  \mathscr{V}(M;\theta )$. Thus, the claim follows from $|R^{\bundle{E}\otimes \bundle{E}}|_{g,op}\leq 2|R^\bundle{E}|_{g,op}$.
\end{proof}
\begin{rem}
A counterexample to the last proposition without the assumption $b_-=0$ is easily found. For instance, consider the manifold $S^2\times T^2$, then $\ccg (S^2\times T^2)=0$ because the class $1\times [T^2]$ has infinite K--area. Conversely,
\[
0<\yam (S^2\times T^2)\leq 4\cdot \cgv (S^2\times T^2)
\]
shows the necessity of $b_-=0$ respectively $b_+=0$.
\end{rem}
\begin{lem}
\label{lem255}
Assume that $M$ satisfies $b_-=0$ and there is a class $c\in H^2(M;\Z )$ with $c^2=1$ (this is true for $0<b_+\leq 4$). Then $\cgv (M)\leq 2\sqrt{2}\, \pi $. In particular,
\[
\cgv (\proj{\C }^2)=\ccg (\proj{\C }^2)=\ccg (\proj{\C }^2_{[g_{FS}]})=2\sqrt{2}\, \pi .
\]
\end{lem}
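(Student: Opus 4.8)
The plan is to reproduce verbatim the self-dual line-bundle computation already carried out for $\proj{\C }^2$ above, observing that the only properties of $\proj{\C }^2$ that it used were $b_-=0$ and the existence of a class with $c^2=1$. So fix an arbitrary conformal class $[g]$ and choose any metric $g\in [g]$. First I would take $\bundle{E}\to M$ to be the Hermitian line bundle with $c_1(\bundle{E})=c$; since $\mathrm{ch}(\bundle{E})=e^{c}$ has top component $c^2/2$, we get $\left< \mathrm{ch}(\bundle{E}),[M]\right> =\tfrac12 c^2=\tfrac12\neq 0$, so $\bundle{E}\in \mathscr{V}_\mathrm{ch}(M;[M])$. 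Then I would equip $\bundle{E}$ with the Hermitian connection whose curvature $2$--form $\omega :=\mathbf{i}R^\bundle{E}$ is the harmonic representative of $2\pi c_\R $ with respect to $g$; such a connection exists by Hodge theory together with Chern--Weil, and $[\omega ]=2\pi c_\R $.

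The key step is the curvature estimate. Because $\dim M=4$, the Hodge star on $2$--forms, and hence the splitting of harmonic forms into self-dual and anti-self-dual parts, depends only on $[g]$; the hypothesis $b_-=0$ then forces $\omega $ to be self-dual, $*\omega =\omega $. Using $|R^\bundle{E}|_{g,op}=|\omega |_{g,1}$ for line bundles together with the inequality $|\omega |_{g,1}\le \sqrt{2}\,|\omega |_g$ in dimension four, I would compute
\[
\int_M|R^\bundle{E}|^2_{g,op}\cdot\mathrm{vol}_g=\int_M|\omega|^2_{g,1}\cdot\mathrm{vol}_g\le 2\int_M|\omega|^2_g\cdot\mathrm{vol}_g=2\int_M\omega\wedge\omega=8\pi^2 c^2=8\pi^2,
\]
where the middle equality uses self-duality ($\int |\omega |_g^2\,\mathrm{vol}_g=\int \omega \wedge *\omega =\int \omega \wedge \omega $) and the final steps use $[\omega ]=2\pi c_\R $ and $c^2=1$. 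Since $\bundle{E}\in \mathscr{V}_\mathrm{ch}(M;[M])$, this gives $\ccg (M_{[g]};[M])\le 2\sqrt{2}\,\pi $, and taking the supremum over all conformal classes yields $\cgv (M)\le 2\sqrt{2}\,\pi $. The construction is uniform in $[g]$ precisely because the final bound $8\pi ^2$ is topological; there is therefore no serious obstacle once self-duality is in hand, and the only point that genuinely requires care is the conformal invariance of the Hodge star (equivalently of $b_\pm $) in dimension four, which is what guarantees that the single bundle $\bundle{E}$ works for every representative metric.

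For the ``in particular'' statement I would combine this bound with the values computed just before the lemma, namely $\ccg (\proj{\C }^2_{[g_{FS}]};[\proj{\C }^2])=\ccg (\proj{\C }^2_{[g_{FS}]})=2\sqrt{2}\,\pi $. By definition $\cgv (\proj{\C }^2)=\sup _{[g]}\ccg (\proj{\C }^2_{[g]};[\proj{\C }^2])\ge \ccg (\proj{\C }^2_{[g_{FS}]};[\proj{\C }^2])=2\sqrt{2}\,\pi $, which together with the inequality $\cgv (\proj{\C }^2)\le 2\sqrt{2}\,\pi $ just established forces $\cgv (\proj{\C }^2)=2\sqrt{2}\,\pi $. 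For $\ccg $ I would invoke proposition \ref{prop243}: since $b_-(\proj{\C }^2)=0$ we have $\ccg (\proj{\C }^2)\in [\tfrac12,1]\cdot \cgv (\proj{\C }^2)$, so $\ccg (\proj{\C }^2)\le \cgv (\proj{\C }^2)=2\sqrt{2}\,\pi $; and $\ccg (\proj{\C }^2)\ge \ccg (\proj{\C }^2_{[g_{FS}]})=2\sqrt{2}\,\pi $ from the supremum defining $\ccg $, giving equality.

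Finally, the parenthetical claim that the hypothesis is automatic for $0<b_+\le 4$ I would dispatch by noting that $b_-=0$ together with Poincar\'e duality makes the intersection form on $H^2(M;\Z )/\mathrm{Tor}$ a positive definite unimodular form of rank $b_+$. For rank below $8$ (in particular for $b_+\le 4$) the only such integral form is the standard diagonal one $\left< 1\right> ^{b_+}$ --- the exceptional definite forms such as $E_8$ first appear in rank $8$ --- and any basis vector of the diagonal form provides a class $c$ with $c^2=1$.
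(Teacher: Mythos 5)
Your proposal is correct and follows essentially the same route as the paper: the line bundle with $c_1(\bundle{E})=c$, the $g$--harmonic connection whose curvature is forced to be self--dual by $b_-=0$, the estimate $|\omega |_{g,1}^2\leq 2|\omega |_g^2$ giving $\int |R^\bundle{E}|_{g,op}^2\,\mathrm{vol}_g\leq 2\int \omega \wedge \omega =8\pi ^2$, and the combination with the corollary for $[g_{FS}]$ for the equalities on $\proj{\C }^2$ (your $\left< \mathrm{ch}(\bundle{E}),[M]\right> =\tfrac12$ is in fact the correct value, fixing a harmless slip in the paper, and only nonvanishing is needed). The sole deviation is cosmetic: where the paper cites Lemma 2.1 of Milnor--Husemoller for the case $0<b_+\leq 4$, you derive it directly from the classification of positive definite unimodular forms of rank below $8$, which is the same underlying fact.
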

\begin{proof}
The statement for $b_+\leq 4$ follows from Lemma 2.1 in \cite{MilHu}. Consider the class $c\in H^2(M;\Z )$ with $\int c^2=1$, and let $\bundle{E}$ be the complex line bundle with $c_1(\bundle{E})=c$. Choose a connection on $\bundle{E}$ which has $g$--harmonic curvature $2$--form $\mathbf{i}R^\bundle{E}=\omega $, then $[\omega ]=2\pi c_\R $ and $\omega \in \Omega ^{2,+}(M)$. Hence,
\[
\int |R^\bundle{E}|_{g,op}^2\cdot \mathrm{vol}_g=\int |\omega |_{g,1}^2\cdot \mathrm{vol}_g\leq 2\int |\omega |^2_g\cdot \mathrm{vol}_g=2\int \omega \wedge \omega =8\pi ^2
\]
together with $\left< \mathrm{ch}(\bundle{E}),[M]\right> =1$ completes the proof. The equality for the complex projective space follows from the above corollary.
\end{proof}
\begin{thm}
\label{thm256}
$\ccg (M)$ and $\cgv (M)$ are finite with
\[
\ccg (M)\leq \cgv (M)\leq 2\pi \cdot \sqrt{6\cdot \max \{ 2b_-,1\} }.
\]
\end{thm}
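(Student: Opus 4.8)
The plan is to split the claim into the formal inequality $\ccg(M)\le\cgv(M)$ and the substantive bound on $\cgv(M)$. The first is immediate: by the identity $\ccg(M_{[g]})=\min_{\theta\in\widetilde H_{2*}(M;\Q)}\ccg(M_{[g]};\theta)$ established above, and since $[M]\in\widetilde H_{2*}(M;\Q)$, this minimum is at most $\ccg(M_{[g]};[M])$; taking the supremum over $[g]$ gives $\ccg(M)\le\sup_{[g]}\ccg(M_{[g]};[M])=\cgv(M)$. So everything reduces to a uniform upper bound for $\ccg(M_{[g]};[M])$.

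Fix a conformal class $[g]$ with representative $g$. I would produce a single $\mathrm{SU}(2)$-bundle $\bundle E\to M$ of small instanton number carrying an anti-self-dual connection and then estimate its operator-norm energy. For an $\mathrm{SU}(2)$-bundle $c_1=0$ and $\mathrm{ch}(\bundle E)=2-c_2$, so $\langle\mathrm{ch}(\bundle E),[M]\rangle=-c_2[M]$; hence any bundle with $c_2[M]\ne0$ lies in $\mathscr V_\mathrm{ch}(M;[M])$. In dimension four the Hodge operator on $\Lambda^2$ and the $L^2$-norm of $2$-forms are conformally invariant, so anti-self-duality of a connection makes sense for the whole conformal class, and for such a connection Chern--Weil gives the topological identity $\int_M|R^\bundle E|_g^2\,\mathrm{vol}_g=8\pi^2c_2[M]$.

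The gain over the generic constant comes from the algebra of $\mathfrak{su}(2)$. Every $A\in\mathfrak{su}(2)$ has eigenvalues $\pm\mathbf i\lambda$, so $|A|_{op}^2=\lambda^2=\tfrac12(-\mathrm{tr}A^2)=\tfrac12|A|_{HS}^2$. For any orthonormal frame, Cauchy--Schwarz over the six index pairs then gives $|R^\bundle E|_{g,op}\le\sum_{i<j}|R_{e_i,e_j}|_{op}\le\sqrt6\,(\sum_{i<j}|R_{e_i,e_j}|_{op}^2)^{1/2}=\sqrt6\cdot\tfrac1{\sqrt2}\,|R^\bundle E|_g=\sqrt3\,|R^\bundle E|_g$, so the generic factor $n(n-1)/2=6$ improves to $3$ on $\mathrm{SU}(2)$. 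Integrating and inserting the energy identity yields $\int_M|R^\bundle E|_{g,op}^2\,\mathrm{vol}_g\le3\int_M|R^\bundle E|_g^2\,\mathrm{vol}_g=24\pi^2c_2[M]$, hence $\ccg(M_{[g]};[M])\le2\pi\sqrt{6\,c_2[M]}$.

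Finally I would invoke Taubes' instanton existence theorem to obtain such a connection with $c_2[M]\le\max\{2b_-,1\}$. Because $\ccg$ and $\cgv$ are orientation-independent and we assume $b_+\ge b_-$, I reverse orientation so that the self-dual harmonic space, whose dimension is the cokernel controlling the gluing obstruction, has the smaller dimension $b_-$; Taubes then produces an anti-self-dual $\mathrm{SU}(2)$-connection with instanton number $k=\max\{2b_-,1\}$. Substituting this value gives $\ccg(M_{[g]};[M])\le2\pi\sqrt{6\max\{2b_-,1\}}$ uniformly in $[g]$, and the supremum over conformal classes is exactly $\cgv(M)$, which is therefore finite. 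The hard part is this last step: applying Taubes with the correct orientation and extracting the \emph{explicit} bound $\max\{2b_-,1\}$ on the instanton number (rather than merely ``$k$ large''), which is precisely what makes $b_-$ and the normalisation $b_+\ge b_-$ enter the final estimate.
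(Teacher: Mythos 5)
Your proposal is correct and follows essentially the same route as the paper: Taubes' existence theorem for an $\mathrm{SU}(2)$-instanton of charge $\max\{2b_-,1\}$, the trace-free rank-two algebra giving $|R^{\bundle{E}}|_{g,op}^2\leq 3|R^{\bundle{E}}|_g^2$, and Chern--Weil to evaluate the energy as $24\pi^2\max\{2b_-,1\}$. The only cosmetic differences are that you work with anti-self-dual connections after reversing orientation (equivalent to the paper's self-dual connections in the original orientation with $b_+\geq b_-$) and cite the classification of $\mathrm{SU}(2)$-bundles by $c_2$ rather than constructing the bundle explicitly as $f^*\spinor ^+$ for a degree-$\max\{2b_-,1\}$ map $f:M\to S^4$.
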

\begin{proof}
In order to show the estimate we use the existence of self dual connections on vector bundles proved by Taubes in \cite{Taub}. In fact, let $\bundle{E}\to M$ be a Hermitian bundle of rank $2$ with $c_1(\bundle{E})=0$ and $c_2(\bundle{E})=-\max \{ 2b_-,1\} $. Note that such a bundle can be obtained as pull back $f^*\spinor ^+$ where $f:M\to S^4$ is a map with $\deg f=\max \{ 2b_-,1\} $ and $\spinor ^+$ is the positive half spinor bundle on $S^4$. Hence, $\bundle{E}$ satisfies $\left< \mathrm{ch}(\bundle{E}),[M]\right> \neq 0$. Moreover, $\bundle{E}$ is a $\mathrm{SU}(2)$--bundle and to any metric on $M$, we can choose a self--dual $\mathrm{SU}(2)$--connection on $\bundle{E}$ (cf.~\cite[Theorem 1.1]{Taub}). Because $R^\bundle{E}_{X,Y}$ is trace free and $\mathrm{rk}(\bundle{E})=2$, we obtain
\[
\begin{split}
|R^\bundle{E}|_{g,op}^2&= \biggl[ \sum _{i<j}|R^\bundle{E} _{e_i,e_j}|_{op} \biggl] ^2\leq 6\sum _{i<j}|R^\bundle{E}_{e_i,e_j}|_{op}^2=\\
&=-3\sum _{i<j}\mathrm{tr}\left( R^\bundle{E}_{e_i,e_j}\circ R^\bundle{E}_{e_i,e_j}\right) =3 |R^\bundle{E}|^2_g=3|R^{\bundle{E},+}|_g^2.
\end{split}
\]
Thus, $R^{\bundle{E},-}=0$ and integration yield
\[
\begin{split}
\int _M|R^\bundle{E}|^2_{g,op}\cdot \mathrm{vol}_g&\leq 3\int _M\left[ |R^{\bundle{E},+}|^2_g-|R^{\bundle{E},-}|^2_g\right] \cdot \mathrm{vol}_g\\
&=-3\int \mathrm{tr}(R^\bundle{E}\wedge R^\bundle{E})=-24\pi ^2 c_2(\bundle{E})=24\pi ^2\max \{ 2b_-,1\} 
\end{split}
\]
which completes the proof.
\end{proof}
In general this is a very rough inequality as the example $M=\proj{\C }^2$ shows. In fact, if $b_->2$, Taubes existence theorem on self dual connections yields the better estimate
\[
\cgv (M)\leq 2\pi \cdot \sqrt{6\cdot \lceil 4b_-/3\rceil}
\]
where $\lceil a\rceil $ means the smallest integer greater than or equal to $a\in \R $. However, we believe that $\cgv (M)\leq \cgv (S^4) $ holds for any $4$--manifold which makes the improved inequality obsolete. Nevertheless, the estimate in the theorem becomes optimal for spin manifolds $M$ with $b_2=0$ and $\yam (M)=\yam (S^4)=8\sqrt{6}\, \pi $. Here we use the inequality $\yam (M)\leq 4\cgv (M)$.
\begin{cor}
If $[g_0]$ is the standard conformal class on $S^4$, then
\[
\cgv (S^4)=\ccg (S^4_{[g_0]})=2\sqrt{6}\, \pi .
\]
Moreover,
\[
\cgv (S^3\times S^1\# \cdots \# S^3\times S^1)=2\sqrt{6}\, \pi .
\] 
\end{cor}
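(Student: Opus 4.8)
The plan is to squeeze both numbers between the upper bound provided by theorem~\ref{thm256} and a lower bound obtained from a Yamabe invariant. Both $S^4$ and $N:=S^3\times S^1\#\cdots\# S^3\times S^1$ have vanishing second Betti number, hence $b_-=0$, so theorem~\ref{thm256} already gives $\cgv(S^4)\le 2\pi\sqrt6$ and $\cgv(N)\le 2\pi\sqrt6$; the whole content is therefore to produce the matching lower bounds and, for the sphere, to identify $\ccg(S^4_{[g_0]})$ with $\cgv(S^4)$.

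For the sphere I would not need any new work beyond the formula proved above for $\ccg(S^{2m}_{[g_0]})$. Since $\widetilde H_{2*}(S^4;\Q)$ is one--dimensional, generated by $[S^4]$, and $\mathscr{V}_\mathrm{ch}(S^4;a[S^4])=\mathscr{V}_\mathrm{ch}(S^4;[S^4])$ for every $a\ne0$, we get $\ccg(S^4_{[g_0]})=\ccg(S^4_{[g_0]};[S^4])$; the $m=2$ case of $\ccg(S^{2m}_{[g_0]})=\tfrac{m(2m-1)}2\,\omega_{2m}^{1/m}$ then reads $\ccg(S^4_{[g_0]})=2\sqrt6\,\pi$. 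Combining $\cgv(S^4)\ge\ccg(S^4_{[g_0]};[S^4])=2\sqrt6\,\pi$ with the upper bound $\cgv(S^4)\le 2\sqrt6\,\pi$ forces $\cgv(S^4)=\ccg(S^4_{[g_0]})=2\sqrt6\,\pi$.

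For $N$ I would first record the topology: a connected--sum computation gives $H_2(N;\Q)=0$ (so $b_-=0$) and $N$ is spin, since each factor $S^3\times S^1$ is parallelizable and spinness persists under connected sum in dimension $\ge3$. As $m=2$, the corollary $\yam(M)\le 4\cgv(M)$ applies and yields $\yam(N)\le 4\cgv(N)$; equivalently one may apply proposition~\ref{pro3} with $c=0$, using $\widehat{\mathbf A}(N)\cap[N]=[N]$ because the $\widehat A$--genus vanishes for $b_2=0$. Together with $\cgv(N)\le 2\sqrt6\,\pi$ it then suffices to prove $\yam(N)\ge 8\sqrt6\,\pi=\yam(S^4)$, for then $8\sqrt6\,\pi=\yam(S^4)\le\yam(N)\le 4\cgv(N)\le 8\sqrt6\,\pi$ collapses to $\cgv(N)=2\sqrt6\,\pi$.

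The hard part is precisely this last Yamabe estimate $\yam(N)\ge\yam(S^4)$, which is the only step relying on external analysis. I would derive it from the surgery stability of the smooth Yamabe (sigma) invariant: $N$ is obtained from $S^4$ by finitely many $0$--dimensional surgeries (each summand $S^3\times S^1$ arises by surgery on an embedded $S^0\subset S^4$), all of codimension $4\ge3$, and surgery of codimension $\ge3$ does not lower the sigma invariant below the sphere value in this range (Ammann--Dahl--Humbert); since $\yam(M)\le\yam(S^4)$ holds for every closed $4$--manifold, equality follows. One may instead invoke the classical identity $\yam(S^{n-1}\times S^1)=\yam(S^n)$ together with the connected--sum behaviour of the sigma invariant. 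With $\yam(S^4)=2\cdot2(2\cdot2-1)\,\omega_4^{1/2}=8\sqrt6\,\pi$ recorded earlier, this completes both equalities.
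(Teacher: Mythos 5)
Your proposal is correct and follows essentially the same route as the paper: the upper bound from theorem \ref{thm256}, the lower bound $\yam\leq 4\,\cgv$ via proposition \ref{pro3} with $c=0$ (equivalently the formula $\ccg(S^{2m}_{[g_0]})=\frac{m(2m-1)}{2}\omega_{2m}^{1/m}$, which was itself obtained this way), and the identity $\yam(S^3\times S^1\#\cdots\#S^3\times S^1)=\yam(S^4)=8\sqrt{6}\,\pi$. The only cosmetic difference is that you rederive this Yamabe identity via surgery stability (Ammann--Dahl--Humbert) or connected-sum monotonicity, whereas the paper simply cites Kobayashi and Schoen for the same fact.
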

\begin{proof}
The second statement follows from the above theorem, $\yam (M)\leq 4\cgv (M)$ for spin manifolds and
\[
\yam (S^3\times S^1\# \cdots \# S^3\times S^1)=\yam (S^4)=8\sqrt{6}\, \pi 
\]
(cf.~\cite{Koba1,Sch2}). Hence, it remains to show $\ccg (S^4_{[g_0]})=2\sqrt{6}\, \pi $ for the standard conformal class. Let $g_0$ be the standard metric on $S^4$, then the above theorem and proposition \ref{pro3}  yield with $c=0$:
\[
2\sqrt{6}\, \pi \geq \cgv (S^4)\geq \ccg (S^4_{[g_0]})\geq \frac{1}{4}\yam (S^4_{[g_0]})=\frac{8\sqrt{6}\, \pi }{4}=2\sqrt{6}\, \pi .
\]
Note that the 't Hooft construction (cf.~\cite{AHS}) yields an explicit construction of a self--dual $\mathrm{SU}(2)$ connection on $\spinor ^+\to S^2$ which can be used to reprove $\ccg (S^4_{[g]})\leq 2\sqrt{6}\, \pi $ for any conformal class $[g]$.
\end{proof}
Suppose that $M$ satisfies $b_-=0$ and $c\in H^2(M;\Z )$ is associated to a spin$^c$--structure on $M$. Then the harmonic $2$--form $\eta $ representing $c_\R $ has to be self dual, i.e.~
\[
\| c_\R \| _{[g]}^2\leq \int |\eta |_{g,1}^2\mathrm{vol}_g\leq 2\int |\eta |_{g}^2\mathrm{vol}_g=2\int \eta \wedge \eta =2c^2.
\]
Hence, inequality (\ref{ineq365}) yields
\[
\yam (M_{[g]})\leq 4\pi \sqrt{2c^2}+4\cgv (M)  
\]
for any conformal class $[g]$ on $M$. If $c^2>b_+$, we even have $\yam (M_{[g]})\leq 4\pi \sqrt{2c^2}$ which was proved by Grusky and LeBrun in \cite{GurLe2}. In the case $c^2=b_+$, we obtain a couple of lower bounds for $\cgv (M)$. For instance: If $N^4$ is a spin manifold with $b_2(N)=0$ and $\yam (N)\geq 12\sqrt{2}\, \pi $, then
\[
M_k:=\underbrace{\proj{\C }^2\# \cdots \# \proj{\C }^2}_{k-times}\# N
\]
satisfies $\yam (M_k)\geq 12\sqrt{2}\, \pi $ (cf.~\cite{Koba1,pre_AmDH,LeB5}). If $k>0$, then there is a class $a\in H^2(M_k;\Z )$ with $a^2=1$ and moreover, we can choose a spin$^c$ structure on $M_k$ whose associated class satisfies $c^2=b_+(M_k)=k$. Hence, we obtain from lemma \ref{lem255} and the above inequality
\[
3\sqrt{2}\, \pi -\sqrt{2k}\, \pi \leq \cgv (M_k)\leq \biggl\{ {2\sqrt{2}\, \pi \ ,\atop 2\sqrt{6}\, \pi \ ,} \ \ {k>0\atop k=0}
\]
for all $k$ which proves $\cgv (M_k)>0$ as long as $k<9$. Hence, proposition \ref{prop243} yields $\ccg (M_k)>0$ if $k<9$. 

\section{$L^{n/2}$--gaps of the Weyl tensor}
In 4--dimensions there are lots of optimal results estimating certain $L^2$--norms of the Weyl tensor (cf.~\cite{Gur1,LeB3,ABKS,Itoh} to name a few). However to our knowledge, in dimension $n>4$ there are no precise results on this subject. In order to conclude our estimates below, we use an observation made by Bourguignon in \cite{Bourg} that the curvature operator of the Bochner--Weitzenb\"ock formula on $\Omega ^m(M^{2m})$ depends only on the scalar curvature and on the Weyl tensor. Akutagawa et al.~showed in \cite{ABKS} that the $L^{n/2}$--norm of the Weyl tensor can be arbitrarily large even for conformal classes with Yamabe invariant sufficiently close to $\yam (M)$. Below, we will show the counterpart to their result for manifolds with Betti number $b_m(M^{2m})\neq 0$.

We denote by $W$ the Weyl tensor of a Riemannian manifold $(M^n,g)$ considered as endomorphism on $\Lambda ^2TM$ where the sign of $W$ and the Riemannian curvature operator $R$ are chosen in such a way that $R-W=\frac{\mathrm{scal_g}}{n(n-1)}$ for Einstein metrics. Let $\lambda _-(W) $ and $\lambda _+(W)$ be the real valued functions on $M$ which in $x\in M$ determine the minimal respectively maximal eigenvalue of $W_x\in \mathrm{End}(\Lambda ^2T_xM)$. The values
\[
\mathscr{W}_\pm (M_{[g]})=\mathscr{W}_\pm (M_g):=\left[ \int _M|\lambda _\pm (W)|^{n/2}\cdot \mathrm{vol}_g\right] ^{2/n}
\]
depend only on the conformal class $[g]$ and vanish iff $[g]$ is (locally) conformally flat. Since $W$ is trace free, a standard argument shows with $c(n)=\frac{(n+1)(n-2)}{2}$
\[
c(n)^{-1}\mathscr{W}_-(M_{[g]})\leq \mathscr{W}_+(M_{[g]})\leq c(n)\mathscr{W}_-(M_{[g]}).
\]
Note that one can also estimate the standard $L^{n/2}$--form of the Weyl tensor $\left[ \int |W|^{n/2}_g \mathrm{vol}_g\right] ^{2/n} $ from below and above by $\mathscr{W}_\pm (M_{[g]})$. However, in order to get optimal inequalities we state our main results only for the conformal invariants $\mathscr{W}_\pm (M_{[g]})$. We consider the following modified scalar curvatures:
\[
\begin{split}
&\mathrm{scal}_g^{W,-}=\mathrm{scal}_g+n(n-1)\lambda _-(W)\\
& \mathrm{scal}_g^{W,+}=\mathrm{scal}_g-(n-1)(n-2)\lambda _+(W)
\end{split}
\]
and the corresponding modified Yamabe invariants
\[
\yam _\pm ^W(M_{[h]}):=\inf _{g\in [h]}\frac{\int \mathrm{scal}_g^{W,\pm }\cdot \mathrm{vol}_g}{\mathrm{Vol}(M,g)^{(n-2)/n}}.
\]
\begin{thm}
Suppose that $M^{2m}$ is a closed oriented manifold with Betti number $b_m\neq 0$, then
\[
\yam ^W _\pm (M):=\sup _{[g]}\yam ^W _\pm (M_{[g]})\leq 0
\]
with $\yam _- ^W(M_{[g]})=0$ if $(M^{2m},g)$ is a locally symmetric Einstein space of nonnegative scalar curvature. Moreover, $\yam _+^W(M_{[g]})=0$ holds for the complex projective plane $\proj{\C}^2$ endowed with the Fubini--Study metric.
\end{thm}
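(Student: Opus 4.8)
The plan is to reduce the statement, via corollary \ref{cor_yam}, to a vanishing theorem for harmonic middle--degree forms. Applying corollary \ref{cor_yam} to the modified scalar curvatures $\mathrm{scal}_g^{W,\pm}$, the inequality $\yam^W_\pm(M)\leq 0$ is equivalent to the nonexistence of a metric $g$ on $M$ with $\mathrm{scal}_g^{W,\pm}>0$ everywhere. (One first checks that $-n(n-1)\lambda_-(W)$ and $(n-1)(n-2)\lambda_+(W)$, $n=2m$, are nonnegative and rescale conformally like $F(\varphi g)=F(g)/\varphi$, so that the machinery of section \ref{sec2} applies; this is routine from $\lambda_-(W)\leq 0\leq\lambda_+(W)$ and the conformal weight of the Weyl operator.) So I would assume such a metric exists and aim for a contradiction with $b_m\neq 0$.

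First I would invoke the Bochner--Weitzenb\"ock formula $\Delta=\nabla^*\nabla+\mathfrak{R}$ on $\Lambda^m T^*M$, where $\mathfrak{R}$ is the Weitzenb\"ock curvature endomorphism. Following Bourguignon's observation (cf.~\cite{Bourg}), in the middle degree $p=m$ on $M^{2m}$ the operator $\mathfrak{R}$ depends only on $\mathrm{scal}_g$ and on the Weyl tensor $W$, the traceless Ricci contribution dropping out. The heart of the proof is then the sharp pointwise operator estimate
\[
\mathfrak{R}\ \geq\ \frac{1}{n(n-1)}\,\mathrm{scal}_g^{W,\pm}\cdot\mathrm{Id}\qquad(n=2m),
\]
which unwinds to $\mathfrak{R}\geq \frac{\mathrm{scal}_g}{n(n-1)}+\lambda_-(W)$ and to $\mathfrak{R}\geq\frac{\mathrm{scal}_g}{n(n-1)}-\frac{n-2}{n}\lambda_+(W)$. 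Granting this, if $\mathrm{scal}_g^{W,\pm}>0$ then $\mathfrak{R}$ is positive definite everywhere, so $\int\langle\mathfrak{R}\omega,\omega\rangle>0$ for every nonzero form, and $0=\|\nabla\omega\|^2_{L^2}+\int\langle\mathfrak{R}\omega,\omega\rangle$ forces every harmonic $m$--form to vanish; hence $b_m=0$, the desired contradiction.

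The main obstacle is establishing the sharp estimate above, i.e.~controlling the minimal eigenvalue of the Weyl part of $\mathfrak{R}$ on $\Lambda^m$. The lower bound by $\lambda_-(W)$ is essentially immediate once $\mathfrak{R}$ is written through the action of $W$ on $\Lambda^m$; the subtle point is the second bound, which produces the coefficient $(n-1)(n-2)$ of $\lambda_+(W)$ and hence the $+$ version. This requires a representation--theoretic analysis of how the curvature operator on $\Lambda^2$ acts on middle--degree forms -- decomposing $\Lambda^m$ and tracking which eigenvalues of $W$ can occur -- so that the two inequalities emerge as the two competing optimal bounds. This is precisely the \emph{precise estimate of the curvature operator} announced in the introduction.

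Finally I would treat the equality cases. On a compact locally symmetric space harmonic forms are parallel, so for $b_m\neq 0$ there is a parallel (hence globally nonvanishing) harmonic $m$--form $\omega$, and $\Delta\omega=\nabla^*\nabla\omega=0$ gives $\mathfrak{R}\omega=0$ pointwise. On a symmetric space the estimate is sharp: the bottom of the spectrum of $\mathfrak{R}$ equals $\frac{1}{n(n-1)}\mathrm{scal}_g^{W,-}$ and is realized by $\omega$, whence $\mathrm{scal}_g^{W,-}\equiv 0$ for an Einstein metric with $\mathrm{scal}_g\geq 0$. Thus $\int\mathrm{scal}_g^{W,-}\,\mathrm{vol}_g=0$ gives $\yam^W_-(M_{[g]})\leq 0$, and remark \ref{rem_yamabe} upgrades this to equality: were $\yam^W_-(M_{[g]})<0$, proposition \ref{prop_itoh} would furnish a second constant modified scalar curvature metric in $[g]$, contradicting the uniqueness of remark \ref{rem_yamabe}. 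For $\proj{\C }^2$ with the Fubini--Study metric the same argument applies with the self--dual K\"ahler form playing the role of $\omega$; here the Weyl tensor is self--dual and $\omega$ realizes the top eigenvalue $\lambda_+(W)$, so it is the $+$ estimate that is sharp, and one obtains $\mathrm{scal}_g^{W,+}\equiv 0$ and $\yam^W_+(\proj{\C }^2_{[g]})=0$.
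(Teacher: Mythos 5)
Your skeleton coincides with the paper's: the reduction through corollary \ref{cor_yam}, the Bochner--Weitzenb\"ock formula on $\Lambda ^m$ with Bourguignon's observation that the curvature term involves only $\mathrm{scal}_g$ and $W$, and the equality analysis via parallel harmonic $m$--forms are all exactly the route taken in the text (lemma \ref{lem_weyl}). But the entire substance of the theorem is the pair of sharp estimates $\frak{R}^W\geq m^2\lambda _-(W)$ and $\frak{R}^W\geq -m(m-1)\lambda _+(W)$, and at precisely this point you stop and defer to an unspecified ``representation--theoretic analysis'' of $\Lambda ^m$. That is a genuine gap, not a routine omission. The paper proves both bounds by a concrete mechanism: write $\Lambda ^*_\C T^*M$ locally as $\spinor M\otimes \spinor M$, so that $\frak{R}^W=-\frac{1}{4}\sum \gamma (e_i)\gamma (e_j)\otimes \gamma (W(e_i\wedge e_j))$; shift $\overline{W}=W+\alpha \cdot \mathrm{Id}$ (legitimate because $\mathrm{Id}$ is an algebraic curvature tensor, so the first Bianchi identity yields $\sum _k\tau _k\, \eta _k\wedge \eta _k=0$ for an eigenbasis $\eta _k$ of $\overline{W}$); and exploit the definite sign of $\frak{C}=\sum _k\tau _k\bigl( \gamma (\eta _k)\otimes \id \pm \id \otimes \gamma (\eta _k)\bigl) ^2$ together with the fact that $L(\eta )=-\sum _i\gamma (e_i)\eta \gamma (e_i)$ vanishes exactly on middle--degree forms --- which is where $p=m$ enters. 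Your division of the difficulty is also backwards: the $\lambda _-$ bound is \emph{not} ``essentially immediate'' --- a termwise estimate of $\frak{R}^W$ produces only a much weaker multiple of $\max |W|$, and the sharp coefficient $m^2$ requires the same Bianchi/middle--degree trick as the $\lambda _+$ bound; in the paper the two inequalities are simply the two sign choices in the same squared operator.

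Two further, related defects. First, your normalization is wrong: the correct inequality for the full Weitzenb\"ock curvature term is $\frac{n}{4(n-1)}\mathrm{scal}_g+\frak{R}^W\geq \frac{n}{4(n-1)}\mathrm{scal}_g^{W,\pm }$, not a bound with coefficient $\frac{1}{n(n-1)}$; as you state it the inequality is already false in constant negative curvature, since on a hyperbolic manifold the Weitzenb\"ock term on $m$--forms equals $-m^2$ while your bound asserts $\geq -1$ (and on the round sphere your bound is far from sharp, which matters because your equality discussion leans on sharpness). For the vanishing step any positive multiple of $\mathrm{scal}^{W,\pm }_g$ would do, so this alone is repairable. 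Second, in the locally symmetric case you \emph{assert} that the parallel form realizes the bottom of the spectrum; the paper instead \emph{derives} $\mathrm{scal}_g^{W,-}\equiv 0$ by combining $\frak{R}^W\theta =-\frac{n\cdot \mathrm{scal}_g}{4(n-1)}\theta $ with the chain $\frak{R}^W\geq m^2\lambda _-(W)\geq -\frac{n\cdot \mathrm{scal}_g}{4(n-1)}$, whose second inequality uses the nonnegativity of the curvature operator $R=\frac{\mathrm{scal}_g}{n(n-1)}+W$ of a locally symmetric Einstein space with $\mathrm{scal}_g\geq 0$ --- an input you never state, and exactly what converts the pointwise conclusion $\mathrm{scal}_g^{W,-}\leq 0$ into identical vanishing. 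Your endgame via remark \ref{rem_yamabe} (and proposition \ref{prop_itoh}) is then fine, and your $\proj{\C }^2$ sketch matches the paper's computation $6\lambda _+(W)=\mathrm{scal}_g$ for the Fubini--Study metric.
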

This theorem follows immediately by the method of modified scalar curvature and lemma \ref{lem_weyl} below. Because $\yam ^W_\pm (M_{[g]})>0$ yields a metric with $\mathrm{scal}_g^{W,\pm }>0$, the estimates in lemma \ref{lem_weyl} imply that all harmonic $m$--forms must be trivial which is a contradiction to $H^m(M;\R )\neq \{ 0\} $. Thus, we conclude from the theorem and inequality (\ref{ineq12}) :
\begin{cor}
Suppose that $M^n$, $n=2m$, is a closed oriented manifold with Betti number $b_m>0$. Then for any conformal class $[g]$ on $M$:
\[
\yam (M_{[g]})\leq n(n-1)\mathscr{W}_-(M_{[g]})\quad \text{and}\quad \yam (M_{[g]})\leq (n-1)(n-2)\mathscr{W}_+(M_{[g]}).
\]
with equality for $\mathscr{W}_-$ if $(M,g)$ is a locally symmetric Einstein space of nonnegative scalar curvature, and equality for $\mathscr{W}_\pm $ if $(M,g)$ is the standard complex projective plane. 
\end{cor}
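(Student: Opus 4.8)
The plan is to recognize the two quantities $\mathrm{scal}^{W,\pm}_g$ as genuine modified scalar curvatures in the sense of section \ref{sec2} and then feed the already established bound $\yam^W_\pm(M)\le 0$ into the sandwich inequality (\ref{ineq12}). First I would set $F_-(g):=-n(n-1)\lambda_-(W)$ and $F_+(g):=(n-1)(n-2)\lambda_+(W)$, so that $\mathrm{scal}^{F_\pm}_g=\mathrm{scal}_g-F_\pm(g)=\mathrm{scal}^{W,\pm}_g$. Since $W$ is trace free as an endomorphism of $\Lambda^2TM$, its minimal eigenvalue satisfies $\lambda_-(W)\le 0$ and its maximal eigenvalue $\lambda_+(W)\ge 0$; hence $F_\pm(g)\ge 0$, and the conformal weight $F_\pm(\varphi g)=F_\pm(g)/\varphi$ holds because the Weyl endomorphism of $\Lambda^2TM$ scales by $\varphi^{-1}$ under $g\mapsto\varphi g$. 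Thus $F_\pm$ satisfies both assumptions of section \ref{sec2}, and the eigenvalue estimates packaged in lemma \ref{lem_weyl} are what justify the Theorem's $\yam^W_\pm(M)\le 0$ that I am about to use.

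Next I would compute the defect term $\mathcal F$ attached to each $F_\pm$ using its closed form from section \ref{sec2}:
\[
\mathcal F_-(M_{[g]})=\Bigl[\int_M F_-(g)^{n/2}\,\mathrm{vol}_g\Bigr]^{2/n}=n(n-1)\,\mathscr{W}_-(M_{[g]}),
\]
where I used $F_-(g)=n(n-1)|\lambda_-(W)|$, and likewise $\mathcal F_+(M_{[g]})=(n-1)(n-2)\,\mathscr{W}_+(M_{[g]})$. The Theorem above gives $\yam^W_\pm(M_{[g]})\le \yam^W_\pm(M)\le 0$, so the upper half of (\ref{ineq12}), namely $\yam(M_{[g]})\le \yam^{F_\pm}(M_{[g]})+\mathcal F_\pm(M_{[g]})$, collapses to
\[
\yam(M_{[g]})\le n(n-1)\,\mathscr{W}_-(M_{[g]})\quad\text{and}\quad \yam(M_{[g]})\le (n-1)(n-2)\,\mathscr{W}_+(M_{[g]}),
\]
which is the asserted pair of inequalities (note $n(n-1)=2m(2m-1)$, matching the invariant form).

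The delicate part is the equality discussion, and this is where I expect the main obstacle to lie. For a locally symmetric Einstein space with $\mathrm{scal}_g\ge 0$, local symmetry forces the Weyl endomorphism to be parallel, so $\lambda_-(W)$ and hence $F_-(g)=n(n-1)|\lambda_-(W)|$ are constant and $\mathrm{scal}^{W,-}_g$ is a constant function. By the critical-point criterion of section \ref{sec2}, $g$ is then a critical, hence---in the regime $\yam^{W}_-(M_{[g]})=0$ granted by the Theorem---a minimizing metric for $\yam^{W}_-$; and remark \ref{rem_yamabe} (applicable since $\yam^{W}_-(M_{[g]})\le 0$) shows that the constant-scalar-curvature metric $g$ is simultaneously the Yamabe minimizer of $[g]$, so it realizes $\yam(M_{[g]})$ as well. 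Because the same $g$ minimizes both functionals and $F_-(g)$ is constant, the H\"older step defining $\mathcal F_-$ is saturated and the upper bound in (\ref{ineq12}) becomes an equality, giving $\yam(M_{[g]})=\yam^W_-(M_{[g]})+\mathcal F_-(M_{[g]})=n(n-1)\mathscr{W}_-(M_{[g]})$. For $\proj{\C }^2$ with the Fubini--Study metric the same reasoning runs on both signs, since it is symmetric Einstein with $\mathrm{scal}>0$: then $F_+(g)$ is likewise constant, $g$ minimizes $\yam^W_+$, and the Theorem furnishes $\yam^W_\pm(M_{[g]})=0$. The principal point to verify carefully is precisely that constancy of $F_\pm(g)$ together with $g$ being the critical (hence minimizing) metric forces $g$ to be the \emph{common} minimizer of $\yam$ and $\yam^W_\pm$, so that neither inequality in (\ref{ineq12}) is strict.
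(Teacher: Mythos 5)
Your derivation of the two inequalities is correct and is exactly the paper's argument: identify $F_\pm$ as admissible modified scalar curvatures (nonnegativity from the trace-freeness of $W$, conformal weight $-1$), compute $\mathcal F_-=n(n-1)\mathscr{W}_-(M_{[g]})$ and $\mathcal F_+=(n-1)(n-2)\mathscr{W}_+(M_{[g]})$, and feed $\yam^W_\pm(M_{[g]})\leq 0$ into the upper half of (\ref{ineq12}).

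The equality discussion, however, has a genuine gap. Remark \ref{rem_yamabe} is a rigidity statement for a modified functional whose (modified) Yamabe invariant is \emph{nonpositive}; it cannot ``show that the constant-scalar-curvature metric $g$ is simultaneously the Yamabe minimizer of $[g]$,'' because for the standard functional ($F\equiv 0$) the relevant invariants here are strictly positive --- e.g.\ $\yam (\proj{\C }^2_{[g_{FS}]})=12\sqrt{2}\, \pi $ --- so the hypothesis of the remark fails. Note also that your decomposition cannot be repaired internally: for unit-volume $h\in [g]$ one has $\int \mathrm{scal}_h\,\mathrm{vol}_h=\int \mathrm{scal}^{W,-}_h\,\mathrm{vol}_h+\int F_-(h)\,\mathrm{vol}_h$, and while the first term is bounded below by $\yam ^W_-(M_{[g]})=0$, the infimum of the second term over the conformal class is $0$ (concentrate the conformal factor), so this route again yields only the upper bound. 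Your H\"older-saturation observation, together with the identities from lemma \ref{lem_weyl} ($\mathrm{scal}_g=-n(n-1)\lambda _-(W)$ for locally symmetric Einstein spaces with $b_m>0$, and $6\lambda _+(W)=\mathrm{scal}_g$ on $\proj{\C }^2$) and the constancy of $\lambda _\pm (W)$ forced by $\nabla W=0$, shows only that the Yamabe quotient of $g$ itself equals $n(n-1)\mathscr{W}_-(M_{[g]})$, i.e.\ once more $\yam (M_{[g]})\leq n(n-1)\mathscr{W}_-(M_{[g]})$. The missing ingredient is Obata's theorem: an Einstein metric is, up to scale, the unique constant-scalar-curvature metric in its conformal class (with the round sphere as the understood exception), hence it is the Yamabe minimizer, giving $\yam (M_{[g]})=\mathrm{scal}_g\cdot \mathrm{Vol}(M,g)^{2/n}$; combined with the lemma \ref{lem_weyl} identities this closes the equality cases (alternatively, for $\proj{\C }^2$ one may quote the computed value of $\yam (\proj{\C }^2_{[g_{FS}]})$ from the literature the paper cites). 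The paper is terse on this point, but it is Obata-type minimality for Einstein metrics, not remark \ref{rem_yamabe}, that does the work.
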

That this lower bound of the conformal Weyl invariant is not a local statement can easily be seen considering the standard sphere $S^{2m}$. More generally, if $(M^m,g)$ and $(N^n,h)$ are closed Riemannian manifolds of constant sectional curvature $K_g=1$ and $K_h=-1$, the product $(M\times N,g\oplus h)$ is conformally flat: $\mathscr{W}_\pm (M\times N_{[g\oplus h]})=0$, but the Yamabe invariant of $[g\oplus h]$ satisfies
\[
\yam (M\times N_{[g\oplus h]})\in \Biggl\{ \begin{array}{ll}(-\infty ,0)&m<n\\\{ 0\} &m=n\\
 (0,\infty )&m>n.
\end{array} 
\]
Moreover, $b_j(M)=0$ for all $0<j<m$ supplies that the $\frac{m+n}{2}$--Betti number of $M\times N$ can be positive only if $m\leq n$ which shows the significance on the assumption of the Betti number. 

In \cite{ABKS}, Akutagawa et al.~proved that $\mathscr{W}_{-}(M_{[g]})$ and $\mathscr{W}_+(M_{[g]})$ can be arbitrary large even for conformal classes $[g]$ with $\yam (M_{[g]})\geq \yam (M)-\epsilon $. The above corollary is therefore the counterpart to their result: $\mathscr{W}_\pm (M_{[g_i]})$ is bounded from below by a positive constant if $\{ [g_i]\} $ is a \emph{Yamabe sequence} (cf.~\cite{ABKS}). In order to be precise, Akutagawa et al.~define a Yamabe sequence $\{ [g_i]\} $ to be a sequence of conformal structures $[g_i]$ with
\[
\lim _{i\to \infty }\yam (M_{[g_i]})=\yam (M),
\]
and consider invariants closely related to
\[
\mathscr{W}_\pm (M):=\inf _{\{ [g_i]\}}\liminf _{i\to \infty }\mathscr{W}_\pm (M_{[g_i]})
\]
where the infimum is taken over all Yamabe sequences. Akutagawa et al.~prove that for any $\epsilon >0$ there is a conformal class $[g]$ with $\yam (M_{[g]})\geq \yam (M)-\epsilon $ and $\mathscr{W}_\pm (M_{[g]})\geq \frac{1}{\epsilon } $. Conversely, the above results and
\[
\yam (\proj{\C }^2)=\yam (\proj {\C }^2_{[g_{FS}]})=12\sqrt{2}\, \pi 
\]
(cf.~\cite{LeB5,GurLe2}) yield the following counterpart:
\begin{thm}
Suppose that $M^{2m}$ is closed with Betti number $b_m>0$, then
\[
\yam (M)\leq 2m(2m-1)\mathscr{W}_-(M)\quad \text{and}\quad \yam (M)\leq 2(m-1)(2m-1)\mathscr{W}_+(M)
\]
with equality in both cases if  $M=\proj{\C }^2$ or $M=T^{2m}$.
\end{thm}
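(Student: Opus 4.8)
The plan is to recognize the two inequalities as the ``$\sigma$--invariant upgrade'' of the preceding corollary, obtained by applying that conformal bound along a Yamabe sequence and then optimizing. Fix a Yamabe sequence $\{[g_i]\}$, so that $\yam(M_{[g_i]})\to \yam(M)$ by definition. For each $i$ the preceding corollary (with $n=2m$, so that $n(n-1)=2m(2m-1)$ and $(n-1)(n-2)=2(m-1)(2m-1)$) gives
\[
\yam(M_{[g_i]})\leq 2m(2m-1)\,\mathscr{W}_-(M_{[g_i]}),\qquad
\yam(M_{[g_i]})\leq 2(m-1)(2m-1)\,\mathscr{W}_+(M_{[g_i]}).
\]
First I would take $\liminf_{i\to\infty}$ of both sides. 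Since the left hand side converges to $\yam(M)$, and since $\liminf(c\,a_i)=c\,\liminf a_i$ for a constant $c\ge 0$, this yields $\yam(M)\le 2m(2m-1)\liminf_i\mathscr{W}_-(M_{[g_i]})$ and likewise for $\mathscr{W}_+$. As this holds for every Yamabe sequence, taking the infimum over all such sequences and using the definition $\mathscr{W}_\pm(M)=\inf_{\{[g_i]\}}\liminf_i\mathscr{W}_\pm(M_{[g_i]})$ gives the two claimed inequalities.

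For the equality assertions I would exhibit a distinguished Yamabe sequence, namely the constant one. For $M=T^{2m}$ (note $b_m=\binom{2m}{m}>0$) the flat conformal class $[g_0]$ satisfies $\yam(T^{2m}_{[g_0]})=0=\yam(T^{2m})$, so $\{[g_0]\}$ is a Yamabe sequence; since a flat metric is conformally flat, $\mathscr{W}_\pm(T^{2m}_{[g_0]})=0$, whence $\mathscr{W}_\pm(T^{2m})=0$. Both sides of each inequality then vanish, giving equality. For $M=\proj{\C}^2$ (here $m=2$, $b_2=1>0$) I would invoke the cited fact $\yam(\proj{\C}^2)=\yam(\proj{\C}^2_{[g_{FS}]})=12\sqrt2\,\pi$, so that the constant sequence $\{[g_{FS}]\}$ is again a Yamabe sequence and $\mathscr{W}_\pm(\proj{\C}^2)\le \mathscr{W}_\pm(\proj{\C}^2_{[g_{FS}]})$. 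Because the Fubini--Study metric is a symmetric Einstein space of positive scalar curvature, the equality clauses of the preceding corollary apply to $[g_{FS}]$, giving $\yam(\proj{\C}^2_{[g_{FS}]})=12\,\mathscr{W}_-(\proj{\C}^2_{[g_{FS}]})=6\,\mathscr{W}_+(\proj{\C}^2_{[g_{FS}]})$, i.e.\ $\mathscr{W}_-(\proj{\C}^2_{[g_{FS}]})=\sqrt2\,\pi$ and $\mathscr{W}_+(\proj{\C}^2_{[g_{FS}]})=2\sqrt2\,\pi$. Combining the upper bounds $\mathscr{W}_\pm(\proj{\C}^2)\le\mathscr{W}_\pm(\proj{\C}^2_{[g_{FS}]})$ with the lower bounds forced by the two main inequalities of the theorem ($\mathscr{W}_-(\proj{\C}^2)\ge \yam/12=\sqrt2\,\pi$ and $\mathscr{W}_+(\proj{\C}^2)\ge \yam/6=2\sqrt2\,\pi$) pins down $\mathscr{W}_\pm(\proj{\C}^2)$ exactly and delivers equality.

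The inequality itself is essentially formal once the conformal corollary is in hand; the only care needed is the elementary but genuine point that $\liminf$ commutes with multiplication by a nonnegative constant and respects the term-by-term inequality. The real content, and the step I expect to be the main obstacle, lives entirely in the equality cases: one must know both that the extremal conformal class realizing $\yam(M)$ is the symmetric one (LeBrun's computation for $\proj{\C}^2$, the flat structure for $T^{2m}$) and that this very class saturates the conformal Weyl bound of the preceding corollary. The torus is painless since everything is zero, but $\proj{\C}^2$ relies on the nontrivial input $\yam(\proj{\C}^2)=12\sqrt2\,\pi$ together with the Einstein/locally--symmetric equality discussion underlying the corollary; verifying that the same $[g_{FS}]$ serves simultaneously as a Yamabe maximizer and as a Weyl--bound extremizer is where the argument could go wrong if either ingredient failed.
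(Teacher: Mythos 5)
Your proposal is correct and follows essentially the same route as the paper, which derives the theorem directly from the preceding corollary applied along Yamabe sequences (using the definition $\mathscr{W}_\pm (M)=\inf _{\{ [g_i]\}}\liminf _i\mathscr{W}_\pm (M_{[g_i]})$) together with LeBrun's value $\yam (\proj{\C }^2)=\yam (\proj{\C }^2_{[g_{FS}]})=12\sqrt{2}\, \pi $ and the corollary's equality clauses for locally symmetric Einstein spaces and the standard $\proj{\C }^2$. Your filled-in details — the $\liminf$ bookkeeping, the constant Yamabe sequences $\{ [g_{FS}]\} $ and $\{ [g_0]\} $, and pinning down $\mathscr{W}_\pm (\proj{\C }^2)$ by combining the upper bounds from $[g_{FS}]$ with the lower bounds from the theorem's own inequalities — are exactly what the paper leaves implicit.
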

The main results in this section follow from the estimates in the lemma below. The key observation for the lemma is due to Bourguignon \cite{Bourg}, although he did not compute the explicit curvature expression. Moreover, we should point out that weaker versions of the result below have been stated by Lafontaine in \cite[theorem 6]{Laf1} respectively by Itoh in \cite[section 3]{Itoh}. However, Lafontaine and Itoh used an incorrect coefficient for the scalar curvature term, because on the standard sphere of constant sectional curvature $K$, the Bochner--Weitzenb\"ock formula on $l$--forms is given by
\[
\Delta =\nabla ^*\nabla +K\cdot l(n-l).
\]
\begin{lem}
\label{lem_weyl}
Let $(M^n,g)$, $n=2m$, be a Riemannian manifold, then the Bochner--Weitzen\-b\"ock formula on $m$--forms is given by
\[
\Delta =\nabla ^*\nabla +\frac{n}{4(n-1)}\mathrm{scal}_g+\frak{R}^W
\]
where $\frak{R}^W$ depends only on the Weyl tensor and is bounded from below by
\[
\frak{R}^W\geq  m^2\lambda _- (W)\quad \text{and}\quad \frak{R}^W\geq -m(m-1)\lambda _+ (W) .
\]
Moreover, if $(M,g)$ is a closed locally symmetric Einstein space with $\mathrm{scal}_g\geq 0$ and $H^m(M;\R)\neq \{ 0\}$, then $\mathrm{scal}_g=-n(n-1)\lambda _- (W)$, i.e.~$m^2\lambda _-(W)$ is the minimal eigenvalue of $\frak{R}^W$. The equality $(n-1)(n-2)\lambda _+(W)=\mathrm{scal}_g$ is satisfied for the complex projective space $\proj{\C }^2$ endowed with the Fubini--Study metric, in this case $-2\lambda _+(W)$ is the minimal eigenvalue of $\frak{R}^W$.
\end{lem}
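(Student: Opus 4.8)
The plan is to derive everything from the Weitzenböck (Bochner) decomposition of the Hodge--de Rham Laplacian on $m$--forms, $\Delta=\nabla^*\nabla+q(\mathcal R)$, where $q(\mathcal R)\in\mathrm{End}(\Lambda^mT^*M)$ is the Weitzenböck curvature operator built from the curvature operator $\mathcal R\colon\Lambda^2TM\to\Lambda^2TM$. First I would recall the $O(n)$--irreducible decomposition $\mathcal R=\frac{\mathrm{scal}_g}{n(n-1)}\mathrm{Id}_{\Lambda^2}+\mathcal R_0+W$ into its scalar, trace-free Ricci and Weyl parts, and invoke Bourguignon's observation \cite{Bourg} that in the middle degree $p=m=n/2$ the trace-free Ricci contribution $q(\mathcal R_0)$ vanishes identically; this is exactly where the middle degree is special (in $n=4$ it reflects that $\mathcal R_0$ is off-diagonal with respect to $\Lambda^2_+\oplus\Lambda^2_-$). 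Since $q$ is linear in $\mathcal R$, this leaves $\Delta=\nabla^*\nabla+\frac{n}{4(n-1)}\mathrm{scal}_g+\mathfrak{R}^W$ with $\mathfrak{R}^W=q(W)$, and the scalar coefficient is pinned down by comparison with the constant curvature case: there the $l$--form term equals $K\,l(n-l)$, which for $l=m$, $n=2m$ and $\mathrm{scal}_g=n(n-1)K$ gives precisely $\frac{m^2}{n(n-1)}\mathrm{scal}_g=\frac{n}{4(n-1)}\mathrm{scal}_g$. This both identifies the coefficient and shows $\mathfrak{R}^W$ depends only on $W$.

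For the eigenvalue estimates I would use the Gallot--Meyer form of $q$. Writing $W$ in an orthonormal eigenbasis $\{\omega_a\}$ of $\Lambda^2T_xM$ with eigenvalues $w_a$, and denoting by $L_\omega$ the (skew) infinitesimal action of $\omega\in\Lambda^2TM\cong\mathfrak{so}(TM)$ on $\Lambda^mT^*M$, one has $\mathfrak{R}^W=\sum_a w_a(-L_{\omega_a}^2)$ with each $-L_{\omega_a}^2\geq 0$ and the Casimir identity $\sum_a(-L_{\omega_a}^2)=m(n-m)\,\mathrm{Id}=m^2\,\mathrm{Id}$ on $\Lambda^m$. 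The first bound is then immediate:
\[
\mathfrak{R}^W-m^2\lambda_-(W)\,\mathrm{Id}=\sum_a\bigl(w_a-\lambda_-(W)\bigr)(-L_{\omega_a}^2)\geq 0 ,
\]
since every factor is nonnegative. The same manipulation only yields the upper bound $\mathfrak{R}^W\leq m^2\lambda_+(W)$, so the lower bound $\mathfrak{R}^W\geq -m(m-1)\lambda_+(W)$ cannot follow from positivity alone.

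The second bound is the \emph{main obstacle}, and I expect it to require the first Bianchi identity of the Weyl tensor, not merely its trace-freeness. Indeed, for an arbitrary trace-free symmetric $W$ one can concentrate the mass of $\langle\mathfrak{R}^W\phi,\phi\rangle=\sum_a w_a|L_{\omega_a}\phi|^2$ on a very negative eigenvalue and violate any bound of the form $-c(m)\lambda_+(W)$; the algebraic curvature symmetry $W_{ijkl}+W_{jkil}+W_{kijl}=0$ is what rules this out. The plan is therefore to evaluate $\langle\mathfrak{R}^W\phi,\phi\rangle$ in a frame adapted to a prospective minimal eigenvector $\phi$ and to use the Bianchi relations to bound the ``occupation numbers'' $|L_{\omega_a}\phi|^2$ against $m(m-1)\lambda_+(W)$. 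The target constant is the sharp one, as confirmed by the case $n=4$, where $\mathfrak{R}^W=-2W$ on all of $\Lambda^2$ has minimal eigenvalue exactly $-2\lambda_+(W)=-m(m-1)\lambda_+(W)$; carrying out this combinatorial estimate uniformly in $m$ is the delicate part.

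Finally I would settle the two equality assertions. For a closed locally symmetric Einstein space with $\mathrm{scal}_g\geq 0$ and $H^m(M;\R)\neq\{0\}$, Hodge theory together with $\nabla R=0$ produces a nonzero \emph{parallel} harmonic $m$--form $\phi$, so $0=\Delta\phi=\nabla^*\nabla\phi+q(\mathcal R)\phi=q(\mathcal R)\phi$; the first bound applied to the full operator gives $\lambda_-(\mathcal R)\leq 0$, while an Einstein locally symmetric space with $\mathrm{scal}_g\geq 0$ is flat or of compact type on each de Rham factor and hence has $\mathcal R\geq 0$, forcing $\lambda_-(\mathcal R)=0$. Using $\mathcal R=\frac{\mathrm{scal}_g}{n(n-1)}\mathrm{Id}+W$ this reads $\mathrm{scal}_g=-n(n-1)\lambda_-(W)$, and since $q(\mathcal R)\geq 0$ vanishes on $\phi$ the bottom of $\mathfrak{R}^W=q(\mathcal R)-\frac{n}{4(n-1)}\mathrm{scal}_g\,\mathrm{Id}$ equals $-\frac{n}{4(n-1)}\mathrm{scal}_g=m^2\lambda_-(W)$, as claimed. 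For $\proj{\C}^2$ with the Fubini--Study metric one is in dimension four, where $\mathfrak{R}^W=-2W$ has minimal eigenvalue $-2\lambda_+(W)=-m(m-1)\lambda_+(W)$; testing against the parallel self-dual Kähler form $\omega$, which is the top eigenvector of $W$, the relations $\nabla\omega=0$ and $\Delta\omega=0$ give $0=q(\mathcal R)\omega=(-2\lambda_+(W)+\tfrac{\mathrm{scal}_g}{3})\omega$, whence $\mathrm{scal}_g=6\lambda_+(W)=(n-1)(n-2)\lambda_+(W)$.
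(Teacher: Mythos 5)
Most of your proposal is sound, and where it works it is genuinely different from the paper: you obtain the Weitzenb\"ock formula and the first bound from the $\mathfrak{so}(n)$--picture, $\frak{R}^W=\sum_a w_a(-L_{\omega_a}^2)$ with the Casimir identity $\sum_a(-L_{\omega_a}^2)=m(n-m)=m^2$ on $\Lambda ^m$, whereas the paper works on the (local) decomposition $\Lambda ^*_\C T^*M=\spinor M\otimes \spinor M$ and kills the Ricci contribution by noting that $L(\eta )=-\sum _i\gamma (e_i)\eta \gamma (e_i)$ acts trivially in middle degree; your equality discussions for locally symmetric Einstein spaces and for $\proj{\C }^2$ also go through. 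But the second lower bound $\frak{R}^W\geq -m(m-1)\lambda _+(W)$ --- half the content of the lemma, and the half the paper needs for its $\mathscr{W}_+$ estimates --- is never proved: you only announce a plan and yourself call the combinatorial estimate ``the delicate part.'' This is a genuine gap, and your plan faces a structural obstruction: under $\Lambda ^*_\C T^*M\cong \spinor M\otimes \spinor M$ the action $L_\omega $ is a fixed linear combination of the \emph{left} and \emph{right} Clifford multiplications by $\omega $, so any estimate phrased purely in the occupation numbers $|L_{\omega _a}\phi |^2$ only sees that sum-type combination --- which, as you correctly observed, yields the wrong-signed bound $\frak{R}^W\leq m^2\lambda _+(W)$.

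The paper closes this in a few lines by using the two one-sided Clifford multiplications \emph{independently}, together with a sign-shifted Weyl operator. Set $\overline{W}:=W-\lambda _+(W)\cdot \id \leq 0$ with orthonormal eigenbasis $\eta _k$ and eigenvalues $\tau _k\leq 0$, and consider
\[
\frak{C}:=\sum _k\tau _k\bigl( \gamma (\eta _k)\otimes \id -\id \otimes \gamma (\eta _k)\bigr) ^2\geq 0,
\]
nonnegative because squares of skew--Hermitian operators are nonpositive and $\tau _k\leq 0$. Expanding and writing $\frak{R}^{\overline{W}}=-\frac{1}{2}\sum _k\tau _k\, \gamma (\eta _k)\otimes \gamma (\eta _k)$ gives $\frak{C}=\sum _k\tau _k\bigl( \gamma (\eta _k)^2\otimes \id +\id \otimes \gamma (\eta _k)^2\bigr) +4\frak{R}^{\overline{W}}$; now $\gamma (\eta )^2=-|\eta |^2+\gamma (\eta \wedge \eta )$ and the first Bianchi identity in the form $\sum _k\tau _k\, \eta _k\wedge \eta _k=0$ reduce the first sum to the scalar $-2\,\mathrm{tr}(\overline{W})=n(n-1)\lambda _+(W)$, whence $\frak{R}^{\overline{W}}\geq -\frac{n(n-1)}{4}\lambda _+(W)$ and $\frak{R}^W=\frak{R}^{\overline{W}}+\frac{n}{4}\lambda _+(W)\geq -\frac{n(n-2)}{4}\lambda _+(W)=-m(m-1)\lambda _+(W)$. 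So your diagnosis (trace-freeness is insufficient; Bianchi is essential) is exactly right, but the effective mechanism is this difference-square device rather than a frame-adapted counting argument, and it is not expressible through $L_\omega $ alone. A secondary slip worth fixing: a harmonic $m$--form on a closed locally symmetric space is \emph{not} parallel merely because $\nabla R=0$ (closed hyperbolic surfaces carry non-parallel harmonic $1$--forms); parallelism of your $\phi $ follows from integrating the Bochner formula once $q(\mathcal{R})\geq 0$ is known, which your hypothesis $\mathcal{R}\geq 0$ supplies --- so establish $\mathcal{R}\geq 0$ first and deduce $\nabla \phi =0$ and $q(\mathcal{R})\phi =0$ from it, as the paper does.
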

\begin{proof}
It is sufficient to consider the statement for complex valued forms. The bundle $\Lambda ^*_\C T^*M$ is (at least locally) given by $\spinor M\otimes \spinor M$ where $\spinor M$ means the (locally existent) irreducible complex spinor bundle. Since the Dirac structure on $\Lambda ^*_\C T^*M$ coincides with the twisted Dirac bundle structure $\spinor M\otimes \spinor M$, we obtain for an orthonormal basis $e_1,\ldots ,e_{n}$ of $TM$:
\[
\Delta =D^2=\nabla ^*\nabla +\frac{\mathrm{scal}_g}{4}+\frac{1}{2}\sum _{i,j}\gamma (e_i)\gamma (e_j)\otimes R^s _{e_i,e_j}.
\]
If $R:\Lambda ^2TM\to \Lambda ^2TM$ denotes the Riemannian curvature operator, the curvature on $\spinor M$ satisfies $R^s_{X,Y}=-\frac{1}{2}\gamma (R(X\wedge Y))$. Moreover, if $\mathrm{Ric}:TM\to TM$ is the Ricci endomorphism and $\mathrm{Ric}_2:\Lambda ^2TM\to \Lambda ^2TM$ the derivative extension to $\Lambda ^2TM$, i.e.~
\[
\mathrm{Ric}_2(X\wedge Y)=\mathrm{Ric}(X)\wedge Y+X\wedge \mathrm{Ric}(Y),
\]
then the curvature operator decomposes as follows
\[
R=-\frac{\mathrm{scal}_g}{(n-1)(n-2)}\cdot \mathrm{Id}+\frac{1}{n-2}\mathrm{Ric}_2+W.
\]
Consider the linear operator $L(\eta )=-\sum _i\gamma (e_i)\eta \gamma (e_i)$ on $\Lambda ^*_\C T^*M$ (cf.~\cite[\S 5  Chap.II]{LaMi}), then $L$ acts trivial on $\Lambda ^m_\C T^*M$ and moreover, $(\varphi \otimes \psi )\gamma (\sigma ^t) =\varphi \otimes \gamma (\sigma )\psi $ for  $\sigma \in \Cl (TM)$ and $\varphi \otimes \psi \in \spinor M\otimes \spinor M$. Choose an eigenbasis $e_1,\ldots ,e_{n}$ of $\mathrm{Ric}$ to eigenvalues $\lambda _1,\ldots ,\lambda _{n}$, then $\gamma (e_i\wedge e_j)=\gamma (e_i)\gamma (e_j)+\delta _{ij}$ yields
\[
\begin{split}
2\sum _{i,j}\gamma (e_i)\gamma (e_j)\otimes R^s _{e_i,e_j}&=-\frac{n\cdot \mathrm{scal}_g}{(n-1)(n-2)}+\frac{2}{n-2}\sum _{i}\lambda _i\\
&\qquad \qquad -\sum _{i,j}\gamma (e_i)\gamma (e_j)\otimes \gamma (W(e_i\wedge e_j))\\
&=\frac{\mathrm{scal}_g}{n-1}+4\cdot \frak{R}^W.
\end{split}
\]
In this case we use the fact that
\[
\sum _{i,j}(\lambda _i+\lambda _j)\gamma (e_i)\gamma (e_j)\otimes \gamma (e_i)\gamma (e_j)\quad \text{and}\quad  \sum _{i,j}\gamma (e_i)\gamma (e_j)\otimes \gamma (e_i)\gamma (e_j)
\]
act trivial on $\Lambda ^m_\C T^*M$ because $L=0$ on $\Lambda ^m_\C T^*M$. This proves the first part of the proposition where
\[
\frak{R}^W=-\frac{1}{4}\sum _{i,j=1}^{n}\gamma (e_i)\gamma (e_j)\otimes \gamma (W(e_i\wedge e_j)).
\]
In order to compute the lower bound for $\frak{R}^W$ we consider modifications of $W$ by a function $\alpha :M\to \R $. In particular, suppose that $\overline{W}=W+\alpha \cdot \mathrm{Id}$ satisfies $\overline{W}\geq 0$ or $\overline{W}\leq 0$. We conclude by the above arguments
\[
\frak{R}^{\overline{W}}=-\frac{1}{4}\sum _{i,j=1}^{n}\gamma (e_i)\gamma (e_j)\otimes \gamma (\overline{W}(e_i\wedge e_j))=\frak{R}^W+\frac{n}{4}\alpha 
\]
on $\Lambda ^m_\C T^*M$, hence, it suffices to compute a lower bound of $\frak{R}^{\overline{W}}$. Suppose at first that $\overline{W}\geq 0$ and choose an orthonormal eigenbasis $\eta _k\in \Lambda ^2TM$ of $\overline{W}$ to eigenvalues $\tau _k\geq 0$, then
\[
\frak{R}^{\overline{W}}=-\frac{1}{2}\sum _k\tau _k\cdot \gamma (\eta _k)\otimes \gamma (\eta _k).
\]
Since $\tau _k\geq 0$ for all $k$ and the square of skew--Hermitian endomorphisms is nonpositive, the operator
\[
\begin{split}
\frak{C}&=\sum _k\tau _k\cdot (\gamma (\eta _k)\otimes \id +\id \otimes \gamma (\eta _k))^2\\
&=-4\cdot \frak{R}^{\overline{W}}+\sum _k\tau _k(\gamma (\eta _k)^2\otimes \id +\id \otimes \gamma (\eta _k)^2),
\end{split}
\]
is nonpositive. The first Bianchi identity for $W$ and $\overline{W}$ yields $\sum _k \tau _k\eta _k\wedge \eta _k=0$. Hence, $\mathrm{tr}(\overline{W})=\sum \tau _k=\frac{n(n-1)}{2}\alpha $ and $\gamma (\eta _k)^2=-|\eta _k|^2+\gamma (\eta _k\wedge \eta _k)$ supply
\[
\frak{C}=-4\frak{R}^{\overline{W}}-n(n-1)\alpha . 
\]
Thus, $\frak{C}\leq 0$ proves
\[
\frak{R}^W=\frak{R}^{\overline{W}}-\frac{n}{4}\alpha \geq -\frac{n^2}{4}\alpha=-m^2\alpha 
\]
for any function $\alpha $ such that $W+\alpha \cdot \mathrm{id}\geq 0$. Conversely, suppose that $\overline{W}\leq 0$ and choose as before an orthonormal eigenbasis $\eta _k$ of $\overline{W}$ to eigenvalues $\tau _k\leq 0$. Consider now the operator
\[
\frak{C}:=\sum \tau _k(\gamma (\eta _k)\otimes \id -\id \otimes \gamma (\eta _k))^2=4\frak{R}^{\overline{W}}+n(n-1)\alpha ,
\] 
then $\tau _k\leq 0$ yields $\frak{C}\geq 0$, i.e.
\[
\frak{R}^W=\frak{R}^{\overline{W}}-\frac{n}{4}\alpha \geq \frac{n(n-2)}{4}\alpha =m(m-1)\alpha . 
\]
Hence, we conclude the lower bounds of $\frak{R}^W$ by choosing $\alpha =-\lambda _\pm (W)$. Suppose now that $(M,g)$ is closed and locally symmetric with $R=\frac{\mathrm{scal}_g}{n(n-1)}+W\geq 0$, then
\[
\frak{R}^W\geq m^2\lambda _-(W)\geq -\frac{n\cdot \mathrm{scal}_g}{4(n-1)} .
\]
But if $H^m(M;\R )\neq \{ 0\} $, the Bochner--Weitzenb\"ock formula yields a nontrivial parallel $m$--form $\theta  $ with
\[
\frac{n\cdot \mathrm{scal}_g}{4(n-1)}\theta +\frak{R}^W\theta =0
\]
proving the statement for locally symmetric Einstein spaces. A standard exercise shows that the Weyl tensor of the Fubini--Study metric $g$ on $\proj{\C }^m$ satisfies
\[
2m(2m-1)\lambda _-(W)=-\mathrm{scal}_g\quad \text{and}\quad m(2m-1)\lambda _+(W)=(m-1)\mathrm{scal}_g
\]
which means that $6\lambda _+(W)=\mathrm{scal}_g$ on $\proj{\C }^2$.
\end{proof}
\chapter{Upper bounds of scalar curvature}
\section{Scalar curvature bounds by Riemannian functionals}
\label{sec_functionals}
Let $M$ be a closed smooth manifold and $\met (M)$ be the set of smooth Riemannian metrics on $M$. A \emph{Riemannian functional} on $M$ is a $\mathrm{Diff}(M)$--invariant map $\mu :\met (M)\to \R $, i.e.~$\mu (f^*g)=\mu (g)$ for any diffeomorphism $f:M\to M$ and any metric $g$ (cf.~\cite{Bes}). We say that $\mu $ is \emph{scaling invariant of weight} $k\in \Z $ if $\mu (c\cdot g)=c^k\cdot \mu (g)$ for positive constants $c$ and all $g\in \met (M)$. The scalar curvature map $\mathrm{scal}:\met (M)\to C^\infty (M)$ defines a scaling invariant Riemannian functional of weight $-1$ as follows
\[
\scal :\met (M)\to \R \ ,\ g\mapsto \min _{x\in M}\ \mathrm{scal}_g(x).
\]
The order on $\R $ yields a partial order on the set of Riemannian functionals where $\mu _1\geq \mu _2$ means that $\mu _1(g)\geq \mu _2(g)$ for all $g\in \met (M)$. If $\mu _1\geq \mu _2$, we say that $\mu _1$ is an \emph{upper bound} of $\mu _2$. This yields the following definition:
\begin{defn}
A scaling invariant Riemannian functional $\mu $ of weight $-1$ is called \emph{upper bound of the scalar curvature} iff $\mu \geq \scal $.  Moreover, we say that an upper bound  $\mu \geq \scal $ is of
\begin{itemize}
%\item \emph{typeI}, if for all $\epsilon >0$ there are $t\in \{ 1-\epsilon ,1+\epsilon \} $ and $g\in \met (M)$ with $t\cdot \mu (g)<\scal (g)$.
\item \emph{typeI}, if for all $\epsilon >0 $ there is a metric $g $ with $\mathrm{Vol}(M,g)=1$ and  $\scal (g)>\mu (g)-\epsilon $. 
\item \emph{typeII},  if there is a metric $g$ with $\scal (g)=\mu (g)$.
\item \emph{typeIII}, if up to scaling by constants and up to isometry, there is a unique metric $g$ with $\scal (g)=\mu (g) $. 
\end{itemize}
\end{defn}
Obviously, the different notions of upper bounds satisfy
\[
\{ \text{typeIII}\} \subseteq \{ \text{typeII}\} \subseteq \{ \text{typeI}\}
\]
%The relation of typeI and typeII upper bounds of $\scal $ depends on the values
%\[
%\begin{split}
%\| \mu \| _\mathrm{inf}&:=\inf _{\mathrm{Vol}(M,g)=1}|\mu (g)|\in [0,\infty ) \%\
%\| \mu \| _\mathrm{sup}&:=\sup _{\mathrm{Vol}(M,g)=1}|\mu (g)|\in [0,\infty ].
%\end{split}
%\]
%In particular, if $\mu $ is a typeI upper bound of the scalar curvature with $\| \mu \| _\mathrm{sup}<+\infty $, then $\mu $ is a typeII upper bound. Conversely, if $\mu $ is a typeII upper bound of $\scal $ with $\| \mu \| _\mathrm{inf}>0 $, then $\mu $ is a typeI upper bound.
Considering the Riemannian functional $\mu =0$ rephrases the question of positive scalar curvature which has been extensively studied. In fact, $\mu =0$ is an upper bound of the scalar curvature iff $M$ does not have a metric of positive scalar curvature and it is a typeII upper bound if additionally $M$ admits a Ricci flat metric. The upper bound $\mu =0$ can be further improved in case $\yam (M)<0$. In fact, suppose that $M$ is closed and orientable of dimension $n$, then the Yamabe invariant of a conformal class $[g]$ on $M$ is given by
\[
\yam (M_{[g]}):=\inf _{h\in [g]}\frac{\int \mathrm{scal}_h\cdot \mathrm{vol}_h}{\mathrm{Vol}(M,h)^{(n-2)/n}}
\]
and the Yamabe invariant (respectively the sigma invariant) of $M$ is $\yam (M)=\sup _{[g]}\yam (M_{[g]})$. Define
\[
\mu (g):=\yam (M)\cdot \mathrm{Vol}(M,g)^{-2/n},
\]
then $\mu $ is a scaling invariant Riemannian functional of weight $-1$ and it is a typeI upper bound of the scalar curvature on $M$ if $\yam (M)\leq 0$. This follows immediately from remark \ref{rem_yamabe} for $F\equiv 0$ and the definition of $\yam (M)$. Note that for each $\epsilon >0$ there is a conformal class $[g]$ with $\yam (M_{[g]})>\yam (M)-\epsilon $, i.e.~the metric $h\in [g]$ with constant scalar curvature and $\mathrm{Vol}(M,h)=1$ satisfies $\mathrm{scal}_h=\scal (h)>\mu (h)-\epsilon $. Moreover, we deduce from results in \cite[Chp.~4]{Bes} that $\mu $ is a typeII upper bound if $\yam (M)\leq 0$ and there is an Einstein conformal class $[g]$ with $\yam (M_{[g]})=\yam (M)$, in this case the Einstein metric $h\in [g]$ satisfies $\mathrm{scal}_h=\mu (h)$. The upper bound becomes typeIII iff there is a unique conformal class $[g]$ with $\yam (M_{[g]})=\yam (M)$.

This approach fails in general for manifolds with $\yam (M)>0$ as the following example shows. Consider a product manifold like $M=S^2\times S^1$, then $\yam (M)=\yam (S^3)=6(2\pi ^2)^{2/3}$ (cf.~Schoen \cite{Sch2} and Kobayashi \cite{Koba1}). The product metric $h:=g_0\oplus \mathrm{d}t^2$ has scalar curvature $\mathrm{scal}_h=2$ for any line segment $\mathrm{d}t^2$ ($g_0$ is the standard metric on $S^2$), i.e.~increasing the size of $S^1$ keeps the scalar curvature constant while the volume of $(M,h)$ goes to infinity and so $\mu (h)\to 0$ proves that $\mu $ can not be an upper bound of the scalar curvature. This leads us to consider the question of upper bounds for the scalar curvature on closed manifolds $M$ with $\yam (M)>0$. We have observed that in general
\[
\mu (g)=C\cdot \mathrm{Vol}(M,g)^{-2/n}
\] 
will not be an upper bound of the scalar curvature for arbitrary constants $C$ if $M^n$ has positive Yamabe invariant. In fact, the example $S^2\times S^1$ suggests to control the areas of $g$ rather than the volume. 
%\begin{prop}
%\label{prop312}
%Suppose that $\yam (M)\geq 0$ and $\mu $ is an upper bound of the scalar curvature with
%\[
%\| \mu \| _\mathrm{sup}=\sup _{\mathrm{Vol}(M,g)=1}|\mu (g)|<+\infty .
%\]
%Then there is some $t\in [0,1]$ such that $\mu _t:=t\cdot \mu $ is a typeI upper bound of the scalar curvature.
%\end{prop}
%\begin{proof}
%The case $\yam (M)=0$ is obvious, hence suppose $\yam (M)>0$. Choose $t\in [0,1%]$ to be the minimum of the set
%\[
%\{ s\in [0,1]\ |\ s\cdot \mu \geq \scal \} ,
%\]
%then $t>0$ since $\yam (M)>0$. Hence, for all $\epsilon >0$ there is a metric $g$ with $\mathrm{Vol}(M,g)=1$ and $(t-\epsilon )\mu (g)<\scal (g)$.
%\end{proof}

An upper bound of the scalar curvature  on spin manifolds $M^n$ is determined by Gromov's K--area (cf.~\cite{Gr01} and chapter \ref{chp2}), here we choose the Riemannian functional
\[
\mu _\ke (g)=\frac{n(n+1)^3}{2\cdot \ke (M_g)}
\] 
where $\ke (M_g)$ denotes the total K--area of the Riemannian manifold $M_g=(M,g)$. There is no known example for which the universal constant $n(n+1)^3/2$ determines a typeI upper bound $\mu _\ke $. However, on certain manifolds one can improve the constant to obtain better statements. For instance, on a rational homology sphere $M^n$
\[
\mu (g):=\frac{2n(n-1)}{\ke (M_g)}
\]
supplies an upper bound of the scalar curvature. Moreover, $\mu $ is a type II upper bound on quotients of $S^n$ where $\mathrm{scal}_g=\mu (g)$ is achieved for metrics of constant sectional curvature. Here, we simply use the fact that on a rational homology sphere, the (ordinary) K--area and the K--area for the Chern character coincide (cf.~chapter \ref{chp2}), and then we apply proposition \ref{prop_scalar}, remark \ref{rem243} and corollary \ref{cor345}. 

We can further improve the above functional to get better upper bounds of the scalar curvature. If $\bundle{S} $ is a Dirac bundle, $\frak{R}^{\bundle{S}}\in \Gamma (\mathrm{End}(\bundle{S}))$ means the twist curvature endomorphism in the Bochner--Weitzenb\"ock formula for the Dirac operator of $\bundle{S}$:
\[
\D ^2=\nabla ^*\nabla +\frac{\mathrm{scal}_g}{4}+\frak{R}^\bundle{S}.
\]
Suppose that $M$ is even dimensional and define
\[
\mu _\ce (g):=4\cdot \inf _\bundle{S}\left( -\min _{x\in M} \frak{R}^{\bundle{S}} _\mathrm{min}(x)\right)  
\]
where the infimum is taken over the nonempty set of $\Z _2$--graded Dirac bundles $\bundle{S}$ with $\mathrm{ind}(\D )\neq 0$ and $\frak{R}^\bundle{S} _\mathrm{min}:M\to (-\infty ,0] $ means the minimal eigenvalue function of $\frak{R}^\bundle{S}\in \Gamma (\mathrm{End}(\bundle{S}))$. Note that this is an $L^\infty $--norm on trace free endomorphisms of $\bundle{S}$ and $\mathrm{tr}(\frak{R}^\bundle{S})=0$. Then $\mu _\ce $ is a Riemannian functional which is scaling invariant of weight $-1$. Moreover, if $M$ is closed and oriented, the Atiyah--Singer index theorem shows that $\mu _\ce $ is an upper bound of the scalar curvature on $M$. In order to get a similar functional for odd dimensional $M$, consider the above functional for the product manifold $M\times S^1$, i.e.~we define
\[
\mu _\ce (g):=\inf _{\mathrm{d}t^2}\mu _\ce (g\oplus \mathrm{d}t^2).
\]
Then $\mu _\ce $ is an upper bound of the scalar curvature on odd dimensional oriented closed manifolds. If $M$ is a spin manifold, a simple exercise shows $\mu _\ke \geq \mu _\ce $. We summarize these observations in the theorem below. The second statement in the theorem is due to the fact that the bundle of exterior forms is a $\Z _2$--graded bundle with twist curvature $ \frak{R}^{\Lambda ^*M } _\mathrm{min}= -\mathrm{scal}_g/4$ if $g$ is locally symmetric of nonnegative Ricci curvature (cf.~section \ref{notes_bochner}). 
\begin{thm}
Let $M$ be a closed oriented manifold, then $\mu _\ce \geq \scal $. Moreover, if $(M,g)$ is a locally symmetric space of positive Ricci curvature and Euler characteristic $\chi (M)\neq 0$, then $\scal (g)=\mu _\ce (g)$ (i.e.~$\mu _\ce $ is a typeII upper bound of the scalar curvature). 
\end{thm}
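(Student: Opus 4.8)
The plan is to prove the inequality $\mu _\ce \geq \scal $ by a direct Bochner argument applied to each admissible Dirac bundle, and then to prove equality on the locally symmetric $g$ by exhibiting the de~Rham complex as an admissible bundle on which the bound is attained.

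First I would treat $\mu _\ce \geq \scal $ on even-dimensional $M$. Fix any $\Z _2$--graded Dirac bundle $\bundle{S}$ with $\mathrm{ind}(\D )\neq 0$. Since the index is nonzero, one of $\ker \D ^+$, $\ker \D ^-$ is nontrivial, so there is a nonzero $\psi $ with $\D \psi =0$. As $\D $ is self-adjoint, integrating the Bochner--Weitzenb\"ock formula $\D ^2=\nabla ^*\nabla +\frac{\mathrm{scal}_g}{4}+\frak{R}^\bundle{S}$ against $\psi $ gives
\[
0=\int _M|\nabla \psi |^2\,\mathrm{vol}_g+\int _M\Bigl( \frac{\mathrm{scal}_g}{4}|\psi |^2+\langle \frak{R}^\bundle{S}\psi ,\psi \rangle \Bigr) \mathrm{vol}_g.
\]
Because $\int _M|\nabla \psi |^2\geq 0$ and $\langle \frak{R}^\bundle{S}\psi ,\psi \rangle \geq \frak{R}^\bundle{S}_{\mathrm{min}}|\psi |^2$ pointwise, there must be a point $x$ with $\frac{\mathrm{scal}_g(x)}{4}+\frak{R}^\bundle{S}_{\mathrm{min}}(x)\leq 0$, whence $\min _x\frak{R}^\bundle{S}_{\mathrm{min}}\leq -\frac14\mathrm{scal}_g(x)\leq -\frac14\scal (g)$. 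Thus $4\bigl( -\min _x\frak{R}^\bundle{S}_{\mathrm{min}}\bigr) \geq \scal (g)$ for every admissible $\bundle{S}$, and taking the infimum over $\bundle{S}$ yields $\mu _\ce (g)\geq \scal (g)$ (if the admissible set is empty the infimum is $+\infty $ and the inequality is trivial). For odd-dimensional $M$ I would invoke $\mu _\ce (g)=\inf _{\mathrm{d}t^2}\mu _\ce (g\oplus \mathrm{d}t^2)$ together with $\mathrm{scal}_{g\oplus \mathrm{d}t^2}=\mathrm{scal}_g$, so that $\scal (g\oplus \mathrm{d}t^2)=\scal (g)$ and the even-dimensional bound on $M\times S^1$ transfers to $M$.

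For the equality statement, note that $\chi (M)\neq 0$ forces $M$ to be even-dimensional. Then the operator $d+d^*$ on the complexified exterior bundle $\bundle{S}=\Lambda ^*M$, graded by even/odd degree, is a $\Z _2$--graded Dirac bundle (the Dirac structure on $\Lambda ^*_\C T^*M\cong \spinor M\otimes \spinor M$ used already in lemma \ref{lem_weyl}) with $\mathrm{ind}(\D )=\chi (M)\neq 0$ by Hodge theory; hence it lies in the admissible set defining $\mu _\ce $. By the fact recalled immediately before the theorem, for the locally symmetric metric $g$ of nonnegative (here positive) Ricci curvature the twist curvature satisfies $\frak{R}^{\Lambda ^*M}_{\mathrm{min}}=-\mathrm{scal}_g/4$. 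Local symmetry ($\nabla R=0$) forces $\mathrm{scal}_g$ to be constant, equal to $\scal (g)$, so $\min _x\frak{R}^{\Lambda ^*M}_{\mathrm{min}}=-\scal (g)/4$ and $4\bigl( -\min _x\frak{R}^{\Lambda ^*M}_{\mathrm{min}}\bigr) =\scal (g)$. Therefore $\mu _\ce (g)\leq \scal (g)$, and combined with the first part this gives $\mu _\ce (g)=\scal (g)$, so $\mu _\ce $ is a type~II upper bound.

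The genuine content sits in the cited Weitzenb\"ock identity $\frak{R}^{\Lambda ^*M}_{\mathrm{min}}=-\mathrm{scal}_g/4$, i.e.~that the Weitzenb\"ock curvature operator on forms is nonnegative for locally symmetric metrics of nonnegative Ricci curvature, so that its minimal eigenvalue over $\Lambda ^*M$ is attained on $0$-- and $n$--forms, where the term equals $-\mathrm{scal}_g/4$; I treat this as established in the earlier section (cf.~section \ref{notes_bochner}). Within the present proof the only points needing care are verifying that $\Lambda ^*M$ with the even/odd grading is an admissible Dirac bundle with index $\chi (M)$, and that the pointwise minimum of $\frak{R}^{\Lambda ^*M}_{\mathrm{min}}$ coincides with $-\scal (g)/4$, for which the constancy of the scalar curvature of a locally symmetric space is exactly what is required.
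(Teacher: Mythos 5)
Your proposal is correct and takes essentially the same route as the paper: the inequality $\mu _\ce \geq \scal $ via a harmonic section supplied by $\mathrm{ind}(\D )\neq 0$ and the integrated Bochner--Weitzenb\"ock formula (with the $S^1$--product reduction in odd dimensions), and the equality via the $\Z _2$--graded bundle $\Lambda ^*_\C M$ with $\mathrm{ind}(\D )=\chi (M)\neq 0$ together with the twist curvature identity $\frak{R}^{\Lambda ^*M}_{\mathrm{min}}=-\mathrm{scal}_g/4$ taken from section \ref{notes_bochner}. The paper merely sketches these steps, and your write-up fills in exactly the intended details, including the constancy of $\mathrm{scal}_g$ on locally symmetric spaces.
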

Another way to obtain upper bounds of the scalar curvature is to consider area extremal metrics. Suppose that $g_0\in \met (M)$ is a metric of non--negative scalar curvature, then $g_0$ is called \emph{area extremal} (cf.~\cite{Gr01,GoSe1}) if for any $g\in \met (M)$ the inequalities
\begin{equation}
\label{area1}
\begin{split}
|v\wedge w|_g&\geq |v\wedge w|_{g_0}  \quad \forall v,w\in TM\\
\mathrm{scal}_g&\geq \mathrm{scal}_{g_0}
\end{split}
\end{equation}
imply $\mathrm{scal}_g=\mathrm{scal}_{g_0}$. Moreover, we say that $g_0$ is \emph{strict area extremal} if (\ref{area1}) implies additionally $g=g_0$. For instance, if $M$ is a manifold which does not admit a metric of positive scalar curvature, any Ricci flat metric $g_0$ on $M$ is area extremal. Examples of strict area extremal metrics are locally symmetric spaces of nontrivial Euler characteristic and positive Ricci curvature.
\begin{thm}[\cite{GoSe1}]Suppose that $(M^n,g_0)$, $n\geq 3$, is a locally symmetric space of positive Ricci curvature and with Euler characteristic $\chi (M)\neq 0$ or Kervaire semi characteristic $\sigma (M)\neq 0$, then $(M,g_0)$ is strict area extremal.
\end{thm}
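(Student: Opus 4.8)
The plan is to prove strict area extremality by a twisted Bochner--Lichnerowicz argument in the spirit of Llarull and of the earlier theorem on $\mu_\ce$, using the reference geometry $(M,g_0)$ to build a twisting bundle and the characteristic numbers $\chi(M)$ or $\sigma(M)$ to force a nontrivial kernel. Assume $g\in\met(M)$ satisfies $|v\wedge w|_g\geq|v\wedge w|_{g_0}$ for all $v,w\in TM$ together with $\mathrm{scal}_g\geq\mathrm{scal}_{g_0}$; the goal is to deduce $g=g_0$. First I would form, over $(M,g)$, the $\Z_2$--graded Dirac bundle obtained by twisting the (locally defined) $g$--spinor bundle with the spinor bundle of the reference metric $g_0$ carrying its $g_0$--Levi--Civita connection. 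Globally this twisted bundle is the exterior bundle $\Lambda^*T^*M$ with a chirality grading, so no global spin structure is required, and its Dirac operator $\D$ is of Gau\ss--Bonnet type: $\mathrm{ind}\,\D=\chi(M)$ when $n$ is even, while for odd $n$ the relevant invariant is the $\Z_2$--valued index equal to the Kervaire semi--characteristic $\sigma(M)$.

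The heart of the argument is the Bochner--Weitzenb\"ock identity on $(M,g)$,
\[
\D^2=\nabla^*\nabla+\frac{\mathrm{scal}_g}{4}+\frak{R},
\]
together with a pointwise lower bound $\frak{R}\geq-\tfrac14\mathrm{scal}_{g_0}$ for the twist curvature. Here $\frak{R}$ is assembled from the reference curvature contracted, through $g$--Clifford multiplication, against $g$--orthonormal bivectors. Because $(M,g_0)$ is locally symmetric with nonnegative Ricci curvature, the computation quoted before the $\mu_\ce$ theorem gives $\frak{R}^{\Lambda^*M}_{\min}=-\mathrm{scal}_{g_0}/4$ when $g=g_0$; the area hypothesis $g\geq g_0$ on $\Lambda^2TM$ is exactly what transfers this eigenvalue estimate to the varying metric, since $g$--unit bivectors then have $g_0$--length at most one. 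Combining this bound with $\mathrm{scal}_g\geq\mathrm{scal}_{g_0}$ yields $\D^2\geq\nabla^*\nabla\geq0$.

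Since the hypothesis $\chi(M)\neq0$ or $\sigma(M)\neq0$ makes the (mod--$2$) index nonzero, there is a nontrivial $\psi\in\ker\D$. Testing the Weitzenb\"ock identity against $\psi$ forces every nonnegative contribution to vanish: $\nabla\psi=0$, so $\psi$ is parallel and hence nowhere zero; $\mathrm{scal}_g=\mathrm{scal}_{g_0}$ identically, which already yields area extremality; and the curvature bound is saturated, $\frak{R}\psi=-\tfrac14\mathrm{scal}_{g_0}\psi$. To reach strict extremality I would exploit these equality conditions: saturation of $\frak{R}\geq-\mathrm{scal}_{g_0}/4$ at the parallel section, combined with equality in the area comparison on the $2$--planes entering the extremal frame, determines the eigenvalues of $g$ relative to $g_0$, and the positivity of the Ricci curvature rigidifies this to force $g=g_0$ rather than merely $\mathrm{scal}_g=\mathrm{scal}_{g_0}$.

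The main obstacle is twofold. First, establishing $\frak{R}\geq-\mathrm{scal}_{g_0}/4$ under the mere inequality $g\geq g_0$ on $\Lambda^2TM$ requires a careful eigenvalue analysis of the twist operator in a frame adapted to both metrics, using the parallelism of the reference curvature on the symmetric space; obtaining the optimal constant is precisely where local symmetry and positive Ricci curvature are indispensable. Second, the odd--dimensional case cannot rely on a nonvanishing ordinary index and must instead use the Kervaire semi--characteristic via a $\Z_2$--valued index, which demands a compatible real (or quaternionic) structure on the twisted bundle and a mod--$2$ index theorem; checking that the same Bochner estimate still produces a parallel kernel element in that setting is the most delicate step.
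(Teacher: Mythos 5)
Your proposal follows essentially the same route as the paper: twisting by the $g_0$--spinor factor so that globally the Dirac bundle is $\Lambda ^*_\C M\cong \spinor M\otimes \spinor M$, the sharp Bochner bound $\frak{R}^0\geq -\frac{1}{4}\mathrm{dil}_2\cdot \mathrm{scal}_{g_0}$ of proposition \ref{main_prop}, the index inputs $\chi (M)$ in even dimensions and the $\mathrm{KO}$--valued mod--$2$ index realizing $\sigma (M)$ via the real $\Cl _1$--Dirac bundle in odd dimensions, and rigidity through the homothetic--surjection argument under $\mathrm{Ric}(g_0)>0$. The only nuance: the sharp estimate you flag as the main obstacle is derived in the paper from nonnegativity of the curvature operator (automatic for your locally symmetric spaces of positive Ricci curvature) via the auxiliary nonpositive operator $\frak{C}$ and the first Bianchi identity $\sum _l\kappa _l h_l\wedge h_l=0$, local symmetry entering only to guarantee that the bound is actually attained at $g=g_0$.
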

The standard sphere was the first area extremal example of positive scalar curvature and discovered by Llarull in \cite{Llarull} although his proof is incomplete for odd dimensional spheres. This gap was closed by Kramer in \cite{Kra}. Min--Oo considered in \cite{MinO2} a similar question for Hermitian symmetric space. Goette and Semmelmann generalized the previous results in \cite{GoSe1,GoSe2} to manifolds with nonnegative curvature operator and nontrivial Euler characteristic as well as to K\"ahler manifolds with nonnegative Ricci curvature. Note that symmetric spaces $G/H$ of compact type have nonnegative curvature operator and that $\chi (G/H)\neq 0$ if and only if $\mathrm{rk}(G)=\mathrm{rk}(H)$. We should also mention that the proof in \cite{GoSe1} is incomplete if $\mathrm{rk}(G)-\mathrm{rk}(H)=1$, but the statement in \cite{GoSe1} is correct if one assumes additionally $\sigma (G/H)\neq 0$. Goette considered in \cite{Go1} area extremal examples of homogeneous spaces.
  
Let $M$ be a closed manifold and $g_0\in \met (M)$ be an area extremal metric, then $f^*g_0$ is area extremal for all diffeomorphism $f:M\to M$ and hence, $g_0$ defines a Riemannian functional $\mu _{g_0}:\met (M)\to [0,\infty )$ as follows:
\[
\mu _{g_0}(g):=\left( \max _{x\in M}\mathrm{scal}_{g_0}(x) \right) \cdot \inf _{f}\max _{v,w\in TM} \frac{|\mathrm{d}f(v)\wedge \mathrm{d}f(w)|_{g_0}}{|v\wedge w|_g}
\]
where the infimum is taken over all diffeomorphisms $f:M\to M$ and $v\wedge w\neq 0$. Obviously, $\mu $ is a Riemannian functional with $\mu _{g_0}(cg)=c^{-1}\mu _{g_0}(g)$. 
\begin{prop}
If $g_0$ is area extremal, then $\mu _{g_0}\geq \scal $ with $\scal (g_0)=\mu _{g_0}(g_0)$ if $g_0$ has constant scalar curvature. Hence, an area extremal metric of constant scalar curvature determines a typeII upper bound of the scalar curvature. Moreover, if $(M,g_0)$ is a closed oriented locally symmetric space of positive Ricci curvature and Euler characteristic $\chi (M)\neq 0$, then
\[
\mu _{g_0}\geq  \mu _\ce  \geq \scal 
\]
with $\mu _{g_0}(g_0)=\mu _\ce (g_0)=\scal (g_0)=\mathrm{scal}_{g_0}$.
\end{prop}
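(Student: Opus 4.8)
The plan is to separate the two inequalities. The bound $\mu_{g_0}\geq\scal$ uses area extremality abstractly, while $\mu_{g_0}\geq\mu_\ce$ in the locally symmetric case is a repackaging of the twisted Dirac estimate behind the Goette--Semmelmann rigidity theorem; I would keep the evaluation at $g_0$ for the end, where all four quantities must collapse to $\mathrm{scal}_{g_0}$.

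For $\mu_{g_0}\geq\scal$, write $S_0:=\max_x\mathrm{scal}_{g_0}(x)\geq 0$ and, for a diffeomorphism $f$, set $\lambda(f):=\max_{v\wedge w\neq 0}\frac{|v\wedge w|_{f^*g_0}}{|v\wedge w|_g}$, so that $\mu_{g_0}(g)=S_0\inf_f\lambda(f)\geq 0$. If $\scal(g)\leq 0$ there is nothing to prove, so assume $s:=\scal(g)>0$. I would first exclude $S_0=0$: if $\mathrm{scal}_{g_0}\equiv 0$, then for $c$ large $cg\geq g_0$ on $\Lambda^2TM$ while $\mathrm{scal}_{cg}=c^{-1}\mathrm{scal}_g>0=\mathrm{scal}_{g_0}$, contradicting area extremality; hence $S_0>0$. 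Now suppose $\mu_{g_0}(g)<s$ and pick $f$ with $S_0\lambda(f)<s$; set $h_0:=f^*g_0$ (again area extremal, with $\max\mathrm{scal}_{h_0}=S_0$) and $\bar g:=\frac{s}{S_0}g$. Then $|v\wedge w|_{h_0}\leq\lambda(f)|v\wedge w|_g<|v\wedge w|_{\bar g}$ strictly, so by compactness there is $\epsilon>0$ with $|v\wedge w|_{\bar g}\geq(1+\epsilon)|v\wedge w|_{h_0}$. The metric $\bar g':=(1+\epsilon)^{-1}\bar g$ then satisfies $\bar g'\geq h_0$ on $\Lambda^2TM$ and $\mathrm{scal}_{\bar g'}=(1+\epsilon)\frac{S_0}{s}\mathrm{scal}_g\geq(1+\epsilon)S_0>S_0\geq\mathrm{scal}_{h_0}$; area extremality of $h_0$ forces $\mathrm{scal}_{\bar g'}=\mathrm{scal}_{h_0}$, a contradiction. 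This yields $\mu_{g_0}\geq\scal$, and taking $f=\mathrm{id}$ gives $\mu_{g_0}(g_0)\leq S_0$, which equals $\scal(g_0)$ when $\mathrm{scal}_{g_0}$ is constant; hence $\mu_{g_0}(g_0)=\scal(g_0)$ and $g_0$ is a typeII bound.

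For the locally symmetric case, recall that such $g_0$ has parallel curvature, hence constant scalar curvature, and is strict area extremal by the Goette--Semmelmann theorem, so the previous paragraph gives $\mu_{g_0}(g_0)=\scal(g_0)=\mathrm{scal}_{g_0}$. The inequality $\mu_\ce\geq\scal$ is the theorem recorded above, and since $(M,g_0)$ is locally symmetric of positive Ricci curvature with $\chi(M)\neq 0$ the same theorem gives $\mu_\ce(g_0)=\scal(g_0)$. It remains to prove $\mu_{g_0}\geq\mu_\ce$; for this it suffices, for each $f$, to produce one $\Z_2$-graded Dirac bundle $\bundle{S}_f$ over $(M,g)$ with $\mathrm{ind}(\D)\neq 0$ and $-4\min_x\frak{R}^{\bundle{S}_f}_{\mathrm{min}}(x)\leq S_0\lambda(f)$, since taking the infimum over $f$ then gives $\mu_\ce(g)\leq S_0\inf_f\lambda(f)=\mu_{g_0}(g)$.

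The bundle to use is $\bundle{S}_f:=\spinor_g M\otimes\spinor_{h_0}M$ with $h_0=f^*g_0$, where Clifford multiplication and the Dirac operator are formed with $g$, the first factor carries the $g$-spin connection, and the second factor $\spinor_{h_0}M$ is an auxiliary twisting bundle with the Levi--Civita connection of $h_0$. As in lemma \ref{lem_weyl} this bundle is (locally) $\Lambda^*_\C T^*M$, and with the Euler grading its Dirac operator is the Gauss--Bonnet operator, whence $\mathrm{ind}(\D)=\chi(M)\neq 0$. The twist curvature $\frak{R}^{\bundle{S}_f}$ is built from the $h_0$-curvature operator Clifford-multiplied via $g$, and the real work is the pointwise lower bound $-4\frak{R}^{\bundle{S}_f}_{\mathrm{min}}\leq\max_x\mathrm{scal}_{h_0}\cdot\lambda(f)=S_0\lambda(f)$. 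This is precisely the curvature estimate at the heart of \cite{GoSe1,GoSe2}: it exploits that compact symmetric spaces have nonnegative curvature operator, that positive Ricci curvature together with $\chi(M)\neq 0$ pins down the relevant eigenvalue, and a linear-algebra bound relating the $g$-Clifford action of the $h_0$-curvature to the area distortion $\lambda(f)$. I expect this estimate to be the main obstacle; granting it (by citing \cite{GoSe1,GoSe2}), the chain $\mu_{g_0}\geq\mu_\ce\geq\scal$ follows, and evaluating at $g_0$ forces $\mu_{g_0}(g_0)=\mu_\ce(g_0)=\scal(g_0)=\mathrm{scal}_{g_0}$.
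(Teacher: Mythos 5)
Your proposal is correct and follows essentially the same route as the paper: the inequality $\mu_{g_0}\geq\mu_\ce$ is obtained exactly as in the text, by twisting the formal decomposition $\Lambda^*_\C M=\spinor M\otimes\spinor M$ with the Levi--Civita connection of $f^*g_0$ on the second factor, invoking the Goette--Semmelmann curvature bound $\frak{R}^{\Lambda^*M}\geq -\frac{1}{4}\mathrm{scal}_{g_0}\cdot\mathrm{dil}_2(f)$ from section \ref{notes_bochner} (proposition \ref{main_prop}), and using that the index $\pm\chi(M)\neq 0$ is nonzero, then taking the infimum over diffeomorphisms $f$. Your first paragraph simply spells out, via the rescaling-and-compactness contradiction (including ruling out $\max\mathrm{scal}_{g_0}=0$), what the paper compresses into ``follows from the definitions,'' which is a legitimate and complete rendering of the same argument.
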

\begin{proof}
The first part follows from the definitions. Hence, it remains to show that $\mu _{g_0}\geq \mu _\ce $ if $(M,g_0)$ is locally symmetric with $\chi (M)\neq 0$. Let $g$ be a metric on $M$ and consider the formal decomposition of the Dirac bundle $\Lambda ^*_\C M=\spinor M\otimes \spinor M$ where the (twisting) second factor is endowed with the Levi--Civita connection of $f^*g_0$. Then the twisted curvature endomorphism is bounded by (cf.~\cite{GoSe1} or section \ref{notes_bochner}):
\[
 \frak{R}^{\Lambda ^*M }\geq-\frac{ \mathrm{scal}_{g_0}}{4}\cdot \max _{v,w\in TM}\frac{|v\wedge w|_{f^*g_0}}{|v\wedge w|_g}
\]
which proves the assertion because the Dirac operator associated to $\Lambda ^*M$ has nontrivial index and this inequality is true for any diffeomorphism $f$.
\end{proof}
\begin{conj}
If $g_0$ is strict area extremal of constant scalar curvature, then $\mu _{g_0}$ is a typeIII upper bound of the scalar curvature.
\end{conj}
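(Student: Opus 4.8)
The plan is to build on the typeII statement just established: since $g_0$ has constant scalar curvature, the preceding proposition gives $\mu _{g_0}\geq \scal $ with $\scal (g_0)=\mu _{g_0}(g_0)$, so $g_0$ itself realizes the bound. What remains for typeIII is the rigidity, namely that every metric $g$ with $\scal (g)=\mu _{g_0}(g)$ must, up to a positive constant and a diffeomorphism, coincide with $g_0$.

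First I would set $s_0:=\mathrm{scal}_{g_0}\geq 0$ (this is the constant value, and $s_0>0$ in the interesting case), and for a diffeomorphism $f$ write $\lambda (f):=\max _{v,w}|v\wedge w|_{f^*g_0}/|v\wedge w|_g$ for the maximal area distortion, so that $\mu _{g_0}(g)=s_0\cdot \inf _f\lambda (f)$. Suppose the infimum is attained at some $f_0$, and put $\tilde g_0:=\lambda (f_0)^{-1}f_0^*g_0$. Because areas scale linearly and scalar curvature inversely under constant rescaling, one checks directly that $g\geq \tilde g_0$ on $\Lambda ^2TM$ and that $\tilde g_0$ has constant scalar curvature $\lambda (f_0)s_0=\mu _{g_0}(g)=\scal (g)=\min _x\mathrm{scal}_g$, whence $\mathrm{scal}_g\geq \mathrm{scal}_{\tilde g_0}$ pointwise. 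Since a constant multiple of a diffeomorphic pullback of a strict area extremal metric is again strict area extremal, I would apply strict area extremality of $\tilde g_0$ to conclude $g=\tilde g_0=\lambda (f_0)^{-1}f_0^*g_0$; that is, $g$ is isometric to the constant rescaling $\lambda (f_0)^{-1}g_0$ of $g_0$, which is exactly the desired uniqueness up to scaling and isometry.

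The hard part will be the attainment of the infimum defining $\mu _{g_0}(g)$ — this is precisely why the statement is only a conjecture. If the infimum is merely approached along a minimizing sequence $f_k$ with $\lambda (f_k)\to \lambda ^*$, the metrics $\tilde g_k:=\lambda (f_k)^{-1}f_k^*g_0$ satisfy $g\geq \tilde g_k$ but only $\mathrm{scal}_{\tilde g_k}=\lambda (f_k)s_0\geq \min _x\mathrm{scal}_g$, so the scalar-curvature inequality points the wrong way and strict area extremality cannot be invoked for finite $k$. The underlying obstruction is the noncompactness of $\mathrm{Diff}(M)$: a two-sided control of $2$-areas does not control the pullback metric, since $\lambda (f)=\sqrt{a_1a_2}$ (the product of the two largest eigenvalues of $f^*g_0$ relative to $g$) can remain bounded while a single eigenvalue degenerates.

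To overcome this I would pursue two complementary routes. One is an analytic compactness argument: normalize the $f_k$ by isometries of $g_0$, extract a limit $f_0$ in a suitable weak topology, and show — using that the equality configuration is rigid — that no eigenvalue can degenerate, so that $f_0$ is again a diffeomorphism realizing $\lambda ^*$, after which the clean argument above applies. The second, and more promising, route is to bypass attainment entirely by running the Goette--Semmelmann Dirac-operator argument that underlies strict area extremality (the same mechanism used in the proof of $\mu _{g_0}\geq \mu _\ce$) directly along the minimizing sequence: construct harmonic twisted spinors $\psi _k$ for the Dirac operators built from $f_k^*g_0$, normalize them, and pass to a limit, extracting from the limiting Bochner--Weitzenb\"ock identity the pointwise rigidity that forces $g$ to be a rescaled pullback of $g_0$. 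Establishing the convergence of the $\psi _k$ together with the curvature terms is where I expect essentially all of the difficulty to lie.
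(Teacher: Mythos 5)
This statement is stated in the paper as a \emph{conjecture}, with no proof offered, so there is no argument of the paper to compare against; your proposal must therefore be judged on whether it closes the conjecture on its own, and it does not. The part you do carry out is correct: if the infimum defining $\mu _{g_0}(g)$ is attained at some diffeomorphism $f_0$, then $\tilde g_0:=\lambda (f_0)^{-1}f_0^*g_0$ satisfies $|v\wedge w|_g\geq |v\wedge w|_{\tilde g_0}$ and $\mathrm{scal}_g\geq \min \mathrm{scal}_g=\mu _{g_0}(g)=\lambda (f_0)\, \mathrm{scal}_{g_0}=\mathrm{scal}_{\tilde g_0}$, and strict area extremality is indeed preserved under constant rescaling (replace $g$ by $c^{-1}g$ in (\ref{area1})) and diffeomorphic pullback, so $g=\lambda (f_0)^{-1}f_0^*g_0$ follows, which is exactly the typeIII uniqueness. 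You also correctly diagnose why a minimizing sequence does not suffice: for finite $k$ one only gets $\mathrm{scal}_{\tilde g_k}=\lambda (f_k)\,\mathrm{scal}_{g_0}\geq \min \mathrm{scal}_g$, the inequality pointing the wrong way for (\ref{area1}), and $\mathrm{dil}_2(f_k)=\sqrt{a_1a_2}$ controls only the product of the two largest eigenvalues of $f_k^*g_0$ relative to $g$, so the sequence can degenerate in $\mathrm{Diff}(M)$.

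The genuine gap is that neither of your two proposed repairs is an argument, and the second is not even available under the stated hypotheses. The conjecture assumes only the abstract rigidity property ``strict area extremal''; it does not assume that this extremality is established by a Goette--Semmelmann type index computation. For a general strict area extremal $g_0$ there need be no Dirac bundle, no nonvanishing index, and no harmonic twisted spinors to pass to the limit along $f_k$, so the strategy of ``running the underlying Bochner--Weitzenb\"ock mechanism along the minimizing sequence'' presupposes structure the hypotheses do not provide (it would at best prove the conjecture for the locally symmetric examples of section \ref{sec_conf_extremal}, where proposition \ref{lem1342} already plays this role). The first route, compactness after normalizing by isometries of $g_0$, is also unsubstantiated: $\mathrm{Isom}(g_0)$ may be finite and cannot absorb the degeneration you describe, a weak limit of the $f_k$ may fail to be injective or surjective, and ruling this out would require extracting pointwise rigidity from the mere equality $\scal (g)=\mu _{g_0}(g)$ before any limit map exists --- which is precisely the unproven content of the conjecture. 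So your proposal is a correct reduction of the conjecture to an attainment/compactness problem for the area--dilatation functional on $\mathrm{Diff}(M)$, together with an accurate account of why that problem is hard, but it is not a proof, consistent with the statement remaining open in the paper.
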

The Riemannian functionals obtained from the K--area and area extremal metrics $g_0$ are inverse area functions, i.e.
\begin{equation}
\label{eq_area}
|v\wedge w|_g\geq |v\wedge w|_h\quad \forall v,w\in TM\quad \Longrightarrow \quad \mu _\bullet (g)\leq \mu _\bullet (h)
\end{equation}
for all $g,h$ and $\mu _\bullet \in \{ \mu _\ke ,\mu _{g_0}\} $. If $\mu $ is a Riemannian functional of weight $-1$ which satisfies (\ref{eq_area}) for all $g$, $h$, then $\mu $ is nonnegative, i.e.~$\mu (g)\geq 0$ for all $g$. This leads us to consider the set
\[
\mathpzc{SC_{area}}(M)=\{ \mu \ |\ \mu (c\cdot g)=c^{-1}\mu (g) , \mu \geq \scal , \mu \text{ satisfies (\ref{eq_area})}\} 
\]
consisting of Riemannian functionals which are upper bounds for the scalar curvature on $M$ and satisfy (\ref{eq_area}) for arbitrary $g,h\in \met (M)$. The example $\mu _\ke $ shows that $\mathpzc{SC_{area}}(M)$ is nonempty if $M$ is a closed spin manifold. The Riemannian functional $\mu _\ce $ does not seem to satisfy (\ref{eq_area}) which means that in the non--spin case, $\mathpzc{SC_{area}}(M)\neq \emptyset $ turns out to be a nontrivial question. Nevertheless, one should be able to show $\mathpzc{SC_{area}}(M)\neq \emptyset $ for a variety of spin$^c$--manifolds. 
Note that $\mathpzc{SC_{area}}(M)$ is a partially ordered convex subset of the vector space consisting of Riemannian functionals with weight $-1$. Moreover, $\scal \notin \mathpzc{SC_{area}}(M)$ because (\ref{eq_area}) is not satisfied but the above results supply a map
\[
\{ \text{area extremal metrics on $M$}\} \to \mathpzc{SC_{area}}(M),\ g_0\mapsto \mu _{g_0}.
\] 
However,  it is still an open problem which manifolds of positive Yamabe invariant admit area extremal metrics. In order to deal with this question, the following proposition may be useful. %The theorem is an easy application of proposition \ref{prop312} and  $\mu _\ke \in \mathpzc{SC_{area}}(M)$.
\begin{prop}
Suppose there are $\mu \in \mathpzc{SC_{area}}(M)$ and $h\in \met (M)$ with $\mathrm{scal}_h=\mu (h)$, then $h$ is area extremal of nonnegative constant scalar curvature.
\end{prop}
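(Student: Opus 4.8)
The plan is to read off all three assertions---nonnegativity, constancy, and area extremality---from a single squeezing chain built out of the three defining properties of an element of $\mathpzc{SC_{area}}(M)$: the scaling weight $-1$, the upper bound $\mu\geq\scal$, and the area monotonicity (\ref{eq_area}). I write $c:=\mu(h)$ throughout.

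First I would dispose of nonnegativity, which is the observation already recorded in the text that a Riemannian functional of weight $-1$ satisfying (\ref{eq_area}) is automatically nonnegative. For $t>1$ the rescaled metric $t^2h$ has strictly larger areas, $|v\wedge w|_{t^2h}=t^2|v\wedge w|_h\geq|v\wedge w|_h$, so (\ref{eq_area}) forces $\mu(t^2h)\leq\mu(h)$; on the other hand the weight gives $\mu(t^2h)=t^{-2}\mu(h)$, so $t^{-2}\mu(h)\leq\mu(h)$ with $t>1$ forces $\mu(h)\geq0$. Hence $c\geq0$. The hypothesis $\mathrm{scal}_h=\mu(h)$ says precisely that the scalar curvature function of $h$ equals the constant $c$, so $h$ has constant scalar curvature $c\geq0$; in particular $\scal(h)=c$ and $\mathrm{scal}_h\geq0$, which is exactly the standing requirement in the definition of an area extremal metric.

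For area extremality I would take an arbitrary competitor $g\in\met(M)$ satisfying the two hypotheses of (\ref{area1}), namely $|v\wedge w|_g\geq|v\wedge w|_h$ for all $v,w\in TM$ and $\mathrm{scal}_g\geq\mathrm{scal}_h=c$. The area inequality together with (\ref{eq_area}) gives $\mu(g)\leq\mu(h)=c$; the upper bound property gives $\mu(g)\geq\scal(g)$; and $\mathrm{scal}_g\geq c$ pointwise gives $\scal(g)=\min_x\mathrm{scal}_g(x)\geq c$. Chaining these,
\[
c=\mu(h)\geq\mu(g)\geq\scal(g)\geq c,
\]
so every inequality is an equality; in particular $\scal(g)=c=\scal(h)$, which squeezes the competitor's scalar curvature back down to that of $h$.

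The main obstacle is the very last step. The definition of area extremality asks for the \emph{pointwise} identity $\mathrm{scal}_g=\mathrm{scal}_h$, whereas the functional $\scal$, and hence the chain, only controls the \emph{minimum} $\min_x\mathrm{scal}_g$; literally the squeeze delivers $\scal(g)=\scal(h)$ together with $\mathrm{scal}_g\geq\mathrm{scal}_h$, and the work is to upgrade this to $\mathrm{scal}_g\equiv c$. I would approach the upgrade by exploiting that equality throughout the chain also yields $\scal(g)=\mu(g)$, so the competitor $g$ itself satisfies the hypothesis of the proposition; one then wants the constancy clause to promote $\min_x\mathrm{scal}_g=c$ to $\mathrm{scal}_g\equiv c$. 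Making this bootstrap rigorous without circularity---i.e.\ proving outright that $\scal(g)=\mu(g)$ for $\mu\in\mathpzc{SC_{area}}(M)$ forces constant scalar curvature---is the delicate point, and I expect it to require either that the operative notion of area extremality be read in the minimal-scalar-curvature sense native to this functional framework, or an additional rigidity input such as the conformal comparison of remark \ref{rem_yamabe}.
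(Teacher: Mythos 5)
Your reading of the hypothesis is correct and efficient: $\mathrm{scal}_h=\mu(h)$ equates a function with a number, so constancy comes for free, and your rescaling argument for $\mu(h)\geq 0$ is exactly the observation recorded in the paper just before the proposition. The squeeze $c=\mu(h)\geq\mu(g)\geq\scal(g)\geq c$ is also surely the intended core of the argument --- note that the paper states this proposition \emph{without any proof}, so there is no written argument to compare against --- and you have correctly isolated the one step it leaves implicit: the three axioms of $\mathpzc{SC_{area}}(M)$ only ever see the functional $\scal(g)=\min_x\mathrm{scal}_g(x)$, so the chain delivers $\min_x\mathrm{scal}_g=c$ together with $\mathrm{scal}_g\geq c$, not the pointwise identity $\mathrm{scal}_g\equiv c$ that the definition (\ref{area1}) of area extremality demands. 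Your proposed bootstrap is, as you suspect, circular: equality in the chain yields $\scal(g)=\mu(g)$, which is the typeII condition involving the \emph{minimum}, not the proposition's hypothesis $\mathrm{scal}_g=\mu(g)$ as functions, so iterating the proposition gains nothing.

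The gap is genuinely closable when $c=0$, and you should do it explicitly rather than gesture at ``rigidity input'': if $\mathrm{scal}_g\geq 0$ and $\mathrm{scal}_g\not\equiv 0$ (with $n\geq 3$; $n=2$ is handled by Gauss--Bonnet), the first eigenvalue $\lambda_1$ of $\frac{4(n-1)}{n-2}\delta_g\mathrm{d}+\mathrm{scal}_g$ is positive with eigenfunction $\varphi>0$, and $g':=t^{4/(n-2)}\varphi^{4/(n-2)}g$ satisfies $\mathrm{scal}_{g'}=t^{-4/(n-2)}\lambda_1\varphi^{-4/(n-2)}>0$ everywhere; choosing $t$ large enough that the conformal factor is $\geq 1$ gives $g'\geq g$ pointwise, hence $|v\wedge w|_{g'}\geq|v\wedge w|_h$, and then (\ref{eq_area}) and $\mu\geq\scal$ force $0=\mu(h)\geq\mu(g')\geq\scal(g')>0$, a contradiction. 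For $c>0$ this device fails: one would need an area-dominating metric with $\min\mathrm{scal}>c$, but the same construction (now with potential $\mathrm{scal}_g-c$ and $\min\varphi=1$) only achieves $\min\mathrm{scal}_{g'}=(\lambda_1+c)(\max\varphi)^{-4/(n-2)}$, which exceeds $c$ only when $\lambda_1$ is large relative to the oscillation of $\varphi$ --- not guaranteed when the equality set $\{\mathrm{scal}_g=c\}$ is large. Remark \ref{rem_yamabe} does not rescue this either, since it is confined to a single conformal class (within $[h]$ the pointwise claim does follow, by the maximum principle), whereas the competitor $g$ need not be conformal to $h$. So your chain proves the proposition with ``area extremal'' read at the level of minima, and proves it fully for $c=0$; the pointwise conclusion for $c>0$ requires an input that neither your proposal nor the paper supplies.
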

%\begin{thm}
%Each closed spin manifold $M$ with $\yam (M)\geq 0$ admits a typeI upper bound $\mu \in \mathpzc{SC_{area}}(M)$.
%\end{thm}
In general one should not expect the existence of a typeII upper bound $\mu \in \mathpzc{SC_{area}}(M)$. For instance, the manifold $M=T^n\# T^n$ satisfies $\yam (M)=0$ if $n\geq 3$, but $M$ does not have a metric with vanishing scalar curvature and hence, $\mu (g)>\scal (g)$ for all $g$ and $\mu \in \mathpzc{SC_{area}}(M)$. Here, we use that  $M$ is an enlargeable spin manifold and on these manifolds, each metric of nonnegative scalar curvature must be flat.   
\section{Conform area extremal metrics} 
\label{sec_conf_extremal}
We can generalize the notion of area extremality given in (\ref{area1}) as follows. A Riemannian metric $g_0$ with $\mathrm{scal}_{g_0}\geq 0$ is said to be \emph{conform area extremal} if for any $g\in \met (M)$ and continuous function $\rho :M\to (0,\infty )$, the inequalities
\begin{equation*}
\begin{split}
|v\wedge w|_g&\geq \rho \cdot |v\wedge w|_{g_0}  \quad \forall v,w\in TM\\
\mathrm{scal}_g&\geq \frac{\mathrm{scal}_{g_0}}{\rho }
\end{split}
\end{equation*}
imply $\rho \cdot \mathrm{scal}_g=\mathrm{scal}_{g_0}$. Moreover, $g_0$ is \emph{strict conform area extremal} if additionally $\rho =\mathrm{const}$ and $g=\rho \cdot g_0$. Thus, each (strict) conform area extremal metric is (strict) area extremal but not vice versa. In order to simplify this definition we introduce the area dilatation function of $g_0$ by $g$:
\[
\mathrm{area}\bigl( {\textstyle \frac{g_0}{g}}\bigl)  :M\to (0,\infty ),\ x\mapsto \max _{v,w\in T_xM}\frac{|v\wedge w|_{g_0}}{|v\wedge w|_g}.
\]
Thus, $g_0$ is conform area extremal if $\mathrm{scal}_{g_0}\geq 0$ and for all $g\in \met (M)$ with
\begin{equation}
\label{conf_extremal}
\mathrm{scal}_g\geq \mathrm{scal}_{g_0}\cdot \mathrm{area}\bigl( {\textstyle \frac{g_0}{g}}\bigl) ,
\end{equation}
equality must hold.  Moreover, $g_0$ is strict conform area extremal if (\ref{conf_extremal}) for $g\in \met (M)$ implies $g=c\cdot g_0$ for some constant $c>0$. If we replace in (\ref{conf_extremal}) the function $\mathrm{area}\bigl( {\textstyle \frac{g_0}{g}}\bigl) $ by its maximum, we recover the notion of area extremality.

Any metric $g_0$ with $\mathrm{scal}_{g_0}\geq 0$ determines a modified scalar curvature (cf.~section \ref{sec2}) by
\[
\overline{\mathrm{scal}}^{g_0}:\met (M)\to C^{0,\alpha } (M) ,\ g\mapsto \mathrm{scal}_g-\mathrm{scal}_{g_0}\cdot \mathrm{area}\bigl( {\textstyle \frac{g_0}{g}}\bigl)  \ .
\]
Hence, if
\[
\overline{\yam }^{g_0}(M):=\sup _{[g]}\overline{\yam }^{g_0}(M_{[g]}),\qquad \overline{\yam }^{g_0}(M_{[g]}):=\inf _{h\in [g]}\frac{\int \overline{\mathrm{scal}}^{g_0}_h\cdot \mathrm{vol}_h}{\mathrm{Vol}(M,h)^{(n-2)/n}}
\]
denotes the modified Yamabe invariant for $\overline{\mathrm{scal}}^{g_0}$ as defined in section \ref{sec2} we can rephrase conform area extremality as follows:
\begin{prop}
Suppose that $g_0$ has nonnegative scalar curvature, then $g_0$ is conform area extremal iff $\overline{\yam }^{g_0}(M)= 0$. Moreover, $g_0$ is strict conform area extremal iff $\overline{\yam }^{g_0}(M_{[g]})\leq 0$ for all $[g]$ with strict inequality for all $[g]\neq [g_0]$.
\end{prop}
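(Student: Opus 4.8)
The plan is to translate conform area extremality entirely into the language of the modified scalar curvature $\overline{\mathrm{scal}}^{g_0}$ and then read off both equivalences from the results of section \ref{sec2}. The first thing to record is that $\overline{\mathrm{scal}}^{g_0}$ really is a modified scalar curvature in the sense of that section: the functional $F(g)=\mathrm{scal}_{g_0}\cdot\mathrm{area}(\frac{g_0}{g})$ is nonnegative because $\mathrm{scal}_{g_0}\geq 0$, and it satisfies $F(\varphi\cdot g)=F(g)/\varphi$ since $|v\wedge w|_{\varphi g}=\varphi|v\wedge w|_g$ gives $\mathrm{area}(\frac{g_0}{\varphi g})=\frac1\varphi\mathrm{area}(\frac{g_0}{g})$. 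With this identification the defining inequality (\ref{conf_extremal}) is exactly $\overline{\mathrm{scal}}^{g_0}(g)\geq 0$, and conform area extremality asserts that this forces $\overline{\mathrm{scal}}^{g_0}(g)\equiv 0$. The key elementary observation is that $\mathrm{area}(\frac{g_0}{g_0})\equiv 1$, so $\overline{\mathrm{scal}}^{g_0}_{g_0}\equiv 0$, a property preserved under rescaling. Hence $g_0$ is a unit-volume metric in its own conformal class with constant modified scalar curvature $0$, which gives $\overline{\yam}^{g_0}(M_{[g_0]})\leq 0$; remark \ref{rem_yamabe} then upgrades this to $\overline{\yam}^{g_0}(M_{[g_0]})=0$, because a negative value of the invariant would force $\overline{\mathrm{scal}}^{g_0}_{g_0}$ to equal a negative constant. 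In particular $\overline{\yam}^{g_0}(M)\geq 0$ always.

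For the first statement I would prove both directions against this baseline. If $g_0$ is conform area extremal, then no metric can have $\overline{\mathrm{scal}}^{g_0}>0$ everywhere (such a metric would satisfy (\ref{conf_extremal}) without equality), so corollary \ref{cor_yam} gives $\overline{\yam}^{g_0}(M)\leq 0$, hence $=0$. Conversely, assume $\overline{\yam}^{g_0}(M)=0$ but some $g$ has $\overline{\mathrm{scal}}^{g_0}(g)\geq 0$ and $\not\equiv 0$. If $\overline{\yam}^{g_0}(M_{[g]})\leq 0$, then remark \ref{rem_yamabe} applied to $h=g$ forces $\overline{\mathrm{scal}}^{g_0}_g$ to be a constant $\leq 0$, impossible for a nonnegative function that is positive somewhere; so $\overline{\yam}^{g_0}(M_{[g]})>0$, contradicting $\overline{\yam}^{g_0}(M)=0$. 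Thus extremality holds.

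The second statement uses the same machinery with the uniqueness clause of remark \ref{rem_yamabe} added. If $g_0$ is strict conform area extremal it is in particular conform area extremal, so the first part gives $\overline{\yam}^{g_0}(M)=0$ and hence $\overline{\yam}^{g_0}(M_{[g]})\leq 0$ for every $[g]$; if some $[g]\neq[g_0]$ had $\overline{\yam}^{g_0}(M_{[g]})=0$, its minimizer $h$ from proposition \ref{prop_itoh} would satisfy $\overline{\mathrm{scal}}^{g_0}_h\equiv 0\geq 0$, forcing $h=c\cdot g_0$ and so $[g]=[g_0]$, a contradiction. For the converse, take any $g$ with $\overline{\mathrm{scal}}^{g_0}(g)\geq 0$: if $[g]\neq[g_0]$ then $\overline{\yam}^{g_0}(M_{[g]})<0$ and remark \ref{rem_yamabe} again forces $\overline{\mathrm{scal}}^{g_0}_g$ to be a negative constant, impossible; hence $[g]=[g_0]$, and the uniqueness clause of remark \ref{rem_yamabe} identifies the unit-volume representative of $g$ with that of $g_0$, giving $g=c\cdot g_0$.

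The main obstacle I anticipate is analytic rather than structural: one must verify that $\overline{\mathrm{scal}}^{g_0}$ genuinely meets the hypotheses of section \ref{sec2} — in particular that the area-dilatation function $x\mapsto\mathrm{area}(\frac{g_0}{g})(x)$ has enough regularity (is $C^{0,\alpha}$) for proposition \ref{prop_itoh} and the uniqueness in remark \ref{rem_yamabe} to apply — and to keep the volume normalizations consistent each time remark \ref{rem_yamabe} is invoked, since that result is phrased for unit-volume metrics.
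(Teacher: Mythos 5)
Your proof is correct and follows essentially the same route as the paper: both translate the extremality conditions into statements about $\overline{\yam}^{g_0}$ and then combine corollary \ref{cor_yam}, remark \ref{rem_yamabe} and the minimizer from proposition \ref{prop_itoh} in the same way. If anything, you are slightly more complete, since you explicitly establish the baseline $\overline{\yam}^{g_0}(M_{[g_0]})=0$ (needed to pass from ``$\overline{\yam}^{g_0}(M_{[g]})\leq 0$ for all $[g]$'' to ``$\overline{\yam}^{g_0}(M)=0$''), a fact the paper only records as a separate remark after its proof.
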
 
\begin{proof}
Let $g_0$ be conform area extremal, then $\overline{\mathrm{scal}}^{g_0}(g)\geq 0$ for some $g$ implies equality by definition. Hence, corollary \ref{cor_yam} yields $\overline{\yam }^{g_0}(M_{[g]})\leq 0$ for all $[g]$. Conversely, if $\overline{\yam }^{g_0}(M_{[g]})\leq 0$ for all $[g]$, then remark \ref{rem_yamabe} shows that for each metric $g$ on $M$ with $\mathrm{Vol}(M,g)=1$ and
\[
\overline{\mathrm{scal}}^{g_0}(g)\geq \overline{\yam }^{g_0}(M_{[g]})\in (-\infty ,0]
\]
equality has to hold. Hence, $\overline{\mathrm{scal}}^{g_0}(g)\geq 0$ implies equality, i.e.~$g_0$ is conform area extremal. Moreover, the same argument shows that for $ \overline{\yam }^{g_0}(M_{[g]})<0$, there is a point $x$ with $\overline{\mathrm{scal}}^{g_0}(g)(x)<0$ which means that $\overline{\mathrm{scal}}^{g_0}(g)=0$ is only possible in case $\overline{\yam }^{g_0}(M_{[g]})=0$. Thus, suppose that $\overline{\yam }^{g_0}(M_{[g]})<0$ for all $[g]\neq [g_0]$, then $\overline{\mathrm{scal}}^{g_0}(g)= 0$ yields $[g]=[g_0]$ and remark \ref{rem_yamabe} supplies $g=c\cdot g_0$ for some constant $c$. Conversely, if $g_0$ is strict conform area extremal and $[g]\neq [g_0]$, then for any $h\in [g]$, there is a point $x\in M$ with $\overline{\mathrm{scal}}^{g_0}_h(x)<0$. Now choose the metric $h\in [g]$ with $\mathrm{Vol}(M,h)=1$ and constant modified scalar curvature (cf.~proposition \ref{prop_itoh}) proving $\overline{\yam }^{g_0}(M_{[g]})=\overline{\mathrm{scal}}^{g_0}_h=\mathrm{const}<0$.
\end{proof}
Hence, there is no conform area extremal metric on $M$ if $\overline{\yam }^{g_0}(M)>0$ for all $g_0$. In fact, the quantity $\inf _{g_0}\overline{\yam }^{g_0}(M)$ may be very useful to exclude conform area extremal metrics. Note that for any metric $g_0$ with $\mathrm{scal}_{g_0}\geq 0$ we know $\overline{\yam }^{g_0}(M_{[g_0]})=0$ from remark \ref{rem_yamabe} which means
\[
\overline{\yam }^{g_0}(M)\in [0,\yam (M)].
\]
The main purpose of this section is to prove the following generalization of the above results on area extremal metrics:
\begin{thm}
Let $(M^m_0,g_0)$, $m>2$, be an orientable locally symmetric space of positive Ricci curvature and Euler characteristic $\chi (M_0)\neq 0$, then $(M_0,g_0)$ is strict conform area extremal. Moreover, if $(T^k,h)$ is a flat torus and $(F,b)$ is a Ricci flat spin manifold with nonvanishing $\widehat{A}$--genus, then
\[
(M_0\times T^k\times F,g_0\oplus h\oplus b)
\]
is conform area extremal. 
\end{thm}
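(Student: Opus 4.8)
The plan is to recast both claims as statements about the modified Yamabe invariant $\overline{\mathrm{scal}}^{g_0}$ and then to run an index argument of Llarull--Goette--Semmelmann type. By the preceding proposition it suffices to prove $\overline{\yam}^{g_0}(M_{[g]})\le 0$ for every conformal class $[g]$, with strict inequality whenever $[g]\neq[g_0]$. Since $\overline{\yam}^{g_0}(M_{[g_0]})=0$ by remark \ref{rem_yamabe}, corollary \ref{cor_yam} reduces the inequality $\overline{\yam}^{g_0}(M)\le 0$ to excluding any metric $g$ with $\overline{\mathrm{scal}}^{g_0}(g)>0$, i.e.\ with $\mathrm{scal}_g>\mathrm{scal}_{g_0}\cdot\mathrm{area}(\tfrac{g_0}{g})$. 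First I would fix such a hypothetical $g$ and, using the Euler--operator realization, form the Clifford bundle $\Lambda^*_\C M_0=\spinor M_0\otimes\spinor M_0$ over $(M_0,g)$: the first factor carries the $g$--Clifford module structure (the Dirac factor), while the second $\spinor M_0$ is a twisting factor endowed with the Levi--Civita connection of $g_0$. Because $\chi(M_0)\neq0$ the associated Gau\ss--Bonnet operator $\dirac$ has index $\chi(M_0)\neq0$, so $\ker\dirac\neq\{0\}$.

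The Bochner--Weitzenb\"ock formula then reads $\dirac^2=\nabla^*\nabla+\tfrac14\mathrm{scal}_g+\frak{R}$, and since $(M_0,g_0)$ is locally symmetric with positive Ricci curvature it is of compact type and hence has nonnegative curvature operator; the estimate of section \ref{notes_bochner} (already used in the area--extremal proposition, but now retained pointwise instead of being replaced by its maximum) gives the fibrewise lower bound $\frak{R}\ge-\tfrac{\mathrm{scal}_{g_0}}{4}\,\mathrm{area}(\tfrac{g_0}{g})$. Consequently $\dirac^2\ge\nabla^*\nabla+\tfrac14\overline{\mathrm{scal}}^{g_0}(g)$, and $\overline{\mathrm{scal}}^{g_0}(g)>0$ would force $\ker\dirac=\{0\}$, contradicting $\chi(M_0)\neq0$. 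Hence no such $g$ exists and $g_0$ is conform area extremal.

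For strictness I would invoke proposition \ref{prop_itoh}: if $\overline{\yam}^{g_0}(M_{[g]})=0$ for some $[g]$, choose $h\in[g]$ with $\mathrm{Vol}(M,h)=1$ and constant $\overline{\mathrm{scal}}^{g_0}_h=0$. For $0\neq\theta\in\ker\dirac$ every inequality above must be saturated, so $\theta$ is parallel and $\frak{R}\,\theta=-\tfrac{\mathrm{scal}_{g_0}}{4}\,\mathrm{area}(\tfrac{g_0}{h})\,\theta$ holds at each point. The main obstacle is the rigidity analysis of this equality case. As in Goette--Semmelmann one must show that a parallel $\theta$ saturating the curvature estimate pins down the eigenvalues of the area dilatation at every point; combining this with the strict area extremality of $g_0$ (the quoted theorem) and the uniqueness clause of remark \ref{rem_yamabe} should then force $h=c\cdot g_0$ and thus $[g]=[g_0]$. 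Carrying out this equality discussion, in particular ruling out a nonconstant conformal factor, is where the real work lies.

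For the product $N=M_0\times T^k\times F$ with reference metric $g_N=g_0\oplus h\oplus b$ I would argue by contradiction again, combining three index contributions. Since $h$ is flat and $b$ is Ricci flat we have $\mathrm{scal}_{g_N}=\mathrm{scal}_{g_0}$, so I must exclude a metric $g$ on $N$ with $\overline{\mathrm{scal}}^{g_N}(g)>0$. The torus has infinite K--area, $\ke_\mathrm{ch}(T^k_h;[T^k])=\infty$, so for every $\epsilon>0$ there is a Hermitian bundle $\bundle{E}_\epsilon\to T^k$ with $\left<\mathrm{ch}(\bundle{E}_\epsilon),[T^k]\right>\neq0$ and $\|R^{\bundle{E}_\epsilon}\|<\epsilon$. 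Following section \ref{notes_bochner}, I would build over $(N,g)$ the Dirac bundle whose Dirac factor realizes the Gau\ss--Bonnet operator on the $M_0$--directions and the intrinsic spinor module on the spin factor $T^k\times F$, twisted by the reference bundle $\spinor M_0\otimes\pi^*\bundle{E}_\epsilon$ (with $\pi:N\to T^k$ the projection and $\spinor M_0$ carrying the $g_0$--connection). Its index is metric independent, so computing with $g_N$ and using multiplicativity gives $\chi(M_0)\cdot\widehat{A}(F)\cdot\left<\mathrm{ch}(\bundle{E}_\epsilon),[T^k]\right>\neq0$, the factor $\widehat{A}(F)$ coming from the intrinsic Dirac operator on the Ricci flat $F$. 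The twisting curvature is supported on the $g_N$--orthogonal $M_0$-- and $T^k$--blocks: the $M_0$--block contributes $\ge-\tfrac{\mathrm{scal}_{g_0}}{4}\,\mathrm{area}(\tfrac{g_N}{g})$ (nonnegative curvature operator of $g_0$, and $|v\wedge w|_{g_0}=|v\wedge w|_{g_N}$ on $TM_0$), the torus block $\ge-C\epsilon\,\mathrm{area}(\tfrac{g_N}{g})$, while $F$ enters only through $\mathrm{scal}_{g_N}$ and through $\widehat{A}(F)$. Thus $\dirac^2\ge\nabla^*\nabla+\tfrac14\overline{\mathrm{scal}}^{g_N}(g)-C\epsilon\,\mathrm{area}(\tfrac{g_N}{g})$. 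By compactness $\overline{\mathrm{scal}}^{g_N}(g)\ge\delta>0$ and $\mathrm{area}(\tfrac{g_N}{g})\le A<\infty$, so choosing $\epsilon<\delta/(4CA)$ makes the right-hand side strictly positive, forces $\ker\dirac=\{0\}$ and contradicts the nonzero index. Hence $g_N$ is conform area extremal.
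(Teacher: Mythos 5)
Your reformulation of both claims through the modified Yamabe invariant is legitimate and is a genuinely different packaging from the paper, which obtains the theorem as an immediate corollary of theorem \ref{area_thm} applied to $f=\mathrm{id}$ respectively to the projection $f:M_0\times T^k\times F\to M_0$. The difference matters at the Bochner step: theorem \ref{area_thm} proves the pointwise equality statement under the bare inequality (\ref{in_conf_ex}), so it must extract information from kernel elements of operators whose curvature term is only bounded below by $-C\epsilon$, and this is exactly why the paper needs the almost--nonnegative Bochner argument of proposition \ref{lem1342}. You, on the other hand, reduce via the first proposition of section \ref{sec_conf_extremal}, corollary \ref{cor_yam} and remark \ref{rem_yamabe} to excluding metrics with $\overline{\mathrm{scal}}^{g_0}(g)>0$ \emph{everywhere}; then compactness gives a uniform margin $\delta>0$ which beats the twist error $C\epsilon\cdot\mathrm{area}\bigl(\textstyle\frac{g_N}{g}\bigl)$ for small $\epsilon$, and the classical Bochner vanishing suffices --- a real simplification for the extremality statements. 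Your index bookkeeping $\pm\chi(M_0)\cdot\widehat{A}(F)\cdot\left< \mathrm{ch}(\bundle{E}_\epsilon),[T^k]\right> $ agrees with the paper's formula $\pm\chi(M_0)\left< \mathrm{ch}(\bundle{E}),\widehat{\mathbf{A}}(N)\cap f^!(1)\right> $ with $\bundle{E}=\pi^*\bundle{E}_\epsilon$; just note that for arbitrary $g$ there is no splitting of $TN$ into ``$M_0$--directions'', so the correct global object is $\spinor (TN\oplus f^*TM_0)\otimes \pi^*\bundle{E}_\epsilon$ from section \ref{notes_bochner}, with your description valid only locally.

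Two genuine gaps remain. First, parity: if $k$ is odd then $N=M_0\times T^k\times F$ is odd dimensional (since $\chi(M_0)\neq 0$ forces $m$ even and $\widehat{A}(F)\neq 0$ forces $\dim F\equiv 0\ \mathrm{mod}\ 4$), the graded index vanishes, and there is no Hermitian bundle on $T^k$ with $\left< \mathrm{ch}(\bundle{E}_\epsilon),[T^k]\right> \neq 0$ at all, since $\mathrm{ch}$ lives in even degrees; you must stabilize by a circle, i.e.\ run the argument on $(N\times S^1,g\oplus \mathrm{d}t^2)$ with bundles on $T^{k+1}$, exactly as the paper does in the odd--dimensional case of theorem \ref{area_thm}. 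Second, strictness: you correctly see that a parallel kernel element saturating the curvature estimate must pin down the area dilatation, but you defer this (``where the real work lies''), and it is precisely the paper's hardest step --- the equality case of proposition \ref{main_prop}, where $\mathrm{Ric}(g_0)>0$, $m\geq 3$ and $\breve{B}_2=\alpha ^2$ on the image of the curvature operator force all eigenvalues of $\breve{B}$ to equal $\alpha $, so that $g_0=\alpha \cdot h$ pointwise. That rigidity is neither proved nor cited in your proposal, and without it the strictness claim is open. Once it is in hand, however, your endgame via the uniqueness clause of remark \ref{rem_yamabe} inside $[g_0]$ does work and is a valid substitute for the paper's integration--by--parts identity showing $\mathrm{dil}_2(f)=\mathrm{const}$; so the single missing ingredient is the pointwise homothety statement, which you should either prove along Goette--Semmelmann lines or quote as proposition \ref{main_prop}.
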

The first part of this theorem was already proved by the author in \cite{List9}. Note that $(M_0\times T^k\times F,g_0\oplus h\oplus b)$ can not be strict conform area extremal in case $k+\dim F>0$ because $g\oplus c_1 h\oplus c_2 b$ satisfies (\ref{conf_extremal}) for all $c_1,c_2\geq 1$.  The theorem follows immediately from theorem \ref{area_thm} below considering $M=M_0$ respectively $M=M_0\times T^k\times F$ and using the results in chapter \ref{chp2}.  In order to state theorem \ref{area_thm} we use the K--area respectively the K--area homology as it is defined in chapter \ref{chp2} and consider the transfer homomorphism $f^!:H_{n-k}(N;\Q )\to H_{m-k}(M;\Q )$ for maps $f:M^m\to N^n$ given by the composition of
\[
H_{n-k}(N;\Q )\to H^k(N;\Q )\stackrel{f^*}{\to } H^k(M;\Q )\to H_{m-k}(M;\Q ) 
\]
where the first and last homomorphism are determined by Poincar\'e duality on $N$ respectively on $M$. Remember that a smooth map $f:M\to N$ is called \emph{spin map} iff $f^*w_2(N)=w_2(M)$ for the second Stiefel--Whitney classes. Note that $\H _*(M;\Q )$ is defined as the set of rational homology classes with finite K--area which means $\widehat{\mathbf{A}}(M)\cap f^!(\eta )\notin  \H _*(M;\Q )$ for $\eta \in H_*(N;\Q )$ is equivalent to say that $\widehat{\mathbf{A}}(M)\cap f^!(\eta )\in H_*(M;\Q )$ has infinite K--area. 
\begin{defn}
\label{dilatation}
We denote the \emph{$\Lambda ^k$--dilatation function} of a smooth map $f:(M,g)\to (N,h)$ by $\mathrm{dil}_{k}(f):M\to [0,\infty )$ where
\[
\mathrm{dil}_{k}(f)(x):=\max _{v_1,\ldots ,v_k\in T_xM}\frac{|\mathrm{d}f(v_1)\wedge \cdots \wedge \mathrm{d}f(v_k)|_h}{|v_1\wedge \cdots \wedge v_k|_g}=\max _{0\neq \eta \in \Lambda ^kT_xM}\frac{|\mathrm{d}f^{\Lambda ^k}(\eta )|_h}{|\eta |_g}.
\]
In fact, $\mathrm{dil}_2(f)=\mathrm{area}\bigl( {\textstyle \frac{f^*h}{g}}\bigl) $.
\end{defn}
\begin{thm}
\label{area_thm}
Let $(M_0^m,g_0)$, $m\geq 3$, be an oriented closed and connected Riemannian manifold with nonnegative curvature operator, positive Ricci curvature and Euler characteristic $\chi (M_0)\neq 0$. Suppose that $(M^n,g)$ is an oriented closed Riemannian manifold and $f:M\to M_0$ is a smooth spin map with $\widehat{\mathbf{A}}(M)\cap f^!(1 )\notin \H _*(M;\Q )$ for a generator $1\in H_0(M_0;\Z ) $. If $g$ satisfies
\begin{equation}
\label{in_conf_ex}
\mathrm{scal}_g\geq \mathrm{dil}_{2}(f)\cdot \mathrm{scal}_{g_0}\circ f
\end{equation}
then equality holds and in case $\mathrm{scal}_g>0$,
\[
f:(M,\mathrm{dil}_{2}(f)\cdot g)\to (M_0,g_0)
\]
is a Riemannian submersion. In fact, if $n=m$ and $g$ satisfies (\ref{in_conf_ex}), then $\mathrm{dil}_2(f)$ is constant and $f:(M,\mathrm{dil}_2(f)\cdot g)\to (M_0,g_0)$ is a Riemannian covering. Moreover, these statements are also true in case $m=\dim M_0=2$ if we replace $\mathrm{dil}_2(f)$ everywhere by $\mathrm{dil}_1(f)^2$.
\end{thm}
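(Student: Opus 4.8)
The plan is to read $F(g):=\mathrm{dil}_2(f)\cdot(\mathrm{scal}_{g_0}\circ f)$ as a modifying function in the sense of section \ref{sec2} and to deduce the equality assertion from a nonpositivity statement for the associated modified Yamabe invariant. First I would check the two axioms for $F$: it is nonnegative, since $\mathrm{scal}_{g_0}>0$ by positive Ricci curvature and $\mathrm{dil}_2(f)\geq0$, and it scales correctly because $|v\wedge w|_{\varphi g}=\varphi\,|v\wedge w|_g$ forces $\mathrm{dil}_2(f)(\varphi g)=\varphi^{-1}\mathrm{dil}_2(f)(g)$. Thus $\mathrm{scal}_g^F=\mathrm{scal}_g-\mathrm{dil}_2(f)\cdot(\mathrm{scal}_{g_0}\circ f)$ is a genuine modified scalar curvature, and by corollary \ref{cor_yam} and remark \ref{rem_yamabe} the hypothesis $\mathrm{scal}_g^F\geq0$ forces $\mathrm{scal}_g^F=0$ — i.e.\ equality in (\ref{in_conf_ex}) — as soon as I establish $\yam^F(M)\leq0$, equivalently that $M$ carries no metric of everywhere positive modified scalar curvature.

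The core is therefore to exclude positive $\mathrm{scal}^F$ by an index argument. Assume some $\tilde g$ satisfied $\mathrm{scal}_{\tilde g}^F>0$, hence $\geq\delta>0$ by compactness. Since $f$ is a spin map, $\spinor M\otimes f^*\spinor M_0$ is a globally defined $\Z_2$--graded Dirac bundle, which I would twist further by a Hermitian bundle $\bundle{E}$. The Goette--Semmelmann estimate for manifolds of nonnegative curvature operator (cf.\ \cite{GoSe1} and section \ref{notes_bochner}) bounds the twist curvature of $f^*\spinor M_0$ below by $-\tfrac14\,\mathrm{dil}_2(f)\,(\mathrm{scal}_{g_0}\circ f)$, while the $\bundle{E}$--contribution is controlled in operator norm by $\tfrac{n(n-1)}2\|R^\bundle{E}\|_{\tilde g}$. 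Inserting this into the Bochner--Lichnerowicz formula, the zeroth order term of $\dirac^2$ is at least $\tfrac14\mathrm{scal}_{\tilde g}^F-\tfrac{n(n-1)}2\|R^\bundle{E}\|_{\tilde g}\geq\tfrac{\delta}4-\tfrac{n(n-1)}2\|R^\bundle{E}\|_{\tilde g}$. Because $\theta:=\widehat{\mathbf{A}}(M)\cap f^!(1)$ has infinite K--area, I may pick $\bundle{E}\in\mathscr{V}_\mathrm{ch}(M;\theta)$ with $\|R^\bundle{E}\|_{\tilde g}<\tfrac{\delta}{2n(n-1)}$; then $\dirac^2>0$ and $\ker\dirac=0$.

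To see the contradiction I compute the index. From $\widehat{\mathbf{A}}(TM_0)\cdot\mathrm{ch}(\spinor^+M_0-\spinor^-M_0)=e(TM_0)$ and the vanishing of cohomology above degree $m$ on $M_0$, the class $\mathrm{ch}(\spinor^+M_0-\spinor^-M_0)$ is pure top degree and equals $e(TM_0)=\chi(M_0)\,[M_0]^*$, so $f^*\mathrm{ch}(\spinor^+M_0-\spinor^-M_0)=\chi(M_0)\,f^*[M_0]^*$. The Atiyah--Singer index theorem then gives
\[
\mathrm{ind}\,\dirac^+=\chi(M_0)\bigl\langle\mathrm{ch}(\bundle{E}),(\widehat{\mathbf{A}}(M)\cup f^*[M_0]^*)\cap[M]\bigr\rangle=\chi(M_0)\bigl\langle\mathrm{ch}(\bundle{E}),\theta\bigr\rangle\neq0,
\]
since $\chi(M_0)\neq0$ and $\bundle{E}$ detects $\theta$; this contradicts $\ker\dirac=0$ and proves $\yam^F(M)\leq0$. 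For odd-dimensional $M$ I would pass to $M\times S^1$ with a product metric exactly as in the proof of proposition \ref{prop_scalar}, the lower curvature bound being independent of the circle factor.

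For the rigidity statements I would analyse the equality case. Once equality holds and $\mathrm{scal}_g>0$ (so $\mathrm{dil}_2(f)>0$), the above operators must carry spinors that become parallel in the limit $\|R^\bundle{E}\|\to0$, saturating the curvature estimate; the standard conclusion then forces $\mathrm{d}f$, after rescaling $g$ by $\mathrm{dil}_2(f)$, to be an isometry on the orthogonal complement of $\ker\mathrm{d}f$, i.e.\ $f:(M,\mathrm{dil}_2(f)\cdot g)\to(M_0,g_0)$ is a Riemannian submersion; if $n=m$ there is no kernel, $\mathrm{dil}_2(f)$ is constant, and the rescaled $f$ is a local isometry of closed manifolds, hence a covering. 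The surface case $M_0=S^2$ is identical except that the spinorial estimate produces $\mathrm{dil}_1(f)^2$ in place of $\mathrm{dil}_2(f)$. I expect the principal obstacle to be precisely this rigidity step: the twisting bundle $f^*\spinor M_0\otimes\bundle{E}$ is nonsymmetric, so the relevant curvature operator is only \emph{almost} nonnegative, and extracting the submersion from a Bochner inequality that is saturated only in the limit needs the extension of the Bochner argument to almost nonnegative curvature operators of proposition \ref{lem1342} rather than a direct parallel-spinor argument. A secondary point is to justify the curvature estimate $\frak{R}^{f^*\spinor M_0}\geq-\tfrac14\mathrm{dil}_2(f)(\mathrm{scal}_{g_0}\circ f)$ when $\mathrm{d}f$ has nontrivial kernel.
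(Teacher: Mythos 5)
Your analytic core coincides with the paper's proof: the $\Z _2$--graded Dirac bundle built from $TM\oplus f^*TM_0$, the index formula $\mathrm{ind}\, \D ^+=\pm \chi (M_0)\left< \mathrm{ch}(\bundle{E}),\widehat{\mathbf{A}}(M)\cap f^!(1)\right> $, the Goette--Semmelmann bound $\frak{R}^0\geq -\frac{1}{4}\mathrm{dil}_2(f)\cdot \mathrm{scal}_{g_0}\circ f$ of proposition \ref{main_prop}, the $\epsilon $--flat twist bundles supplied by the infinite K--area hypothesis, and the $M\times S^1$ reduction for odd $n$ are all exactly the paper's ingredients. Your modified--Yamabe detour (reading $F=\mathrm{dil}_2(f)\cdot \mathrm{scal}_{g_0}\circ f$ and invoking corollary \ref{cor_yam} and remark \ref{rem_yamabe}) is valid --- it is precisely how the paper characterizes conform area extremality in section \ref{sec_conf_extremal} --- but in the proof of the theorem it is redundant and, more importantly, insufficient for the rigidity: applying proposition \ref{lem1342} directly to the given $g$ with $\bundle{S}=\spinor \bundle{F}$ and $\alpha =\frac{\mathrm{scal}_g}{4}+\lambda _-(\frak{R}^0)\geq 0$ yields $\alpha \equiv 0$ at once, which gives both equality in (\ref{in_conf_ex}) \emph{and} the pointwise saturation $\lambda _-(\frak{R}^0)=-\frac{1}{4}\mathrm{dil}_2(f)\,\mathrm{scal}_{g_0}\circ f$. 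Your Yamabe route alone only produces the scalar curvature equality, from which saturation does \emph{not} follow. Your description of the submersion step as ``spinors that become parallel in the limit'' is the wrong mechanism: the twist bundles have unbounded rank, so no limiting parallel spinor can be extracted; what actually happens is that once \ref{lem1342} forces pointwise saturation, the purely algebraic equality--case analysis inside proposition \ref{main_prop} (saturation forces $\breve{B}_2=\mathrm{dil}_2(f)^2\cdot \mathrm{Id}$ on $\mathrm{Im}(\curv{R}^0)$, and $\mathrm{Ric}(g_0)>0$ with $m\geq 3$ then forces all eigenvalues of $\breve{B}$ to coincide) makes $f_*$ a homothetic surjection pointwise, with no further spinor analysis. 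Your ``secondary point'' about degenerate $\mathrm{d}f$ is already covered there: at points with $\mathrm{dil}_2(f)=0$ the curvature $\frak{R}^0$ vanishes, so the estimate holds trivially. Also note that $\spinor M\otimes f^*\spinor M_0$ is in general only locally defined (neither $M$ nor $M_0$ is assumed spin); the global object is the spinor bundle of $TM\oplus f^*TM_0$, to which the index theorem is applied --- your index computation is otherwise correct.

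The genuine gap is the case $n=m$. You assert ``$\mathrm{dil}_2(f)$ is constant'' as if it were automatic, but after the submersion step one only knows that $f:(U,\alpha \cdot g)\to (M_0,g_0)$ is a local isometry on the open set $U=\{ \alpha >0\} $ with $\alpha :=\mathrm{dil}_2(f)$; a priori $\alpha $ varies and $U$ may be a proper subset of $M$ (the theorem does not assume $\mathrm{scal}_g>0$ here). The paper closes this with a separate global argument that your proposal is missing: since $\mathrm{scal}_{g_0}\circ f>0$, the equality $\mathrm{scal}_g=\alpha \cdot \mathrm{scal}_{g_0}\circ f$ shows $\alpha $ is smooth on all of $M$; comparing $\mathrm{scal}_{\alpha g}=\mathrm{scal}_{g_0}\circ f$ on $U$ with the conformal transformation law gives $\frac{n-1}{\alpha ^2}\delta \mathrm{d}\alpha -\frac{(n-1)(n-6)}{4\alpha ^3}|\mathrm{d}\alpha |^2=0$ on $U$, hence $\alpha ^k\delta \mathrm{d}\alpha -\frac{n-6}{4}\alpha ^{k-1}|\mathrm{d}\alpha |^2=0$ on all of $M$ for every $k\geq 1$; integrating yields $\bigl( k-\frac{n-6}{4}\bigl) \int _M\alpha ^{k-1}|\mathrm{d}\alpha |^2=0$, so $\mathrm{d}\alpha =0$, and $\alpha \neq 0$ because $f^!(1)\neq 0$ forces $\deg f\neq 0$, whence $U\neq \emptyset $. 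Only then is $f:(M,\alpha g)\to (M_0,g_0)$ a local isometry of closed manifolds and hence a Riemannian covering. Without this integration trick the covering conclusion --- one of the three assertions of the theorem --- is unproved.
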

\begin{proof}
We start with the case that $n=\dim M$ is even and use the results in section \ref{notes_bochner}. Since $f$ is a spin map, the second Stiefel--Whitney classes are related by $w_2(TM)=f^*w_2(TM_0)$, i.e.~the bundle $\bundle{F}=TM\oplus f^*TM_0$ admits a spin structure. The associated irreducible complex spinor bundle $\spinor \bundle{F}$ is naturally $\Z _2$--graded by the volume form of $\Cl _\C (\bundle{F})$. Using the embedding $\Cl _\C (TM)\hookrightarrow \Cl _\C (\bundle{F})$ and a connection induced from a connection on $\bundle{F}$, $\spinor \bundle{F}$ is a Dirac bundle. Moreover, if $\bundle{E}\to M$ is a Hermitian vector bundle with connection, the tensor product $\spinor \bundle{F}\otimes \bundle{E}$ is a $\Z _2$--graded Dirac bundle with Dirac operator $\D $. The index of $\D ^+:\Gamma (\spinor ^+\bundle{F}\otimes \bundle{E})\to \Gamma (\spinor ^-\bundle{F}\otimes \bundle{E})$ is given by  (cf.~\cite{LaMi} theorem 13.13 and proposition 11.24)
\[
\begin{split}
\mathrm{ind}\ \D ^+&=(-1)^k\left< \chi (f^*TM_0)\cdot \widehat{\mathbf{A}}(f^*TM_0)^{-1}\cdot \widehat{\mathbf{A}}(TM)\cdot \mathrm{ch}(\bundle{E}), [M]\right> \\
&=(-1)^k\chi (M_0)\left< \widehat{\mathbf{A}}(M)\cdot \mathrm{ch}(\bundle{E})\cdot f^*\omega , [M]\right> \\
&=(-1)^k\chi (M_0)\left< \mathrm{ch}(\bundle{E}), \widehat{\mathbf{A}}(M)\cap f^! (1)\right> 
\end{split}
\] 
where $2k=\dim M_0$ and $\omega \in H^{2k}(M_0)$ means the orientation class. Let $\nabla ^\bundle{F}=\nabla \oplus f^*\nabla ^0$ be the connection on $\bundle{F}$ induced from the Levi--Civita connection on $TM$ and the Levi--Civita connection $\nabla ^0$ on $TM_0$, then $\frak{R}^0$ denotes the twist curvature endomorphism of the Dirac bundle $\spinor \bundle{F} $ as in section \ref{notes_bochner}, i.e.
\[
\D ^2=\nabla ^*\nabla +\frac{\mathrm{scal}_g}{4}+\frak{R}^0\otimes \mathrm{id}+\sum _{i<j}\gamma (e_i)\gamma (e_j)\otimes R^\bundle{E}_{e_i,e_j}.
\]
Moreover, inequality (\ref{in_conf_ex}) and proposition \ref{main_prop} yield $\mathrm{scal}_g/4+\lambda _-(\frak{R}^0)\geq 0$ where $\lambda _-(\frak{R}^0)$ means the pointwise minimal eigenvalue of $\frak{R}^0$. Since the homology class $\widehat{\mathbf{A}}(M)\cap f^! (1)$ has infinite K--area, there exists for each $\epsilon >0$  a twist bundle $\bundle{E}$ with $\| R^\bundle{E}\| _g<\epsilon $ and corresponding Dirac operator $\mathrm{ind}(\D ^+)\neq 0$. Thus, proposition \ref{lem1342} proves $\mathrm{scal}_g/4+\lambda _-(\frak{R}^0)=0$ and
\[
0=4\lambda _-(\frak{R}^0)+\mathrm{scal}_g\geq 4\lambda _-(\frak{R}^0)+ \mathrm{dil}_2(f)\cdot \mathrm{scal}_{g_0}\circ f\geq 0
\]
supplies $\mathrm{scal}_g=\mathrm{dil}_2(f)\cdot \mathrm{scal}_{g_0}\circ f$. Since $\mathrm{scal}_g>0$ implies $\mathrm{dil}_2(f)>0$, proposition \ref{main_prop} shows that $f:(M,\mathrm{dil}_2(f)\cdot g)\to (M_0,g_0)$  is a Riemannian submersion. 

If $n=\dim M$ is odd, we apply the even--dimensional case to the Riemannian manifold $(\tilde M,\tilde g):=(M\times S^1,g\oplus \mathrm{d}t^2)$. Consider the map $\tilde f:M\times S^1\to M_0, (p,x)\mapsto f(p)$ and the projection $\pi :\tilde M\to M$, then
\[
\mathrm{scal}_{\tilde g}=\mathrm{scal}_g\circ \pi \ , \qquad \mathrm{dil}_2(\tilde f)\cdot \mathrm{scal}_{g_0}\circ \tilde f=\left[\mathrm{dil}_2(f)\cdot \mathrm{scal}_{g_0}\circ f\right] \circ \pi 
\]
and
\[
\begin{split}
\infty =\ke (M_g; \widehat{\mathbf{A}}(M)\cap f^!(1))&\stackrel{def}{=}\sup _{\mathrm{d}t^2}\ke (M_g\times S^1_{\mathrm{d}t^2};\widehat{\mathbf{A}}(M)\cap f^! (1) \times [S^1])\\
&= \ke (\tilde M_{\tilde g}; \widehat{\mathbf{A}}(\tilde M)\cap \tilde f ^!(1)).
\end{split}
\]
shows that $(\tilde M,\tilde g)$ satisfies the assumptions of the theorem for even dimensional manifolds. Thus, assuming (\ref{in_conf_ex}) and $\mathrm{scal}_g>0$, $\tilde f:(\tilde M,\mathrm{dil}_2(\tilde f)\cdot \tilde g)\to (M_0,g_0)$ is a Riemannian submersion  and $\mathrm{dil}_2(\tilde f)=\mathrm{dil}_2(f)\circ \pi $ supplies the claim for odd dimensional $M$.

Now, suppose that $n=m$, then $n$ is obviously even. Define the open set $U:=\{ p\in M|\ \alpha (p)>0\} $ where $\alpha :=\mathrm{dil}_2(f)$ for notational simplicity. Then $U$ is nonempty and $f:M\to M_0$ is surjective because $f^!(1)\neq 0$ yields $f_*f^!=\deg (f)\neq 0$. The above results show that the map $f:(U,\alpha \cdot g)\to (M_0,g_0)$ is a local isometry if we assume inequality (\ref{in_conf_ex}). We use $\mathrm{scal}_g=\alpha \cdot \mathrm{scal}_{g_0}\circ f$ to prove that $\alpha $ has to be constant which implies that $U=M$ and that $f:(M,\alpha g)\to (M_0,g_0)$ must be a Riemannian covering. We first note that $\mathrm{scal}_0\circ f>0$ implies that $\alpha $ is smooth on all of $M$ not only on $U$. Moreover, the scalar curvature of $g$ and $h:=\alpha g$ are on $U$ related by
\[
\mathrm{scal}_h=\frac{1}{\alpha }\mathrm{scal}_g+\frac{n-1}{\alpha ^2}\delta \mathrm{d}\alpha -\frac{(n-1)(n-6)}{4\alpha ^3}|\mathrm{d}\alpha |^2_g.
\]
Since $h=f^*g_0$ on $U$, the scalar curvature of $h$ is given by $\mathrm{scal}_h=\mathrm{scal}_0\circ f$ and we conclude from $\mathrm{scal}_g=\alpha \cdot \mathrm{scal}_{g_0}\circ f$ on $U$:
\[
\frac{n-1}{\alpha ^2}\delta \mathrm{d}\alpha -\frac{(n-1)(n-6)}{4\alpha ^3}|\mathrm{d}\alpha |^2_g=0.
\]
Thus, since $\alpha $ is smooth on $M$ and $\alpha =0$ on $M-U$, the following equation holds on all of $M$ for all $k\geq 1$:
\[
\alpha ^{k}\delta \mathrm{d}\alpha -\frac{n-6}{4}\alpha ^{k-1}|\mathrm{d}\alpha |^2=0.
\]
Integrate over $M$ w.r.t.~the volume form of $g$ yields for all $k\geq 1$
\[
\left( k-\frac{n-6}{4}\right) \int\limits _M\alpha ^{k-1}|\mathrm{d}\alpha |^2=0
\]
and hence, $\mathrm{d}\alpha =0$ shows $\alpha =\mathrm{dil}_2(f)=\mathrm{const}$ which completes the proof (recall that $\alpha \not\equiv 0$ since $\deg (f)\neq 0$ means $U\neq \emptyset $).
\end{proof}

\section{Notes on curvature in Bochner--Weitzenb\"ock formulas}
\label{notes_bochner}
A result by Bourguignon states that a closed manifold $M$ admits a metric of strictly positive scalar curvature or any metric of nonnegative scalar curvature is Ricci flat. This is the starting point in Gromov and Lawson's proof that enlargeable spin manifolds do not carry metrics of positive scalar curvature. In their proof Gromov and Lawson assume the existence of positive scalar curvature and then they consider Dirac bundles $\spinor M\otimes \bundle{E}$ for a suitable twist bundle $\bundle{E}$ with curvature norm less than $\min \mathrm{scal}_g$. This yields a contradiction if the Dirac operator associated to $\spinor M\otimes \bundle{E}$ has nontrivial index. Below we show that the Gromov and Lawson argument also works without knowing Bourguignon's result. In fact, lemma \ref{lem1342} will be essential to conclude the main statement in section \ref{sec_conf_extremal}.  

Let $\bundle{S}$ be a (complex) Dirac bundle on a closed Riemannian manifold $(M^n,g)$, then $\frak{R}^{\bundle{S}}$ denotes the twist curvature endomorphism which appears in the Boch\-ner--Weitzenb\"ock formula:
\[
\D ^2=\nabla ^*\nabla +\frac{\mathrm{scal}_g}{4}+\frak{R}^{\bundle{S} }.
\] 
If $\bundle{E}\to M$ is a Hermitian vector bundle, i.e.~$\bundle{E}$ is a complex vector bundle endowed with a Hermitian metric and a metric connection, then $\bundle{S}\otimes \bundle{E}$ is a Dirac bundle and
\[
\frak{R}^{\bundle{S}\otimes \bundle{E} }=\frak{R}^{\bundle{S}}\otimes \mathrm{id}+\sum _{i<j}\gamma (e_i)\gamma (e_j)\otimes R^\bundle{E}_{e_i,e_j}
\]
where $\gamma (.)$ means Clifford multiplication on $\bundle{S}$ and $e_1,\ldots ,e_n$ is an orthonormal basis of $TM$. We denote by $\| R^\bundle{E}\| _{g}$ the $L^\infty $--operator norm of $R^\bundle{E}$ defined as in chapter \ref{chp2}, i.e.
\[
\| R^\bundle{E}\| _{g}:=\max _{0\neq v\wedge w\in \Lambda ^2TM}\frac{|R^\bundle{E}_{v,w}|_{op}}{|v\wedge w|_g}
\]
where $|\, .\, |_{op}$ means the operator norm on the fibers of $\mathrm{End}(\bundle{E})$. Moreover, $\lambda _-(.)$ denotes the (pointwise) minimal eigenvalue of selfadjoint endomorphism, in fact, $\lambda _-(\frak{R}^{\bundle{S} }):M\to \R $ is the function which assigns to $p\in M$ the minimal eigenvalue of $\frak{R}^{\bundle{S} }(p)\in \mathrm{End}(\bundle{S}_p)$. In the case that $\frac{\mathrm{scal}_g}{4}+\frak{R}^\bundle{S}\geq 0$ and $\bundle{E}$ is a bundle with $\epsilon $--flat curvature, we cannot directly apply the integrated Bochner--Weitzenb\"ock formula on $\bundle{S}\otimes \bundle{E}$  because we only have $\frac{\mathrm{scal}_g}{4}+\frak{R}^{\bundle{S}\otimes \bundle{E}}\geq -\epsilon $. Now, the following proposition deals with this problem of almost nonnegative curvature.  
\begin{prop}
\label{lem1342}
In the above situation suppose that $\frac{\mathrm{scal}_g}{4}+\lambda _-(\frak{R}^\bundle{S})\geq 0$, then there is a constant $C=C(g,n) $ with the following property: For any sufficiently small $\epsilon >0$ and Hermitian bundles $\bundle{E}$ with $\| R^\bundle{E}\| _g<\epsilon $ and $\mathrm{ind}(\D ^{\bundle{S}\otimes \bundle{E}})\neq 0$ one has
\[
\int \left[\frac{\mathrm{scal}_g}{4}+\lambda _-(\frak{R}^\bundle{S})\right] \cdot \mathrm{vol}_g\leq C\cdot \sqrt{\epsilon }.
\]
In particular, if for all $\epsilon >0$ there is a twist bundle $\bundle{E}$ with $\mathrm{ind}(\D ^{\bundle{S}\otimes \bundle{E}})\neq 0$ and $\| R^\bundle{E}\| _g<\epsilon $, then $\frac{\mathrm{scal}_g}{4}+\lambda _-(\frak{R}^\bundle{S})=0$.
\end{prop}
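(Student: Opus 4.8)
The plan is to run the classical Bochner--Lichnerowicz argument and then upgrade the pointwise information it produces to an integral bound by exploiting that the harmonic spinor is forced to be almost parallel. Since $\mathrm{ind}(\D^{\bundle{S}\otimes\bundle{E}})\neq 0$, the twisted Dirac operator has nontrivial kernel, so there is a spinor $\phi\neq 0$ with $\D\phi=0$, which I normalise to $\int_M|\phi|^2\cdot\mathrm{vol}_g=1$. Integrating the Bochner--Weitzenb\"ock formula for $\bundle{S}\otimes\bundle{E}$ over $M$ gives
\[
0=\int_M|\nabla\phi|^2\cdot\mathrm{vol}_g+\int_M\Bigl\langle\Bigl(\tfrac{\mathrm{scal}_g}{4}+\frak{R}^\bundle{S}\otimes\mathrm{id}+\frak{K}^\bundle{E}\Bigr)\phi,\phi\Bigr\rangle\cdot\mathrm{vol}_g,
\]
where $\frak{K}^\bundle{E}=\sum_{i<j}\gamma(e_i)\gamma(e_j)\otimes R^\bundle{E}_{e_i,e_j}$. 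Bounding the twist term exactly as in proposition \ref{prop_scalar}, its smallest eigenvalue satisfies $\lambda_-(\frak{K}^\bundle{E})\geq-\tfrac{n(n-1)}{2}\|R^\bundle{E}\|_g>-\tfrac{n(n-1)}{2}\epsilon$, while $\frak{R}^\bundle{S}\geq\lambda_-(\frak{R}^\bundle{S})\cdot\mathrm{id}$. Writing $u:=\tfrac{\mathrm{scal}_g}{4}+\lambda_-(\frak{R}^\bundle{S})\geq0$ (a continuous, hence bounded, nonnegative function by hypothesis) and $\delta:=\tfrac{n(n-1)}{2}\epsilon$, the displayed identity yields the two estimates $\int_M u|\phi|^2\cdot\mathrm{vol}_g\leq\delta$ and $\int_M|\nabla\phi|^2\cdot\mathrm{vol}_g\leq\delta$.

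The heart of the argument is to pass from the \emph{weighted} bound $\int u|\phi|^2\leq\delta$ to the \emph{unweighted} bound $\int u\leq C\sqrt\epsilon$. First I would put $f:=|\phi|$; Kato's inequality $|\nabla f|\leq|\nabla\phi|$ gives $\int_M f^2\cdot\mathrm{vol}_g=1$ and $\int_M|\nabla f|^2\cdot\mathrm{vol}_g\leq\delta$. Let $\bar f$ denote the mean value of $f$ and $C_P=C_P(g)$ the Poincar\'e constant of $(M,g)$, so that $\int_M(f-\bar f)^2\cdot\mathrm{vol}_g\leq C_P\delta$. Then $\bar f^2\cdot\mathrm{Vol}(M,g)=1-\int(f-\bar f)^2\geq1-C_P\delta$, whence $\bar f^2\geq\tfrac{1}{2\mathrm{Vol}(M,g)}$ once $\epsilon$ is small enough. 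Expanding $f^2=(f-\bar f)^2+2\bar f(f-\bar f)+\bar f^2$ and integrating against $u$ gives
\[
\bar f^2\int_M u\cdot\mathrm{vol}_g=\int_M uf^2\cdot\mathrm{vol}_g-\int_M u(f-\bar f)^2\cdot\mathrm{vol}_g-2\bar f\int_M u(f-\bar f)\cdot\mathrm{vol}_g,
\]
and I would bound the three terms by $\delta$, by $\|u\|_\infty C_P\delta$, and (via Cauchy--Schwarz together with $\bar f\leq\mathrm{Vol}(M,g)^{-1/2}$) by $2\|u\|_\infty\sqrt{C_P\delta}$ respectively. Dividing by $\bar f^2\geq\tfrac{1}{2\mathrm{Vol}(M,g)}$ produces $\int_M u\cdot\mathrm{vol}_g\leq C\sqrt\epsilon$ with $C$ depending only on $n$, $\mathrm{Vol}(M,g)$, $\|u\|_\infty$ and $C_P$, i.e.\ on $g$ and $n$.

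The final statement is then immediate: if for every $\epsilon>0$ such a bundle $\bundle{E}$ exists, letting $\epsilon\to0$ forces $\int_M u\cdot\mathrm{vol}_g=0$, and since $u$ is continuous and nonnegative this means $u\equiv0$, i.e.\ $\tfrac{\mathrm{scal}_g}{4}+\lambda_-(\frak{R}^\bundle{S})=0$. I expect the main obstacle to be precisely the concentration phenomenon handled in the second paragraph: the Bochner identity by itself only controls $u$ weighted by $|\phi|^2$, and a priori the harmonic spinor could concentrate in the region where $u$ is small. What rescues the argument is that the \emph{same} identity also bounds $\|\nabla\phi\|_{L^2}^2$ by $\delta$, so $f=|\phi|$ is $L^2$-close to a constant; the Poincar\'e inequality quantifies this, and the cross term $\int u(f-\bar f)$ --- controlled only to order $\sqrt\delta$ by Cauchy--Schwarz --- is exactly what accounts for the $\sqrt\epsilon$ rather than $\epsilon$ in the stated bound.
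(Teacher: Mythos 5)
Your proof is correct, and it shares the paper's overall strategy---harmonic spinor from the nonvanishing index, Bochner--Weitzenb\"ock, and the Poincar\'e inequality of the \emph{base} manifold $(M,g)$ to show $|\phi |$ is nearly constant before de-weighting $\int u|\phi |^2\leq \delta $---but you implement the crucial middle step differently. The paper never invokes Kato's inequality: it multiplies the \emph{pointwise} Bochner inequality by $|\phi |^2$ and integrates, which converts the $\frac{1}{2}\delta _g\mathrm{d}|\phi |^2$ term into $\int \bigl| \nabla |\phi |^2\bigl| ^2$ and yields $\int \bigl| \nabla |\phi |^2\bigl| ^2\leq n(n-1)\epsilon \int |\phi |^4$; the Poincar\'e inequality applied to $|\phi |^2$ (whose mean is $1$ after normalizing $\mathrm{Vol}(M,g)=1$) then gives $|\phi |^2=1+h$ with $\| h\| _{L^1}\leq \sqrt{2C\epsilon }$, self-consistently absorbing the $\int |\phi |^4$ on the right, and substituting $1+h$ into the integrated formula produces the $\sqrt{\epsilon }$ bound. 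You instead stay entirely at the $L^2$ level: integrate first, extract $\| \nabla \phi \| ^2_{L^2}\leq \delta $, pass to $f=|\phi |$ via Kato's inequality, apply Poincar\'e to $f$, and de-weight through the algebraic expansion of $uf^2$; this avoids fourth powers and the self-referential absorption step, at the cost of the (standard, but worth stating) fact that $f\in W^{1,2}$ with $|\nabla f|\leq |\nabla \phi |$ almost everywhere. Both routes respect the one point the paper flags as essential---Sobolev and Poincar\'e constants for sections of $\bundle{S}\otimes \bundle{E}$ depend on the twist $\bundle{E}$, so the functional inequality must be applied to the scalar function $|\phi |$ or $|\phi |^2$---and in both the cross term controlled only by Cauchy--Schwarz is what accounts for $\sqrt{\epsilon }$ rather than $\epsilon $, exactly as you diagnose. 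One cosmetic point: to obtain $C=C(g,n)$ literally, rather than a constant depending on $\bundle{S}$ through $\| u\| _\infty $, note as the paper does that $\frak{R}^\bundle{S}$ is trace free, so $\lambda _-(\frak{R}^\bundle{S})\leq 0$ and hence $0\leq u\leq \max \mathrm{scal}_g/4$, which bounds $\| u\| _\infty $ in terms of $g$ alone.
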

\begin{proof}
Note that we cannot use Sobolev embeddings for sections of $\bundle{S}\otimes \bundle{E}$ because the embedding constants depend on the choice of the twist bundle $\bundle{E}$. We assume without loss of generality $\mathrm{Vol}(M,g)=1$ and integrate with respect to the volume form of $g$. Define the function $\alpha :=\frac{\mathrm{scal}_g}{4}+\lambda _-(\frak{R}^\bundle{S}):M\to [0,\infty )$ for notational simplicity. Let $\phi \in \ker (\D ^{\bundle{S}\otimes \bundle{E}})$ be nontrivial, then the local Bochner--Weitzenb\"ock formula for $\D ^{\bundle{S}\otimes \bundle{E}}$ and a standard exercise show
\begin{equation}
\label{local_boc}
\begin{split}
0&=\frac{1}{2}\delta _g\mathrm{d}|\phi |^2+|\nabla ^{\bundle{S}\otimes \bundle{E}}\phi |^2+\frac{\mathrm{scal}_g}{4}|\phi |^2+\left< \frak{R}^{\bundle{S}\otimes \bundle{E}}\phi ,\phi \right>\\
&\geq \frac{1}{2}\delta _g\mathrm{d}|\phi |^2+|\nabla ^{\bundle{S}\otimes \bundle{E}}\phi |^2+\alpha |\phi |^2-\frac{n(n-1)}{2}\| R^\bundle{E}\| _g\cdot |\phi |^2.
\end{split}
\end{equation}
Hence, multiplication of this inequality by $|\phi |^2$ and using $\alpha \geq 0$ supplies
\[
\int \Bigl| \nabla |\phi |^2\Bigl |^2\leq n(n-1)\| R^\bundle{E}\| _g\int |\phi |^4<n(n-1)\epsilon \int |\phi |^4.
\]
By a constant rescaling of $\phi $ we assume $\int |\phi |^2=1$, i.e.~the Poincar\'e inequality for $(M,g)$ yields a constant $C'=\frac{C}{n(n-1)}>0$ (independent on $\phi $ and $\epsilon $) with
\[
\int (|\phi |^2-1)^2\leq C'\cdot \int \Bigl| \nabla |\phi |^2\Bigl| ^2< C\epsilon \int |\phi |^4=C\epsilon \left( 1+\int (|\phi |^2-1)^2\right) .
\]
Hence, if $C\epsilon \leq 1/2$, then
\[
\int (|\phi |^2-1)^2\leq 2C\epsilon 
\]
which implies that $|\phi |^2=1+h$ for a $L^1$--function $h$ with $\| h \|_{L^1}\leq \sqrt{2C\epsilon }$. Integration of (\ref{local_boc}) supplies for $\epsilon \leq \min \{ 1,\frac{1}{2C}\} $:
\[
\begin{split}
0&\geq \int \left[ |\nabla ^{\bundle{S}\otimes \bundle{E}}\phi |^2+\alpha (1+h)-\frac{n(n-1)}{2}\epsilon |\phi |^2\right]\\
&\geq -\max \alpha \cdot \| h\| _{L^1}-\frac{n(n-1)}{2}\epsilon +\int \alpha \\
& \geq - \left[ \max \frac{\mathrm{scal}_g}{4}\sqrt{2C}+\frac{n(n-1)}{2}\right] \cdot \sqrt{\epsilon }+\int \alpha  ,
\end{split}
\]
here we use $\lambda _-(\frak{R}^\bundle{S})\leq 0$, i.e.~$0\leq 4\cdot \max \alpha \leq \max \mathrm{scal}_g$. Thus, the assumption $\alpha \geq 0$ and the fact that $\alpha $ is continuous provide the claim.
\end{proof}

In the remainder of this section we show the main inequality to deduce theorem \ref{area_thm}. In order to get a simple expression for the index of the Dirac operator in terms of characteristic classes, we are using the approach presented in \cite{GoSe1}. We assume that $(M_0^m,g_0)$ is an oriented manifold with non--negative Riemannian curvature operator $\curv{R}^0:\Lambda ^2TM_0\to \Lambda ^2TM_0$, in particular $g_0$ has non--negative sectional curvature. Suppose $(M,g)$ is an oriented manifold of dimension $n$ and $f:M\to M_0$ is a spin map, then the vector bundle
\[
\bundle{F}:=TM\oplus f^*TM_0
\]
admits a spin structure. The Levi--Civita connection of $g$ and the Levi--Civita connection of $g_0$ induce the connection $\nabla ^\bundle{F}=\nabla \oplus f^*\nabla ^0$ on $\bundle{F}$ which is Riemannian with respect to $g\oplus f^*g_0$. The complex Clifford bundle of $\bundle{F}$ is given by
\[
\Cl _\C (\bundle{F})=\Cl _\C (TM)\widehat{\otimes } f^*\Cl _\C (TM_0).
\]
Since the $\mathrm{SO}$--frame bundle of $\bundle{F}$ has structure group $\mathrm{SO}(n)\times \mathrm{SO}(m)$, the structure group of a spin structure on $\bundle{F}$ is reducible to
\[
\mathrm{Spin}(n)\cdot \mathrm{Spin}(m):=\mathrm{Spin}(n)\times \mathrm{Spin}(m)/\{ \pm 1\} \subseteq \mathrm{Spin}(m+n).
\]
Let $\spinor \bundle{F}$ be the complex spinor bundle of $\bundle{F}$ induced by a choice of the spin structure and the tensor product of the complex spin representations. The connection $\nabla ^\bundle{F}$ lifts uniquely to a Riemannian connection $\overline{\nabla }$ on $\spinor \bundle{F}$. For each point $p\in M$ there are neighborhoods $p\in U\subseteq M$ and $f(p)\in V\subseteq M_0$ in such a way that the spinor bundle decomposes as
\begin{equation*}
\spinor \bundle{F}_{|U}=\spinor U\otimes f^*(\spinor V)
\end{equation*}
(note that we do not assume $\spinor \bundle{F}$ to be irreducible at this point). In particular, if $M_0$ is spin, then $M$ is spin and we conclude $\spinor \bundle{F}=\spinor M\widehat{\otimes }f^*\spinor M_0$ as $\Cl _\C (\bundle{F})$ module. Since the forthcoming computations are of local nature we use the notation $\spinor \bundle{F}=\spinor M\otimes f^*\spinor M_0$ even if $M_0$ is not spin. Because $\spinor \bundle{F}$ is a $\Cl _\C (\bundle{F})$--module, $\spinor \bundle{F}$ becomes a complex Dirac bundle over $M$ if we use the imbedding
\[
\Cl _\C (TM)\hookrightarrow \Cl _\C (TM)\otimes \mathbbm{1}\subseteq \Cl _\C (\bundle{F}).
\]
The corresponding Dirac operator will be denoted by
\[
\overline{\dirac }:=\sum _{j=1}^n\gamma (e_j)\overline{\nabla }_{e_j}
\]
where $e_1,\ldots ,e_n$ is an orthonormal basis of $T_pM$. Since the connection $\overline{\nabla }$ preservers the decomposition of the formal tensor product $\spinor \bundle{F}=\spinor M\otimes f^*\spinor M_0$, i.e.~$\overline{\nabla }=\nabla \otimes \mathbbm{1}+\mathbbm{1}\otimes f^*\nabla ^0$, we can use \cite[Ch.~II Thm.~8.17]{LaMi} to compute the Bochner--Weitzenb\"ock formula of $\overline{\dirac }$:
\begin{equation*}
%\label{Lichner}
\overline{\dirac }^2 =\overline{\nabla }^* \overline{\nabla }+\frac{\mathrm{scal}_g}{4}+\frak{R}^{0}
\end{equation*}
where $\frak{R}^0$ is defined by
\[
\frak{R}^0=\frac{1}{2}\sum _{i,j=1}^n\gamma (e_i)\gamma (e_j)\otimes R_{e_i,e_j}^0
\]
and $R^0$ means the curvature of $f^*\nabla ^0$.
\begin{defn}
Suppose $(V,\langle .,.\rangle _V)$ and $(W,\langle .,.\rangle _W)$ are inner product spaces and $\beta :V\to W$ is a linear transformation. Then $\beta $ is a \emph{homothetic injection} if there is a constant $c>0$ such that
\begin{equation}
\label{homothetic}
\left< v,\hat v\right> _V=c \cdot \left< \beta (v),\beta (\hat v) \right> _W 
\end{equation}
for all $v,\hat v\in V$. Let $V=\ker (\beta )\oplus V^\prime $ be the orthogonal decomposition of $V$ w.r.t.~the inner product. Then $\beta $ is said to be a \emph{homothetic surjection} if (\ref{homothetic}) holds for all $v,\hat v\in V^\prime $. A homothetic isomorphism is also called \emph{homothety}.
\end{defn}
\begin{prop} 
\label{main_prop}
Let the Riemannian curvature operator of $(M_0,g_0)$ be non--negative and $\dim M_0\geq 3$, then the curvature endomorphism $\frak{R}^0$ is at each point bounded as follows
\begin{equation}
\label{ineq}
\frak{R}^0\geq -\mathrm{dil}_2(f)\cdot \frac{\mathrm{scal}_{g_0}\circ f}{4}
\end{equation}
with $\mathrm{dil}_2(f)$ given in definition \ref{dilatation}. If $\mathrm{Ric}(g_0)$ is positive definite at $f(p)\in M_0$ and $-\frac{1}{4}\mathrm{dil}_2(f)(p)\cdot \mathrm{scal}_{g_0}( f(p))$ is the minimal eigenvalue of $\frak{R}^0$ at $p\in M$, then $f_*:T_pM\to T_{f(p)}M_0$ is a homothetic surjection or $\mathrm{dil}_2(f)(p)=0$. 

In particular, if $\mathrm{Ric}(g_0)>0$ on $M_0$ and $U\subseteq M$ denotes the interior of all points  $p\in M$ where the minimal eigenvalue of $\frak{R}^0$ is $-\frac{1}{4}\mathrm{dil}_2(f)(p)\cdot \mathrm{scal}_{g_0}(f(p))$ and $\mathrm{dil}_2(f)(p)>0 $, then
\[
f:(U,\mathrm{dil}_2(f)\cdot g) \to (M_0,g_0)
\]
is a Riemannian submersion (not necessarily surjective). 
\end{prop}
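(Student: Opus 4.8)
The plan is to prove both the inequality~(\ref{ineq}) and the rigidity statement by a pointwise algebraic analysis of the twist curvature endomorphism $\frak{R}^0$ acting on the fibre $\spinor\bundle{F}=\spinor M\otimes f^*\spinor M_0$. Fix $p\in M$ and set $q=f(p)$. By the singular value decomposition of $f_*:T_pM\to T_qM_0$ I would choose orthonormal bases $e_1,\dots ,e_n$ of $T_pM$ and $\hat e_1,\dots ,\hat e_m$ of $T_qM_0$ with $f_*e_i=\lambda_i\hat e_i$, $\lambda_i\geq 0$ (and $\lambda_i=0$ for $i>m$), so that $\mathrm{dil}_2(f)(p)=\max_{i<j}\lambda_i\lambda_j$. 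Diagonalising the curvature operator as $\curv{R}^0=\sum_\alpha\mu_\alpha\,\omega_\alpha\langle\,\cdot\,,\omega_\alpha\rangle$ with $\mu_\alpha\geq 0$ (this is where non-negativity of $\curv{R}^0$ enters) and inserting $R^0_{e_i,e_j}=-\tfrac12\hat\gamma\bigl(R^{g_0}(f_*e_i\wedge f_*e_j)\bigr)$ (the sign convention of lemma~\ref{lem_weyl}) into the definition of $\frak{R}^0$, one rewrites
\[
\frak{R}^0=-\tfrac12\sum_\alpha\mu_\alpha\,\gamma(\xi_\alpha)\otimes\hat\gamma(\omega_\alpha),\qquad \xi_\alpha:=\sum_{i<j}\lambda_i\lambda_j\langle\hat e_i\wedge\hat e_j,\omega_\alpha\rangle\,e_i\wedge e_j\in\Lambda^2T_pM,
\]
where $\gamma$ and $\hat\gamma$ denote Clifford multiplication on the two tensor factors. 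Here $|\omega_\alpha|=1$ while $|\xi_\alpha|^2=\sum_{i<j}\lambda_i^2\lambda_j^2\langle\hat e_i\wedge\hat e_j,\omega_\alpha\rangle^2$, so $|\xi_\alpha|\leq\mathrm{dil}_2(f)(p)$; the asymmetry of the two norms is exactly what will force the sharp (linear, not quadratic) dependence on $\mathrm{dil}_2(f)$.

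The core estimate comes from a \emph{scaled} version of the square argument used in lemma~\ref{lem_weyl}. Setting $t:=\mathrm{dil}_2(f)(p)^{-1/2}$ (the case $\mathrm{dil}_2(f)(p)=0$ being trivial, since then $\frak{R}^0=0$), the operator
\[
\mathfrak{C}:=\sum_\alpha\mu_\alpha\bigl(t\,\gamma(\xi_\alpha)\otimes\id+t^{-1}\,\id\otimes\hat\gamma(\omega_\alpha)\bigr)^2
\]
is nonpositive, because each $\mu_\alpha\geq 0$ and the square of a skew-Hermitian endomorphism is $\leq 0$ (the even Clifford elements $\gamma(\xi_\alpha)\otimes\id$ and $\id\otimes\hat\gamma(\omega_\alpha)$ commute in the graded tensor product). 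Expanding and using $\sum_\alpha\mu_\alpha\gamma(\xi_\alpha)\otimes\hat\gamma(\omega_\alpha)=-2\frak{R}^0$ gives $4\frak{R}^0=\sum_\alpha\mu_\alpha\bigl(t^2\gamma(\xi_\alpha)^2\otimes\id+t^{-2}\id\otimes\hat\gamma(\omega_\alpha)^2\bigr)-\mathfrak{C}$. The two diagonal terms are evaluated via $\gamma(\eta)^2=-|\eta|^2+\gamma(\eta\wedge\eta)$ together with two Bianchi identities: $\sum_\alpha\mu_\alpha\,\omega_\alpha\wedge\omega_\alpha=0$ (as in lemma~\ref{lem_weyl}) and $\sum_\alpha\mu_\alpha\,\xi_\alpha\wedge\xi_\alpha=0$, the latter because the coefficient on $e_i\wedge e_j\wedge e_k\wedge e_l$ is $\lambda_i\lambda_j\lambda_k\lambda_l$ times the totally antisymmetric part of $R^0_{ijkl}$, which vanishes by the first Bianchi identity. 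Hence both diagonal sums are scalar multiples of the identity: $\sum_\alpha\mu_\alpha\hat\gamma(\omega_\alpha)^2=-\mathrm{tr}(\curv{R}^0)=-\tfrac12\mathrm{scal}_{g_0}\circ f$ and $\sum_\alpha\mu_\alpha\gamma(\xi_\alpha)^2=-\sum_{i<j}\lambda_i^2\lambda_j^2R^0_{ijij}$, with $R^0_{ijij}\geq 0$. Using $\lambda_i^2\lambda_j^2\leq\mathrm{dil}_2(f)^2$, $t^2=\mathrm{dil}_2(f)^{-1}$ and the arithmetic–geometric bound $t^2\lambda_i^2\lambda_j^2+t^{-2}\geq 2\mathrm{dil}_2(f)$, the whole diagonal is $\geq-\mathrm{dil}_2(f)\sum_{i<j}R^0_{ijij}\cdot\id=-\tfrac12\mathrm{dil}_2(f)\,\mathrm{scal}_{g_0}\circ f\cdot\id$, and since $-\mathfrak{C}\geq0$ this yields $\frak{R}^0\geq-\tfrac14\mathrm{dil}_2(f)\,\mathrm{scal}_{g_0}\circ f$, which is inequality~(\ref{ineq}).

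For the rigidity claim, assume $\mathrm{Ric}(g_0)$ is positive definite at $q$ and that $-\tfrac14\mathrm{dil}_2(f)(p)\,\mathrm{scal}_{g_0}(q)$ is the minimal eigenvalue of $\frak{R}^0$ at $p$, with $\mathrm{dil}_2(f)(p)>0$. Then the entire chain above is saturated for a minimising spinor $\phi$, which forces two conditions simultaneously: the arithmetic–geometric step gives $\lambda_i\lambda_j=\mathrm{dil}_2(f)(p)$ for every pair $(i,j)$ with $R^0_{ijij}>0$, while $\langle\mathfrak{C}\phi,\phi\rangle=0$ puts $\phi$ in the common kernel of all factors $t\gamma(\xi_\alpha)\otimes\id+t^{-1}\id\otimes\hat\gamma(\omega_\alpha)$ with $\mu_\alpha>0$. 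Because $\mathrm{Ric}(g_0)(\hat e_i)=\sum_{j\neq i}R^0_{ijij}>0$ for each image direction, the pairwise equalities propagate so that all nonzero singular values equal a common $\sqrt{\mathrm{dil}_2(f)(p)}$ and $f_*$ is surjective onto $T_qM_0$; in the terminology of (\ref{homothetic}) this says $f_*$ is a homothetic surjection with factor $\mathrm{dil}_2(f)(p)$ (or $\mathrm{dil}_2(f)(p)=0$). Letting $p$ range over the open set $U$, the pointwise homothetic-surjection property shows $\mathrm{dil}_2(f)$ is smooth and of constant rank on $U$ and that rescaling by $\mathrm{dil}_2(f)$ makes $f_*$ a fibrewise isometry on horizontal vectors, i.e. $f:(U,\mathrm{dil}_2(f)\cdot g)\to(M_0,g_0)$ is a Riemannian submersion. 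The inequality part is routine bookkeeping of Clifford and Bianchi identities, entirely parallel to lemma~\ref{lem_weyl}; the main obstacle is the rigidity step, namely extracting from the single scalar equality that \emph{all} nonzero singular values coincide and that $f_*$ is onto. This requires carefully identifying which inequalities are saturated for the one minimising spinor and chaining the equalities $\lambda_i\lambda_j=\mathrm{dil}_2(f)$ across the planes of positive sectional curvature, with the positivity of $\mathrm{Ric}(g_0)$ being precisely the hypothesis that makes this chaining connected enough to conclude.
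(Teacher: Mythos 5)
Your proof is, in substance, the paper's own argument transcribed into a singular--value basis: your $\xi _\alpha $ is exactly the paper's $f^\# h_l$, your scaled square $\mathfrak{C}=\sum _\alpha \mu _\alpha \bigl( t\,\gamma (\xi _\alpha )\otimes \id +t^{-1}\,\id \otimes \hat\gamma (\omega _\alpha )\bigl) ^2$ with $t^2=\mathrm{dil}_2(f)^{-1}$ is the paper's operator $\frak{C}$ (formed with the weight $\alpha =\mathrm{dil}_2(f)$) up to the overall factor $\alpha $, and the two Bianchi identities together with $\gamma (\eta )^2=\gamma (\eta \wedge \eta )-|\eta |^2$ enter identically. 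Two bookkeeping slips in the inequality part should be repaired, although they do not affect your endpoint: the bound you need is $t^2\lambda _i^2\lambda _j^2+t^{-2}\leq 2\,\mathrm{dil}_2(f)$, an \emph{upper} bound coming from $\lambda _i\lambda _j\leq \mathrm{dil}_2(f)$ and your choice of $t$ (an arithmetic--geometric estimate points the wrong way and is in general false as you wrote it), and consequently the diagonal is bounded below by $-2\,\mathrm{dil}_2(f)\sum _{i<j}R^0_{ijij}=-\mathrm{dil}_2(f)\,\mathrm{scal}_{g_0}\circ f$, not by $-\mathrm{dil}_2(f)\sum _{i<j}R^0_{ijij}$; your intermediate constant would yield $\frak{R}^0\geq -\frac{1}{8}\mathrm{dil}_2(f)\,\mathrm{scal}_{g_0}\circ f$, which is already violated by $f=\mathrm{id}$ on the round sphere, where the minimal eigenvalue is $-\frac{1}{4}\mathrm{scal}_{g_0}$.

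The genuine gap is the propagation step in the rigidity claim, which you defer and mis--attribute: you assert that positivity of $\mathrm{Ric}(g_0)$ is ``precisely the hypothesis that makes this chaining connected enough to conclude,'' but as described the step does not go through. Saturation gives $\lambda _i\lambda _j=D:=\mathrm{dil}_2(f)(p)$ only on pairs with $R^0_{ijij}>0$, and $\mathrm{Ric}(g_0)>0$ guarantees merely that each index lies in at least \emph{one} such pair (each summand $R^0_{kjkj}=\bigl< \curv{R}^0(\hat e_k\wedge \hat e_j),\hat e_k\wedge \hat e_j\bigl> $ is $\geq 0$, so one must be positive); it does not make the graph of saturated pairs connected in any useful sense. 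Indeed, for $\dim M_0=2$ saturation yields exactly $\lambda _1\lambda _2=D$ and nothing forces $\lambda _1=\lambda _2$ --- this is precisely why the proposition assumes $\dim M_0\geq 3$ and why the paper replaces $\mathrm{dil}_2(f)$ by $\mathrm{dil}_1(f)^2$ in dimension two, a role for the dimension hypothesis that your chaining paragraph never invokes. The mechanism the paper actually uses is a three--index argument exploiting the bound $\lambda _i\lambda _j\leq D$ valid for \emph{all} pairs: fix $k$ and a partner $l$ with $\lambda _k\lambda _l=D$; for $i\notin \{ k,l\} $, multiply $\lambda _i\lambda _k\leq D$ and $\lambda _i\lambda _l\leq D$ and divide by $\lambda _k\lambda _l=D$ to get $\lambda _i^2\leq D$, and then each saturated pair pushes the remaining values back up to $\sqrt{D}$, whence all singular values equal $\sqrt{D}>0$, $f_*$ is onto, and the rescaled map is a fibrewise isometry on horizontal vectors. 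You must write this out; your sketch names the obstacle but supplies no argument for it, and the one--line reason you give is the wrong one. (By contrast, the kernel condition $\mathfrak{C}\phi =0$ you extract from the minimizing spinor is true but unnecessary: after the Bianchi reductions the diagonal is already a scalar, so only its saturation is used --- this is also all the paper uses.)
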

Note that $\mathrm{dil}_2(f)$ is smooth on $U$, while of course $U$ could be empty in this proposition. Recall that if $\mathrm{dil}_2(f)$ vanishes at $p$, then the image of $f_*:T_pM\to T_{f(p)}M_0$ is at most one dimensional and $\mathrm{Im}(f_*)$ is trivial if and only if $\mathrm{dil}_1(f)=0$. The assumption $\dim M_0\geq 3$ can be omitted in the above proposition if we replace the function $\mathrm{dil}_2(f)$ everywhere by $\mathrm{dil}_1(f)^2$ (in case $\dim M_0=2$, $\Lambda ^2TM_0$ has rank one which will not be enough to show that $f_*$ is a homothetic surjection if $\mathrm{dil}_2(f)>0$). The following two examples provide non--constant maps which are not homothetic surjections but satisfy all the assumptions of the proposition except $\mathrm{Ric}(g_0)>0$ respectively $\mathrm{dil}_2(f)>0$ ($T^n$ means the standard $n$--dimensional flat torus, $c:[0,2\pi )\to S^n$ is a simple closed geodesic):
\[
f:T^n\to S^n,\ (t_1,\ldots ,t_n)\mapsto c(t_1)\ , \qquad \widetilde{f}:T^n\to T^n\times S^1, \ p\mapsto (p,t_0).
\]
In both cases $\frak{R}^0$ vanishes and inequality (\ref{ineq}) is an equality. Although $\mathrm{Ric}(g_{S^n})$ is positive definite, $f$ is nowhere a homothetic surjection since $\mathrm{dil}_2(f)=0$. Moreover, $\widetilde{f}$ satisfies $\mathrm{dil}_2(f)\equiv 1$, but $\widetilde{f}$ is nowhere a homothetic surjection since $\mathrm{Ric}(g_0)=0$. In fact, in both cases we have $U=\emptyset $.

In order to show proposition \ref{main_prop} we will simplify the curvature expression $\frak{R}^0$. For each point $p\in M$ the map $f:M\to M_0$ induces an isometric isomorphism $\beta _p:f^*\spinor \frak{p}\to \spinor \frak{p}$ where $\frak{p}=T_{f(p)}M_0$ and $\spinor \bundle{F}_{p}=\spinor T_pM\otimes f^*\spinor \frak{p}$. Since the curvature of the connection $\nabla ^0$ is the curvature of the (virtual) spin bundle $\spinor M_0$, we obtain the curvature of $f^*\nabla ^0$
\begin{equation*}
%\label{curv12}
\begin{split}
R^0_{v,w}&=\beta ^{-1}_p\circ R^{0}_{f_*v,f_*w}\circ \beta _p\\
&=-\frac{1}{2}\beta ^{-1}_p\circ \gamma ^0(\curv{R}^0(f_*(v\wedge w)))\circ \beta _p
\end{split}
\end{equation*}
where $v,w\in T_pM$, $\gamma ^0 $ is the Clifford multiplication on $\spinor \frak{p}$ and $\curv{R}^0$ is the Riemannian curvature operator of $(M_0,g_0)$ considered as endomorphism on $\Lambda ^2TM_0$. Thus, the curvature operator $\frak{R}$ is determined by
\begin{equation}
\label{curv11}
\begin{split}
\frak{R}^0&=\frac{1}{2}\sum _{ i,j=1}^{n}\gamma (e_i)\gamma (e_j)\otimes R_{e_i,e_j}^0\\
&=-\frac{1}{4}\sum _{i,j=1}^{n}\gamma (e_i\wedge e_j)\otimes \beta _p^{-1}\gamma ^0\Bigl( \curv{R}^0(f_*e_i\wedge f_*e_j)\Bigl) \beta _p.
\end{split}
\end{equation}
Let $B\in \Gamma (\mathrm{End}(TM))$ be the symmetric positive semi--definite transformation defined by
\begin{equation}
\label{defn_of_B}
\begin{split}
g(BX,Y)&=f^*g_0(X,Y)=g_0(f_*(X),f_*(Y))
%g(BX,Y)&=g(X,BY),
\end{split}
\end{equation}
and set $B_k:=\underbrace{B\otimes \cdots \otimes B}_{k-\text{times}}\in \Gamma (\mathrm{End}(\Lambda ^kTM))$, then $B_k$ satisfies
\[
0\leq B_k\leq \mathrm{dil }_k(f) ^{2}
\]
[note that the upper inequalities are sharp in each point, since by definition, $B_k$ has an eigenvalue $\mathrm{dil}_k(f)^2$]. 
\begin{lem}
\label{lem_lin}
Suppose $f:M\to N$ is a differentiable map, $g$ is a Riemannian metric on $M$ and $\overline{g}$ is a Riemannian metric on $N$. Then $g$ and $\overline{g}$ induce isomorphisms $t:\Lambda ^kT^*_pM\rightarrow \Lambda ^kT_pM$, $r:\Lambda ^kT_qN\rightarrow \Lambda ^kT_q^*N$ and the following diagram is commutative:
\[
\begin{xy}
\xymatrix{
\Lambda ^kT_pM \ar[r]^{f_*} \ar[rd]^{B_k}& \Lambda ^kT_{f(p)}N\ar@{.>}[d]^{f^\#}\ar[r]^{r} & \Lambda ^kT_{f(p)}^*N\ar[d]^{f^*}\\
& \Lambda ^kT_pM &\ar[l]^t  \Lambda ^kT_p^*M
}
\end{xy}
\]
In particular, $f^\#:\Lambda ^kT_{f(p)}N\to \Lambda ^kT_pM$ is uniquely determined for each $p\in M$ by $f^\#:=t\circ f^*\circ r$ and satisfies
$f^\#\circ f_*=B_k$ as well as
\[
\overline{g}(f_*v,w)=g(v,f^\#w)
\]
for all $v\in \Lambda ^kT_pM$ and $w\in \Lambda ^kT_{f(p)}N$. Furthermore, define
\[
\breve{B}_k:=f_*f^\# :\Lambda ^kT_{f(p)}N\to \Lambda ^kT_{f(p)}N,
\]
then $\breve{B}_k$ is positive semi--definite and symmetric w.r.t.~$\overline{g}$. Moreover, the non--vanishing eigenvalues of $B_k$ at $p\in M$ coincide with the non--vanishing eigenvalues of $\breve{B}_k$ at $f(p)\in N$ and we have $\breve{B}_k=\underbrace{\breve{B}\otimes \cdots \otimes \breve{B}}_k$ with $\breve{B}=\breve{B}_1$. 
\end{lem}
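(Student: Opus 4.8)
The plan is to reduce everything to pointwise linear algebra and to isolate the single identity
\[
\overline{g}(f_*v,w)=g(v,f^\#w)\qquad (v\in \Lambda^kT_pM,\ w\in\Lambda^kT_{f(p)}N),
\]
from which all the assertions follow formally. First I would unwind the definition $f^\#=t\circ f^*\circ r$. Writing the flat map as $r(w)=\overline{g}(w,\cdot)$ and the sharp map $t$ as the inverse of $u\mapsto g(u,\cdot)$, so that $g(v,t(\alpha))=\alpha(v)$, the adjoint relation is immediate: $g(v,f^\#w)=\bigl(f^*r(w)\bigr)(v)=\bigl(r(w)\bigr)(f_*v)=\overline{g}(w,f_*v)$, where the middle equality is just the definition of the pullback $f^*$ on $\Lambda^k$ (extended by $(f^*\beta)(v)=\beta(f_*v)$). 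This is the conceptual core and costs nothing beyond careful bookkeeping of which musical isomorphism lowers and which raises indices.

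Next I would establish the commutativity of the diagram, i.e.\ $f^\#\circ f_*=B_k$. Applying the adjoint relation with $w=f_*v$ gives $g(v',f^\#f_*v)=\overline{g}(f_*v',f_*v)$ for all $v'$, so it remains to identify $\overline{g}(f_*v',f_*v)$ with $g(B_kv,v')$. For decomposable $v=v_1\wedge\cdots\wedge v_k$ the induced inner product is the Gram determinant $\overline{g}(f_*v,f_*v')=\det\bigl(\overline{g}(f_*v_i,f_*v_j')\bigr)=\det\bigl(g(Bv_i,v_j')\bigr)$, which by the defining formula for $\Lambda^kB$ is precisely $g(B_kv,v')$; here I use the definition (\ref{defn_of_B}) of $B$, namely $g(BX,Y)=\overline{g}(f_*X,f_*Y)$. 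The symmetry and positivity of $\breve{B}_k=f_*f^\#$ then fall out of the adjoint relation alone, since $\overline{g}(\breve{B}_kw,w')=g(f^\#w,f^\#w')$ is manifestly symmetric in $w,w'$ and nonnegative on the diagonal.

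Finally the two remaining assertions. For the eigenvalues I would invoke the elementary fact that for linear maps $S:\Lambda^kT_pM\to\Lambda^kT_{f(p)}N$ and $T$ in the reverse direction the compositions $TS$ and $ST$ share their nonzero spectrum: concretely, if $B_kv=\lambda v$ with $\lambda\neq0$ then $\breve{B}_k(f_*v)=f_*\bigl(f^\#f_*v\bigr)=\lambda f_*v$ with $f_*v\neq0$, and $f_*$ restricts to an isomorphism of the $\lambda$-eigenspaces, matching multiplicities. For the factorization $\breve{B}_k=\breve{B}\otimes\cdots\otimes\breve{B}$ I would use that $f^*$, the flat map $r$, the sharp map $t$, and $f_*$ are all multiplicative over wedge products, so each equals the $k$-th exterior power of its degree-one version; hence $f^\#=\Lambda^k f^\#_1$ and $f_*=\Lambda^k(f_*)_1$, whence $\breve{B}_k=\Lambda^k\bigl((f_*)_1\circ f^\#_1\bigr)=\Lambda^k\breve{B}$. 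The main obstacle is not a genuine difficulty but consistency of conventions: one must fix once and for all the pullback convention on $\Lambda^k$ and the Gram-determinant definition of the induced inner product, so that the musical isomorphisms in degree $k$ genuinely are the exterior powers of those in degree one; once this is pinned down, every step above is a one-line verification.
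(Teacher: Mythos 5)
Your proof is correct and takes essentially the same route as the paper: the paper's own (very terse) proof consists precisely of your adjoint characterization $\overline{g}(\breve{B}x,y)=g(f^\# x,f^\# y)$ together with the eigenvector-transfer computation $\breve{B}f_*v=f_*(f^\# f_*)v=f_*Bv=\lambda f_*v$ with $f_*v\neq 0$. The remaining steps you spell out --- the adjoint relation via the musical isomorphisms, $f^\#\circ f_*=B_k$ through Gram determinants, symmetry and positivity of $\breve{B}_k$, and the exterior-power factorization --- are exactly what the paper dismisses as ``some facts from linear algebra,'' so your write-up simply fills in those omitted details.
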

\begin{proof}
This lemma collects some facts from linear algebra. Note that $\breve{B}$ can be defined analogous to (\ref{defn_of_B}) by
\[
\begin{split}
\overline{g}(\breve{B}x,y)&=g(f^\# x,f^\# y)
%\overline{g}(\breve{B}x,y)&=\overline{g}(x,\breve{B}y),
\end{split}
\]
for all $x,y\in T_{f(p)}N$. If $v\in T_pM$ is an eigenvector of $B$ to the eigenvalue $\lambda \neq 0$, then (\ref{defn_of_B}) yields that $f_*v\neq 0$. In particular, $f_*v\in T_{f(p)}N$ is an eigenvector of $\breve{B}$ to the eigenvalue $\lambda $:
\[
\breve{B}f_*v=(f_*f^\# )f_*v=f_*(f^\# f_*)v=f_*Bv=\lambda f_*v
\]
and thus, $\lambda $ is an eigenvalue of $\breve{B}$ (appears with the same multiplicity as in $B$). That any nonzero eigenvalue of $\breve{B}$ is an eigenvalue of $B$ follows in the same way.
\end{proof}

Let $h_1,\ldots ,h_s\in \Lambda ^2T_{f(p)}M_0$ be a $g_0$--orthonormal eigenbasis of $\curv{R}^0$ and $\kappa _1,\ldots ,\kappa _s$ be the corresponding eigenvalues (note that $\curv{R}^0$ is symmetric), then $\curv{R}^0\geq 0$ yields
\[
\kappa _j=\left< \curv{R}^0(h_j),h_j\right> \geq 0.
\]
Furthermore, we obtain from (\ref{curv11}) and the symmetry of $\curv{R}^0$ ($e_1,\ldots ,e_n$ is a $g$--orthonormal basis of $T_pM$):
\begin{equation*}
%\label{curv13}
\begin{split}
\frak{R}^0&=-\frac{1}{4}\sum _{i,j=1}^n\sum _{l=1}^sg_0(f_*(e_i\wedge e_j),\curv{R}^0(h_l))\cdot \gamma (e_i\wedge e_j)\otimes \beta _p^{-1}\gamma ^0(h_l) \beta _p\\
&=-\frac{1}{2}\sum _{l=1}^s\kappa _l\gamma (f^\# h_l)\otimes \beta _p^{-1}\gamma ^0(h_l)\beta _p.
\end{split}
\end{equation*}
Let $\alpha \in [0,\infty )$ be non--negative and define
\begin{equation}
\label{defnC}
\frak{C}:=\sum _{l=1}^s\kappa _l[\gamma (f^\# h_l)\otimes \mathrm{Id}+\alpha \cdot \mathrm{Id}\otimes \beta _p^{-1}\gamma ^0(h_l)\beta _p]^2 \in \mathrm{End}(\spinor \bundle{E}_p).
\end{equation}
then a straightforward calculation shows
\[
\frak{C}=\sum _{l=1}^s\kappa _l[(\gamma (f^\# h_l))^2\otimes \mathrm{Id}+\alpha ^2\cdot \mathrm{Id}\otimes \beta _p^{-1}(\gamma ^0(h_l))^2\beta _p]-4\alpha \frak{R}^0.
\]
\begin{lem}
Suppose $\eta $ is a $2$--form, then
\[
\gamma (\eta )^2=\gamma (\eta \wedge \eta )-|\eta |^2
\]
for any Clifford module. Moreover, let $\kappa _1,\ldots ,\kappa _s$ be the eigenvalues of $\curv{R}^0$ and $h_1,\ldots ,h_s$ be the corresponding orthonormal eigenbasis, then $\sum \kappa _l=\frac{1}{2}\mathrm{scal}_0$ and
\[
\sum _{l=1}^s\kappa _lh_l\wedge h_l=0.
\]
\end{lem}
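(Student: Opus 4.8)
The plan is to establish the three assertions independently, as they concern unrelated structures.

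\emph{The Clifford square.} For the identity $\gamma(\eta)^2=\gamma(\eta\wedge\eta)-|\eta|^2$ I would work at a fixed point and invoke the normal form of a $2$--form: there is an orthonormal basis $e_1,\dots,e_n$ of the tangent space and scalars $\lambda_k$ with $\eta=\sum_k\lambda_k\,e_{2k-1}^*\wedge e_{2k}^*$, so that $\gamma(\eta)=\sum_k\lambda_k\,\gamma(e_{2k-1})\gamma(e_{2k})$. The factors belonging to distinct indices $k$ involve four distinct generators and therefore commute (two anticommutations cancel), so squaring splits into a diagonal and an off--diagonal part. Using $\gamma(v)^2=-|v|^2$ one finds $(\gamma(e_{2k-1})\gamma(e_{2k}))^2=-1$, whence the diagonal part is $-\sum_k\lambda_k^2=-|\eta|^2$; the off--diagonal part $\sum_{k\neq l}\lambda_k\lambda_l\,\gamma(e_{2k-1})\gamma(e_{2k})\gamma(e_{2l-1})\gamma(e_{2l})$ is precisely the Clifford image of $\eta\wedge\eta=\sum_{k\neq l}\lambda_k\lambda_l\,(e_{2k-1}^*\wedge e_{2k}^*)\wedge(e_{2l-1}^*\wedge e_{2l}^*)$, i.e.\ $\gamma(\eta\wedge\eta)$. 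The identity holds in any Clifford module because it uses only the relations $\gamma(v)\gamma(w)+\gamma(w)\gamma(v)=-2\langle v,w\rangle$.

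\emph{The trace.} For $\sum_l\kappa_l=\tfrac12\mathrm{scal}_0$ I would compute the trace of $\curv{R}^0$ in the orthonormal basis $\{e_i\wedge e_j\}_{i<j}$ of $\Lambda^2TM_0$, giving $\sum_l\kappa_l=\mathrm{tr}(\curv{R}^0)=\sum_{i<j}\langle\curv{R}^0(e_i\wedge e_j),e_i\wedge e_j\rangle$. With the sign convention fixed earlier (for which $\curv{R}^0=\frac{\mathrm{scal}}{n(n-1)}\mathrm{Id}$ on Einstein manifolds), each summand is the sectional curvature of the plane $\mathrm{span}(e_i,e_j)$, and the scalar curvature equals twice the sum of these sectional curvatures over an orthonormal frame; hence $\sum_l\kappa_l=\tfrac12\mathrm{scal}_0$. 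A quick check on the round sphere, where every $\kappa_l=1$ and there are $\binom{n}{2}$ of them, recovers $\tfrac12 n(n-1)=\tfrac12\mathrm{scal}$.

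\emph{The Bianchi relation.} The identity $\sum_l\kappa_l\,h_l\wedge h_l=0$ is the main point, and I would read it as a coordinate--free form of the first Bianchi identity. The spectral decomposition presents $\curv{R}^0=\sum_l\kappa_l\,h_l\otimes h_l$ as a symmetric tensor $\sum_l\kappa_l\,h_l\odot h_l\in\mathrm{Sym}^2(\Lambda^2 TM_0)$, and composing with the natural wedge map $w:\mathrm{Sym}^2(\Lambda^2 V)\to\Lambda^4 V$, $a\odot b\mapsto a\wedge b$, produces exactly $w(\curv{R}^0)=\sum_l\kappa_l\,h_l\wedge h_l$. It then remains to check $w(\curv{R}^0)=0$. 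Writing $\curv{R}^0$ in components $R^0_{ijkl}$ with the usual symmetries $R^0_{ijkl}=R^0_{klij}$, the coefficient of a basis $4$--vector $e_a\wedge e_b\wedge e_c\wedge e_d$ in $w(\curv{R}^0)$ works out, after reordering the wedge factors, to the cyclic sum $R^0_{abcd}-R^0_{acbd}+R^0_{adbc}$, which vanishes by the first Bianchi identity. The delicate step is this sign bookkeeping in $\Lambda^4$; once it is settled, the vanishing is immediate.
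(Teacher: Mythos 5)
Your proof is correct and takes essentially the same route as the paper's (very terse) proof: the Clifford identity by the standard pointwise diagonalization $\eta =\sum _k\lambda _ke_{2k-1}^*\wedge e_{2k}^*$, the trace identity from $\mathrm{scal}_{g_0}=2\,\mathrm{trace}(\curv{R}^0)$, and the vanishing of $\sum _l\kappa _lh_l\wedge h_l=\sum _l\curv{R}^0(h_l)\wedge h_l$ from the first Bianchi identity. Your sign bookkeeping in the last step is accurate --- the coefficient of $e_a\wedge e_b\wedge e_c\wedge e_d$ is $2\bigl( R_{abcd}-R_{acbd}+R_{adbc}\bigl) $, which is the Bianchi cyclic sum --- so you have merely written out in components what the paper compresses into ``applying vectors $x,y,z,t$''.
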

\begin{proof}
The first statement is a straightforward calculation and the second follows from $\mathrm{scal}_{g_0}=2\mathrm{trace}(\curv{R}^0)$. Moreover, the first Bianchi identity yields after applying vectors $x,y,z,t$:
\[
\sum _{l=1}^s\kappa _lh_l\wedge h_l=\sum _l\curv{R}^0(h_l)\wedge h_l=0.
\]
\end{proof}
Using the fact $f^\# (h_l\wedge h_l)=f^\# h_l\wedge f^\# h_l$, this lemma simplifies $\frak{C}$ to:
\begin{equation}
\label{simplC}
\frak{C}=-4\alpha \frak{R}^0-\frac{\alpha ^2}{2}\cdot \mathrm{scal}_{g_0}(f(p))-\sum _{l=1}^s\kappa _l |f^\#h_l|^2_g.
\end{equation}
Since $0\leq B_2\leq \mathrm{dil}_2(f)^2$ and the nonzero eigenvalues of $B_2$ and $\breve{B}_2$ coincide, lemma \ref{lem_lin} yields
\[
g(f^\# h_l,f^\# h_l)=g_0(f_*f^\#h_l,h_l)=g_0(\breve{B}_2h_l,h_l)\leq \mathrm{dil}_2(f)^2.
\]
In particular, $\kappa _j\geq 0$ and $\sum _l\kappa _l=\frac{1}{2}\mathrm{scal}_{g_0}(f(p))$ prove the following inequality
\begin{equation}
\label{ineq_loc}
\sum _{l=1}^s\kappa _l|f^\# h_l|^2_g \leq \frac{1}{2}\mathrm{dil}_2(f)^2\cdot \mathrm{scal}_{g_0}(f(p)).
\end{equation}
This estimate completes the proof for the first part of proposition \ref{main_prop} by the following argument. Consider the definition of $\frak{C}$ in (\ref{defnC}). Since $\gamma (\eta )$ is a skew adjoint action on any Clifford module for arbitrary $2$--forms $\eta $ and $\kappa _l\geq 0$ for all $l=1\ldots s$, we conclude from (\ref{defnC}): $\frak{C}\leq 0$. Hence, set $\alpha :=\mathrm{dil}_2(f):M\to [0,\infty )$ then equation (\ref{simplC}) and inequality (\ref{ineq_loc}) show
\begin{equation}
\label{ineq_fin}
\frak{R}^0\geq -\frac{\alpha }{8}\mathrm{scal}_{g_0}\circ f-\frac{1}{4\alpha }\sum _{l=1}^m\kappa _l|f^\# h_l|^2_g\geq -\frac{\alpha }{4}\mathrm{scal}_{g_0}\circ f
\end{equation}
(note that in case $\alpha =\mathrm{dil}_2(f)=0$ at a point $p\in M$, $f_*:\Lambda ^2T_pM\to \Lambda ^2T_{f(p)}M_0$ has to vanish and thus, $\frak{R}^0$ vanishes at $p\in M$ from (\ref{curv11}), i.e.~this inequality is also true at points where $\alpha = 0$). In order to show the second part of proposition \ref{main_prop} we need the conditions $\mathrm{Ric}(g_0)>0$ and $m=\dim M_0\geq 3$. We have to prove that $f_*:T_pM\to T_{f(p)}M_0$ is a homothetic surjection if $\mathrm{dil}_2(f)(p)>0$. Suppose the minimal eigenvalue of $\frak{R}^0$ at $p\in M$ is $-\frac{1}{4}\alpha \cdot \mathrm{scal}_{g_0}(f(p))$ with $\alpha :=\mathrm{dil}_2(f)(p)>0$.  In this case we obtain equality for at least one nontrivial spinor in (\ref{ineq_fin}) and henceforth, we obtain equality in (\ref{ineq_loc}). In particular, $| f^\# h_l| _g=\alpha $ for all $l$ with $\kappa _l> 0$ which is equivalent to $\breve {B}_2 =\alpha ^2\mathrm{Id}$ on $\mathrm{Im}(\curv{R}^0)\subseteq \Lambda ^2T_{f(p)}M_0$. Let $e_1,\ldots ,e_m\in T_{f(p)}M_0$ be an orthonormal eigenbasis of $\breve{B}=\breve{B}_1$ to eigenvalues $\lambda _1\geq \ldots \geq \lambda _m$. Then $\lambda _i\lambda _j$ ($i\neq j$) are the eigenvalues of $\breve{B}_2$ and $0\leq \breve{B}_2\leq \alpha ^2$ yields $\lambda _i\lambda _j\leq \alpha ^2 $ for all $i,j=1\ldots m$ with $i\neq j$. Moreover, $\breve{B}_2=\alpha ^2$ on $\mathrm{Im}(\curv{R}^0)$ supplies for the curvature of $(M_0,g_0)$ (use the symmetry of $\breve{B}_2=\breve{B}\otimes \breve{B}$):
\[
\begin{split}
\alpha ^2\cdot R^0(e_i,e_j,e_k,e_l)&=\alpha ^2\cdot g_0(\curv{R}^0(e_i\wedge e_j ),e_l \wedge e_k )=g_0(\breve{B}_2\curv{R}^0(e_i\wedge e_j),e_l\wedge e_k)\\
&=g_0(\curv{R}^0(e_i\wedge e_j),\breve{B}e_l\wedge \breve{B}e_k)=\lambda _k\lambda _lR^0(e_i,e_j,e_k,e_l).
\end{split}
\]
Since $\mathrm{Ric}(g_0)>0$ at $f(p)$, for any $k$ there is some $l$ with $R^0(e_l,e_k,e_k,e_l)>0$. Thus, for all $k=1\ldots m$ there is some $l\neq k$ with $\lambda _k\lambda _l=\alpha ^2$. We assumed $\alpha >0$ and $m=\dim M_0\geq 3$. Thus, let $k$ be arbitrary and $ l\neq k$ in such a way that $\lambda _k\lambda _l=\alpha ^2$. Suppose $i\neq k$ as well as $i\neq l$. Then
\[
\lambda _i\lambda _k\leq \alpha ^2,\qquad \lambda _i\lambda _l\leq \alpha ^2
\]
together with $\lambda _k\lambda _l=\alpha ^2$ yields $\lambda _i\leq \alpha $ and since $k$ was arbitrary, we conclude $\lambda _i\leq \alpha $ for all $i=1\ldots m$. Because for any $i$ there is some $j$ with $\lambda _i\lambda _j=\alpha ^2$, we obtain $\lambda _i=\alpha $ for all $i=1\ldots m$ which is equivalent to $\breve{B}=\alpha \mathrm{Id}$. Since the non--vanishing eigenvalues of $B$ and $\breve{B}$ coincide (lemma \ref{lem_lin}), definition \ref{defn_of_B} proves that $f_*:T_pM\to T_{f(p)}M_0$ is a homothetic surjection in case $\mathrm{dil}_2(f)(p)>0$ (we have $f_*$ is surjective, $f^\# $ is injective, $T_pM=\ker (f_*)\oplus V$ and $B_1=\alpha $ on $V=\mathrm{Im}(f^\# )$ as well as $B_1=0$ on $\ker (f_*)$). 

Suppose now that $\mathrm{Ric}(g_0)>0$. Define $U\subseteq M$ to be the interior of all point $p\in M$ where the minimal eigenvalue of $\frak{R}^0$ is $-\frac{1}{4}\alpha \cdot \mathrm{scal}_{g_0}(f(p))$ and where $\mathrm{dil}_2(f)(p)>0$. Then the above considerations show that $f:U\to M_0$ is a submersion and (\ref{defn_of_B}) [$B=0$ on $\ker (f_*) \subseteq TU$ as well as $B_1=\alpha $ on $\ker (f_*)^\perp \subseteq TU$] proves that 
\[
f:(U,\alpha g)\to (M_0,g_0)
\]
is a Riemannian submersion with $\alpha =\mathrm{dil}_2(f)$.

\section{Strictly conform area examples with vanishing Euler characteristic}
\label{strict_conf}
The previous results on strict conform area extremality are based on the Atiyah--Singer index theorem for the signature operator or the Euler characteristic operator. The reason for using only the Euler characteristic operator is that our model spaces $M_0$ have nontrivial signature only in case $\chi (M_0)\neq 0$ by the following remark which follows from theorem 20 in \cite{Brend} and the main result in \cite{CaoChow}.
\begin{rem}[\cite{Brend,CaoChow}]
If $(M_0^n,g_0)$ is closed, simply connected and irreducible with nonnegative curvature operator, then one of the following cases occurs:
\begin{enumerate}
\item[(i)] $M_0$ is homeomorphic to $S^n$.
\item[(ii)] $(M_0,g_0)$ is K\"ahler and biholomorphic to $\proj{\C }^{n/2}$.
\item[(iii)] $(M_0,g_0)$ is a symmetric space of compact type.
\end{enumerate}
\end{rem}
\begin{proof}
If $g_0$ has nonnegative curvature operator, each harmonic form is parallel. For $0<k<n$ any nontrivial parallel $k$--form yields a restriction on the holonomy group of $(M_0,g_0)$. Thus, $\mathrm{Hol}(M_0,g_0)=\mathrm{SO}(n)$ implies vanishing of the Betti numbers $b_1,\ldots ,b_{n-1}$ and $M_0$ is a rational homology sphere. In order to conclude that $M_0$ is homeomorphic to $S^n$ we refer to \cite[theorem 20]{Brend}. The next possible holonomy group in Berger's list is $\mathrm{U}(n/2)$. In this case $(M_0,g_0)$ is a K\"ahler manifold with nonnegative curvature operator and the main result in \cite{CaoChow} shows that $(M_0,g_0)$ is a Hermitian symmetric space or $M_0$ is biholomorphic to $ \proj{\C }^{n/2}$. The only other holonomy group in Berger's list which needs to be considered is $\mathrm{Sp}(1)\cdot \mathrm{Sp}(n/4)$ because otherwise  $(M_0,g_0)$ is a symmetric space or $g_0$ is Ricci flat which together  with nonnegative curvature operator yields $R^{g_0}=0$, a contradiction to the classification of space forms. However, quaternionic K\"ahler manifolds are Einstein, and Einstein spaces with nonnegative curvature operator are locally symmetric (use a generalization of the main theorem in \cite{Tach}). 
\end{proof} 
Using real Dirac bundles and indices of Dirac operators in $\mathrm{KO}$--groups, one can also show conform area extremality of odd dimensional symmetric spaces with positive Ricci curvature if the Kervaire semi--characteristic is nontrivial. However, for index theoretic reasons it seems rather difficult to conclude similar statements for symmetric spaces $G/H$ of compact type if $\mathrm{rk}(G)-\mathrm{rk}(H)>1$. 
\begin{thm}[\cite{List9}]
Suppose that $(M_0^{4n+1},g_0)$ is a closed, oriented and locally symmetric space with positive Ricci curvature and Kervaire semi--charac\-teris\-tic $\sigma (M_0)\neq 0$, then $(M_0,g_0)$ is strict conform area extremal.
\end{thm}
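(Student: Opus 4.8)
The plan is to reduce to the rigidity machinery already developed for the even-dimensional case (theorem \ref{area_thm} and proposition \ref{main_prop}) and to replace the Euler-characteristic operator, which is useless here because $\chi(M_0)=0$ in odd dimension, by a real Clifford-linear Dirac operator whose $\mathrm{KO}$-valued index computes the Kervaire semi-characteristic. First I would invoke the characterization of strict conform area extremality through the modified Yamabe invariant $\overline{\yam}^{g_0}$: it suffices to show that every metric $g$ on $M_0$ obeying the conform-area inequality $\mathrm{scal}_g\geq \mathrm{scal}_{g_0}\cdot \mathrm{area}(g_0/g)$ is a constant multiple of $g_0$. Writing this through the identity map $f=\mathrm{id}\colon (M_0,g)\to(M_0,g_0)$ one has $\mathrm{dil}_2(f)=\mathrm{area}(g_0/g)$, so that proposition \ref{main_prop} supplies the pointwise bound $\frak{R}^0\geq -\tfrac14\,\mathrm{dil}_2(f)\cdot(\mathrm{scal}_{g_0}\circ f)$ for the twist curvature of the Dirac bundle built from $\bundle{F}=TM_0\oplus f^*TM_0$. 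Note that $\bundle{F}=TM_0\oplus TM_0$ has $w_2=0$ and is therefore spin even when $M_0$ is not, so the construction is available without any spin hypothesis on $M_0$.

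Next I would build the index. Since $\dim M_0=4n+1$ is odd, the complex Dirac operator on $\spinor\bundle{F}\otimes\bundle{E}$ used in theorem \ref{area_thm} has index $\chi(M_0)\cdot(\dots)=0$. Instead I would equip the real exterior bundle $\Lambda^*_\R T^*M_0$---locally the real spinor bundle $\spinor_\R\bundle{F}$---with its $\Cl_1$-linear structure and the connection coming from the Levi-Civita connection of $g$ on $TM_0$ and of $g_0$ on the twisting factor $f^*TM_0$. The analytic index of the resulting skew-adjoint Clifford-linear Dirac operator then lives in a real $\mathrm{KO}$-group and is $\Z_2$-valued, and by the Atiyah--Singer mod-$2$ (semi-characteristic) index theorem it equals $\sigma(M_0)$. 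Hence $\sigma(M_0)\neq0$ forces a nontrivial kernel $\ker\D\neq\{0\}$ for every competing $g$; because the relevant twist is the fixed bundle $f^*\spinor_\R M_0$ carrying the full $g_0$-curvature, no small-curvature limit and hence no appeal to the almost-flat Bochner estimate of proposition \ref{lem1342} is needed here---the ordinary Bochner argument will do.

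With a nonzero harmonic spinor $\phi\in\ker\D$ available, I would run the Bochner--Weitzenb\"ock formula $\D^2=\nabla^*\nabla+\tfrac{\mathrm{scal}_g}{4}+\frak{R}^0$. The assumed inequality together with proposition \ref{main_prop} gives $\tfrac{\mathrm{scal}_g}{4}+\frak{R}^0\geq0$, so a kernel element is parallel and annihilated by the curvature term; this yields both $\mathrm{scal}_g=\mathrm{dil}_2(f)\cdot\mathrm{scal}_{g_0}\circ f$ (equality in the conform-area inequality) and equality in the eigenvalue estimate of proposition \ref{main_prop}. Since $\mathrm{Ric}(g_0)>0$, the rigidity clause of proposition \ref{main_prop} then makes $f=\mathrm{id}$ a homothetic surjection wherever $\mathrm{dil}_2(f)>0$; feeding $\mathrm{scal}_g=\mathrm{dil}_2(f)\cdot\mathrm{scal}_{g_0}$ into the conformal change of scalar curvature and integrating exactly as at the end of the proof of theorem \ref{area_thm} (the identity $\bigl(k-\tfrac{n-6}{4}\bigr)\int\alpha^{k-1}|\mathrm d\alpha|^2=0$ with $\alpha=\mathrm{dil}_2(f)$) forces $\mathrm d\,\mathrm{dil}_2(f)=0$. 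Thus $\mathrm{dil}_2(f)$ is a positive constant $c$ and $g=c\cdot g_0$, which is strict conform area extremality.

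The hard part will be the middle paragraph: producing a genuine real Clifford-linear Dirac operator whose $\mathrm{KO}$-valued index is $\sigma(M_0)$ and whose Bochner curvature term is precisely the $\frak{R}^0$ controlled by proposition \ref{main_prop}. In the complex even-dimensional setting the Chern character and the explicit formula $\mathrm{ind}=\pm\chi(M_0)\langle\mathrm{ch}(\bundle{E}),\dots\rangle$ make this transparent, but in $\mathrm{KO}$-theory there is no Chern character, so the identification of the mod-$2$ index with the semi-characteristic and its persistence under the fixed real twist must be argued directly, keeping careful track of the Clifford-module gradings so that the twisting factor really carries the $g_0$-curvature. A related subtlety, the ``nonsymmetric twisting'' issue flagged in the introduction, is that the harmonic spinors need not be eigenvectors of a symmetric endomorphism, so I must verify that the pointwise eigenvalue bound of proposition \ref{main_prop} continues to apply verbatim to the real bundle $\spinor_\R\bundle{F}$ rather than only to its complexification.
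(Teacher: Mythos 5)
Your proposal is correct and follows essentially the same route as the paper, which proves this theorem (deferring details to \cite{List9}) precisely by combining the results of sections \ref{sec_conf_extremal} and \ref{notes_bochner} with the real $\Cl _1$--Dirac bundle $\Lambda ^*M_0$, whose $\Z _2$--valued $\mathrm{KO}$--index is the Kervaire semi--characteristic; your reduction via $f=\mathrm{id}$, the bound of proposition \ref{main_prop}, the ordinary Bochner argument (correctly noting that proposition \ref{lem1342} is unnecessary since the twist is fixed), and the integration identity from the end of theorem \ref{area_thm} is exactly the intended argument.
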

The proof of this theorem follows from the results in section \ref{sec_conf_extremal} and \ref{notes_bochner} using the real $\Cl _1$--Dirac bundle $\Lambda ^*M_0$. Note that this theorem does not apply to $S^{4k-1}$, i.e.~conform area extremality for these spaces needs yet to be investigated. One may try to adapt Kramer's proof  \cite{Kra}  on area extremality of odd dimensional spheres to show the conform area extremality but we will use a slightly different approach. Note that the previous theorem covers the strictly conform area extremality of $\R P^{4n+1}$ which does not follow from theorem \ref{thm343} below because $\R P^{4n+1}$ is not spin if $n\geq 1$. Conversely the spin manifolds $\R P^{4n-1}$ have trivial Kervaire semi--characteristic, but $\R P^{4n-1}$ is spin and therefore, strictly conform area extremal: 
\begin{thm}
\label{thm343}
Let $(M_0^n,g_0)$, $n\geq 3$, be a closed spin manifold of constant sectional curvature $K_{g_0}>0$. Then $(M_0,g_0)$ is strictly conform area extremal. In fact, if $(M,g)$ is a closed spin manifold and $f:M\to M_0$ has nontrivial degree, then
\[
\mathrm{scal}_g\geq n(n-1)\mathrm{dil}_2(f)\cdot K_{g_0}
\]
implies equality and $f:(M,c\cdot g)\to (M_0,g_0)$ is a Riemannian covering for some constant $c>0$. 
\end{thm}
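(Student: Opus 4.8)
The plan is to recognise the hypothesis as a modified--scalar--curvature inequality and to close it with a twisted Dirac argument whose twist realises the (finite) K--area of the model space form. Since $g_0$ has constant sectional curvature $K_{g_0}$ we have the constant value $\mathrm{scal}_{g_0}=n(n-1)K_{g_0}$, so the assumption
\[
\mathrm{scal}_g\geq n(n-1)\,\mathrm{dil}_2(f)\cdot K_{g_0}=\mathrm{dil}_2(f)\cdot\mathrm{scal}_{g_0}\circ f
\]
is exactly inequality (\ref{in_conf_ex}) for the modified scalar curvature $\mathrm{scal}_g-\mathrm{dil}_2(f)\,\mathrm{scal}_{g_0}\circ f$. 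By the characterisation of (strict) conform area extremality through modified Yamabe invariants in \S\ref{sec_conf_extremal}, it then suffices to show that this modified scalar curvature can never be positive and to analyse the equality case. The geometric inputs are that a closed space form of positive curvature is a rational homology sphere finitely covered by $S^n$ and carries a positive curvature operator, and that by remark \ref{rem243} and corollary \ref{cor345} its K--area satisfies $\ke(M_0)=\ke_\mathrm{ch}(M_0)=2/K_{g_0}$.

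First I would set up the twist. As $M$ is spin I twist its complex spinor bundle $\spinor M$ by $f^{*}\bundle{W}$, where $\bundle{W}\to M_0$ satisfies $\langle\mathrm{ch}(\bundle{W}),[M_0]\rangle\neq 0$ and nearly realises the $\mathrm{K}_\mathrm{ch}$--area of $[M_0]$. Because $M_0$ is a rational homology sphere, $f^{*}\mathrm{ch}_j(\bundle{W})=0$ for $0<2j<n$, so the $\widehat{A}$--correction drops out of the index formula and
\[
\mathrm{ind}\,\dirac^{+}_{f^{*}\bundle{W}}=\bigl\langle\widehat{\mathbf{A}}(M)\cdot\mathrm{ch}(f^{*}\bundle{W}),[M]\bigr\rangle=\deg(f)\cdot\langle\mathrm{ch}(\bundle{W}),[M_0]\rangle\neq 0 .
\]
When $n$ is odd, both $M$ and $M_0$ are odd--dimensional, so I pass to $M\times S^1$ and $M_0\times S^1$ and to the class $[M_0]\times[S^1]$, exactly as in the proof of proposition \ref{prop_scalar}; this makes the base even--dimensional while leaving the K--area value unchanged, and it is here that the odd--dimensional value $2/K_{g_0}$ must be fed in through the $0$--dimensional surgery $S^{2n}\rightsquigarrow S^{2n-1}\times S^1$ of corollary \ref{cor345}.

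The heart of the proof, and the main obstacle, is that the optimal twisting bundles are \emph{not} almost flat: their curvature tends to $K_{g_0}/2$ rather than to $0$, so proposition \ref{lem1342} cannot be applied to $f^{*}\bundle{W}$ directly. To remedy this I would factor the near--optimal twist as $\bundle{W}=\bundle{W}_0\otimes\bundle{L}_j$, where $\bundle{W}_0$ is a fixed \emph{nonsymmetric} model bundle carrying the fixed curvature contribution $\tfrac{1}{2}K_{g_0}$ supplied by the surgery construction, and $\bundle{L}_j$ is a genuinely almost flat family that carries the nonvanishing index in the limit. Setting $\bundle{S}:=\spinor M\otimes f^{*}\bundle{W}_0$, the pointwise operator--norm estimate $\sum_{i<j}\gamma(e_i)\gamma(e_j)\otimes R^{f^{*}\bundle{W}_0}_{e_i,e_j}\geq-\tfrac{n(n-1)}{2}\,\mathrm{dil}_2(f)\,\|R^{\bundle{W}_0}\|_{g_0}$, which gives precisely $-\mathrm{dil}_2(f)\,\mathrm{scal}_{g_0}\circ f/4$ when $\|R^{\bundle{W}_0}\|_{g_0}=K_{g_0}/2$, turns the hypothesis into $\tfrac{\mathrm{scal}_g}{4}+\lambda_-(\frak{R}^{\bundle{S}})\geq 0$. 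Now proposition \ref{lem1342} applies to the almost flat family $\bundle{S}\otimes f^{*}\bundle{L}_j$ and forces $\tfrac{\mathrm{scal}_g}{4}+\lambda_-(\frak{R}^{\bundle{S}})\equiv 0$, hence the equality $\mathrm{scal}_g=n(n-1)\,\mathrm{dil}_2(f)\cdot K_{g_0}$ together with a nontrivial parallel section in the limiting kernel.

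Finally I would extract the rigidity from the equality case. Equality in the eigenvalue estimate, analysed as in the second half of proposition \ref{main_prop}, forces $\mathrm{d}f$ to be a homothety on $\Lambda^2TM$ wherever $\mathrm{dil}_2(f)>0$, so $f\colon(M,\mathrm{dil}_2(f)\cdot g)\to(M_0,g_0)$ is a Riemannian submersion; since $\dim M=\dim M_0$ it is a local isometry, and as $\deg(f)\neq 0$ it is a Riemannian covering onto the connected $M_0$. Substituting $\mathrm{scal}_g=\mathrm{dil}_2(f)\,\mathrm{scal}_{g_0}\circ f$ into the conformal transformation law for the scalar curvature and integrating over $M$, exactly as at the end of the proof of theorem \ref{area_thm}, yields $\mathrm{d}\,\mathrm{dil}_2(f)=0$; thus $c:=\mathrm{dil}_2(f)$ is constant and $g=c^{-1}f^{*}g_0$, which is strict conform area extremality. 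I expect the delicate point throughout to be the surgery computation behind corollary \ref{cor345} and the resulting factorisation of the near--optimal twist into a fixed nonsymmetric model piece and an almost flat index--carrier, since this is what makes proposition \ref{lem1342} applicable despite the twist curvature not tending to zero.
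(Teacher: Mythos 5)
Your scaffolding matches the paper's proof quite closely: the even--dimensional case handled separately, the value $\ke (M_0)=2/K_{g_0}$ from the surgery argument of corollary \ref{cor345}, the passage to $M\times S^1$ with the index surviving because $\widehat{\mathbf{A}}(M\times S^1)$ has no top--degree component, a generalized Bochner argument forced by the fact that the twists are not almost flat, and the conformal endgame. But the central mechanism you propose --- factoring the near--optimal twist as $\bundle{W}=\bundle{W}_0\otimes \bundle{L}_j$ with a \emph{fixed} bundle $\bundle{W}_0$ of curvature exactly $K_{g_0}/2$, so that proposition \ref{lem1342} applies verbatim to the almost flat family $\bundle{L}_j$ --- has a genuine gap, and it cannot be repaired in the form you state. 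For the index to survive the limit, the top Chern character must sit either on $\bundle{L}_j$ or on $\bundle{W}_0$. The first is impossible: on $M_0\times S^1$ (a rational homology $S^n\times S^1$) the only reduced even cohomology is in degree $n+1$, so an almost flat family with $\left< \mathrm{ch}(\bundle{L}_j),[M_0\times S^1]\right> \neq 0$ would force $\ke (M_0;[M_0])=\infty $, contradicting $\ke =2/K_{g_0}<\infty $ (corollary \ref{cor345} together with proposition \ref{prop_scalar}). The second reduces your scheme to the assertion that the K--area infimum is \emph{attained}: that a single bundle exists on $M_0\times S^1$ with nonzero top Chern number, curvature exactly $K_{g_0}/2$ on $\Lambda ^2TM_0$ and zero in the mixed directions. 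The surgery construction supplies no such bundle --- it produces a sequence $\bundle{E}_\epsilon $ with curvature $\leq K_{g_0}/2+\epsilon $, each obtained by pulling back $\spinor ^+S^{2n}$ through $\epsilon $--dependent contraction maps over circles of growing radius --- and attainment is nowhere established in the paper, nor needed.

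The paper repairs exactly this point by not invoking proposition \ref{lem1342} as a black box but re--running its \emph{proof} with the varying bundles: for $\tilde f=f\times \mathrm{id}$ and $\bundle{S}_\epsilon =\spinor (M\times S^1)\otimes \tilde f^*\bundle{E}_\epsilon $ one has the pointwise bound $|\frak{R}^{\bundle{S}_\epsilon }|_{op}\leq \frac{K_{g_0}}{2}\beta +C\epsilon $ with $\beta =\sum _{i<j}\lambda _i\lambda _j$ built from the singular values of $\mathrm{d}f$, so the function $\alpha :=\frac{1}{4}\bigl( \mathrm{scal}_{\tilde g}-2K_{g_0}\beta \bigl) \geq 0$ is independent of $\epsilon $, the bundle--dependence enters only through the additive error $C\epsilon $, and the Poincar\'e constant of $(M\times S^1,g\oplus r^2\mathrm{d}t^2)$ is fixed; hence $\int \alpha =O(\sqrt{\epsilon })\to 0$. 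Two secondary points. First, you must keep the refined bound through $\beta $ rather than your coarse estimate $-\frac{n(n-1)}{2}\mathrm{dil}_2(f)\cdot \| R^{\bundle{W}_0}\| $: equality with the coarse bound yields only the scalar curvature identity, whereas $\beta =\frac{n(n-1)}{2}\mathrm{dil}_2(f)$ forces all singular values of $\mathrm{d}f$ to coincide (using $n\geq 3$), i.e.~$f^*g_0=\mathrm{dil}_2(f)\cdot g$, which is what feeds the conformal integration argument from the end of section \ref{sec_conf_extremal}; your appeal to the equality analysis of proposition \ref{main_prop} presupposes a kernel spinor on a single extremal bundle, which the varying--bundle limit does not provide. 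Second, your claim that the $\widehat{A}$--correction drops out because $M_0$ is a rational homology sphere is correct only after crossing with $S^1$; in even dimensions the term $\mathrm{rk}(\bundle{W})\cdot \widehat{A}(M)$ survives (e.g.~for degree--one maps of a spin $4$--manifold with $\widehat{A}(M)\neq 0$ onto $S^4$), and the paper accordingly delegates the even--dimensional case to theorem 3 of \cite{List9}.
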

The main ingredient of this theorem will be corollary \ref{cor345} which computes the total K--area of Riemannian manifolds with constant sectional curvature. In order to show corollary \ref{cor345} we use the relative K--area.
\begin{lem}
Let $(M^n,g)$ be a connected closed Riemannian manifold and $\{ p_1,\ldots ,p_s\}$ be a finite set of points on $M$, then for $\theta \in \widetilde{H}_{2*}(M)$:
\[
\ke (M_g,\{ p_1,\ldots ,p_s\} ;j_*\theta )=\ke (M_g,\{ p_1,\ldots ,p_s\} ;\theta )=\ke (M_g;\theta )
\]
where $j:(M,\emptyset )\to (M,\{ p_1,\ldots ,p_s\} )$.
\end{lem}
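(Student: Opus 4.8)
The plan is to prove the two equalities separately, the first being essentially formal and already contained in the preceding discussion, the second requiring a genuine curvature estimate. For the first equality $\ke(M_g,A;j_*\theta)=\ke(M_g,A;\theta)$, write $A=\{p_1,\dots,p_s\}$ and recall the commutative diagram used just before Proposition \ref{prop162}: since $M$ is connected and $A$ is a finite set, the inclusion $H_k(\BU)\to H_k(\BU,\rho^{\bundle{E}}(P))$ is an isomorphism for every $k>0$, whence $\mathscr{V}(M,A;j_*\theta)=\mathscr{V}(M,A;\theta)$ as sets. As $\theta\in\widetilde{H}_{2*}(M)$ is concentrated in even degrees $\geq 2$, decomposing $\theta=\sum_{i\geq 1}\theta_{2i}$ and invoking the relative version of Proposition \ref{proposition1} reduces the statement to each homogeneous $\theta_{2i}$, for which the two admissible sets literally coincide. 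Hence the two relative K--areas agree.

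For the second equality one inclusion is immediate: any bundle that is trivial with trivial connection near $A$ is a fortiori admissible for the absolute K--area, so $\mathscr{V}(M,A;\theta)\subseteq\mathscr{V}(M;\theta)$ and therefore $\ke(M_g,A;\theta)\leq\ke(M_g;\theta)$. The content of the lemma is the reverse inequality, equivalently $\inf_{\bundle{E}\in\mathscr{V}(M,A;\theta)}\|R^{\bundle{E}}\|_g\leq\inf_{\bundle{E}\in\mathscr{V}(M;\theta)}\|R^{\bundle{E}}\|_g$. First I would fix $\delta>0$ and choose $(\bundle{E},\nabla)\in\mathscr{V}(M;\theta)$ with $\|R^{\bundle{E}}\|_g<\inf_{\mathscr{V}(M;\theta)}\|R\|+\delta$. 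On disjoint geodesic balls $B_\epsilon(p_i)$ (with $\epsilon$ below the injectivity radius and below half the minimal distance between the $p_i$) I would put $\nabla$ into radial (exponential) gauge: parallel transport along radial geodesics trivializes $\bundle{E}|_{B_\epsilon(p_i)}$ and produces a connection form $\omega$ with $\omega(\partial_r)=0$ and $\partial_r\omega_a=R^{\bundle{E}}_{\partial_r,\partial_a}$, so that $|\omega(x)|\leq \|R^{\bundle{E}}\|_g\,r\,(1+o(1))$ as $\epsilon\to0$, where $r=\mathrm{dist}_g(x,p_i)$.

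Next, choosing $0<r_1<\epsilon$ and the radially symmetric cutoff $\chi$ with $\chi\equiv0$ on $B_{r_1}(p_i)$, $\chi\equiv1$ outside $B_\epsilon(p_i)$, and with $r\mapsto r\chi(r)$ \emph{affine} on the intermediate annulus, I would set $\nabla':=\mathrm d+\chi\omega$ on each $B_\epsilon(p_i)$ and $\nabla':=\nabla$ elsewhere; this connection is trivial on $B_{r_1}(p_i)$. Writing
\[
R^{\nabla'}=\chi R^{\bundle{E}}+(\chi^2-\chi)\,\omega\wedge\omega+\mathrm d\chi\wedge\omega,
\]
the only dangerous term $\mathrm d\chi\wedge\omega$ combines with $\chi R^{\bundle{E}}$ into the radial--angular part $R^{\bundle{E}}_{\partial_r,\partial_a}\cdot(r\chi)'$, while the angular--angular part stays $\chi R^{\bundle{E}}_{\partial_a,\partial_b}+(\chi^2-\chi)(\omega\wedge\omega)_{ab}$. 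The affine choice forces $(r\chi)'\equiv \tfrac{\epsilon}{\epsilon-r_1}$ and $0\leq\chi\leq1$, so that $\|R^{\nabla'}\|_g\leq \tfrac{\epsilon}{\epsilon-r_1}\,\|R^{\bundle{E}}\|_g+O\!\left(\|R^{\bundle{E}}\|_g^2\,\epsilon^2\right)$. Topologically $(\bundle{E},\nabla')$ is still the bundle $\bundle{E}$, so $\rho^{\bundle{E}}_*(\theta)\neq0$ persists and $\nabla'$ is trivial near $A$; hence $(\bundle{E},\nabla')\in\mathscr{V}(M,A;\theta)$. Letting first $r_1\to0$ (so $\tfrac{\epsilon}{\epsilon-r_1}\to1$), then $\epsilon\to0$ (killing the correction terms), and finally $\delta\to0$ yields $\inf_{\mathscr{V}(M,A;\theta)}\|R\|\leq\inf_{\mathscr{V}(M;\theta)}\|R\|$, i.e.\ $\ke(M_g,A;\theta)\geq\ke(M_g;\theta)$. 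Together with the easy inequality this gives the second equality, and no $S^1$--stabilization is needed since $\theta$ is even.

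The main obstacle is exactly the curvature bookkeeping in the third paragraph. A naive cutoff only produces a \emph{bounded} multiplicative constant (typically $2$), which suffices to preserve \emph{finiteness} of the K--area but not the \emph{precise value} that Corollary \ref{cor345} will require; the point is that the optimal radial profile "$r\chi$ affine" (equivalently, arranging $(r\chi)'$ constant) forces the factor to tend to $1$. The delicate part is therefore to verify that the radial--gauge remainder, the angular--angular terms, and the fibrewise operator norm (rather than a componentwise norm) are all uniformly $o(1)$ relative to $\|R^{\bundle{E}}\|_g$ as the balls shrink; everything else is either formal or quoted verbatim from the earlier sections.
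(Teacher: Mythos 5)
Your argument is correct, but for the essential inequality $\ke (M_g,\{ p_1,\ldots ,p_s\} ;\theta )\geq \ke (M_g;\theta )$ it takes a genuinely different route from the paper. The paper never touches the connection directly: it fixes disjoint balls $B_\delta (p_j)$ and smooth contractions $f^{j}:M\to M$ with $f^{j}\simeq \mathrm{id}$, $f^{j}(B_\epsilon (p_j))=p_j$, $f^{j}=\mathrm{id}$ off $B_\delta (p_j)$ and $\| \mathrm{d}f^{j}\| \leq 1+C(g,\delta )\epsilon $; pulling back $\bundle{E}\in \mathscr{V}(M;\theta )$ by the composite then \emph{automatically} yields a bundle that is trivial with trivial connection near each $p_j$ (pullback along a map that is constant there), still lies in $\mathscr{V}(M,\{ p_1,\ldots ,p_s\} ;\theta )$ since the composite is homotopic to the identity, and satisfies $\| R^{\bundle{E}'}\| _g\leq (1+C\epsilon )^{2s}\| R^\bundle{E}\| _g$; letting $\epsilon \to 0$ gives the sharp constant $1$. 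You instead keep the bundle and surgically modify the connection in radial gauge with a cutoff tuned so that the multiplicative loss tends to $1$. Both methods deliver exactly the constant needed later in corollary \ref{cor345}. The paper's pullback trick buys a two-line curvature estimate with no gauge bookkeeping (triviality near $A$, unitarity and smoothness of the new connection are automatic), at the cost of constructing the almost-$1$-Lipschitz contractions, which is where the constant $C(g,\delta )$ lives; your gauge-theoretic argument is more local and shows the finer point that only the connection needs changing, but it commits you to precisely the verifications you flag. Two minor inaccuracies in your write-up, both harmless: in exponential gauge the coordinate components satisfy $\omega _a(r)=\int _0^rR_{\partial _r,\partial _a}(s)\, \mathrm{d}s$ and the angular coordinate field has length $\sim s$, so the combined radial--angular component is controlled by the factor $\chi +\tfrac{1}{2}r\chi '$ rather than by $(r\chi )'=\chi +r\chi '$; with your affine profile this factor is at most $\tfrac{1}{2}\bigl( 1+\tfrac{\epsilon }{\epsilon -r_1}\bigl) $, which still tends to $1$ as $r_1\to 0$, so your limiting argument is unaffected. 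And for $\nabla '$ to be smooth you need $\chi \equiv 1$ near $r=\epsilon $ (mollify the affine profile), which changes the bounds by arbitrarily little.
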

\begin{proof}
The inequality $\leq $ is obvious by the definitions, i.e.~it remains to show $\geq $. Fix $\delta >0$ in such a way that $B_\delta (p_i)\cap B_\delta (p_j)=\emptyset $ for all $i\neq j$, then there is a constant $C=C(g,\delta )>0$ with the following property.  For all $0<\epsilon <\delta /2$ there are smooth contraction maps $f^{j}:M\to M$ such that $f^{j}\simeq \mathrm{id}_{M}$, $f^{j}(B_\epsilon (p_j))=p_j$, $f^{j}_{|M\setminus B_\delta (p_j)}=\mathrm{id}$ and
\[
\| \mathrm{d}f^{j}\| :=\max _{v\in TM}\frac{|\mathrm{d}f^{j}(v)|_g}{|v|_g}\leq 1+C\cdot \epsilon .
\]
Consider a bundle $\bundle{E}\in \mathscr{V}(M;\theta )$ and define $\bundle{E}':=(f^1)^*\cdots (f^s)^*\bundle{E}$, then $\bundle{E}'\in \mathscr{V}(M,\{ p_1,\ldots ,p_s\} ;\theta )$ and
\[
\| R^{\bundle{E}'}\| _g\leq \| \mathrm{d}f^1\| ^2\cdots \| \mathrm{d}f^s\| ^2\cdot \| R^\bundle{E}\| _g\leq (1+C\cdot \epsilon )^{2s}\| R^\bundle{E}\| _g.
\]
Hence, we obtain for all $0<\epsilon <\delta /2$:
\[
\ke (M_g;\theta )\leq (1+C\epsilon )^{2s}\ke (M_g,\{ p_1,\ldots ,p_s\} ;\theta )
\]
which completes the proof.
\end{proof}
\begin{cor}
\label{cor345}
Let $(M,g)$ be orientable and closed of constant sectional curvature $K\geq 0$, then $\ke (M_g)=2/K$.
\end{cor}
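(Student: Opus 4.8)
The plan is to reduce everything to the unit spheres by scaling and covering, and then treat the two parities separately. By the scaling invariance $\ke(M_{\mu g})=\mu\,\ke(M_g)$ together with $K_{\mu g}=\mu^{-1}K_g$, it suffices to prove $\ke(M_{g_0})=2$ when $K_{g_0}=1$ and $\ke(M_g)=\infty$ when $K=0$. In the flat case $M$ is finitely and normally covered by a flat torus $f\colon T^n\to M$ (Bieberbach); since $f^{!}[M]=[T^n]$, proposition \ref{proposition7} gives $\ke_\mathrm{ch}(M_g)=\ke_\mathrm{ch}(T^n_{\tilde g};[T^n])=\infty$, whence $\ke(M_g)\geq\ke_\mathrm{ch}(M_g)=\infty=2/K$. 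In the case $K=1$ the manifold is a spherical space form $M=S^n_0/\Gamma$ with $\Gamma\subset\mathrm{SO}(n+1)$ finite and free, so $M$ is a rational homology sphere; using that $\ke=\ke_\mathrm{ch}$ on rational homology spheres (chapter \ref{chp2}) and applying proposition \ref{proposition7} to the finite normal Riemannian covering $S^n_0\to M$ (for which $f^{!}[M]=[S^n]$), one obtains $\ke(M_{g_0})=\ke_\mathrm{ch}(M_{g_0})=\ke_\mathrm{ch}(S^n_0)$. Thus the whole statement reduces to $\ke(S^n_0)=\ke_\mathrm{ch}(S^n_0)=2$.

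For even $n=2m$ this is remark \ref{rem243}: the bundle $\spinor^+ S^{2m}$ satisfies $\|R^{\spinor^+}\|=\tfrac12$ with nonvanishing top Chern class, giving $\ke_\mathrm{ch}\geq2$, while proposition \ref{prop_scalar} with $\widehat{\mathbf{A}}(S^{2m})=1$ and $\mathrm{scal}_{g_0}=2m(2m-1)$ gives $\ke_\mathrm{ch}\leq2$. For the odd sphere $n=2m-1$ the upper bound follows the same way: since $\widehat{\mathbf{A}}(S^{2m-1})\cap[S^{2m-1}]=[S^{2m-1}]$ and $\mathrm{scal}_{g_0}=(2m-1)(2m-2)$, proposition \ref{prop_scalar} yields $\ke_\mathrm{ch}(S^{2m-1}_0)\leq\ke_\mathrm{ch}^{\mathrm{st}}(S^{2m-1}_0)\leq2$, and since $\ke=\ke_\mathrm{ch}$ for spheres this gives $\ke(S^{2m-1}_0)\leq2$.

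The essential remaining point is the lower bound $\ke(S^{2m-1}_0)\geq2$, where by definition $\ke(S^{2m-1}_0)=\sup_{\mathrm{d}t^2}\ke(S^{2m-1}_0\times S^1_{\mathrm{d}t^2};[S^{2m-1}]\times[S^1])$. Here I would use the $0$–dimensional surgery picture: $S^{2m-1}\times S^1$ arises from $S^{2m}$ by deleting two polar balls and regluing the resulting cylinder, and the smash–collapse map $c\colon S^{2m-1}_0\times S^1_{\mathrm{d}t^2}\to S^{2m-1}\wedge S^1=S^{2m}_0$ satisfies $c_*([S^{2m-1}]\times[S^1])=[S^{2m}]$. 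Because the latitude spheres of the unit $S^{2m}$ have radius $\leq1$ and its meridian has length $\pi$, for a sufficiently long circle the map $c$ can be taken smooth with $g_0\oplus\mathrm{d}t^2\geq c^*g_0$ on $\Lambda^2T$, so proposition \ref{proposition2} gives $\ke(S^{2m-1}_0\times S^1_{\mathrm{d}t^2};[S^{2m-1}]\times[S^1])\geq\ke(S^{2m}_0)=2$. Equivalently, in the bundle formulation one transports $\spinor^+ S^{2m}$ across the surgery: the preceding lemma shows that trivializing it near the two poles $p_1,p_2$ costs nothing in K–area, and such a bundle glues over the surgery tube to a bundle on $S^{2m-1}\times S^1$ whose curvature norm stays arbitrarily close to $\tfrac12$ and whose Chern character still pairs nontrivially with $[S^{2m-1}]\times[S^1]$.

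I expect the odd–dimensional lower bound to be the main obstacle. The difficulty is to produce the degree–one collapse map (equivalently the surgered twisting bundle) so that \emph{simultaneously} the $\Lambda^2$–contraction hypothesis of proposition \ref{proposition2} holds exactly — here the long circle is what makes equality, rather than an $\epsilon$–loss, attainable — the homology class $[S^{2m-1}]\times[S^1]$ genuinely survives the collapse, and the curvature stays controlled across the tube. Since near the reglued region the transported bundle is no longer the symmetric round spinor bundle, the clean Bochner estimate used on $S^{2m}$ must be replaced by the almost–nonnegative curvature argument of proposition \ref{lem1342}; reconciling this nonsymmetry with the requirement that the relevant index remain nonzero is the delicate step that distinguishes the odd from the even case.
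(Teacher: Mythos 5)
Your global skeleton is the paper's: the scaling reduction, the flat case via a torus covering and proposition \ref{proposition7}, the reduction of space forms to the unit sphere via proposition \ref{proposition7} together with $\ke =\ke _\mathrm{ch}$ on rational homology spheres, the even case from remark \ref{rem243}, and the upper bound $\ke (S^{2m-1}_0)\leq 2$ from proposition \ref{prop_scalar}. The gap sits in your primary treatment of the odd lower bound. The metric justification you give (latitude spheres of radius $\leq 1$, meridian of length $\pi$) only produces the suspension map on the cylinder, $(y,s)\mapsto (\sin \phi (s)\, y,\cos \phi (s))$ on $S^{2m-1}_0\times [0,2\pi r]$ --- but this map sends the two ends of the cylinder to \emph{different} poles and therefore does not descend to $S^{2m-1}\times S^1$. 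The genuine smash collapse $c$ must in addition crush a meridian circle $\{ y_0\} \times S^1$ to the base point, and for such a map the exact inequality $g_0\oplus \mathrm{d}t^2\geq c^*g_0$ on $\Lambda ^2T$ is not established by your argument; within the rotationally symmetric ansatz it is impossible, since for fixed $y$ the loop $s\mapsto c(y,s)$ would have to traverse a meridian from pole to pole and still close up. So proposition \ref{proposition2} cannot be applied as you state it. Note also that exactness is not what the long circle buys you: since $\ke (S^{2m-1}_0)$ is a supremum over line elements, an $\epsilon $--quasi--contracting map with $r=r(\epsilon )$ would already give the sharp bound in the limit --- but even that map still has to be constructed near the crushed meridian, which is precisely the fiddly step you have not carried out.

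Your ``equivalent'' bundle formulation is, in substance, the paper's actual proof and is the correct repair: the paper cuts the circle and works with the \emph{relative} K--area, exactly so that no map has to descend to the circle product. Concretely, $\ke (S^{2m-1}_0\times S^1_r,S^{2m-1}\times \{ x\} ;\, \cdot \, )$ equals the relative K--area of the cylinder pair by relative diffeomorphism invariance (proposition \ref{prop162}); the latitude map from the cylinder to $(S^{2m}_{g_0'},\{ p,-p\} )$ is $\Lambda ^2$--contracting once $r\geq 1$; bundles that are trivial with trivial connection near the boundary spheres glue over the rest of the circle --- which is what the class $\mathscr{V}(M,A;\theta )$ encodes --- and the lemma preceding corollary \ref{cor345} (your ``preceding lemma'') gives $\ke (S^{2m}_{g_0'},\{ p,-p\} )=\ke (S^{2m}_{g_0'})=2$. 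Since the relative K--area bounds the absolute one from below, this yields the lower bound $2$. Finally, your closing concern is misplaced for this statement: no index or Bochner argument, in particular not proposition \ref{lem1342}, enters corollary \ref{cor345} at all. Nonvanishing of the pairing of $\mathrm{ch}$ of the glued bundle with $[S^{2m-1}]\times [S^1]$ is pure Chern--Weil (the Chern form is supported away from the gluing region, where the connection is trivial, so the pairing equals the nonzero Chern number of $\spinor ^+S^{2m}$); the nonsymmetric--twist and index--nonvanishing issues you describe belong to the application in theorem \ref{thm343}, not to the corollary itself.
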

\begin{proof}
The cases $K=0$ and $M=S^{2n}$ are known from chapter \ref{chp2}. Hence, assume that $M$ is odd dimensional of positive sectional curvature. Moreover, proposition \ref{proposition7} supplies the claim if $\ke (S^{2n-1}_{g_0})=2$ holds for the standard sphere $S^{2n-1}_{g_0}$, $n\geq 2$. Now, proposition \ref{prop_scalar} yields $\ke (S^{2n-1}_{g_0})\leq 2$. In order to see the opposite inequality it suffices to show
\[
\ke (S^{2n-1}_{g_0}\times S^1_r,S^{2n-1}\times \{ x\} ;[(S^{2n-1}\times S^1,S^{2n-1}\times \{ x\} )])\geq 2
\]
if $S^1_r$ is sufficiently large. The map
\[
(S^{2n-1}_{g_0}\times [0,2\pi r],S^{2n-1}\times \{ 0,2\pi r\} )\to (S^{2n-1}_{g_0}\times S^1_{r},S^{2n-1}\times \{ x\} )
\]
is a relative diffeomorphism and an isometry, i.e.~the corresponding relative K--areas coincide. Moreover, the usual map \[
f:(S^{2n-1}_{g_0}\times [0,\pi r],S^{2n-1}\times \{ 0,\pi r\} )\to (S^{2n}_{g_0'},\{ p,-p\} )
\]
is a relative diffeomorphism with $f^*g_0'\leq g_0\oplus \mathrm{d}s^2$ if $g_0'$ is the standard metric on $S^{2n}$ and $r\geq 1$. Thus, the previous lemma and remark \ref{rem243} yield
\[
\ke (S^{2n-1}_{g_0}\times [0,\pi r],S^{2n-1}\times \{ 0,\pi r\} )\geq \ke (S^{2n}_{g_0'},\{ p,-p\} )=\ke (S^{2n}_{g_0'})=2.
\]
as long as $r\geq 1$.
\end{proof}
The even dimensional case of theorem \ref{thm343} follows from theorem 3 in \cite{List9}. Hence, we assume that $M$ and $M_0$ are odd dimensional. Consider the manifold $(M_0\times S^1,g\oplus \mathrm{d}t^2)$ and let $e_1^0,\ldots ,e_{n+1}^0$ be an arbitrary orthonormal frame with $e_1^0,\ldots ,e_n^0\in TM_0$ and $e^0_{n+1}\in TS^1$. Then the previous corollary shows that for all $\epsilon >0$ there is a Hermitian bundle $\bundle{E}\to M_0\times S^1$ with Hermitian connection such that $\left< \mathrm{ch}(\bundle{E}),[M_0\times S^1]\right> \neq 0$ and
\[
| R^\bundle{E}_{e_i^0,e_j^0}| _{op}\leq \frac{K_0}{2}+\epsilon \qquad \text{as well as}\qquad |R^\bundle{E}_{e_i^0,e_{n+1}^0}|_{op}\leq \epsilon 
\] 
where $1\leq i\leq j\leq n$ (note that we take the supremum over all line elements on $S^1$ to define the K--area). Consider the map
\[
\tilde f=f\times \mathrm{id}:(M\times S^1, g\oplus r^2\cdot \mathrm{d}t^2)\to (M_0\times S^1,g_0\oplus \mathrm{d}t^2)
\]
where $r:=\max \mathrm{dil}_1(f)$. Since $f$ has nontrivial degree, $\tilde f$ has nonvanishing degree and the bundle $\tilde f^*\bundle{E}\to M\times S^1$ satisfies $\left< \mathrm{ch}(\tilde f^*\bundle{E}),[M\times S^1]\right> \neq 0$. Since $\mathrm{ch}(\tilde f^*\bundle{E})\in H^0(M\times S^1;\Q )\oplus H^{n+1}(M\times S^1;\Q )$ and $M\times S^1$ has trivial $\widehat{A}$--genus, the Dirac operator $\D $ associated to the Dirac bundle $\bundle{S}:=\spinor (M\times S^1)\otimes \tilde f^*\bundle{E}$ has nontrivial index. Consider the metric $\tilde g=g\oplus r^2\cdot \mathrm{d}t^2$ on $M\times S^1$ and let $e_1,\ldots ,e_{n+1}\in T_x(M\times S^1)$ be a $\tilde g$--orthonormal frame with $\tilde f_*(e_i)=\lambda _i(x)\cdot e_i^0\in T_{\tilde f(x)}(M_0\times S^1)$, then
\[
|R^{\tilde f^*\bundle{E}}_{e_i,e_j}|_{op}(x)\leq  \lambda _i(x)\cdot \lambda _j(x)\cdot \left( \frac{K_0}{2}+\epsilon \right) ,\quad |R^{\tilde f^*\bundle{E}}_{e_i,e_{n+1}}|_{op}(x)\leq \epsilon   
\]
for $1\leq i,j\leq n$. Hence, the operator norm of the twisted curvature operator for the Dirac bundle $\bundle{S}$ has a pointwise estimate
\[
|\frak{R}^\bundle{S}|_{op}(x)\leq \sum _{i<j}^{n+1}|R^{\tilde f^*\bundle{E}}_{e_i,e_j}|_{op}(x)\leq \frac{K_0}{4}\sum _{ i,j=1}^n\lambda _i(x)\lambda _j(x)+C\cdot \epsilon 
\]
where $C$ is a constant independent on $\epsilon $. Define the function $\beta :M\times S^1\to [0,\infty )$ by $\beta (x):=\sum _{i<j}^n\lambda _i(x)\lambda _j(x)$, i.e.~$\beta =\mathrm{tr}_g(f^*g_0^{\Lambda ^2})\circ \pi $ if $\pi :M\times S^1\to M$ is the projection and $g_0^{\Lambda ^2}$ means the metric on $\Lambda ^2TM_0$. Obviously, $\beta \leq \frac{n(n-1)}{2}\mathrm{dil}_2(f)\circ \pi $ and equality at the point $x\in M$ implies $\lambda _i(x)=\lambda _j(x)$ for al $1\leq i, j\leq n$ (here we use $\dim M\geq 3$ again). Now we proceed as in the proof of proposition \ref{lem1342}. Set $4\alpha :=\mathrm{scal}_{\tilde g}-2K_0\beta $ for notational simplicity, then $\alpha \geq 0$ on $M\times S^1$ by assumption.  Suppose $\phi \in \ker \D $ with $\int |\phi |^2=\int 1$ where integration is done w.r.t.~the volume form of $\tilde g$. Then the above estimate yields
\[
\begin{split}
0&=\frac{1}{2}\delta _{\tilde g}\mathrm{d}|\phi |^2+|\nabla \phi |^2+\frac{\mathrm{scal}_{\tilde g}}{4}|\phi |^2+\left< \frak{R}^\bundle{S}\phi ,\phi \right> \\
&\geq \frac{1}{2}\delta _{\tilde g}\mathrm{d}|\phi |^2+|\nabla \phi |^2+\alpha |\phi |^2-C\epsilon|\phi |^2. 
\end{split}
\]
Multiply the inequality by $|\phi |^2$ and use $\alpha \geq 0$, then
\[
\int \left| \nabla |\phi |^2\right|^2\leq 2C\epsilon \int |\phi |^4.
\]
Hence, the Poincar\'e inequality on $(M\times S^1,g\oplus r^2\mathrm{d}t^2)$ yields
\[
\int \left( |\phi |^2-1\right) ^2\leq C'\int \left| \nabla |\phi |^2\right|^2\leq 2\epsilon C\cdot C'\int |\phi |^4=\epsilon \tilde C\left[ \int 1+\int \left( |\phi |^2-1\right) ^2\right]
\]
which supplies $|\phi |^2=1+h$ for a $L^1$--function $h$ with $\| h\| _{L^1}\leq \bar C \sqrt{\epsilon }$ if $\epsilon $ is sufficiently small. Integration of the above inequality shows
\[
0\geq \int \left[ |\nabla \phi |^2+\alpha (1+h)-C\epsilon (1+h)\right]\geq \int \alpha -O(\sqrt{\epsilon }),
\]
i.e.~$\alpha \geq 0$ and the limit case $\epsilon \to 0$ implies $\alpha =0$ and therefore, $\beta =\frac{n(n-1)}{2}\mathrm{dil}_2(f)\circ \pi $ as well as $\mathrm{dil}_2(f)g=f^*g_0$. Hence, the conformal argument  at the end of section \ref{sec_conf_extremal} shows $\mathrm{dil}_2(f)=\mathrm{const}>0$ and $f:(M,\mathrm{dil}_2(f)\cdot g)\to (M_0,g_0)$ has to be a Riemannian covering.
\begin{rem}
One should to able to generalize theorem \ref{thm343} to get an analogous result to theorem 3 in \cite{List9}. Moreover, we expect the theorem to hold if the model spaces are products of odd dimensional manifolds with constant sectional curvature. 
\end{rem}

\bibliographystyle{abbrv}
\bibliography{bibliothek}

\noindent Mathematisches Institut, Albert-Ludwigs-Universit\"at Freiburg, Eckerstra\ss e 1, 79104 Freiburg, Germany\\
{\it E-mail address:} mario.listing@math.uni-freiburg.de
\end{document}